\documentclass[letterpaper,11pt]{article}
\usepackage[margin=1in]{geometry}
\usepackage[bookmarks, colorlinks=true, plainpages = false, colorlinks=true,
            citecolor=blue,
            linkcolor=blue,
            anchorcolor=red,
            urlcolor=blue]{hyperref}
\usepackage{url}
\usepackage{amsmath, amsfonts,amsthm,amssymb,bm, verbatim,dsfont,mathtools}
\usepackage{color,graphicx,appendix}
\usepackage{etoolbox}

\usepackage{array}
\usepackage{multirow}

\usepackage{float}

\usepackage{tikz}
\usetikzlibrary{arrows.meta}
\usetikzlibrary{matrix,arrows,calc,shapes,backgrounds}
\usetikzlibrary{shapes.callouts,decorations.text}

\usetikzlibrary{shapes.misc}

\tikzset{cross/.style={cross out, draw=black, minimum size=2*(#1-\pgflinewidth), inner sep=0pt, outer sep=0pt},
cross/.default={1pt}}

\tikzstyle{int}=[draw, fill=blue!20, minimum size=2em]
\tikzstyle{dot}=[circle, draw, fill=blue!20, minimum size=2em]
\tikzstyle{dotred}=[circle, draw, fill=red!20, minimum size=2em]
\tikzstyle{init} = [pin edge={to-,thin,black}]
\tikzstyle{initred} = [pin edge={to-,thin,red}]
\tikzstyle{plan}=[draw, fill=blue!20, minimum size=2em, text width=5em, rounded corners,align=center]
\tikzstyle{planwide}=[draw, fill=blue!20, minimum size=2em, text width=8em, rounded corners,align=center]
\tikzstyle{nodedot}=[circle, draw, fill=white, minimum size=0.3cm,inner sep=0pt]
\tikzstyle{nodedot}=[circle, draw, fill=white, minimum size=3,inner sep=0pt]
\tikzstyle{Medge}=[green!60!black, thick]
\tikzstyle{Bedge}=[red, thick]
\tikzstyle{Cedge}=[blue, thick]
\tikzstyle{Sedge}=[black, thick]
\tikzstyle{Mgiantedge}=[green!60!black, line width=3.0pt]
\tikzstyle{Bgiantedge}=[red,line width=3.0pt]
\tikzstyle{Cgiantedge}=[blue,line width=3.0pt]
\tikzstyle{Sgiantedge}=[black,line width=3.0pt]
\tikzstyle{shadedgiantnode}=[circle, draw, fill=black!10, minimum size=1cm, inner sep=0pt]
\tikzstyle{unshadedgiantnode}=[circle, draw, fill=white, minimum size=1cm, inner sep=0pt]
\makeatletter
\tikzset{my loop/.style =  {to path={
  \pgfextra{}
  [looseness=5,min distance=10mm]
  \tikz@to@curve@path},font=\sffamily\small
  }}  
\makeatletter 
\newcolumntype{C}[1]{>{\centering\arraybackslash}p{#1}}

\tikzstyle{vertexdot}=[circle, draw, fill=white, minimum size=3,inner sep=0pt]
\tikzstyle{root}=[circle, draw, fill=black, minimum size=3,inner sep=0pt]

\tikzstyle{vertexdotsolid}=[circle, draw, fill=black, minimum size=3,inner sep=0pt]

\usepackage{xr,xspace}
\usepackage{todonotes}
\usepackage{paralist}
\usepackage{enumitem}

\usepackage{caption,subcaption,soul}
\usepackage{algorithm}%
\usepackage{algpseudocode}
\makeatletter
\usepackage{romanbar}

\numberwithin{equation}{section}
\newtheorem{theorem}{Theorem}[section]
\newtheorem{corollary}[theorem]{Corollary}
\newtheorem{lemma}[theorem]{Lemma}
\newtheorem{remark}[theorem]{Remark}
\newtheorem{proposition}[theorem]{Proposition}

\newtheorem{problem}[theorem]{Problem}

\newtheorem{definition}[theorem]{Definition}

\newtheorem*{customthm*}{\thistheoremname}
\newcommand{\thistheoremname}{} %

\newcommand \E[1]{\mathbb{E}[#1]}

\newcommand{\op}{\mathrm{op}}

\usepackage{xspace,prettyref}

\newcommand{\eps}{\varepsilon}

\newcommand{\iiddistr}{{\stackrel{\text{\iid}}{\sim}}}

\newcommand{\reals}{{\mathbb{R}}}

\newcommand{\Prob}{\mathbb{P}}

\def\Var{\mathrm{Var}}
\def\E{\mathbb{E}}
\newcommand{\Bern}{{\rm Bern}}
\newcommand{\Binom}{{\rm Binom}}

\newcommand{\eg}{e.g.\xspace}

\newcommand{\iid}{i.i.d.\xspace}
\newcommand{\GOE}{{\rm GOE}}
\newcommand{\Wishart}{{\rm Wishart}}

\newrefformat{eq}{(\ref{#1})}
\newrefformat{chap}{Chapter~\ref{#1}}
\newrefformat{sec}{Section~\ref{#1}}
\newrefformat{alg}{Algorithm~\ref{#1}}
\newrefformat{assump}{Assumption~\ref{#1}}
\newrefformat{fig}{Figure~\ref{#1}}
\newrefformat{tab}{Table~\ref{#1}}
\newrefformat{rmk}{Remark~\ref{#1}}
\newrefformat{clm}{Claim~\ref{#1}}
\newrefformat{claim}{Claim~\ref{#1}}
\newrefformat{def}{Definition~\ref{#1}}
\newrefformat{cor}{Corollary~\ref{#1}}
\newrefformat{lmm}{Lemma~\ref{#1}}
\newrefformat{prop}{Proposition~\ref{#1}}
\newrefformat{prob}{Problem~\ref{#1}}
\newrefformat{app}{Appendix~\ref{#1}}
\newrefformat{hyp}{Hypothesis~\ref{#1}}
\newrefformat{thm}{Theorem~\ref{#1}}
\newrefformat{fac}{Fact~\ref{#1}}
\newrefformat{ex}{Example~\ref{#1}}

\newcommand{\norm}[1]{\left\|{#1} \right\|}

\newcommand{\iprod}[2]{\left \langle #1, #2 \right\rangle}

\newcommand{\dTV}{d_{\rm TV}}
\newcommand{\dKL}{d_{\rm KL}}

\newcommand{\bbP}{{\mathbb{P}}}

\newcommand{\calC}{{\mathcal{C}}}

\newcommand{\calE}{{\mathcal{E}}}
\newcommand{\calF}{{\mathcal{F}}}
\newcommand{\calG}{{\mathcal{G}}}

\newcommand{\calL}{{\mathcal{L}}}

\newcommand{\calN}{{\mathcal{N}}}

\newcommand{\calP}{{\mathcal{P}}}
\newcommand{\calQ}{{\mathcal{Q}}}

\newcommand{\calU}{{\mathcal{U}}}

\newcommand{\ER}{Erd\H{o}s--R\'enyi\xspace}

\newcommand{\Tr}{\mathrm{Tr}}
\renewcommand{\tilde}{\widetilde}

\newcommand{\tri}{\mathrm{tri}}
\newcommand{\scan}{\mathrm{scan}}

\newif\ifhideproofs
\ifhideproofs
\usepackage{environ}
\NewEnviron{hide}{}

\fi

\newif\ifdraft
\drafttrue

\begin{document}
\title{\hspace{-0.5cm} \parbox{\textwidth}{\centering Detection of local geometry in random graphs: \\information-theoretic and 
computational limits}}
\author{
    Jinho\ Bok\thanks{J. Bok is with the Department of Statistics and Data Science, The Wharton School, University of Pennsylvania. Email: \href{mailto:jinhobok@wharton.upenn.edu}{\color{black}{\texttt{jinhobok@wharton.upenn.edu}}}.},~  %
    Shuangping\ Li\thanks{S. Li is with the Department of Statistics and Data Science, Yale University. Email: \href{mailto:shuangping.li@yale.edu}{\color{black}{\texttt{shuangping.li@yale.edu}}}.},~and %
    Sophie\ H.\ Yu\thanks{S. H. Yu is with the Operations, Information and Decisions Department, The Wharton School, University of Pennsylvania. Email: \href{mailto:hysophie@wharton.upenn.edu}{\color{black}{\texttt{hysophie@wharton.upenn.edu}}}.}
} 
\date{March 25, 2026}
\maketitle
\thispagestyle{empty}
\begin{abstract}

We study the problem of detecting local geometry in random graphs. We introduce a model $\mathcal{G}(n,p,d,k)$, where a hidden community of average size $k$ has edges drawn as a random geometric graph on $\mathbb{S}^{d-1}$, while all remaining edges follow the \ER model $\calG(n,p)$. The random geometric graph is generated by thresholding inner products of latent vectors on $\mathbb{S}^{d-1}$, with each edge having marginal probability equal to $p$. This implies that $\calG(n,p,d, k)$ and $\calG(n, p)$ are indistinguishable at the level of the marginals, and the signal lies entirely in the edge dependencies induced by the local geometry.

We investigate both the information-theoretic and computational limits of detection. On the information-theoretic side, our upper bounds follow from three tests based on signed triangle counts: a global test, a scan test, and a constrained scan test; our lower bounds follow from two complementary methods: truncated second moment via Wishart--GOE comparison, and tensorization of KL divergence.
These results together settle the detection threshold at 
$d = \widetilde{\Theta}(k^2 \vee k^6/n^3)$ for fixed $p$, and extend the state-of-the-art bounds from the full model (i.e., $k = n$) for vanishing $p$. 
On the computational side, we identify a computational--statistical gap and provide evidence via the low-degree polynomial framework, as well as the suboptimality of signed cycle counts of length $\ell \geq 4$.

\end{abstract}

\newpage 
\tableofcontents
\thispagestyle{empty}
\clearpage

\setcounter{page}{1}

\section{Introduction}

Networks across 
multiple domains often contain inherent structures \cite{Newman2010Networks,Barabasi2016NetworkScience}: communities in social networks \cite{holland1983sbm, girvan2002community, Fortunato2010CommunityDetection}, functional modules in biological systems \cite{HartwellEtAl1999ModularCellBiology,SpirinMirny2003ProteinModules,BarabasiOltvai2004NetworkBiology}, and anomalous subgraphs in communication networks \cite{PriebeEtAl2005ScanEnron,AkogluTongKoutra2015AnomalySurvey}.
Detecting such structure from noisy observations is a fundamental statistical problem, which has also driven significant advances in probability theory, combinatorics, and theory of algorithms. Prominent models for this task include the stochastic block model \cite{holland1983sbm,DecelleEtAl2011AsymptoticSBM,Abbe2018SBMSurvey}, the planted clique \cite{jerrum1992plantedclique,kucera1995plantedclique}, the planted dense subgraph \cite{pdsdense, hajek2015computationalcommunity, pdssparse}, and the planted matching \cite{moharrami2021plantedmatching, ding2023plantedmatching}, %
each serving as a benchmark for understanding statistical and computational phase transitions in structured random graphs.

The hidden subgraphs in these models are often assumed to be ``simple'', having a distinctive combinatorial shape or an elevated edge density relative to the background. While analytically convenient, such assumptions can be misaligned with real-world networks, where the defining signature of a subgraph may lie not in its density or shape but in how its vertices relate through their latent features. %
In particular, edges are influenced by the similarity between those features (e.g., personal profiles, textual representations, biological summaries), which is often modeled through latent space \cite{hoff2002latent,penrosebook,handcock2007latentcluster,newman2016annotated}. 
Under this perspective, the structure to be detected is better described by its interaction patterns that are consistent with the underlying geometry.

This distinction is especially important in settings where each vertex resembles or imitates the others.
For instance, in social networks, the subgraph of interest may be a small group of genuine users among bots \cite{ferrara2016socialbots}, or a set of accounts under coordination for influence \cite{pacheco2021uncovering}; similarly, in economic networks the subgraph may consist of firms under collusion in a marketplace \cite{morselli2018collusion, wachs2019collusion}. 
In such cases, each vertex may appear to be statistically similar despite the interactions within the subgraph at the level of the latent space. Furthermore, those interactions are often inherently intricate, characterized by contextual or longitudinal features in high dimensions.

Motivated by the geometry-based signals as described, we introduce a random graph model in which a small, hidden community exhibits \emph{local geometry}.  %
We focus on the fundamental task of \emph{detection}: given an observed graph $G$ on vertex set $[n]$, decide whether it was generated from a null model with no signal or from our proposed alternative model.

Formally, we consider the hypothesis testing problem
\[
\mathcal P := \mathcal G(n,p,d,k)
\quad\text{vs.}\quad
\mathcal Q := \mathcal G(n,p)\,,
\]
where $\mathcal Q$ is the \ER model \cite{erdosrenyi} and $\mathcal P$ is a planted version of the high-dimensional random geometric graph model \cite{devroye2011rgg}. Under $\mathcal P$, a hidden set $S$ of expected size $k$ (which we refer to as the community)
carries latent feature vectors, and edges within %
$S$ are formed according to geometric proximity in a $d$-dimensional latent space; all remaining edges behave as in $\mathcal G(n,p)$. We now give the formal definition of $\mathcal P$.

\begin{definition}[Random graphs with local high-dimensional geometry]\label{def:planteddist}
A sample $G \sim \calP$ is drawn as follows:
\begin{enumerate}
    \item Each vertex $i \in [n]$ joins the community $S$ independently with probability $k/n$.
    \item Each community vertex $i \in S$ receives a latent feature vector
    $U_i \overset{\mathrm{i.i.d.}}{\sim} \mathcal{U}(\mathbb{S}^{d-1})$.
    \item For any $i,j \in [n]$ with $i\neq j$, if $i,j \in S$,
    edge $ij$ is present iff $\langle U_i, U_j\rangle \geq \tau(p,d)$;
    otherwise edge $ij$ is present independently with probability $p$.
    The threshold $\tau(p,d)$ is chosen so that
    $\mathbb{P}(\langle U_i,U_j\rangle \geq \tau(p,d)) = p$.
\end{enumerate}
\end{definition}

\begin{figure}[!t]
    \centering
    \begin{subfigure}[t]{0.4\linewidth}
        \centering
        \includegraphics[width=\linewidth]{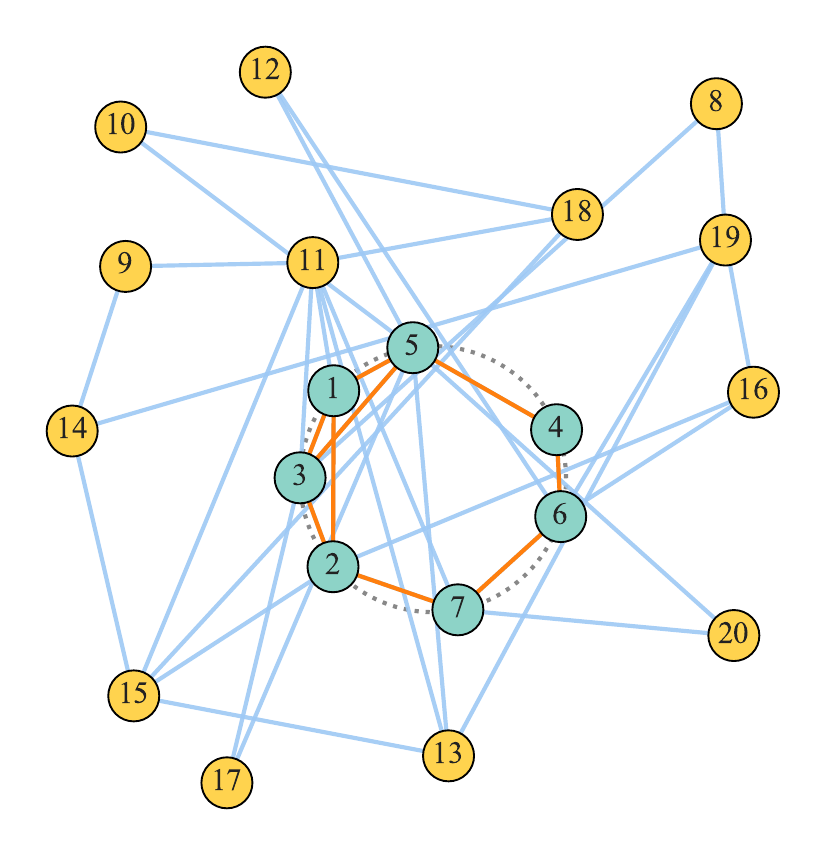}
        \caption{}\label{fig:illustration-a}
    \end{subfigure}
    \hspace{2mm}
    \begin{subfigure}[t]{0.4\linewidth}
        \centering
        \includegraphics[width=\linewidth]{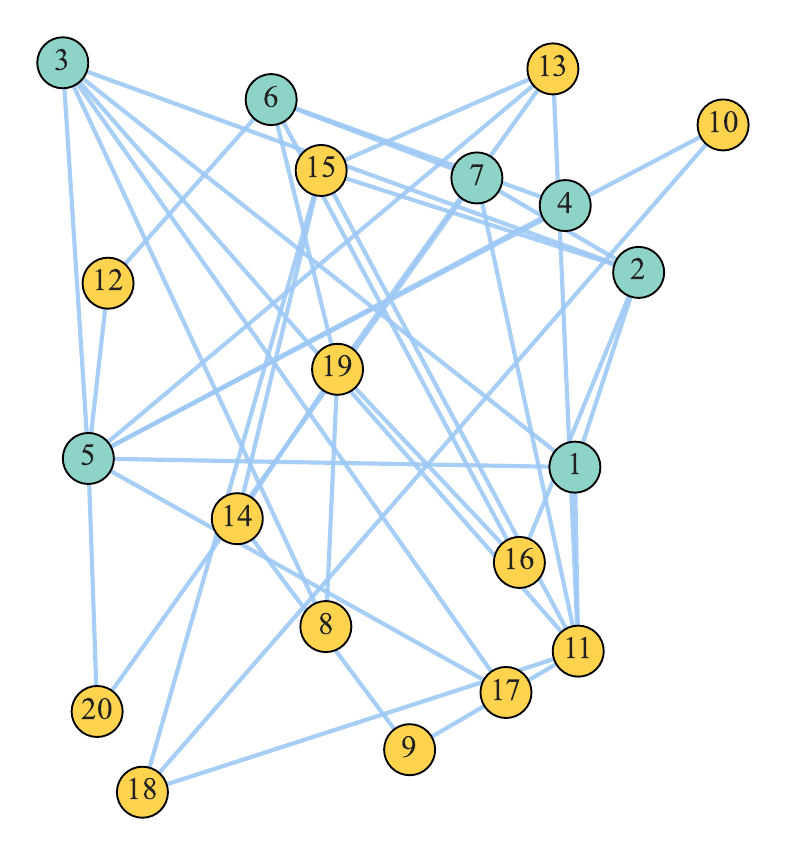}
        \caption{}\label{fig:illustration-b}
    \end{subfigure}

    \caption{
    Two drawings of the same graph sampled from $\calG(n, p, d, k)$ with $n=20$, $p=0.18$, $k=7$; we set $d=2$ for visualization purposes. 
    (a) Vertices are positioned to reflect the latent geometry: the $k=7$ community 
    vertices (circled, teal) have latent vectors drawn from $\mathcal{U}(\mathbb{S}^1)$, 
    so they lie on a circle in the latent space. The orange edges---based on geometric proximity---reveal the resulting cycle-rich structure 
    induced by the local geometry. 
    (b) Vertices are positioned randomly, and the planted community becomes 
    visually indistinguishable from the \ER background. } 
    \label{fig:illustration}
\end{figure}

We only observe the final graph $G$; neither the community $S$ nor the latent vectors $\{U_i\}$ are observed. Moreover, by construction, every edge marginally appears with probability $p$, and vertices have the same marginal neighborhood distribution regardless of whether they belong to $S$. Thus, the signal is not visible at the level of the first moment and is instead carried by the dependence structure induced by the local geometry. An illustration of a sample from $\calG(n, p, d, k)$ is provided in Figure~\ref{fig:illustration}. When $k = n$, we write the distribution $\calG(n, p, d, k)$ as $\calG(n, p, d)$; this is the traditional high-dimensional random geometric graph \cite{devroye2011rgg} studied in the literature, which we refer to as the full model.

We study when detection is possible as $n\to\infty$, allowing the parameters $p,d,k$ to depend on $n$ (and hence on each other). We use the following standard notions.

\begin{definition}
For the detection problem of $\calP$ vs.\ $\calQ$, a test statistic $f(G)$ with threshold $\gamma$ achieves
\begin{itemize}
    \item[(a)] \emph{strong detection} if
    $\Prob_{G \sim \calQ}(f(G) > \gamma) + \Prob_{G \sim \calP}(f(G) \leq \gamma) = o(1)$;
    \item[(b)] \emph{weak detection} if
    $\Prob_{G \sim \calQ}(f(G) > \gamma) + \Prob_{G \sim \calP}(f(G) \leq \gamma) = 1 - \Omega(1)$.
\end{itemize}
\end{definition}

It is well-known (by the Neyman--Pearson lemma) that the infimum of the sum of type~I and type~II errors equals $1-\dTV(\calP,\calQ)$. In particular, no test can achieve weak detection if $\dTV(\calP,\calQ)=o(1)$. Intuitively, detection becomes harder as the dimension $d$ grows, since the geometric constraints induce weaker dependencies among edges in higher dimensions. Our main goal is to quantitatively characterize how large $d$ can be (as a function of $n,p,k$) while detection remains possible.
\subsection{Our contributions}

We characterize when the detection between $\calP$ and $\calQ$ is possible, tracing out both the information-theoretic and computational limits as functions of $n, p, d, k$.
The following theorem summarizes our main results in the log-density setting \cite{bhaskara2010logdensity}; see Figure~\ref{fig:phasediagram} for the resulting phase diagrams.

\begin{theorem}[Informal]\label{thm:main}
Let
\begin{align*}
    p = \Theta(n^{-\alpha})\,, \quad
    d = \Theta(n^{\beta})\,, \quad
    k = \Theta(n^{\gamma})\,,
\end{align*}
where $0 \leq \alpha < 1$, $\beta > 0$, and $0 < \gamma \leq 1$.
\begin{enumerate}
    \item[(i)] If $\beta < 6\gamma - 3\alpha - 3$, strong detection is possible with a test statistic that is efficiently computable.
    \item[(ii)] If $\beta < 2\gamma - 3\alpha$, strong detection is possible with a test statistic that is inefficiently computable.
    \item[(iii)] If any of the following holds, weak detection is impossible:
    \begin{itemize}
        \item[$\bullet$] $\beta > 2\gamma \vee (6\gamma - 3)$;
        \item[$\bullet$] $\beta > (2\gamma - 2\alpha) \vee (4\gamma - 2\alpha - 1)$ and $\gamma > \alpha$.
    \end{itemize}
    \item[(iv)] If $\beta > 6\gamma - 3\alpha - 3$, weak detection is impossible for low-degree polynomial algorithms.
\end{enumerate}
\end{theorem}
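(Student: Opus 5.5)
This is an informal summary theorem, so the plan is to prove each of its four parts through the precise statements developed later. In every part I translate the conditions into $p,d,k,n$ with $p=n^{-\alpha}$, $d=n^\beta$, $k=n^\gamma$. The basic quantity is the signed triangle. For a triple of vertices inside $S$, the latent geometry gives
\[
\mathbb{E}[(A_{ij}-p)(A_{jk}-p)(A_{ki}-p)] \asymp p^3 \frac{\log^{3/2}(1/p)}{\sqrt d}.
\]
This is the standard estimate from the full-model literature, obtained by comparing Wishart and GOE. Under $\calQ$ the same expectation is $0$.

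\textbf{Part (i): global test.} I take the statistic $T=\sum_{i<j<l}(A_{ij}-p)(A_{jl}-p)(A_{li}-p)$.
\begin{itemize}
\item The mean under $\calP$ is about $k^3p^3/\sqrt d$, up to logarithmic factors.
\item The variance under $\calQ$ is about $n^3p^3$.
\item For the variance under $\calP$, I bound the covariances of overlapping triangles inside $S$. These are controlled by the same Gaussian comparison and are dominated by $n^3p^3 + k^4p^5/d$ plus similar terms.
\end{itemize}
Chebyshev's inequality then succeeds when $k^6p^6/d \gg n^3p^3$, that is, $d \ll k^6p^3/n^3$. This is exactly $\beta<6\gamma-3\alpha-3$.

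\textbf{Part (ii): scan test.} I maximize the signed triangle count over all vertex subsets $W$ with $|W|=k'$, where $k' \approx k/2$ (concentration of $|S|$ makes this size available). Under $\calQ$, each subset's count is a degree-3 polynomial with variance about $k^3p^3$. Its upper tail needs a Bernstein or hypercontractive-type bound, since Gaussian tails are not available. A union bound over $\binom{n}{k}\le e^{k\log n}$ subsets then requires a threshold of order $\sqrt{k^3p^3}\cdot k\log n$. Under $\calP$, taking $W\subseteq S$ gives mean $k^3p^3/\sqrt d$. Comparing the two gives $d\ll k^2 p^3$, up to logarithms, which is $\beta<2\gamma-3\alpha$. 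A constrained scan over subsets with typical degrees and codegrees tames the heavy tail for sparse $p$.

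I expect the main obstacle to be the tail bound under $\calQ$. Here I would use the moment bounds of Kim and Vu, or a decoupling argument, together with conditioning on the typical-degree event so that the constrained scan controls the polynomial's derivatives.

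\textbf{Part (iii): lower bounds.} For the first bullet, I condition on $S$ and use the Wishart--GOE comparison to define a truncated second moment. Conditioned on $|S\cap S'|=m$, the likelihood-ratio cross term is at most $\exp(O(m^3/d))$. This is the known full-model bound $\dTV(\calG(m,p,d),\calG(m,p))\to0$ for $d\gg m^3$, upgraded to $\chi^2$ on a truncation event. The overlap $m$ is $\mathrm{Bin}(k,k/n)$-like, so the second moment is
\[
\mathbb{E}_m\, e^{O(m^3/d)} = 1+o(1)
\]
when $d\gg k^2$ (small overlaps, with $m\approx k^2/n$) and $d\gg k^6/n^3$ (the typical overlap). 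Truncating to $m\le k$ handles $m^3/d$ versus the binomial tail, which is where the $\vee$ between the two rates arises.

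For the second bullet, I use tensorization of KL divergence: I reveal vertices sequentially and bound each vertex's conditional KL by the known sparse full-model estimate. Its contribution is about $p^2k^2/d$ per community pair in the regime $\gamma>\alpha$. Summing gives the conditions $d\gg k^2p^2$ and $d\gg k^4p^2/n$.

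\textbf{Part (iv): low-degree lower bound.} I compute the low-degree norm $\sum_{H}\mathbb{E}_\calP[\chi_H]^2$ over edge subsets $H$ with $|H|\le D=\mathrm{polylog}(n)$. For a connected $H$ on $v$ vertices,
\[
\mathbb{E}_\calP[\chi_H] \le (k/n)^{v}\,\bigl(\tilde O(1/\sqrt d)\bigr)^{|H|-v+1}p^{|H|/2}\cdot\mathrm{poly}.
\]
Here $\chi_H$ is normalized, and the geometric factor uses the Fourier decay of the geometric graph, which vanishes for forests. Counting the $n^v$ placements, the triangle term is $n^3(k/n)^6p^3/d$, which is $o(1)$ when $\beta>6\gamma-3\alpha-3$. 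I must then check that larger cycles and denser graphs are dominated by this term. For components with many cycles, this check relies on the Fourier bound for the geometric graph and is the delicate step.
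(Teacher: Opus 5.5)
Your part (i) matches the paper's global test. Parts (ii)--(iv), however, each break at the point where the paper needs a new ingredient.

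\textbf{Part (ii).} A union bound over $e^{k\log n}$ sets needs a threshold of order $\sqrt{k^3p^3\cdot k\log n}$. That is, you need a sub-Gaussian tail for the signed triangle count all the way down to probability $e^{-k\log n}$. The threshold you wrote, $\sqrt{k^3p^3}\,k\log n$, would only give $d\ll kp^3$.

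For small $p$, the unconstrained statistic does not have this tail. As Remark~\ref{remark:ubgeneralrole} explains, a clique-like patch of size $t^{1/3}$ exceeds the threshold $t$ with probability $e^{-\widetilde O(t^{2/3})}$. So the plain scan cannot beat roughly $d\approx k^3p^6$, which falls below $k^2p^3$ once $p=\widetilde o(k^{-1/3})$. The generic polynomial bound gives only $d\lesssim k^2p^6$. Kim--Vu-type bounds lose $\lambda^3$ in the deviation at probability $e^{-\lambda}$, which is worse at $\lambda\approx k\log n$.

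``Typical degrees and codegrees'' is the right instinct, but the proof lies in two specifics your plan omits:
\begin{itemize}
\item \emph{The choice of constraints.} The paper bounds $\sum_{i<j\in A}W_{ij}^2$ and $\max_{i<j\in A}|W_{ij}|$, where $W_{ij}=\sum_{\ell<i,\,\ell\in A}(G_{\ell i}-p)(G_{\ell j}-p)$. These are exactly the predictable quadratic variation (up to the factor $p(1-p)$) and the increment bound of the edge-exposure Doob martingale of $f_A$, so Freedman's inequality applies.
\item \emph{Feasibility of the planted set.} You must show the planted set satisfies both constraints with high probability under $\calP$. The paper does this with $2\lceil\log k\rceil$-th moments of signed wedges via Lemma~\ref{lem:rggfourier}, plus a Chebyshev bound that uses the signed $4$-cycle estimate.
\end{itemize}

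\textbf{Part (iv).} Your displayed bound decays in the cyclomatic number $|H|-v+1$. For a cycle $C_v$ it therefore only gives $n^v\Phi_\calP(C_v)^2\lesssim (k^2p/n)^v/d$. This diverges precisely in the relevant regime, since $6\gamma-3\alpha-3>0$ forces $k^2p/n\to\infty$. So under your bound, longer cycles are not dominated by the triangle. What you need is decay in the number of \emph{vertices}: for connected $H$, $|\mathbb{E}_{\calG(n,p,d)}\prod_{ij\in E(H)}(G_{ij}-p)|\le(8p)^{e(H)}(\widetilde O(1/\sqrt d))^{\lceil (v(H)-1)/2\rceil}$ (Lemma~\ref{lem:rggfourier}). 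For non-forest components, $\lceil (v-1)/2\rceil\ge v/3$ and $e\ge v$, so $n^{v}\Phi_\calP(H)^2\le 64^{e(H)}\bigl(k^6p^3\,\mathrm{polylog}(n)/(n^3d)\bigr)^{v(H)/3}$, i.e.\ triangle-like decay per vertex. Forests contribute zero, and counting isomorphism classes at $D=\lfloor(\log n/\log\log n)^2\rfloor$ finishes the argument.

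\textbf{Part (iii), first bullet.} Community edges threshold the \emph{normalized} inner products $\langle Z_i,Z_j\rangle/(\|Z_i\|\|Z_j\|)$ at $\tau(p,d)$, while all other edges threshold Gaussians. So the law of $G$ is not the image of the matrix mixture under a single map. A Wishart--GOE truncated $\chi^2$ bound cannot be transferred to graphs until, for each fixed $S$, normalized thresholding is replaced by unnormalized thresholding. That per-$S$ step must work at $d\gg k^2$, because for $d\ll k^3$ the community alone is detectable. Comparing both to GOE costs $d\gg k^3$, and entrywise coupling costs $d\gg k^4$. The paper supplies a new TV bound between off-diagonal spherical Wishart and Wishart at $d\gg k^2$ (Proposition~\ref{prop:sphwisandwis}).

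In addition, the cross term $\exp(O(m^3/d))$ is needed for overlaps with $m^3\gg d$, which no full-model TV result provides. The paper obtains it from an explicit trace expansion of the squared density ratio on a spectral truncation event. Finally, the roles of the two conditions are reversed in your sketch: the typical overlap $m\approx k^2/n$ yields $k^6/n^3$, while the tail up to $m\approx k$ yields $k^2$.

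\textbf{Part (iii), second bullet.} To show the truncation event is also typical under $\calP$, one changes measure to $\calQ$. This requires the second moment of the neighborhood likelihood ratio ($\nu\approx k$ vertices, about $kp$ edges) to be $O(1)$. The full-model cap-concentration estimates give this only when $d=\widetilde\Omega(k^3p^3)$, since the ratio can be $e^{\widetilde\Omega(kp)}$. Used as is, your plan would prove only $\beta>(3\gamma-3\alpha)\vee(4\gamma-2\alpha-1)$, which is strictly weaker than claimed when $\alpha<\gamma<1-\alpha$.

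The missing idea is a recursion for the likelihood-ratio martingale $R_t=R_{t-1}\varphi_t$. Split on $\{R_{t-1}\ge a\}$ and use one-cap concentration to control $\mathbb{E}[(\varphi_t-1)^2\mid\calF_{t-1}]$. This gives $\mathbb{E}[R_t^2]\le(1+\eta)\,\mathbb{E}[R_{t-1}^2]+O(a^2/p^2)$, and hence $\mathbb{E}[R_\nu^2]\le\exp(\widetilde O(k^2p^2/d))$.
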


\begin{figure}[!t]
    \centering
    \includegraphics[width=0.9\textwidth]{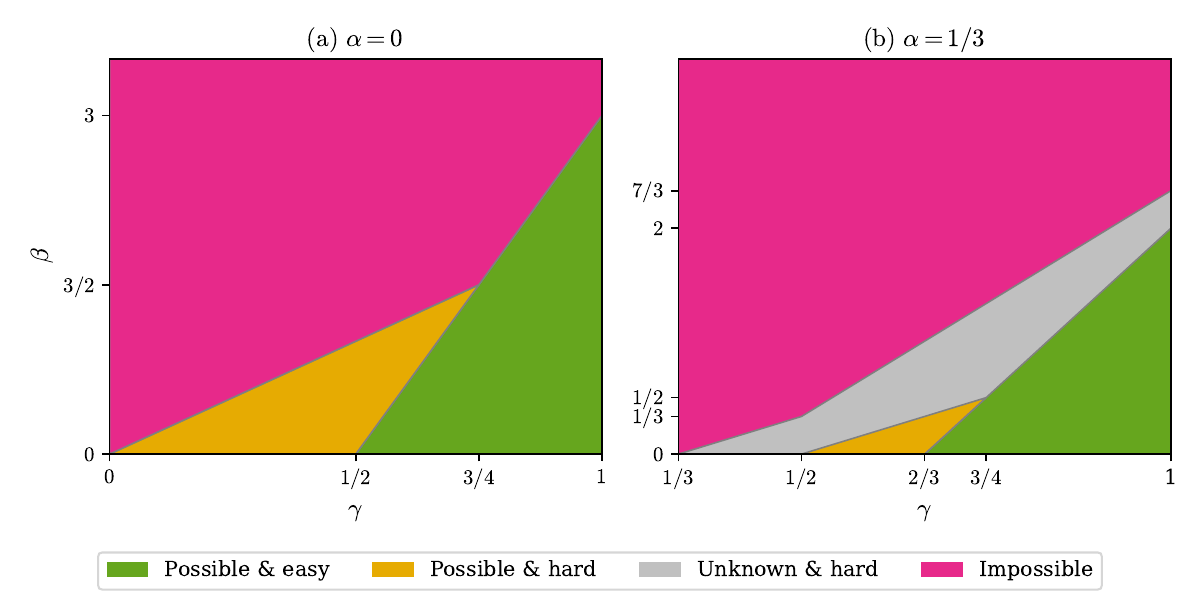}
    \caption{Phase diagram for detection, for (a) $\alpha = 0$ and  (b) $\alpha = 1/3$. Note that the two plots are in different scales. In the possible \& easy phase (green), strong detection can be done by an efficient test statistic. In the possible \& hard phase (yellow), strong detection can be done by an inefficient test statistic, and weak detection is impossible for low-degree polynomial algorithms. In the unknown \& hard phase (grey), it is open whether strong detection is possible, but weak detection is impossible for low-degree polynomial algorithms. Finally, in the impossible phase (magenta), weak detection is impossible. The impossible phases extend to all values of $\beta > 0$ beyond those presented in the plots. } 
    \label{fig:phasediagram}
\end{figure}

\paragraph{Information-theoretic limits.}
Parts (i)--(iii) of Theorem~\ref{thm:main} together characterize the information-theoretic threshold for detection. On the upper bound side, we propose three tests based on \emph{signed triangle counts}: a global test (counting over the entire graph), a scan test (taking the maximum signed triangle count over all subsets of size $\approx k$), and a constrained scan test (further restricting to subsets with controlled wedge sums). The scan test covers a complementary regime to the global test---which together suffice for fixed $p$---and the constrained scan test strictly extends the parameter regime of the scan test when $p = \widetilde{o}(1)$. For details, see Theorems~\ref{thm:ubsignedtrianglecount}, \ref{thm:ubscandense} and \ref{thm:ubscangeneral}.
On the lower bound side, we develop two complementary approaches: truncated second moment, which captures the dependence on the average community size $k$; and tensorization of KL divergence, which captures the dependence on the edge density $p$. For details, see Theorems~\ref{thm:lbdense} and \ref{thm:lbgeneral}.

In the dense case ($\alpha = 0$), combining those upper and lower bounds settles the detection threshold sharply at $d = \widetilde{\Theta}(k^2 \vee k^6/n^3)$; see Figure~\ref{fig:phasediagram}(a). 
For $0 < \alpha < 1$, our results generalize the state-of-the-art bounds for the full model $\mathcal{G}(n,p,d)$ (the special case $k = n$,  i.e., $\gamma = 1$), recovering the upper bound $d = \widetilde{o}(n^3p^3)$ and lower bound $d = \widetilde{\Omega}(n^3p^2)$ of~\cite{liu2022rgg} and extending them to our planted setting for all $k = \Theta(n^\gamma)$ with $\gamma \in (\alpha,1)$; see Figure~\ref{fig:phasediagram}(b). 
Our results leave a gap (the unknown \& hard region in Figure~\ref{fig:phasediagram}(b)) between upper and lower bounds when $0 < \alpha < 1$; we conjecture that the upper bounds are tight and that this region is in fact in the impossible phase.

\paragraph{Computational limits.}
Part~(iv) of Theorem~\ref{thm:main} provides evidence that the regime beyond part (i) is computationally hard, based on the \emph{low-degree 
polynomial framework}~\cite{hopkins2018statistical, kunisky2019notes, 
wein2025computational} (see Sections~\ref{sec:priorwork} and \ref{subsec:maincomp} for background); note that among our proposed tests, only the global signed triangle count runs in polynomial time.
Specifically, we show that no polynomial of degree at most 
$\lfloor (\log n / \log (\log n))^2 \rfloor $ achieves weak separation whenever $\beta > 6\gamma - 
3\alpha - 3$, matching the threshold of the global test; see Theorem~\ref{thm:lowdeglb}.  We further show 
that no signed cycle count of length $\ell \geq 4$ improves upon the triangle count, 
providing additional evidence that the global signed triangle count may be the asymptotically 
optimal efficient test among all signed cycle counts; see Proposition~\ref{prop:signedcycle}.

\paragraph{Computational--statistical gap.} Parts~(i) and~(ii), together with (iv) identify a regime of $6\gamma - 
3\alpha - 3 < \beta < 2\gamma - 3\alpha$ (the yellow regions in 
Figure~\ref{fig:phasediagram}), where detection is 
information-theoretically possible yet no efficient algorithm is known.
Interestingly, for the full model $\mathcal{G}(n, 
p, d)$ (i.e., $k = n$ without locality), a computational--statistical gap is not 
known and conjecturally does not exist~\cite{liu2022rgg}. 
The gap here thus appears 
to be a consequence of the local nature of the planted geometric structure: the hidden community may be placed at exponentially many possible locations, introducing a combinatorial search barrier for efficient algorithms.

\subsection{Related literature}\label{sec:priorwork}
We review several lines of research that are closely related to our work.

\paragraph*{High-dimensional random geometric graphs.} A classical line of work studied random geometric graphs for fixed dimension $d$; see \cite{penrosebook} for an overview. The study of random graphs with high-dimensional (i.e., $d \to \infty$ as $n \to \infty$) geometry was initiated by \cite{devroye2011rgg}, where the authors studied the clique number of $\calG(n, p, d)$ and showed that for $d \geq \exp(\widetilde{\Omega}(n^2))$, the graph is indistinguishable from $\calG(n, p)$. 
Ever since, there has been a growing line of work on random geometric graphs in high dimensions, investigating a variety of algorithmic and statistical phenomena.

For the detection between $\calG(n, p, d)$ and $\calG(n, p)$, the breakthrough work of \cite{bubeck2016rgg} settled the threshold of $d = \Theta(n^3)$ for fixed $p$. That paper also introduced a test that counts the number of signed triangles (see Section~\ref{sec:overview} for details), which attains the best known upper bound of $d = \widetilde{o}(n^3p^3)$ for general $p$ \cite{liu2022rgg}. Several works since then have improved lower bounds for the regime of $p = o(1)$. Namely, \cite{brennan2020rgg} showed a lower bound of $d = \widetilde{\Omega}(n^3p \vee n^{7/2}p^2 \vee n)$ for $p = \widetilde{\Omega}(1/n^2)$. This was later improved in \cite{liu2022rgg}, which is the state-of-the-art result: $d = \widetilde{\Omega}(n^3p^2)$ for $p = \Omega(1/n)$, 
and $d = \Omega((\log n)^{36})$ for $p = \Theta(1/n)$. Notably, it is an open problem to reduce the gap of polynomial factor $p$ between the upper and lower bounds.

Besides detection, $\calG(n, p, d)$ have received increasing attention in recent years, in terms of its spectral property \cite{liu2023expansion, abdalla2024synch, cao2025spectra}, low-degree moments \cite{bangachev2024fourier}, coupling with $\calG(n, p)$ \cite{liu2022rgg, bangachev2025sandwiching}, latent estimation \cite{mao2024latent}, and rare events \cite{deka2025rare}, to name a few. Furthermore, there has been active research on models that are different from but closely related to $\calG(n, p, d)$, where the difference lies in various factors such as edge connection rule \cite{liu2023noisyrgg, liuracz, mao2026rggsmooth}, metric of the latent geometry \cite{bangachev2024geometry, baguley2025toroidal}, isotropy \cite{eldan2020anisotropy, brennan2024anisotropy}, homogeneity \cite{bringmann2019geometricinhomogeneous}, cluster structure \cite{li2024spectralclusteringgaussianmixture}, and combinations thereof.
For further discussion on high-dimensional random geometric graphs, we refer readers to a recent survey \cite{rggsurvey}.

\paragraph*{Planted subgraphs.} Our model can be viewed as a particular case within a class of random graphs known as planted subgraphs. 
In general, these can be generated by first drawing a background \ER graph $G \sim \calG(n,p)$ and independently a subgraph $H$ over the complete graph from another distribution. For vertex set $S$ of $H$, the final graph is then obtained by either replacing the induced subgraph $G[S]$ with $H$, or taking the union $G \cup H$.
Planted subgraphs can be considered as graph (binary) versions of spiked random matrices \cite{johnstone2001spiked, baik2005spiked}, which are fundamental objects in probability theory and statistics. Within the vast landscape of random graph models with community structures, many planted subgraphs can be characterized with the existence of a single community. The literature on models with multiple communities (e.g., the stochastic block model \cite{holland1983sbm}) is extensive and merits a separate discussion; here we focus on the single community case.

An iconic example of planted subgraphs is the planted clique \cite{jerrum1992plantedclique, kucera1995plantedclique}, where a clique (i.e., a complete graph) of small size is hidden within a graph from $\calG(n, 1/2)$. This simple model is well-known for exhibiting a computational--statistical gap, with vast connections across theoretical computer science \cite{juels2000plantedcliquecrypto, hazan2011plantedcliquegametheory} and high-dimensional statistics \cite{berthet2013plantedcliquereduction, brennan2020plantedcliquereduction}. The literature has since been expanding, with various choices for the planted subgraph such as a dense subgraph \cite{pdsdense, hajek2015computationalcommunity, pdssparse}, a tree \cite{massoulie2019plantedtrees}, a cycle \cite{bagaria2020hamiltoniancycle, gaudio2025plantedcycles}, a matching \cite{moharrami2021plantedmatching, ding2023plantedmatching, wee2025cluster, addario2026statistical}, to name a few. Besides subgraph-specific results, a recent line of work \cite{huleihel2022hidden, elimelech2025arbitrary,  lee2025plantedsubgraph, mossel2025inference, yu2025countingstars} aims to provide a unified theory for general subgraphs.

\paragraph*{Signals beyond mean.} A common feature in planted subgraphs is that the signal exhibits at the level of mean. For example, in planted dense subgraph \cite{pdsdense} the edge density is higher on average within the community than the rest of the graph. In other words, the signal already exists at the lowest possible level (the first moment), and often there is no need to consider any interaction (e.g., higher-order moments) between the inputs. As a result, it is often the case that if properly done, thresholding the mean suffices.
This is not the case for our model, as each vertex marginally has the same distribution.

A few papers have studied settings where the hidden signals are not observed at the mean level. For example, \cite{arias-castro2012correlation, arias-castro2015correlation, arias-castro2018markov} studied detection problems where among samples of mean 0, only a small unknown subset has dependence within, such as positive correlation or Markovian structure. Another notable recent paper \cite{kunisky2025directed} extensively analyzed a new directed random graph model, where a small unknown subset of vertices have latent ranking. Between the ranked vertices in that model, a directed edge from a vertex of higher rank to a vertex of lower rank is more likely to be added, compared to the other direction. As the overall edge density is the same over the whole graph, there is no signal at the mean level; in particular, the model is equivalent to $\calG(n, p)$ if the direction is ignored. As a result, the detection is done by considering the unusual consistency of \emph{pairwise} orderings, rather than the edge density.\footnote{We mention that despite the apparent differences between the models, interestingly, our analysis shares certain key technical results with theirs (e.g., Lemma~\ref{lem:hypergeomcubemgf}).}

\paragraph*{Random graphs with geometry-based communities.}

In recent years, various random graph models with both latent geometry and community structures have been studied. Often the community structures in those models are in the style of the stochastic block model, consisting of multiple communities: examples include the geometric block model \cite{galhotra2018geometricblock, galhotra2023geometricblock}, the geometric SBM \cite{abbe2021community, gaudio2024gsbm, gaudio2025exact}, the geometric hidden community model \cite{gaudio2024exact, gaudio2026exactrecoverygeometrichidden}, and different variants thereof \cite{peche2020community, avrachenkov2021geometric, avrachenkov2024community}. Our model differs from those in that it has a single hidden community rather than multiple. One notable exception is the planted dense cycle \cite{mao2023planteddensecycle, mao2025planteddensecycle}, a latent-based model where signed triangle count is also used for detection; however, in terms of the modeling components this model is also fundamentally distinct from ours.

We highlight two models in the literature that share certain common features with our model. In the model introduced in \cite{bet2020botnet}, a small community made of an \ER graph is hidden in a larger random geometric graph. %
Hence, at a conceptual level our model can be viewed as an inverted version of theirs, where the roles of community and non-community are flipped. In another model introduced in \cite{bet2025localized}, edges within a hidden community are affected by its latent geometry, whereas edges outside are formed independently.
Thus, in principle we combine the community structure and the latent geometry in the same way. Despite those similarities, the mathematical details of those models are quite different from ours and hence our results cannot be directly compared to theirs. Moreover, we quantitatively characterize how high dimensionality in the geometry affects the graph, whereas the results in both of those works are either independent of $d$ or for fixed $d$.

\paragraph*{Low-degree polynomial framework.}

Many high-dimensional statistical models exhibit a phenomenon known as computational--statistical gap, where in certain parameter regimes a task is information-theoretically feasible but appears to lack any polynomial-time algorithm. A prominent approach for analyzing this phenomenon is the low-degree polynomial framework~\cite{hopkins2018statistical}, which studies algorithms expressible as low-degree polynomials of the input. For input dimension $n$, one considers polynomials of degree at most $D$; the guiding heuristic is that $D = O(\log n)$ often matches the power of polynomial-time algorithms for many average-case problems. This heuristic is supported by many examples, as polynomials of degree $O(\log n)$ can implement or approximate various efficient algorithms including spectral methods, subgraph-counting procedures, and approximate message passing (see~\cite[Section~6.2]{wein2025computational}). Accordingly, hardness at degree $D = \omega(\log n)$ is widely regarded as evidence of computational hardness beyond polynomial time.

For hypothesis testing, the low-degree polynomial framework examines whether low-degree polynomials can achieve \emph{separation} between the null and the alternative distributions (see Definition~\ref{def:strongsep}). In practice, this is often analyzed via the low-degree likelihood ratio, i.e., the norm of the projection of the likelihood ratio onto the space of polynomials of degree at most $D$; see, e.g., \cite{kunisky2019notes} for details. Understanding the rigorous implications of this criterion has recently attracted significant attention and is an active area of research \cite{holmgren2021lowdeg, buhai2025quasi, hsieh2026rigorous, jia2026lowdegree}; we refer to a recent survey~\cite{wein2025computational} for a further discussion.

\subsection{Notations}

We denote $[n] := \{1, \dots, n\}$, and $\binom{[n]}{k}$ to be the set of all size-$k$ subsets of $[n]$. For graph $H$, $V(H)$ and $E(H)$ respectively denote the set of its vertices and edges; $v(H)$ and $e(H)$ denote their respective cardinalities. $K_n$ denotes the complete graph on $[n]$. We use $\mathbb{S}^{d-1}$ to denote the unit sphere in $\reals^d$, and $\calU(\mathbb{S}^{d-1})$ to denote the uniform distribution over $\mathbb{S}^{d-1}$ (i.e., the Haar measure); throughout, $U_i \overset{\mathrm{i.i.d.}}{\sim} \calU(\mathbb{S}^{d-1})$ for $i \in [n]$. For symmetric matrix $A$, $\norm{A}_{\op}$ denotes its operator norm. All logarithms are with base $\exp(1)$, and $\log^a b$ denotes $(\log b)^a$.

For asymptotics, we always assume $n \to \infty$ with $k = k(n) \to \infty$, and $d$ to be sufficiently large. We use standard big-$O$ notation, where for any $a_n, b_n$, $a_n = O(b_n)$ and $a_n \lesssim b_n$ denote $a_n \leq Cb_n$ for some absolute constant $C > 0$; $a_n = \Omega(b_n)$ and $a_n \gtrsim b_n$ denote $b_n = O(a_n)$; $a_n = \Theta(b_n)$ denotes $a_n = O(b_n)$ and $a_n = \Omega(b_n)$. Also, $a_n = o(b_n)$ and $a_n \ll b_n$ denote $\lim_{n \to \infty} (a_n / b_n) = 0$; $a_n = \omega(b_n)$ and $a_n \gg b_n$ denote $b_n = o(a_n)$. For each of those we use $\widetilde{O}, \widetilde{\Omega}, \widetilde{\Theta}, \widetilde{o}, \widetilde{\omega}$ to hide $\mathrm{polylog}(n)$ factors.

\section{Main results}
In this section, we present the information-theoretic upper and lower bounds, and the computational lower bound for the detection problem. Throughout this section,  we assume $p \leq 1/2$; for any fixed $p \in (1/2, 1)$, it can be readily deduced (\eg, following \cite[Lemmas 3 \& 4]{bubeck2016rgg}) that our detection threshold for fixed $p \in (0, 1/2]$ extends.

\subsection{Information-theoretic upper bound}\label{subsec:mainub}
We present three different tests for the detection between $\calP$ and $\calQ$.
First, we consider the \emph{global test} for counting signed triangles, whose test statistic is defined as
\begin{equation}\label{eq:signedtristatistic}
    f_{\mathrm{tri}}(G) := \sum_{i < j < \ell \in [n]}(G_{ij}-p)(G_{j\ell}-p)(G_{i\ell}-p)\,.
\end{equation}
This is a natural candidate in that it achieves the best known performance for the detection between $\calG(n, p)$ and the full model $\calG(n, p, d)$ \cite{bubeck2016rgg, liu2022rgg}. %
\begin{theorem}[Detection via global test]\label{thm:ubsignedtrianglecount} 
There exists a constant $C_{\ref{thm:ubsignedtrianglecount}} > 0$ such that if %
\[\frac{1}{k} \leq  p \leq \frac{1}{2} \quad \text{and} \quad C_{\ref{thm:ubsignedtrianglecount}} \vee (5\log(1/p))^4 \leq  d \ll \frac{k^6p^3}{n^3}\log^3(1/p)\,,
\]
the testing error satisfies
\begin{align*}
    \bbP_{G \sim \calQ}\left(f_{\mathrm{tri}}(G) > \gamma_{\mathrm{tri}} \right) + \bbP_{G \sim \calP}\left(f_{\mathrm{tri}}(G) \leq \gamma_{\mathrm{tri}} \right) = o(1)\,, 
\end{align*}
where the threshold is chosen as\footnote{The constant factor of $1/2$ in front of $\gamma_{\tri}$ is arbitrary and can be replaced with any fixed constant in $(0, 1)$. This also applies to the thresholds in the scan test and the constrained scan test. The equality between expressions in $\E_{G \sim \calP}$ and $\E_{G \sim \calG(n, p, d)}$ follows from Lemma~\ref{lem:fouriersimple}.} 
\[\gamma_{\mathrm{tri}} :=  \frac{1}{2}\E_{G \sim \calP}[f_{\mathrm{tri}}(G)] = \frac{1}{2}\binom{n}{3}\left(\frac{k}{n}\right)^3\E_{G \sim \calG(n, p, d)}[(G_{12}-p)(G_{23}-p)(G_{13}-p)]\,.
\]
\end{theorem}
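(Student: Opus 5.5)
The plan is a standard Chebyshev argument: compute the means of $f_{\tri}$ under $\calQ$ and $\calP$, bound both variances, and show that the gap between the means exceeds the standard deviations.

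\textbf{Means.} Under $\calQ$ the edges are independent and centered, so $\E_{G\sim\calQ}[f_{\tri}(G)]=0$. Under $\calP$, a triangle $\{i,j,\ell\}$ contributes a nonzero expectation only if all three vertices lie in $S$. If some vertex is outside $S$, at least one of its two triangle edges is independent $\Bern(p)$ and independent of the rest, so that term has mean zero. This gives $\E_{G\sim\calP}[f_{\tri}]=\binom{n}{3}(k/n)^3\,\E_{\calG(n,p,d)}[\text{signed triangle}]$. For the signed triangle I will invoke the known estimate from \cite{bubeck2016rgg,liu2022rgg}: for $d\ge (5\log(1/p))^4\vee C$,
\[
\E_{\calG(n,p,d)}[(G_{12}-p)(G_{23}-p)(G_{13}-p)] \gtrsim \frac{p^3\log^{3/2}(1/p)}{\sqrt d}\,.
\]
Hence $\mu:=\E_{\calP}[f_{\tri}]\gtrsim k^3p^3\log^{3/2}(1/p)/\sqrt d$.

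\textbf{Variances.} Under $\calQ$, distinct signed triangles are orthogonal, so $\Var_{\calQ}(f_{\tri})=\binom n3 p^3(1-p)^3\lesssim n^3p^3$.

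Under $\calP$, I expand the variance as a sum of covariances over pairs of triangles $(T,T')$:
\begin{itemize}
\item Pairs sharing no edge: by the same independence argument, both triangles must lie in $S$. Conditioned on $S$, the covariance of two triangles sharing at most one vertex is zero. For vertex-disjoint triangles this is immediate. For triangles sharing exactly one vertex, it follows from rotational invariance: conditioning on $U_v$ leaves the two triangles independent, each with the same conditional mean. So these pairs contribute nothing.
\item Pairs sharing exactly one edge: $4$ vertices, and the covariance is at most the expectation of the product, a $4$-vertex "diamond" signed count. I will bound it by $\lesssim p^5\cdot\mathrm{polylog}/d$ when all four vertices are in $S$, and by $p\cdot(\text{triangle mean})^2$-type quantities otherwise; bounds of this kind should follow from the Fourier/moment estimates in the paper (Lemma~\ref{lem:fouriersimple}) or those of \cite{liu2022rgg}.
\item Identical pairs: contribute $\lesssim n^3p^3$.
\end{itemize}
Collecting terms, I expect
\[
\Var_{\calP}(f_{\tri})\lesssim n^3p^3 + \frac{k^4p^5\log^2(1/p)}{d} + \frac{n k^3 p^5 \cdots}{\cdots}\,,
\]
where the middle terms are of lower order once $\mu^2\gg n^3p^3$. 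The condition $p\ge 1/k$ guarantees that $k^4p^5/d\ll \mu^2\asymp k^6p^6/d$, since $k^2p\ge k\gg1$.

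\textbf{Conclusion.} The condition $d\ll k^6p^3\log^3(1/p)/n^3$ is exactly $\mu^2\gg n^3p^3$. Chebyshev then gives type I error $\le 4\Var_{\calQ}/\mu^2=o(1)$ and type II error $\le 4\Var_{\calP}/\mu^2=o(1)$ at threshold $\mu/2$.

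\textbf{Main obstacle.} The delicate step is the covariance of edge-sharing triangle pairs under $\calP$. The $4$-vertex signed diamond has expectation involving dependent inner products near the threshold $\tau(p,d)$, and it must be bounded sharply in $p$, with a factor $p^5$ and not merely $p^4$, so that the sparse regime $p\ge 1/k$ goes through. I would first try to reuse the corresponding bound from the full-model analysis of \cite{liu2022rgg}, conditioning on the shared edge's endpoints. Failing that, I would settle for a crude bound $\lesssim p^4/d$, which still suffices provided $k^2p^2\gg 1$.
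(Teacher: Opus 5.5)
There is a genuine gap exactly at the step you flag as the main obstacle. Your overall architecture matches the paper: Chebyshev at threshold $\mu/2$, the mean via independence of the edges leaving $S$, and the type~I bound. But you never bound the covariance of two triangles sharing an edge, and none of the sources you point to supplies it under the theorem's hypotheses.

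Write $T_{ij\ell}:=(G_{ij}-p)(G_{j\ell}-p)(G_{i\ell}-p)$. The sources you cite fall short as follows.
\begin{itemize}
\item Lemma~\ref{lem:fouriersimple} only transfers $\E_{\calP}[T_{123}T_{124}]$ to $\calG(n,p,d)$ with a factor $(k/n)^4$. It gives no quantitative estimate.
\item Lemma~\ref{lem:rggfourier} needs $d\ge n^{\delta}$ and $p\ge n^{-1+\delta}$. These are not assumed; the theorem allows, e.g., polylogarithmic $d$.
\item The corresponding step of \cite{liu2022rgg} conditions on $G_{12}$. Under that conditioning, $(G_{13}-p)(G_{23}-p)$ and $(G_{14}-p)(G_{24}-p)$ are \emph{not} conditionally independent, since both depend on $U_1,U_2$.
\end{itemize}

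The missing idea is to condition on the latent vectors $(U_1,U_2)$, equivalently on $\iprod{U_1}{U_2}$. The two wedges are then i.i.d.\ with conditional mean $m(\iprod{U_1}{U_2})$. Using $(G_{12}-p)^2\le 1$ gives $\E_{\calG(n,p,d)}[T_{123}T_{124}]\le\E[m(\iprod{U_1}{U_2})^2]$, which equals the expected signed $4$-cycle $\E_{\calG(n,p,d)}[(G_{13}-p)(G_{23}-p)(G_{14}-p)(G_{24}-p)]$.

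You then still need the estimate $O(p^4\log^2(1/p)/d)$ for this signed $4$-cycle. The paper gets it by expanding $\bm{1}\{\iprod{U_i}{U_j}\ge\tau(p,d)\}$ in Gegenbauer polynomials and spherical harmonics. This turns the signed cycle into $\sum_{m\ge1}c_m^4/N_m$, and every coefficient $c_m$ is then controlled (Proposition~\ref{prop:signedcyclecount}). This is where the hypothesis $d\ge C\vee(5\log(1/p))^4$ is really used. Without such an estimate, even your fallback ``crude $p^4/d$'' bound has no justification.

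On the other hand, no $p^5$ sharpening is needed. The edge-sharing contribution is $12\binom{n}{4}(k/n)^4\cdot O(p^4\log^2(1/p)/d)=O(k^4p^4\log^2(1/p)/d)$. Since $\mu^2\gtrsim k^6p^6\log^3(1/p)/d$, this is $o(\mu^2)$ as soon as $k^2p^2\log(1/p)\to\infty$, which follows from $p\ge 1/k$. Your stated condition $k^2p^2\gg 1$ fails at $p=1/k$; it is the $\log(1/p)$ factors you dropped from $\mu^2$ that rescue this case.

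There is also a second, smaller error: pairs of triangles sharing exactly one vertex do not contribute nothing. Conditioning on $S$ kills their conditional covariance, but because $S$ is random, $\Var_{\calP}$ also contains a between-$S$ term. Set $\E_{\calG(n,p,d)}[T_{123}]=:p^3\eps$. By your own rotational-invariance argument, $\E_{\calP}[T_{123}T_{145}]=(k/n)^5p^6\eps^2$, while $\E_{\calP}[T_{123}]\,\E_{\calP}[T_{145}]=(k/n)^6p^6\eps^2$. Hence $\mathrm{Cov}_{\calP}(T_{123},T_{145})=(k/n)^5(1-k/n)p^6\eps^2\neq 0$.

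Summed over the $30\binom{n}{5}$ ordered such pairs, this is $O(k^5p^6\eps^2)$, a factor $1/k$ below $\mu^2\asymp k^6p^6\eps^2$. So it is harmless, and the paper keeps and dismisses it exactly this way, but it has to appear in your accounting. Vertex-disjoint pairs, by contrast, do have zero covariance, since memberships are independent across vertices.
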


In the global test, we essentially compare the signed triangle count within the community with the fluctuation of the signed triangle count over the entire graph. This fluctuation can be quite large, in particular when $n$ is much larger than $k$. It is thus natural to consider the \emph{scan test}, where we instead consider the individual fluctuations of subgraphs of similar size. In particular, let
\begin{equation}\label{eq:scanstatistic}
    f_{\mathrm{scan}}(G) := \max_{\substack{A \subseteq [n],  |A| = k^-}}  \sum_{i < j < \ell \in A} (G_{ij}-p)(G_{j\ell}-p)(G_{i\ell}-p) \,,
\end{equation}
where $k^- :=  \lfloor 0.9k \rfloor $. We show that this succeeds for $d = \widetilde{O}(k^2p^6)$, a threshold with only logarithmic dependence on $n$. 
\begin{theorem}[Detection via scan test]\label{thm:ubscandense} There exists a constant $C_{\ref{thm:ubscandense}} > 0$ such that if 
\[
p \leq \frac{1}{2}, \quad k \geq C_{\ref{thm:ubscandense}}\log^2n \quad \text{and} \quad C_{\ref{thm:ubscandense}} \vee (5\log(1/p))^4 \leq d \leq \frac{k^2p^6\log^6(1/p)}{C_{\ref{thm:ubscandense}}\log n}\,,
\]
the testing error satisfies
\begin{align}
    \bbP_{G \sim \calQ}\left(f_{\mathrm{scan}}(G) > \gamma_{\mathrm{scan}} \right) + \bbP_{G \sim \calP}\left(f_{\mathrm{scan}}(G) \leq \gamma_{\mathrm{scan}} \right) = o(1)\,,  \label{eq:scan_testing}
\end{align}
where the threshold is chosen as  
\begin{align}
    \gamma_{\mathrm{scan}} := \frac{1}{2}\binom{k^-}{3}\E_{G \sim \calG(n, p, d)}[(G_{12}-p)(G_{23}-p)(G_{13}-p)]\,.  \label{eq:scan_gamma}
\end{align} 
\end{theorem}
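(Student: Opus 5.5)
We control the two error terms separately, writing $\mu := \E_{G \sim \calG(n,p,d)}[(G_{12}-p)(G_{23}-p)(G_{13}-p)]$. We will use the known signed-triangle estimate (as in \cite{bubeck2016rgg, liu2022rgg}) $\mu \gtrsim p^3\log^{3/2}(1/p)/\sqrt d$, valid for $d \ge (5\log(1/p))^4$, so that $\gamma_{\scan} \asymp k^3 p^3\log^{3/2}(1/p)/\sqrt d$.

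\textbf{Type II error.} Under $\calP$ we have $|S| \sim \Binom(n,k/n)$. Since $k \ge C\log^2 n$, a Chernoff bound gives $|S| \ge k^-$ with probability $1-o(1)$. We then choose $A \subseteq S$ uniformly at random among subsets of size $k^-$, independently of everything else. The induced graph $G[A]$ is distributed as $\calG(k^-,p,d)$, so $f_{\scan}(G) \ge T(A)$, where $T(A)$ is the signed triangle count of a full geometric graph on $k^-$ vertices. We have $\E[T(A)] = \binom{k^-}{3}\mu = 2\gamma_{\scan}$. For the variance we reuse the full-model computation of \cite{bubeck2016rgg, liu2022rgg}: $\Var(T) \lesssim k^3p^3 + k^4 p^4(\cdots)/d$. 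Chebyshev then gives $T(A) > \gamma_{\scan}$ with high probability once $k^6\mu^2 \gg k^3p^3 + k^4(\cdot)$. This reduces to $d \ll k^3p^3\log^3(1/p)$, which is implied by the assumed $d \le k^2p^6\log^6(1/p)/(C\log n)$.

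\textbf{Type I error.} This is the main obstacle. Under $\calQ$ we need a union bound over $\binom{n}{k^-} \le \exp(k\log n)$ sets. It therefore suffices to show, for a fixed $A$ of size $m = k^-$, that
\[
\Prob\bigl(T_A > \gamma_{\scan}\bigr) \le \exp(-2k\log n),
\]
where $T_A = \sum_{i<j<\ell \in A} X_{ij}X_{j\ell}X_{i\ell}$ with $X_{ij} = G_{ij}-p$ independent and centered. $T_A$ is a degree-3 homogeneous chaos in $\binom{m}{2}$ independent bounded variables, so only a polynomial tail is available: Gaussian tails fail for deviations of order $k^3p^3/\sqrt d$, which exceeds the standard deviation $k^{3/2}p^{3/2}$ by a large factor.

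We plan to use a decoupling/martingale argument. Reveal the vertices of $A$ one at a time. The increment at vertex $\ell$ is $\sum_{i<j<\ell} X_{i\ell}X_{j\ell}X_{ij}$, a quadratic form in the fresh edges $X_{\cdot \ell}$ with coefficient matrix $(X_{ij})_{i,j<\ell}$. Conditionally, we bound each increment by Hanson--Wright, which requires the high-probability event $\norm{(X_{ij})}_{\op} \lesssim \sqrt{kp}$ (for $p \ge \log k/k$), and its failure probability must itself be $\le e^{-2k\log n}$. Equivalently, we would split
\[
T_A = \tfrac{1}{3}\sum_\ell X_\ell^\top M X_\ell
\]
and use a Bernstein-type moment bound for degree-3 chaoses (e.g.\ Lata{\l}a's moment bounds or a Kim--Vu style inequality) at moment order $q \asymp k\log n$. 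The dominant terms are then
\[
\sqrt{q}\,k^{3/2}p^{3/2} \quad\text{and}\quad q^{3/2}\cdot(\text{sup-norm terms}) \approx (k\log n)^{3/2}.
\]
Requiring these to be at most $k^3p^3\log^{3/2}(1/p)/\sqrt d$ gives the condition $d \lesssim k^3p^6\log^3(1/p)/(k\log n)$, up to the precise log powers. This matches the stated threshold $k^2p^6\log^6(1/p)/\log n$, and checking that the chaos moment bound yields exactly this is the crux of the argument.

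Finally, we combine the two estimates. The union bound makes the Type I error $\exp(k\log n - 2k\log n) = o(1)$, and the Type II error is $o(1)$ by Chebyshev.
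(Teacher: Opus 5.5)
Your overall architecture matches the paper's. Under $\calQ$ you take a union bound over the $\binom{n}{k^-}\le e^{k\log n}$ candidate sets. Under $\calP$ you use a Chernoff bound on $|S|$ and then Chebyshev on a size-$k^-$ subset of the community, which needs $d\ll k^3p^3\log^3(1/p)$. For that variance, do not simply quote \cite{liu2022rgg}: the paper points out that its treatment of the $\E[T_{123}T_{124}]$ term is flawed, and instead bounds it by the signed $4$-cycle expectation $O(p^4\log^2(1/p)/d)$ from Proposition~\ref{prop:signedcyclecount}.

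The genuine gap is the Type~I tail bound $\Prob(T_A>\gamma_{\scan})\le e^{-2k\log n}$. This is the real content of the theorem, you explicitly leave it unchecked, and the routes you suggest do not deliver it:
\begin{itemize}
\item Lata{\l}a's moment bounds are for Gaussian chaoses. Kim--Vu requires nonnegative coefficients, whereas the signed triangle count has mixed signs in the $G_{ij}$. Nothing you cite gives a Gaussian term with the true variance $k^3p^3$ at moment order $q\asymp k\log n$ for sparse Bernoulli entries.
\item The Hanson--Wright route fails as stated. The event $\|(X_{ij})\|_{\op}\lesssim\sqrt{kp}$ cannot have failure probability $e^{-2k\log n}$. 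A fixed set of $r\asymp\sqrt{kp}$ vertices forms a clique with probability $p^{\binom r2}=e^{-O(kp\log(1/p))}\gg e^{-2k\log n}$. Such a clique forces $\|(X_{ij})\|_{\op}\ge(r-1)(1-p)$ (test against the normalized indicator of the clique).
\item With an $O(1)$ coefficient on your $q^{3/2}$ term you only get $d\lesssim k^3p^6\log^3(1/p)/\log^3 n$. This falls short of the stated range $d\le k^2p^6\log^6(1/p)/(C\log n)$ by a factor of order $\log^3(1/p)$ when $k\asymp\log^2 n$.
\end{itemize}

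The missing ingredient is a concentration inequality for polynomials of independent subgaussian variables (Adamczak--Wolff), applied with $\|G_{ij}-p\|_{\psi_2}\asymp\log^{-1/2}(1/p)$. This is Lemma~\ref{lem:ertriconc}, which for $|A|=m$ gives
\[\Prob(|T_A|\ge t)\le 2\exp\Bigl(-c\Bigl(\frac{t^2\log^3(1/p)}{m^3}\wedge\frac{t\log^{3/2}(1/p)}{\sqrt m}\wedge t^{2/3}\log(1/p)\Bigr)\Bigr).\]
Take $t=\gamma_{\scan}\gtrsim k^3p^3\log^{3/2}(1/p)/\sqrt d$ and $m=k^-$. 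The three exponents are then of order $k^3p^6\log^6(1/p)/d$, $k^{5/2}p^3\log^3(1/p)/\sqrt d$ and $k^2p^2\log^2(1/p)/d^{1/3}$.
\begin{itemize}
\item The first exceeds $2k\log n$ iff $d\lesssim k^2p^6\log^6(1/p)/\log n$.
\item The last does iff $d\lesssim k^3p^6\log^6(1/p)/\log^3 n$, which is implied by the first exactly because $k\ge C\log^2 n$.
\item The middle one never binds when $d\le k^3p^6\log^6(1/p)$.
\end{itemize}
So the hypothesis $k\ge C\log^2 n$ is really used here. In your write-up it only feeds a Chernoff bound, for which $k\to\infty$ suffices.
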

As we will see later, the global test and the scan test are sufficient for any fixed $p$ in that there exists a matching lower bound (up to a logarithmic factor). However, the detection threshold provided by the scan test quickly degrades as $p \to 0$.
To improve upon this, we add certain constraints on top of the scan test, which we call the \emph{constrained scan test}. To be specific, among the subgraphs of size $\approx k$, we only consider those that satisfy additional conditions on (signed) wedge counts. Formally, let
\begin{equation}\label{eq:constscanstatistic}
    \widetilde{f}_{\mathrm{scan}} (G) : =\max_{ A \in \calC(G), |A| = k^-}\sum_{i <j < \ell \in A} (G_{ij}-p)(G_{j\ell}-p)(G_{i\ell}-p) \,,
\end{equation}
where %
\begin{align}
    \calC(G) := \Bigg\{A \subseteq [n]: & \sum_{i < j \in A} \left(\sum_{\ell < i, \ell \in A} (G_{\ell i}-p)(G_{\ell j}-p)\right)^2 \leq \sigma^2 \,, \label{eq:constrainedscantestvar} \\
    &  \max_{i<j\in A} \left|\sum_{\ell < i, \ell \in A} (G_{\ell i}-p)(G_{\ell j}-p)\right| \leq B  \label{eq:constrainedscantestrange} \Bigg\}\,,
\end{align}
with 
\begin{align*}
    \sigma^2 := k^3p^2 + C_{\ref{prop:signedcyclecount}}^4\frac{k^4p^4\log^2 (1/p)}{d} \quad  \text{and} \quad
    B := (2048kp^2 + 8)\lceil \log k \rceil \,. 
\end{align*}
\begin{theorem}[Detection via constrained scan test]\label{thm:ubscangeneral} Assume that there exists a constant $\delta > 0$ such that $d \geq n^{\delta}, n^{-1+\delta} \leq p \leq 1/2$. Then there exists a constant $C_{\ref{thm:ubscangeneral}} = C_{\ref{thm:ubscangeneral}}(\delta) > 0$ such that if
\[d \leq \frac{1}{C_{\ref{thm:ubscangeneral}}}\frac{k^2p^3\log^3(1/p)}{(\log^2 n) (\log^2k)}\,,
\]
\prettyref{eq:scan_testing} holds with $\widetilde{f}_{\mathrm{scan}} (G) $ in place of $ f_{\mathrm{scan}} (G) $, that is,
\begin{align*}
    \bbP_{G \sim \calQ}\left(\widetilde{f}_{\mathrm{scan}} (G) > \gamma_{\mathrm{scan}} \right) + \bbP_{G \sim \calP}\left(\widetilde{f}_{\mathrm{scan}}(G) \leq \gamma_{\mathrm{scan}} \right) = o(1)\,. 
\end{align*}
\end{theorem}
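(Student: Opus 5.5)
We follow the two-sided scheme of the scan test (Theorem~\ref{thm:ubscandense}): bound the statistic from below under $\calP$ and above under $\calQ$. The wedge constraints in $\calC(G)$ exist so that, under $\calQ$, the signed triangle sum over a constrained set behaves like a martingale with controlled conditional variance and bounded increments. For orientation, write $\mu := \E_{G \sim \calG(n,p,d)}[(G_{12}-p)(G_{23}-p)(G_{13}-p)]$. By the known estimates for signed triangles in $\calG(n,p,d)$, $\mu \asymp p^3\log^{3/2}(1/p)/\sqrt d$, so $\gamma_{\scan} \asymp k^3 p^3 \log^{3/2}(1/p)/\sqrt d$.

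\textbf{Alternative.} With probability $1-o(1)$ we have $|S| \geq k^-$ by a Chernoff bound, since $k \to \infty$. Fix $A \subseteq S$ with $|A| = k^-$ chosen independently of the graph. The induced graph on $A$ is distributed as $\calG(k^-,p,d)$, so it suffices to show two things for the full model on $k^-$ vertices.
\begin{enumerate}
\item[(a)] The signed triangle count exceeds $\gamma_{\scan}$ with high probability. This follows from Chebyshev with the variance bound for signed triangles in $\calG(m,p,d)$ used in Theorem~\ref{thm:ubsignedtrianglecount} (taking $n = k = k^-$), and the condition $d \ll k^2p^3$ lies well inside that theorem's regime.
\item[(b)] $A \in \calC(G)$ with high probability. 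For the variance constraint \eqref{eq:constrainedscantestvar}, compute the expectation of $\sum_{i<j}(\sum_\ell \bar G_{\ell i}\bar G_{\ell j})^2$ under $\calG(k,p,d)$, where $\bar G := G - p$. The diagonal terms give about $k^3p^2$. The cross terms are signed $4$-cycles and wedge pairs, which I expect to be controlled by Proposition~\ref{prop:signedcyclecount} as $O(k^4p^4\log^2(1/p)/d)$; this is exactly why $\sigma^2$ contains the constant $C_{\ref{prop:signedcyclecount}}$. Markov's inequality then gives the constraint up to a constant, which can be absorbed. For the range constraint \eqref{eq:constrainedscantestrange}, each inner sum is a sum over $\ell$ of terms that are, conditionally on $U_i,U_j$, independent across $\ell$ with conditional mean $O(p^2)$. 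A Bernstein bound plus a union bound over pairs gives $|\cdot| \leq (2048kp^2+8)\lceil\log k\rceil$.
\end{enumerate}

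\textbf{Null.} For a fixed $A$ with $|A| = k^-$, order the vertices and write the triangle sum as $\sum_{i<j}\bar G_{ij} W_{ij}$, where $W_{ij} := \sum_{\ell<i, \ell \in A}\bar G_{\ell i}\bar G_{\ell j}$. Revealing edges in lexicographic order of $(i,j)$, this is a martingale. Each increment is bounded by $B$, and the predictable quadratic variation is $p(1-p)\sum W_{ij}^2 \leq p\sigma^2$ on the event $A \in \calC(G)$. Freedman's inequality, applied to the martingale stopped when the constraint is first violated, gives
\[
\Prob\bigl(A \in \calC(G),\ \text{sum} > \gamma_{\scan}\bigr) \leq \exp\left(-c\min\left\{\frac{\gamma_{\scan}^2}{p\sigma^2},\ \frac{\gamma_{\scan}}{B}\right\}\right).
\]
We then take a union bound over $\binom{n}{k^-} \leq e^{k\log n}$ sets, so it suffices that both ratios are at least $Ck\log n$.

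With $\sigma^2 \approx k^3p^2$, the first ratio is $k^6p^6\log^3(1/p)/(d\,k^3p^3) = k^3p^3\log^3(1/p)/d$. Requiring this to be at least $k\log n$ gives $d \lesssim k^2p^3\log^3(1/p)/\log n$. The second term of $\sigma^2$ contributes $\gamma_{\scan}^2 d/(k^4p^5) \asymp k^2p\log(1/p)$, which exceeds $k\log n$ when $p \geq n^{-1+\delta}$ and $k$ is large. For the second ratio, $\gamma_{\scan}/B \asymp k^3p^3/(\sqrt d\,(kp^2+1)\log k)$. Requiring this to be at least $k\log n$ gives $\sqrt d \lesssim k^2p^3/((kp^2+1)\log k\log n)$. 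In the regime $kp^2 \gtrsim 1$ this reads $\sqrt d \lesssim kp/(\log k\log n)$, which is implied by the hypothesis. Otherwise it reads $\sqrt d \lesssim k^2p^3/(\log k\log n)$, whose square is $d \lesssim k^4p^6/(\log^2k\log^2n)$. I believe this reduces to the stated condition, since under $d \lesssim k^2p^3$ we have $k^4p^6 \gtrsim d^2 \geq d$. The $\log^2 n\log^2 k$ factor in the theorem absorbs these losses.

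\textbf{Main obstacle.} The delicate step is (b), showing that the true community set satisfies the wedge constraints under $\calP$ with the specific $\sigma^2$ and $B$. Concretely, this requires the second moment of wedge sums in $\calG(k,p,d)$, namely signed $4$-cycle and related expectations of size about $p^4\log^2(1/p)/d$. It also requires concentration for the range constraint despite geometric dependence. For the range constraint I would first try conditioning on $U_i,U_j$, which makes the $\ell$-terms i.i.d., and use $d \geq n^\delta$ to bound the conditional mean. The assumptions $d \geq n^\delta$ and $p \geq n^{-1+\delta}$ will be used here and in the Freedman bookkeeping.
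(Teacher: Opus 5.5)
Your architecture matches the paper's. Under $\calP$ you show the planted $k^-$-subset lies in $\calC(G)$ and has a large signed triangle count by Chebyshev. Under $\calQ$ you take a union bound over $\binom{n}{k^-}$ sets and apply localized Freedman to the edge-exposure martingale. The null-side calculation is right, including the reduction of the $kp^2<1$ case to the stated condition.

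The genuine gap is in step (b), for the variance constraint \eqref{eq:constrainedscantestvar}. Let $X:=\sum_{i<j\in A}W_{ij}^2$ with your wedge sums $W_{ij}$. Markov gives only $\Prob(X>\sigma^2)\le \E[X]/\sigma^2$, and this ratio is bounded below by a constant. Indeed, $\E[X]$ equals $\binom{k^-}{3}p^2(1-p)^2$ plus $2\binom{k^-}{4}$ times the signed $4$-cycle expectation. The lower bound in Proposition~\ref{prop:signedcyclecount} shows this second part matches the second part of $\sigma^2$ up to constants. So your argument bounds the type~II error only by a constant (about $1/2$), not $o(1)$.

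Nothing can be ``absorbed'' here. $\sigma^2$ and $B$ are part of the definition of $\widetilde f_{\scan}$, and inflating $\sigma^2$ by a growing factor would give a different test. What is missing is concentration of $X$: you need $\Var[X]=o(\sigma^4)$ and then Chebyshev.

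This is a fourth-moment computation in the wedge variables, not the second-moment one you flagged as the main obstacle. Expand $X$ into diagonal terms $\bar G_{\ell i}^2\bar G_{\ell j}^2$ and signed $4$-cycles $\bar G_{\ell i}\bar G_{\ell j}\bar G_{qi}\bar G_{qj}$. You must then bound every covariance between two such terms according to the shape of their union graph. The paper reduces squares via $(x-p)^2=(1-2p)(x-p)+p(1-p)$ and applies Lemma~\ref{lem:rggfourier} to each union-graph type $(v,e)\in\{(4,4),(4,6),(5,6),(5,7),(6,7),(6,8),(7,8)\}$. This is where $d\ge n^\delta$ and $p\ge n^{-1+\delta}$ are genuinely used. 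It obtains $\Var[X]=O\bigl(k^4p^3+k^5p^6\log^3 d/d+k^7p^8\log^{9/2}d/d^{3/2}\bigr)$ and then checks that this is $o(\sigma^4)$ under the hypotheses on $d$, $p$, $k$.

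Two smaller remarks. First, your range-constraint argument is a valid and more elementary alternative to the paper's $2\lceil\log k\rceil$-th moment bound via Lemma~\ref{lem:rggfourier}. By Lemma~\ref{lem:capsandanticaps} with two vertices and $d\ge n^\delta$, with overwhelming probability every pair has $\Prob(G_{\ell i}=G_{\ell j}=1\mid U_i,U_j)\le 2p^2$. Then the conditional mean of $W_{ij}$ is at most $kp^2$ in absolute value and its conditional variance is $O(kp^2)$, so Bernstein plus a union bound over $k^2$ pairs fits inside $B$.

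Second, $k^2p\log(1/p)\ge Ck\log n$ does not follow from $p\ge n^{-1+\delta}$ and $k$ large, as you state. It follows from $n^\delta\le d\lesssim k^2p^3\log^3(1/p)$, which forces $kp\log(1/p)\gtrsim n^{\delta/3}$.
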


\begin{remark}[Comparison between tests]\label{remark:testcomparison} As noted earlier, the global test (Theorem \ref{thm:ubsignedtrianglecount}) and the scan test (Theorem \ref{thm:ubscandense}) together suffice for characterizing the detection threshold for any fixed $p$, up to a logarithmic factor. In terms of the performance guarantees from the theorems, the global test is better if $k \gg (n^3 / \log n)^{1/4}$, whereas the scan test is better if $k \ll (n^3/\log n)^{1/4}$. 

When $p = o(1)$, the constrained scan test (Theorem \ref{thm:ubscangeneral}) is better than the scan test except for a very limited regime. In fact, it directly follows from our analysis that for $p = O((\log n)^{-1} (\log^2 k)^{-1})$, the constrained scan test succeeds if $d = O(k^2p^3\log^3(1/p)/\log n)$, strictly improving the scan test; see \eqref{eq:ubgeneralfinalcond1} and the surrounding arguments there. As in the dense case, whether the global test or the constrained scan test is guaranteed for a better performance depends on how $k$ compares to $\widetilde{\Theta}(n^{3/4})$.

\end{remark}

\subsection{Information-theoretic lower bound}\label{subsec:mainlb}
We present different thresholds for the impossibility of detection, based on two different approaches.
First, we focus on capturing the dependence on $k$. For this, we consider calculating the truncated second moment between certain random matrices that generate the random graphs. 

\begin{theorem}[Lower bound via truncated second moment]\label{thm:lbdense} There exists a constant $C_{\ref{thm:lbdense}} > 0$ such that the following holds: if
\[k \leq \frac{n}{5}, \quad d \geq C_{\ref{thm:lbdense}}\log(1/p) \quad \text{and} \quad d \gg k^2 \vee  \frac{k^6}{n^3}\,,
\]
no test achieves weak detection.\footnote{If $k > n/5$, no test achieves weak detection if $d \gg k^3 = \Theta(n^3)$ even when the community location is known.}
\end{theorem}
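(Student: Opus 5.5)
We reduce the graph problem to a matrix problem and bound a truncated second moment. Conditional on $S$, the graph inside $S$ is a deterministic function of the Gram matrix $(\langle U_i,U_j\rangle)_{i,j\in S}$: threshold its off-diagonal entries at $\tau(p,d)$. Up to the normalization $U_i=X_i/\|X_i\|$, this Gram matrix is a rescaled Wishart matrix $W_S=\frac{1}{\sqrt d}(X_S X_S^\top - dI)$ with $X_i\sim\calN(0,I_d)$. Under $\calQ$ we would like the analogous object to be a GOE matrix $M_S$, whose off-diagonal entries are independent Gaussians. Thresholding them gives independent edges. Their marginal probability is not exactly $p$, but it matches after a small adjustment of the threshold, or we handle the discrepancy by a coupling in which the normalization error is absorbed once $d\ge C\log(1/p)$.

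So we define an auxiliary null $\calQ'$: pick $S$ as in $\calP$, plant the thresholded GOE on $S$, and use $\calG(n,p)$ elsewhere. Then $\calQ'\approx\calQ$ in total variation. By data processing it suffices to show $\dTV(\calP,\calQ')=o(1)$. Since both are mixtures over $S$ and share the outside edges, it is in turn enough to compare the mixture over $S$ of $\mathrm{law}(W_S)$ with the mixture over $S$ of $\mathrm{law}(M_S)$, each embedded into an $n\times n$ array with independent noise off $S$. Data processing again makes the thresholding harmless.

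The core step is a second-moment computation for the mixture of planted Wishart against the fixed reference of independent noise everywhere. For two independent community draws $S,S'$ with $|S\cap S'|=m\sim\mathrm{Bin}(n,k^2/n^2)$, the likelihood ratios factor, and the cross term reduces to
\[
\E_{S,S'}\bigl[\E_{M}[L_S L_{S'}]\bigr]=\E_m\bigl[\exp(\Phi(m))\bigr],
\]
where $\Phi(m)$ is the log of the $\chi^2$-type overlap between Wishart and GOE on an $m\times m$ block. Using the known cumulant expansion of the Wishart-versus-GOE density ratio (Bubeck--Ding--Eldan--R\'acz), one gets $\chi^2(W_m\|M_m)\approx \exp(c\,m^3/d)-1$. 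The leading term comes from the third cumulant, that is, from triangles, which gives the $m^3/d$ rate. Hence
\[
\chi^2+1\lesssim \E_m\bigl[e^{c m^3/d}\bigr].
\]
We truncate to the event $\{|S|\le 2k,\ |S\cap S'|\le m_0\}$, which costs $o(1)$ in total variation; restricting to it is the conditioning that makes this a truncated second moment.

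On that event, the Wishart--GOE expansion is valid because $m\le 2k$ and $d\gg k^2$; the expansion requires $m^2\ll d$ to control higher cumulants, which explains the $k^2$ term. It remains to check that $\E_m[e^{c m^3/d}\mathbf 1\{m\le 2k\}]=1+o(1)$. For $m$ near its mean $\mu=k^2/n$ the exponent is $\mu^3/d=k^6/(n^3d)=o(1)$. For large deviations, the binomial tail $\Pr(m=j)\le (e\mu/j)^j$ competes with $e^{cj^3/d}\le e^{c j\cdot 4k^2/d}=e^{o(j)}$, since $j\le 2k$ and $d\gg k^2$. The sum is therefore dominated by small $j$ and equals $1+o(1)$. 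We need $k\le n/5$ so that $\mu\le k/5$ and the binomial tails are genuinely small relative to $k$.

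The main obstacle is a rigorous, uniform bound on the Wishart--GOE $\chi^2$ for a single $m\times m$ block, valid for all $m\le 2k$ with $m^2\ll d$, including the normalization to the sphere and the threshold correction. We would first try to quote or adapt the precise density comparison of Bubeck et al., which controls $\dTV$ only, and upgrade it to $\chi^2$ via an explicit density-ratio expansion restricted to a high-probability operator-norm event $\norm{W_m}_{\op}\lesssim \sqrt{m}+m/\sqrt d$. The further truncation on that event is presumably what the qualifier ``truncated'' refers to.
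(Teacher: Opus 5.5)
\textbf{Gap: the normalization step from graphs to matrices.} Your matrix half is sound and is essentially the paper's term (II): restrict to $|S|=\Theta(k)$, truncate on an operator-norm event, and collapse $\E[\widetilde L^2]$ onto the overlap block $S\cap S'$ using that the reference is a product measure. Then bound the truncated squared Wishart/GOE density ratio on an $N\times N$ block by $\exp(O(N^3/d))$, and finish with the overlap MGF under $d\gg k^2\vee k^6/n^3$.

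The gap is the passage from graphs to matrices. Under $\calP_S$ an edge is $\mathbf 1\{\langle Z_i,Z_j\rangle/(\|Z_i\|\|Z_j\|)\ge\tau(p,d)\}$. This depends on the norms, so it is not a function of the off-diagonal entries $\langle Z_i,Z_j\rangle$ alone. Under $\calQ$, by contrast, an edge is a threshold of a single off-diagonal Gaussian entry. The two graph laws are therefore images of the two matrix laws under \emph{different} maps ($\beta$ versus $\alpha$ in the paper). So ``data processing makes the thresholding harmless'' does not apply.

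Adjusting the threshold only repairs the marginal mismatch between $\tau(p,d)$ and $\Phi^{-1}(1-p)/\sqrt d$. That is where $d\ge C\log(1/p)$ is used, at total cost $O(k^2p\log^2(1/p)/d)$. It does nothing about the random factor $\|Z_i\|\|Z_j\|/d$, which fluctuates around $1$ at scale $d^{-1/2}$. Under the natural coupling each edge flips with probability of order $p\log(1/p)/\sqrt d$. A union bound over $\binom{s}{2}$ pairs then only gives $d\gg k^4$ for fixed $p$. Comparing each ensemble with GOE separately gives $d\gg k^3$. Neither reaches $d\gg k^2$.

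The missing ingredient is Proposition~\ref{prop:sphwisandwis}: for $d\gg k^2$, the off-diagonal spherical Wishart $(\langle Z_i,Z_j\rangle/(\|Z_i\|\|Z_j\|))_{i<j}$ and the off-diagonal Wishart $(\langle Z_i,Z_j\rangle/d)_{i<j}$ are $o(1)$-close in total variation. The paper proves this distributionally, not pathwise:
\begin{itemize}
\item It writes the latter as $T_D(Y)$ with $D=\mathrm{diag}(\|Z_i\|/\sqrt d)$ independent of $Y$.
\item It uses the explicit density $\propto\det(I_k+\overline{y})^{(d-k-1)/2}$ of $Y$.
\item It bounds $\dTV(Y,T_D(Y))^2$ by a KL-type integral over an operator-norm ball, using a smooth cutoff and the divergence theorem to handle the first-order term.
\item Averaging over $D$ yields $O(\sum_{i<j}(D_{ii}D_{jj}-1)^2)=O(k^2/d)$.
\end{itemize}
The triangle inequality then splits $\dTV(\calP'_s,\calQ)$ into this normalization term (I) and your matrix term (II).

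Your last paragraph folds ``normalization to the sphere'' into the Wishart--GOE $\chi^2$ expansion. That expansion compares Wishart with GOE and does not address this step. One could instead keep a common map by comparing the spherical Gram block directly with independent copies of $\langle U_1,U_2\rangle$, thresholded at $\tau(p,d)$ on both sides. But then the Wishart--GOE results you plan to quote no longer apply, and the truncated density-ratio analysis would have to be redone for that pair.
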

By combining this lower bound with the upper bounds (Theorems~\ref{thm:ubsignedtrianglecount} and \ref{thm:ubscandense}), one can conclude that for any fixed $p$ the detection threshold is given as
\[d = \widetilde{\Theta}\left(k^2 \vee \frac{k^6}{n^3}\right)\,.
\]

On the other hand, the threshold in Theorem~\ref{thm:lbdense} essentially has no dependence on $p$. 
A common key feature in recent works \cite{brennan2020rgg, liu2022rgg, liuracz} that consider $p = o(1)$ is to leverage the tensorization property (i.e., chain rule) of KL divergence, which allows ``local'' comparison between the models; for further details, see the technical overview in Section~\ref{sec:overview}. Our next result refines such approaches for our setting, which in addition has a community structure.

\begin{theorem}[Lower bound via tensorization]\label{thm:lbgeneral} There exists a constant $C_{\ref{thm:lbgeneral}} > 0$ such that the following holds: if
\[\frac{C_{\ref{thm:lbgeneral}} \log n}{k} \leq  p \leq \frac{1}{2} \quad \text{and} \quad d \geq C_{\ref{thm:lbgeneral}}\left(k^2p^2 \vee \frac{k^4p^2}{n}\right)\log^2(k/p) \log^2(1/p)\log^3n\,,
\]
no test achieves weak detection.
\end{theorem}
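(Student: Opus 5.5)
We bound $\dKL(\calP\|\calQ)$ by tensorizing over vertices and reduce each step to a comparison for a single vertex's neighbourhood. It suffices to show $\dKL(\calP \| \calQ) = o(1)$, since Pinsker's inequality then gives $\dTV(\calP,\calQ)=o(1)$.

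\textbf{Step 1: reveal vertices one at a time.} Order the vertices $1,\dots,n$ and reveal, for each $i$, the edges $G_{\ell i}$ with $\ell<i$. Write $G_{<i}$ for the induced graph on $[i-1]$. To make the alternative tractable, we enlarge it to $\calP'$, which also reveals the membership vector $\mathbf 1\{\ell\in S\}$ and the latent vectors $U_\ell$ for $\ell \in S$. Since KL can only increase under this enlargement (data processing), and by the chain rule,
\[
\dKL(\calP\|\calQ)\le \sum_{i=1}^n \E\Big[\dKL\big(\calL(G_{\cdot i}\mid S\cap[i],\, U_{S\cap[i-1]})\,\big\|\,\Bern(p)^{\otimes (i-1)}\big)\Big].
\]
A cleaner variant, closer to what I would actually do, reveals only whether $i\in S$, costing at most $h(k/n)$ per vertex. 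That cost is too large, so I would instead condition only on $S\cap[i-1]$ and $U_{S\cap[i-1]}$. Then for $i \notin S$ the conditional law is exactly $\Bern(p)^{\otimes(i-1)}$. For $i\in S$, which has probability $k/n$, the new row restricted to $S\cap[i-1]$ is $\big(\mathbf 1\{\langle U_\ell,U_i\rangle\ge\tau\}\big)_{\ell}$ with $U_i$ fresh, while all other entries are i.i.d.\ $\Bern(p)$. Because the law of $i$ is a mixture, with weight $k/n$ on the geometric row, convexity of KL gives a bound of $k/n$ times the KL of the geometric row. Convexity is too lossy, however: it yields $\sum_i (k/n)\,\E\,\dKL_i$, which requires $\E\,\dKL_i = o(1/k)$. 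That requirement is acceptable and will be used below.

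\textbf{Step 2: the one-vertex comparison.} Set $m=|S\cap[i-1]|$, which is at most $2k$ with high probability by Chernoff; this concentration is where $p\ge C\log n/k$ and a union bound enter. The remaining task is the setting of \cite{liu2022rgg, brennan2020rgg}. Given fixed vectors $U_1,\dots,U_m$ that are typical, i.e.\ whose Gram matrix is close to the identity with $\norm{U^\top U-\tfrac md I}_{\op}\lesssim \sqrt{m/d}+m/d$, we must bound the KL divergence between the law of $(\mathbf 1\{\langle U_\ell,V\rangle\ge\tau\})_{\ell\le m}$ for $V\sim\calU(\mathbb S^{d-1})$ and $\Bern(p)^{\otimes m}$. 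I would invoke the conditional-KL / $\chi^2$ estimates from \cite{liu2022rgg}. Their bound is of order $\widetilde O(m^2p^2/d)$ for the typical-configuration part, which corresponds to their threshold $d\gtrsim n^3p^2$ after summing over $n$ vertices. Transplanted here, the per-vertex contribution is $\widetilde O(k^2p^2/d)$ for the geometric rows plus an atypicality term. Multiplying by the weight $k/n$ and summing over $i\le n$ gives $\widetilde O(k^3p^2/d)$. This already matches neither the $k^2p^2$ nor the $k^4p^2/n$ term in the statement, so the mixture structure must be exploited rather than discarded.

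\textbf{Step 3: use the mixture.} We do not use convexity. The row of vertex $i$ restricted to $S\cap[i-1]$ is distributed as $(1-\tfrac kn)\Bern^{\otimes}+\tfrac kn\,\mu_U$, so its $\chi^2$-divergence from the null equals $(k/n)^2\chi^2(\mu_U\|\Bern^{\otimes})$. Summing gives roughly $n\cdot(k/n)^2\cdot \widetilde O(k^2p^2/d)=\widetilde O(k^4p^2/(nd))$, which is the $k^4p^2/n$ term. The $\chi^2$ form is lossy when $\frac kn\chi^2>1$; in that regime I would fall back on the KL/convexity bound of Step 2, but with the atypicality/truncation handled so that the dominant term is $k^2p^2/d$. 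This is the step where the $k^2p^2$ term arises, plausibly by restricting the tensorization to community vertices only and conditioning on the (unobserved) $S$ via a planted-versus-known-$S$ comparison. The \textbf{main obstacle} is controlling $\chi^2(\mu_U\|\Bern^{\otimes m})$. This quantity is far larger than the KL because of rare configurations, e.g.\ many $\langle U_\ell,V\rangle\ge\tau$ simultaneously. I would handle it by truncating to the event that $V$ has at most $O(mp+\log n)$ neighbours among the $U_\ell$ and bounded projections. The polylog factors $\log^2(k/p)\log^2(1/p)\log^3 n$ would come from these truncations and from the tail of $\tau\approx\sqrt{2\log(1/p)/d}$.
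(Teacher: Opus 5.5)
Your skeleton is the paper's. You use Pinsker and the chain rule, reveal the latents and memberships of earlier vertices, and keep the new vertex's membership hidden, so its row is $\frac kn\overline{\calP}_n+(1-\frac kn)\calQ_n$ and the relevant quantity is a $(\frac kn)^2$-weighted $\chi^2$ term. This correctly accounts for the $k^4p^2/n$ condition.

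Your proposed source of the $k^2p^2$ term cannot work, however. Falling back on convexity over each new vertex's membership amounts to revealing $S$. The resulting bound therefore dominates $\E_S[\dKL(\calP_S\|\calQ)]$, the KL of the known-community problem. By Lemma~\ref{lem:signedtrianglecountvar} applied with $n=k$, the signed triangle count on $S$ detects whenever $d\ll k^3p^3\log^3(1/p)$. So for, e.g., constant $p$ and $k^2\,\mathrm{polylog}(n)\ll d\ll k^3$, this KL is bounded away from zero. No truncation can rescue the convexity/known-$S$ route, which is consistent with your own $k^3p^2/d$ computation. The paper has no regime split: both conditions enter a single argument.

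The real gap is the step you flag as the main obstacle, and citing \cite{liu2022rgg} does not close it. Their estimate (Lemma~\ref{lem:capsandanticaps}) is a concentration bound whose tail saturates at deviation $t=1$. Yet on $\{\sum_i\Gamma_i\le 4kp\}$ the ratio $\widetilde{\calP}_{\nu+1}/\calQ_{\nu+1}$ can be as large as $e^{\widetilde{\Omega}(kp)}$. Controlling its $\chi^2$, or the planted mass of atypical configurations, from concentration alone therefore costs $d\gtrsim k^3p^3$.

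The paper instead truncates on $\calE=\{\nu\le 2k,\ \sum_{V_i=1}\Gamma_i\le 4kp,\ |\Delta|\le k/n\}$. On $\calE$ one has $(\widetilde{\calP}_{\nu+1}/\calQ_{\nu+1}-1)^2\le 1$, and tail-integrating Lemma~\ref{lem:capsandanticaps} gives the $k^4p^2/n$ condition. The price is showing $\Prob(\calE^c)=o(1/(n^2\log(1/p)))$ under both measures, which is needed because $\log(1+\Delta)$ can only be bounded by $n\log(1/p)$ there.
\begin{itemize}
\item Under $\calQ$, this is Lemma~\ref{lem:capsandanticaps} at $t=1$, which already requires $d\gtrsim k^2p^2$ up to logarithms.
\item Under $\calP$, one changes measure and applies Cauchy--Schwarz. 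This requires $\E_{\calQ}[(\widetilde{\calP}_{\nu+1}/\calQ_{\nu+1})^2\bm{1}\{\sum_i\Gamma_i\le 4kp\}]=O(1)$.
\end{itemize}
That second-moment bound is the new ingredient. The paper writes the ratio as the multiplicative martingale $R_t=\prod_{i\le t}\varphi_i$ over cap/anticap intersections. It then proves $\E[R_t^2]\le(1+\eta_a)\E[R_{t-1}^2]+a^2/p^2$ for caps, and the analogous bound for anticaps. This uses Lemma~\ref{lem:onecapconcsmallerthan1} on $\{R_{t-1}\ge a\}$, where $\norm{\mu}_\infty=1/\rho(L_{t-1})$ is controlled. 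Iterating gives $\exp(\widetilde{O}(k^2p^2/d))$. This, not a known-$S$ comparison, is where $k^2p^2$ comes from, and your proposal contains no substitute for it.
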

In terms of the dependence on $p$ we obtain a polynomial factor of $p^2$ for the threshold, which matches and extends (by considering $k = n$) the state-of-the-art results of $d = \widetilde{\Omega}(n^3p^2)$ \cite{liu2022rgg, baguley2025toroidal}.

\begin{remark}[Comparison between lower bounds]\label{remark:lbcomparison}
For lower bounds, we focus on the regime of $p \geq \widetilde{O}(1/k)$; for the sparse regime $p = \Theta(1/k)$, $\dTV(\calP, \calQ) = o(1)$ already holds for $d = \Omega(\mathrm{polylog}(k))$ \cite{liu2022rgg}. We choose not to pursue the case of $p = o(1/k)$, as the average degree within the community is already $o(1)$ there. 

Theorem~\ref{thm:lbgeneral} does not strictly extend Theorem~\ref{thm:lbdense}, in terms of its dependence on $k/n$. Indeed, it can be checked that depending on the size of $k$, Theorem~\ref{thm:lbdense} covers a wider regime: specifically, when $n^{1/2} \leq k = \widetilde{O}(n^{3/4})$ and $p = \widetilde{\Omega}(\sqrt{n}/k)$, 
 or $k = \widetilde{\Omega}(n^{3/4})$ and $p = \widetilde{\Omega}(k/n)$. This mainly comes from the differences in their underlying approaches; see Section \ref{sec:overview} for a detailed discussion. In brief, the proof of Theorem~\ref{thm:lbdense} essentially proceeds by bounding TV distance with $\chi^2$ divergence, which seems to be essential for capturing the tight dependence on the community size. This cannot be directly adapted to the proof of Theorem~\ref{thm:lbgeneral}:
 that comes at the cost of losing the chain-rule structure of KL divergence, which is essential for all existing approaches that capture dependence on $p$. We believe that improving the dependence on $k/n$ for $p = o(1)$ would require substantially new ideas, which we leave as an open question.
\end{remark}

\subsection{Computational lower bound}\label{subsec:maincomp}
While the global signed triangle count can clearly be calculated in polynomial time, the scan-based tests in general seem to require superpolynomial time as brute-force algorithms. This suggests the existence of a computational--statistical gap for our detection problem; we claim that this is indeed the case. 

Our analysis is based on the low-degree polynomial framework \cite{hopkins2018statistical, kunisky2019notes, wein2025computational}, which considers the following criterion for polynomials as test statistics.

\begin{definition}\label{def:strongsep} Let $f$ be a polynomial. A test statistic $f(G)$ achieves
\begin{itemize}
    \item[(a)] \emph{strong separation} if $\E_{G \sim \calP}[f(G)] - \E_{G \sim \calQ}[f(G)] =\omega\left( \sqrt{\Var_{G \sim \calQ}[f(G)]} \vee \sqrt{\Var_{G \sim \calP}[f(G)]}\right)$;
    \item[(b)] \emph{weak separation} if $\E_{G \sim \calP}[f(G)] - \E_{G \sim \calQ}[f(G)] = \Omega\left( \sqrt{\Var_{G \sim \calQ}[f(G)]} \vee \sqrt{\Var_{G \sim \calP}[f(G)]}\right)$.
\end{itemize}
\end{definition}

In the low-degree polynomial framework, a negative result for this criterion with degree $\omega(\log n)$ serves as evidence that no polynomial-time algorithms exist (for background, see Section~\ref{sec:priorwork}).

Recall from Theorem~\ref{thm:ubsignedtrianglecount} that detection can be done efficiently for $d = \widetilde{o}(k^6p^3/n^3)$. The following result complements this, showing that no low-degree polynomial can significantly improve that threshold even by weak separation.

\begin{theorem}[Low-degree lower bound]\label{thm:lowdeglb}
    Assume that there exists a constant $\delta > 0$ such that $d \geq n^{\delta}, n^{-1+\delta} \leq p \leq 1/2$. If there exists any constant $\eps > 0$ such that
\[d \geq \frac{k^6}{n^{3-\eps}}p^3\,,
\]
no degree-$\lfloor (\log n / \log (\log n))^2\rfloor$ polynomial achieves weak separation.
\end{theorem}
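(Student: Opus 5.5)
We follow the standard low-degree route: bound the norm of the low-degree projection of the likelihood ratio, $\|L^{\le D}\|^2 = \sum_{H \subseteq E(K_n),\, 1 \le |E(H)|\le D} \widehat{L}(H)^2$. Here we use the orthonormal Fourier basis $\chi_H(G)=\prod_{e\in H}(G_e-p)/\sqrt{p(1-p)}$ under $\calQ$, and $D=\lfloor(\log n/\log\log n)^2\rfloor$. It suffices to show $\|L^{\le D}-1\|^2 = o(1)$. This quantity equals $\sup_f (\E_\calP f - \E_\calQ f)^2/\Var_\calQ f$ over degree-$\le D$ polynomials. With some care about also controlling $\Var_\calP$ (or simply noting that weak separation with respect to $\Var_\calQ$ already fails), it rules out weak separation in the sense of Definition~\ref{def:strongsep}.

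The first step is to compute the coefficients. Since edges involving a vertex outside $S$ are independent $\Bern(p)$, the Fourier coefficient vanishes unless $V(H)\subseteq S$. Conditioning on $S$ then gives
\[\widehat L(H)=\Big(\tfrac{k}{n}\Big)^{v(H)}\,\E_{G\sim\calG(n,p,d)}[\chi_H(G)],\]
which is exactly the reduction behind Lemma~\ref{lem:fouriersimple}. Moreover, $\E_{\calG(n,p,d)}[\chi_H]=0$ whenever $H$ has a vertex of degree one, and it factorizes over blocks (2-connected components), by rotational invariance of the spherical latent vectors. So only graphs with minimum degree at least $2$ contribute. For these we invoke the Fourier bounds for $\calG(n,p,d)$ established earlier in the paper (the estimates behind Proposition~\ref{prop:signedcyclecount}, in the spirit of \cite{bangachev2024fourier}). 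Concretely, we aim for a bound of the form
\[|\E\chi_H|\le \Big(\frac{C\,\mathrm{polylog}(n)\,p^{1/2}}{\sqrt d}\Big)^{e(H)-v(H)+\kappa(H)}\cdot(\text{combinatorial factor } (C e(H))^{O(e(H))}),\]
where $\kappa(H)$ is the number of blocks. The triangle gives $p^{3/2}/\sqrt d$ per triangle, up to logs, which matches the global-test threshold.

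Next we sum over graph shapes. The number of labelled copies of a shape $H$ is at most $n^{v(H)}$, so the contribution of $H$ is roughly
\[\Big(\frac{k^2}{n}\Big)^{v(H)}\Big(\frac{C p\,\mathrm{polylog}\,n}{d}\Big)^{e(H)-v(H)+\kappa(H)}.\]
For a triangle this is $k^6p^3/(n^3 d)\le n^{-\eps}$ by assumption. For a general $H$ with minimum degree $2$, decompose it into blocks and ears. Every block has excess $e-v+1\ge 1$, and each additional ear adds at most one vertex per unit of excess more than a cycle does. We then show that each unit of excess is charged at most a factor $(k^2/n)^{3}p^3/d\cdot n^{3}/n^{3}$-type quantity, i.e., $\le n^{-\eps}$, using $d\ge n^\delta$ and $p\ge n^{-1+\delta}$ to absorb polylogarithmic and combinatorial losses. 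For cycles of length $\ell$ the contribution is $(k^2/n)^\ell (p/d)^{\ell-1}$-ish; this should be dominated by the triangle-type ratio, consistent with Proposition~\ref{prop:signedcycle}. Summing over all shapes with $e(H)\le D$, of which there are at most $D^{O(D)}=n^{o(1)\cdot(\log n/\log\log n)}$ per excess level, should give a geometric series in $n^{-\eps/2}$ once $D^{O(1)}\ll n^{\eps/2}$ per unit excess. This is where the choice $D=(\log n/\log\log n)^2$ enters.

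The main obstacle is the per-shape Fourier bound together with the vertex–excess bookkeeping. Dense shapes with many vertices of degree exactly $2$ (long subdivided paths) gain vertex factors $k^2/n$, which can exceed $1$, while contributing only weak decay. So we must use that a path of $m$ degree-2 vertices behaves like a single edge with effective correlation $(p/d)^{\Theta(m)}$-decay, or equivalently bound $\E\chi_H$ for subdivided graphs via contracting paths. If $k^2/n\cdot p/d<1$ fails, this step needs a sharper per-path estimate. I would first try the path-contraction bound: a path of length $m$ between fixed endpoints has conditional expectation at most $(C\sqrt{p\log(1/p)/d})^{m}$-type decay.
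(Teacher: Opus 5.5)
There is a genuine gap, and it sits exactly where you flag the main obstacle: you never establish a per-shape Fourier bound, and the one you propose has the wrong form.

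\textbf{The excess-based bound cannot work.} A bound $(C\,\mathrm{polylog}(n)\,p^{1/2}/\sqrt d)^{e(H)-v(H)+\kappa(H)}$ decays only in the excess. Each labelled copy of $H$, however, carries the vertex weight $n^{v(H)}(k/n)^{2v(H)}=(k^2/n)^{v(H)}$. For fixed excess, $v(H)$ is unbounded: every $\ell$-cycle has $e-v+\kappa=1$. So when $k^2\ge n^{1+c}$ and $d=\mathrm{poly}(n)$, your bound on the $\ell$-cycle terms grows like $(k^2/n)^{\ell}$ and cannot sum to $o(1)$ over $\ell\le D$.

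\textbf{It is also too weak in $p$.} You get $p^{1/2}$ per unit of excess, whereas the truth gives $p^{1/2}$ per edge. Your own formula gives the triangle term $(k^2/n)^3\cdot p/d=k^6p/(n^3d)$, not the $k^6p^3/(n^3d)$ you wrote. Under $d\ge k^6p^3/n^{3-\eps}$, the quantity $k^6p/(n^3d)$ is not $\le n^{-\eps}$ once $p$ is polynomially small.

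\textbf{Other problems.} Your cycle heuristic $(k^2/n)^{\ell}(p/d)^{\ell-1}$ matches neither your formula nor Proposition~\ref{prop:signedcyclecount}, which gives roughly $(k^2p/n)^{\ell}\log^{\ell}(1/p)/d^{\ell-2}$. That proposition treats only cycles, so it cannot supply the general bound you invoke. The path-contraction repair is plausible, but it is precisely the missing argument. It would also still require a vertex-decaying estimate for the minimum-degree-$3$ cores carrying non-indicator contracted kernels. Finally, $D^{\Theta(D)}$ equals $n^{\Theta(\log n/\log\log n)}$, not $n^{o(1)\cdot\log n/\log\log n}$, so your shape count per excess level is also misstated.

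\textbf{What is missing.} You need a bound that decays in the number of vertices and carries a factor $p$ per edge. The paper takes this ready-made from \cite[Theorem 1.1]{bangachev2024fourier}, recorded as Lemma~\ref{lem:rggfourier}. For connected $H$ with $C\,v(H)e(H)\log^{3/2}d\le\sqrt d$, it gives
\[
\Bigl|\E_{\calG(n,p,d)}\Bigl[\prod_{ij\in E(H)}(G_{ij}-p)\Bigr]\Bigr|\le(8p)^{e(H)}\bigl(C\,v(H)e(H)\log^{3/2}d/\sqrt d\bigr)^{\lceil(v(H)-1)/2\rceil}.
\]
With this, no block or ear bookkeeping is needed. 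The argument then runs as follows.
\begin{enumerate}
\item Components that are trees vanish by Corollary~\ref{cor:treesimple}, so every surviving component has $e(H_i)\ge v(H_i)\ge 3$.
\item Use $(8p)^{e}/(p(1-p))^{e/2}\le 8^{e}(2p)^{v/2}$ and $\lceil(v-1)/2\rceil\ge v/3$, together with the factor $(k/n)^{v}$ from Lemma~\ref{lem:fouriersimple}. Then each vertex contributes to $n^{v(H)}\Phi_\calP(H)^2$ a factor $(k^6p^3\,\mathrm{polylog}(n)/(n^3d))^{1/3}\le n^{-(\eps\wedge\delta)/6}$.
\item The leftover $64^{e(H)}$ is absorbed via $e(H)\le v(H)\sqrt D$.
\item Sum over isomorphism classes grouped by $v$ rather than by excess. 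There are at most $2^{v^2}\wedge\exp(2D\log v)=\exp(o(v\log n))$ such classes with at most $D$ edges, which yields $o(1)$ for $D=\lfloor(\log n/\log\log n)^2\rfloor$.
\end{enumerate}
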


A related question is whether there are efficient algorithms other than the global signed triangle count. A natural extension of the signed triangle count is the class of signed cycle counts, frequently appearing in latent geometry detection \cite{bangachev2024geometry, bangachev2025random}. %
In the following proposition, we provide a negative answer, showing that any longer cycle count is strictly less powerful than the triangle count.

\begin{proposition}[Suboptimality of longer cycle counts]\label{prop:signedcycle}
    Let $\ell \geq 3$ and $d$ be sufficiently large with $d \geq (5\log(1/p))^4$. If the global signed count of length-$\ell$ cycle achieves strong separation, then
    \[d \ll \left(\frac{k^2p\log(1/p)}{n}\right)^{\ell/(\ell-2)}\,.
    \]
\end{proposition}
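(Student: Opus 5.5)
We compare the mean shift with the null standard deviation of the signed $\ell$-cycle count
\[
f_\ell(G) := \sum_{C} \prod_{ij \in E(C)} (G_{ij}-p),
\]
where the sum ranges over all $\Theta(n^\ell)$ cycles $C$ of length $\ell$ in $K_n$.

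\textbf{Mean and variance.} Under $\calQ$ the edges are independent and centered, so $\E_{\calQ}[f_\ell]=0$. Distinct cycles have distinct edge sets, so the terms are orthogonal and
\[
\Var_{\calQ}[f_\ell] = \Theta\bigl(n^\ell (p(1-p))^\ell\bigr) = \Theta(n^\ell p^\ell).
\]
Under $\calP$, any cycle with a vertex outside $S$ has an independent centered edge and contributes zero in expectation. Hence, as in the identity behind $\gamma_{\tri}$ (Lemma~\ref{lem:fouriersimple}),
\[
\E_{\calP}[f_\ell] = \Theta\bigl(n^\ell (k/n)^\ell\bigr)\, \E_{\calG(n,p,d)}\Bigl[\prod_{ij\in E(C_\ell)}(G_{ij}-p)\Bigr] = \Theta(k^\ell)\,\mu_\ell .
\]
Strong separation requires in particular $\E_\calP[f_\ell]-\E_\calQ[f_\ell] = \omega(\sqrt{\Var_\calQ[f_\ell]})$, that is,
\[
k^\ell |\mu_\ell| \gg n^{\ell/2}p^{\ell/2}.
\]

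\textbf{Bounding the cycle coefficient.} The crux is an upper bound on the signed $\ell$-cycle Fourier coefficient of the full model. I would aim for
\[
|\mu_\ell| \lesssim \left(\frac{p\log(1/p)}{\sqrt d}\right)^{\ell}\cdot d^{\,1}\,(C\ldots)
\]
or, in cleaner form,
\[
|\mu_\ell| \le \left(\frac{C\,p\log(1/p)}{\sqrt d}\right)^{\ell}\cdot d\cdot \mathrm{poly}\text{-correction},
\]
which equals $(Cp\sqrt{\log(1/p)})^\ell d^{-(\ell-2)/2}$ up to log factors. The target is the known scaling: triangles give $p^3 d^{-1/2}$, and $\ell$-cycles give $d^{-(\ell-2)/2}$ times $p^\ell$ times log powers. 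The plan is to expand the edge indicator $\mathbf 1\{\langle U_i,U_j\rangle\ge\tau\}-p$ in Gegenbauer (spherical harmonic) polynomials. The dominant term is the degree-one part $c_1\sqrt d\,\langle U_i,U_j\rangle$, with $c_1\approx p\tau\approx p\sqrt{\log(1/p)/d}\cdot\sqrt d$. The cycle expectation is then $\sum_m c_m^\ell \dim(\mathcal H_m)^{1-\ell}$-type terms, via the reproducing property: the $\ell$-fold convolution of zonal kernels around a cycle gives $\dim\mathcal H_m\cdot(\lambda_m)^\ell$. The $m=1$ term yields $d\cdot(p\log(1/p)/d)^{\ell}\cdot$(normalizations), and higher $m$ must be controlled using $d\ge(5\log(1/p))^4$. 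I expect this to be the main obstacle, and would reuse whatever bound underlies the constant $C_{\ref{prop:signedcyclecount}}$, presumably an earlier lemma stating $|\mu_\ell|\le (C p\log(1/p))^\ell d^{-(\ell-2)/2}$.

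\textbf{Concluding.} Plugging the bound on $\mu_\ell$ into the separation condition gives
\[
k^\ell (Cp\log(1/p))^\ell d^{-(\ell-2)/2} \gg n^{\ell/2}p^{\ell/2}.
\]
Rearranging,
\[
d^{(\ell-2)/2} \ll \frac{k^\ell\, p^{\ell/2}\log^\ell(1/p)}{n^{\ell/2}},
\]
and taking the $2/(\ell-2)$ power yields
\[
d \ll \left(\frac{k^2p\log^2(1/p)}{n}\right)^{\ell/(\ell-2)}
\]
up to the stated logarithmic power. The statement's $\log(1/p)$ rather than $\log^2(1/p)$ would follow if the coefficient bound carries $\sqrt{\log(1/p)}$ per edge, which matches $\tau\approx\sqrt{2\log(1/p)/d}$. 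So I would prove the coefficient bound with exactly $(Cp\sqrt{\log(1/p)})^\ell$. The constant $C^{2\ell/(\ell-2)}$ is absorbed since $\ll$ hides constants and $\ell$ is fixed.
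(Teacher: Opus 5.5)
Your proposal is correct and follows the paper's argument. You bound the mean shift by the number of $\ell$-cycles times $(k/n)^\ell$ (Lemma~\ref{lem:fouriersimple}) times the upper bound $C_{\ref{prop:signedcyclecount}}^\ell p^\ell\log^{\ell/2}(1/p)/d^{\ell/2-1}$ of Proposition~\ref{prop:signedcyclecount}. The paper proves that bound by the same Gegenbauer expansion you sketch: the summands are $c_m^\ell/N_m^{\ell/2-1}$, the $m=1$ term dominates, and $c_1\asymp p\sqrt{\log(1/p)}$. You then compare against the null variance $\binom{n}{\ell}\frac{(\ell-1)!}{2}(p(1-p))^\ell$, exactly as the paper does.

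Two small cleanups. Discard your first-draft bound with $\log(1/p)$ per edge, since it only yields $\log^2(1/p)$. Also keep the exact cycle count rather than $\Theta(n^\ell)$: it then cancels between mean and standard deviation up to a factor $\sqrt{\binom{n}{\ell}(\ell-1)!/2}\le n^{\ell/2}$, which goes in the right direction.
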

In this proposition, the signed triangle count succeeds for the largest range of $d$, as the right hand side is maximized at $\ell = 3$. This suggests that the global signed triangle count may be the asymptotically optimal efficient test.

\section{Technical overview}\label{sec:overview}
\paragraph*{Information-theoretic upper bound.}
A key feature of random geometric graphs is homophily: adjacent vertices share similar latent vectors, making their common neighbors more likely to be adjacent as well. As a result, geometric graphs contain more triangles than an \ER graph with the same edge density. 
Our test statistics are based on the signed triangle count $\sum_{i<j<\ell}(G_{ij}-p)(G_{j\ell}-p)(G_{i\ell}-p)$, which further exploits the homophily by centering each edge indicator. In particular, the mean under $\calP$ accurately reflects the geometric signal, while its variance remains small as the centering cancels out redundant contribution from the \ER background~\cite{bubeck2016rgg, liu2022rgg}. On top of this, the geometry is local in our setting, only confined to a hidden community of average size $k$. 
Together, these motivate a sequence of three tests, each adapted for expanding parameter regimes beyond those of the previous.

The global statistic $f_\mathrm{tri}$ \eqref{eq:signedtristatistic} sums signed triangles over the entire graph. 
The mean under $\calP$ is strictly positive due to the extra triangles within the community (Lemmas~\ref{lem:fouriersimple} and \ref{lem:signedtrianglelb}), while the variance is dominated by \ER fluctuations over the whole graph. As a technical remark, verifying the latter requires bounding the mean of signed 4-cycle counts under $\calP$, where we invoke a general bound for cycles of any length developed in this paper; see the last paragraph of this section for details. By comparing the mean and the variance using Chebyshev inequality,
we obtain the detection threshold $d \ll k^6p^3\log^3(1/p)/n^3$ of  Theorem~\ref{thm:ubsignedtrianglecount}.

A limitation of the global statistic is that $k$ needs to be large (e.g., polynomial in $n$) for the test to succeed; when the community is smaller, the geometry-based signal is already diluted by the ambient \ER noise. To resolve this issue, 
we introduce the scan statistic $f_\mathrm{scan}$ \eqref{eq:scanstatistic} which maximizes the signed triangle count over all subsets of size $k^-\approx k$. Under $\calP$, the planted community has a large signed triangle count with high probability by locally applying the same Chebyshev argument earlier. 
The new challenge lies in analyzing the error under $\calQ$: showing that no subset under $\calQ$ has an exceedingly large signed triangle count. 
In order to dominate a factor of $\binom{n}{k^-}$ from the union bound, we use a strong concentration inequality for polynomials of subgaussian variables (Lemma~\ref{lem:ertriconc}).
Altogether, we obtain the threshold $d = \widetilde{O}(k^2p^6)$ of Theorem~\ref{thm:ubscandense}. As noted earlier, for any fixed $p$ the global test and the scan test together characterize the optimal detection threshold up to a logarithmic factor. %

However, the scan test is not enough when $p$ vanishes. This is because the \ER background
can produce large signed triangle counts from its dense fluctuations without any geometric structure. This is not merely an artefact of our analysis but indeed a fundamental barrier; see Remark~\ref{remark:ubgeneralrole}.
Our approach is to characterize those anomalous subsets (roughly behaving as cliques) as having large signed wedge sums $W_{ij} := \sum_{\ell<i}(G_{\ell i}-p)(G_{\ell j}-p)$ for many pairs.
In particular, the constraints on $\sum_{i<j}W_{ij}^2$ and $\max_{i<j}|W_{ij}|$ (see \eqref{eq:constrainedscantestvar} and~\eqref{eq:constrainedscantestrange}) for the constrained scan statistics $\widetilde{f}_{\scan}$ \eqref{eq:constscanstatistic} play distinct roles: 
the former controls the variance in a Bernstein-type concentration argument, whereas the latter filters the aforementioned dense, clique-like patches
that produce large signed wedge counts.

A key technical step in those arguments is to verify that under $\calP$, the planted community lies in the constraint set $\calC(G)$ with high probability (Lemma~\ref{lem:ubgeneralrggwhp}). This amounts to showing that the signed wedge counts within the community concentrate around their typical values. For the constraint on $\sum_{i<j}W_{ij}^2$ (see~\eqref{eq:constrainedscantestvar}), this is done through a fine-grained control over the mean and the variance of small subgraphs formed by the wedges; for the constraint on $\max_{i<j}|W_{ij}|$ (see~\eqref{eq:constrainedscantestrange}), this is established via logarithmic-order moment bounds %
for the wedges. 
The rest of the argument is conceptually similar to those for the scan test, and we obtain the threshold $d = \widetilde{O}(k^2p^3)$ of Theorem~\ref{thm:ubscangeneral} which strictly improves upon the scan test for $p = \widetilde{o}(1)$.

\paragraph*{Information-theoretic lower bound.}
Establishing lower bounds for distinguishing $\calP$ and $\calQ$ is considerably more delicate, due to the strong 
dependencies among edges in the random geometric graph. 
Even in the classical model $\calG(n,p,d)$, the optimal detection threshold remains unresolved: the best known lower bound exhibits a polynomial gap in $p$ from the conjectured optimal upper bound~\cite{liu2022rgg}. In our localized setting---which is strictly more general---we derive lower bounds that match and generalize the strongest results currently available in that simpler model. We approach proving the lower bound in two complementary ways (see Remark~\ref{remark:lbcomparison}), each capturing a different aspect of the detection threshold.

The first approach captures dependence on the average community size $k$ and is particularly effective in the dense regime. Lower bounds for models with hidden structure are often obtained via the second moment of the likelihood ratio (i.e., the Ingster--Suslina method~\cite{ingster2003nonparametric}). In our setting, this calculation becomes tractable after viewing $\calP$ and $\calQ$ as pushforwards of classical random matrix ensembles, namely the Wishart distribution and the Gaussian Orthogonal Ensemble (GOE). While comparisons between these ensembles have appeared in previous works~\cite{bubeck2016rgg, bubeck2018entropicclt, raczrichey}, these are done at the levels of TV distance or KL divergence which are weaker than $\chi^2$-divergence as done here.
Furthermore,
a direct second-moment argument fails in our model, as %
contributions from rare configurations dominate. 
Our analysis is based on a truncation argument on the spectrum that only preserves the relevant bulk behavior; the second moment is then bounded by a tractable function of the overlap between two i.i.d. communities.

A further challenge introduced by the community structure for this approach is to compare the Wishart distribution against its spherical variant~\cite{paquette2021random}.
This arises from the fact that the community edges are generated by thresholding a normalized version of a Wishart matrix. 
The comparison here is subtle: only the strictly upper triangular entries should be considered,\footnote{The diagonal entries of a Wishart matrix have a continuous density, whereas those of a spherical Wishart matrix have a Dirac measure, implying that the TV distance between those is trivially equal to $1$.} and certain na\"ive comparisons are strictly suboptimal.\footnote{For example, comparing two matrices entrywise and invoking union bound requires $d \gg k^4$, and comparing both matrices with GOE requires $d \gg k^3$ \cite{jiang2015wishart, bubeck2016rgg}.} By directly analyzing their respective densities as well as the spectral properties, we
show those two distributions are indistinguishable when $d \gg k^2$ (Proposition~\ref{prop:sphwisandwis}). Notably, this identifies a regime where they are asymptotically equivalent to each other but are distinct from GOE---which may be of independent interest. Taken together with the second moment analysis earlier, this refined matrix comparison yields the optimal detection lower bound in the dense regime (Theorem~\ref{thm:lbdense}) up to a logarithmic factor. 

The second approach captures dependence on the edge density $p$, as the first lower bound is essentially independent of $p$. Following the strategies of~\cite{liu2022rgg, liuracz} for the full model (i.e., $k = n$), 
our starting point is to view
the random graph as generated by revealing one vertex at a time
and then bound the TV distance via the tensorization property of KL divergence. In particular, we extend the earlier strategies %
to our setting by incorporating the decision of community membership into the sequential process: the latent variable for each vertex is %
augmented to jointly encode both its feature vector $U_i$ and its community membership indicator $V_i$.  The key leverage comes from the average-case structure: each vertex belongs to the community with probability $k/n$, and conditional on the community membership, only $O(k)$ previously revealed vertices are relevant. These features are captured through a careful truncation argument, yielding the $p$-dependent lower bound of Theorem~\ref{thm:lbgeneral} that generalizes the $k=n$ case. 

An important technical step is to bound the squared likelihood ratio between the neighborhood distributions under $\calP$ (conditioned on the latents) and $\calQ$. This was previously done only under $d = \widetilde{\Omega}(k^3p^3)$, which is sufficient for the $k = n$ case but falls short for our more general setting. Through a new analysis, we improve this condition to $d = \widetilde{\Omega}(k^2p^2)$.
The main difference lies in the strategy for using the martingale structure of the likelihood ratio. By leveraging the concentration property of each martingale difference, we build a pair of recursive inequalities that directly characterizes the second moment and is tighter than a bound that is only based on the concentration of the martingale itself \cite{liu2022rgg}.

\paragraph*{Computational lower bound.}
We provide evidence that no polynomial-time algorithm may succeed beyond the regime of the global test, based on the low-degree polynomial framework~\cite{hopkins2018statistical}; see Sections~\ref{sec:priorwork} and~\ref{subsec:maincomp} for background. In our setting, controlling all low-degree polynomials reduces to bounding the squared Fourier coefficients $\Phi_\calP(H)^2$ summed over all subgraphs $H$ of polylogarithmic size. 
Our analysis is based on two key ideas: for forests, the contribution is zero by Corollary~\ref{cor:treesimple}; for the remaining subgraphs, a general-purpose moment bound (from~\cite{bangachev2024fourier}; Lemma~\ref{lem:rggfourier}) gives an upper bound depending only on $v(H)$ and $e(H)$. 
Summing over all possible combinations of $(v(H), e(H))$ then establishes the result (Theorem~\ref{thm:lowdeglb}).

A related question is whether any efficient test other than the global signed triangle count can match its performance. We show that any longer signed cycle count performs strictly worse (Proposition~\ref{prop:signedcycle}). This follows from a tight estimate for the expectation of the signed cycle count of any length (Proposition~\ref{prop:signedcyclecount}); this result may be of independent interest, and earlier we used the result for the $4$-cycle in order to analyze our tests. The main idea is to expand the threshold function $\bm{1}\{\langle U_i, U_j\rangle \geq \tau(p,d)\}$ in the orthonormal basis of spherical harmonics and Gegenbauer polynomials. By orthonormality, the expectation of the signed cycle count reduces to an infinite sum $\sum_{m=1}^\infty c_m^\ell / N_m^{\ell/2-1}$, where $c_m$ are the Gegenbauer coefficients of the threshold function and $N_m$ are the multiplicities of the spherical harmonics. This sum is dominated by the $m=1$ term, resulting in the tight estimate $p^\ell \log^{\ell/2}(1/p) / d^{\ell/2-1}$ up to multiplicative factors of $(1/C)^\ell$ and $C^\ell$ for a constant $C$. %
Using this to compare the mean and the variance of signed cycle counts, we show that the $\ell=3$ case covers the widest regime, suggesting that the global signed triangle count is the asymptotically optimal test among all signed cycle counts.

\section{Proofs for information-theoretic upper bound}\label{sec:ub}
In this section, we prove the upper bounds presented in Section~\ref{subsec:mainub}. 

Before presenting proofs, we review some elementary facts of the model $\calP = \calG(n, p, d, k)$.
First, we can write $\calP$ as a mixture
\begin{equation}\label{eq:PandPS}
    \calP = \E_S [\calP_S]\,, 
\end{equation}
where for each fixed $S \subseteq [n]$, $\calP_S$ is a distribution drawn as in Step 3 in Definition~\ref{def:planteddist}. 

Instead of drawing a community of variable size $\Binom(n, k/n)$, it is often useful to consider a fixed-size variant.  
For integer $0 \leq s \leq n$, define
\begin{equation}\label{eq:fixedsizevariant}
    \calP'_s := \E_{S \big| |S| = s} [\calP_S]\,.
\end{equation}
Note that after conditioning on $|S| = s$, $S$ is chosen uniformly at random from $\binom{[n]}{s}$. The size $s$ can be considered to be around $k$ in the following sense: for 
\[k^- := \lfloor 0.9k\rfloor, \quad k^+ := \lceil 1.1k \rceil\,,
\]
any event $E$ that only depends on $G$ satisfies 
\begin{equation}\label{eq:whpcommunitysize}
\begin{aligned}
    \Prob_{G \sim \calP}(E) &\leq \Prob(|S| \notin [k^-, k^+]) + \max_{s \in [k^-, k^+]}\Prob_{G \sim \calP'_s}(E) \\
    &\leq 2\exp(-\Omega(k)) + \max_{s \in [k^-, k^+]}\Prob_{G \sim \calP'_s}(E)\,, 
\end{aligned}
\end{equation}
where the second inequality is by applying Chernoff bound.

The key quantities throughout our analysis are the expectations of signed subgraph counts. For the \ER graph $\calQ$, they are always $0$ by independence. For $\calP = \calG(n, p, d, k)$, we first note that they are directly related to the expectations under $\calG(n, p, d)$ in the following sense.

\begin{lemma}\label{lem:fouriersimple} Let $H$ be a subgraph of $K_n$ without isolated vertices. Then, 
\[\E_{G \sim \calP}\left[\prod_{ij \in E(H)}(G_{ij}-p)\right] = \left(\frac{k}{n}\right)^{v(H)}\E_{G \sim \calG(n, p, d)}\left[\prod_{ij \in E(H)}(G_{ij}-p)\right]\,.
\]\
\end{lemma}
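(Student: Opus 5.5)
We condition on the community $S$ using the mixture representation \eqref{eq:PandPS}, $\calP=\E_S[\calP_S]$, and compute $\E_{G\sim\calP_S}\bigl[\prod_{ij\in E(H)}(G_{ij}-p)\bigr]$ for each fixed $S$. Two cases arise, according to whether $V(H)\subseteq S$.

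\emph{Case 1: $V(H)\not\subseteq S$.} Pick a vertex $v\in V(H)\setminus S$. Since $H$ has no isolated vertices, $v$ has at least one incident edge $vw\in E(H)$. Under $\calP_S$, every edge with an endpoint outside $S$ is an independent $\Bern(p)$ variable. It is also independent of all other edges, both the geometric edges inside $S$ and the other Erd\H{o}s--R\'enyi edges. Hence
\[
\prod_{ij\in E(H)}(G_{ij}-p)=(G_{vw}-p)\cdot Z,
\]
where $Z$ is a function of edges independent of $G_{vw}$. Since $\E[G_{vw}-p]=0$, the whole expectation vanishes. (We could take the product of all factors touching vertices outside $S$, but a single centered factor already suffices.)

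\emph{Case 2: $V(H)\subseteq S$.} All edges of $H$ lie inside $S$ and are determined by $\bm 1\{\langle U_i,U_j\rangle\ge\tau(p,d)\}$, with the $U_i$ i.i.d.\ uniform on $\mathbb{S}^{d-1}$. The joint law of $(G_{ij})_{ij\in E(H)}$ is therefore exactly the law of the same edges under $\calG(n,p,d)$, because that law depends only on the latent vectors indexed by $V(H)$. The expectation thus equals $\E_{G\sim\calG(n,p,d)}\bigl[\prod_{ij\in E(H)}(G_{ij}-p)\bigr]$, independent of $S$.

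Averaging over $S$, whose vertices are included independently with probability $k/n$, gives $\Prob(V(H)\subseteq S)=(k/n)^{v(H)}$, which yields the claim. The only point needing care is the independence in Case 1: the edge $vw$ must be independent of everything else even though the other edges in $H$ may be dependent among themselves. This holds directly from Step 3 of Definition~\ref{def:planteddist}, since non-community edges are sampled independently of the latent vectors. No step here is a real obstacle.
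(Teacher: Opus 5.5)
Your proof is correct and follows essentially the same route as the paper: condition on $S$ via $\calP=\E_S[\calP_S]$, observe that a centered edge with an endpoint outside $S$ is independent of the rest and kills the expectation, note that for $V(H)\subseteq S$ the law agrees with $\calG(n,p,d)$, and multiply by $\Prob(V(H)\subseteq S)=(k/n)^{v(H)}$. You also make explicit where the no-isolated-vertices hypothesis is used, namely to guarantee that a vertex of $H$ outside $S$ has an incident edge; the paper leaves this implicit.
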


\begin{proof}
We have
\[\E_{G \sim \calP}\left[\prod_{ij \in E(H)}(G_{ij} - p)\right] 
    = \E_S\left[\E_{G \sim \calP_S}\left[\prod_{ij \in E(H)}(G_{ij} - p)\right]\right]\,.
\]
If $V(H) \not\subseteq S$, the inner expectation is $0$ since the centered factor for any edge leaving $S$ has mean $0$ and is independent of the rest. 
If $V(H) \subseteq S$, the inner expectation equals 
$\E_{G \sim \calG(n,p,d)}[\prod_{ij\in E(H)}(G_{ij}-p)]$ since $\calP_S$ 
agrees with $\calG(n,p,d)$ on $V(H)$. Since $\Prob(V(H)\subseteq S) = (k/n)^{v(H)}$, 
the result follows. 
\end{proof}
\begin{corollary}\label{cor:treesimple} Let $H$ be a subgraph of $K_n$ that is a forest. Then $\E_{G \sim \calP}\left[\prod_{ij \in E(H)}(G_{ij}-p)\right] = 0$, or equivalently, $\E_{G \sim \calP}\left[\prod_{ij \in E(H)}G_{ij}\right] = p^{e(H)}$.    
\end{corollary}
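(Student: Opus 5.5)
By Lemma~\ref{lem:fouriersimple}, it suffices to show that for a forest $H$ without isolated vertices, $\E_{G \sim \calG(n,p,d)}[\prod_{ij\in E(H)}(G_{ij}-p)] = 0$. If $H$ has isolated vertices, we first delete them. This changes neither the edge set nor the product, and forests stay forests. The equivalent formulation will then follow by expanding $\prod_{ij}G_{ij} = \prod_{ij}((G_{ij}-p)+p)$ into a sum over edge subsets $F\subseteq E(H)$. Each subgraph on $F$ is again a forest, so every term with $F\neq\emptyset$ vanishes, and only $p^{e(H)}$ remains.

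The core claim is proved by induction on $e(H)$, peeling off a leaf. Let $v$ be a leaf of $H$ with unique neighbor $u$. All other edges of $H$ avoid $v$, so their product depends only on $\{U_w : w\neq v\}$. Conditioning on these vectors, the remaining factor is $G_{uv}-p = \bm{1}\{\langle U_u,U_v\rangle\geq\tau(p,d)\}-p$, with $U_v$ uniform on $\mathbb{S}^{d-1}$ and independent of the conditioned vectors.

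The key observation is rotational invariance. For any fixed unit vector $x$, $\langle x, U_v\rangle$ has the same law as $\langle U_1,U_2\rangle$. Hence $\Prob(\langle U_u,U_v\rangle\geq\tau \mid U_u)=p$ exactly, by the choice of $\tau(p,d)$. The conditional expectation of the leaf factor is therefore $0$, and so the whole expectation is $0$. This already handles every forest with at least one edge, without needing a formal induction. Equivalently, one can peel leaves successively to reach the multiplicative form directly.

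There is no real obstacle here. The only point requiring care is that the leaf factor's conditional mean vanishes identically in $U_u$, not merely on average; this is exactly what the exact rotational invariance of the Haar measure provides.
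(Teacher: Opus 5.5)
Your proof is correct and follows the paper's route. The paper states this as an immediate corollary of Lemma~\ref{lem:fouriersimple}: reduce to $\calG(n,p,d)$, where signed forest counts vanish because the leaf factor has conditional mean zero by rotational invariance, exactly as you argue (you also correctly restrict the first form to $e(H)\ge 1$, where it is needed).
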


Among different subgraphs, the triangle ($K_3$) plays a central role through our analysis. While the expectation of signed triangle is $0$ under $\calQ$, it is strictly positive\footnote{The reference in Lemma \ref{lem:signedtrianglelb} assumes $p \in (0, 0.49)$; for $p \in [0.49, 0.5]$, one can apply Proposition~\ref{prop:signedcyclecount}.} under $\calG(n, p, d)$ and thus also under $\calP$.
\begin{lemma}[{\cite[Lemma 1]{bubeck2016rgg} and \cite[Claim A.2]{liu2022rgg}}]\label{lem:signedtrianglelb} There exists a universal constant $C_{\ref{lem:signedtrianglelb}} > 0 $ such that for all sufficiently large $d$ and $0 < p \leq 1/2$,
\[\E_{G \sim \calG(n, p, d)}\left[(G_{12}-p)(G_{23}-p)(G_{13}-p)\right] \geq \frac{p^3 \log^{3/2}(1/p)}{C_{\ref{lem:signedtrianglelb}}\sqrt{d}}\,.
\]
\end{lemma}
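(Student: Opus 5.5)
The statement is Lemma~\ref{lem:signedtrianglelb}, which is quoted from \cite{bubeck2016rgg} and \cite{liu2022rgg}. I would nonetheless reprove it directly by conditioning on one vertex pair. Write $X_{ij} = \langle U_i, U_j\rangle$ and $\tau = \tau(p,d)$.

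\textbf{Step 1: reduce to one conditional probability.} Condition on $U_1, U_2$ and set $\rho = X_{12}$. By rotational invariance, $G_{13}$ and $G_{23}$ depend on $U_3$ only through its projections onto $U_1$ and $U_2$. Since each has marginal mean $p$ and they are conditionally independent of $G_{12}$ given $\rho$, expanding the product gives
\[
\E[(G_{12}-p)(G_{23}-p)(G_{13}-p)] = \E\big[(G_{12}-p)\,(q(\rho)-p^2)\big], \qquad q(\rho) := \Prob(X_{13}\ge\tau,\,X_{23}\ge\tau \mid X_{12}=\rho).
\]
Here $q(\rho)$ is increasing in $\rho$: larger overlap makes the two caps intersect more.

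\textbf{Step 2: use monotonicity.} Since $\E[G_{12}-p]=0$, we may subtract the constant $q(\tau)-p^2$ inside the expectation. This yields
\[
\E\big[(G_{12}-p)(q(\rho)-q(\tau))\big].
\]
The integrand is pointwise nonnegative: for $\rho\ge\tau$ both factors are nonnegative, and for $\rho<\tau$ both are nonpositive. Dropping the region $\rho<\tau$, the expectation is at least
\[
(1-p)\,\Prob(\rho\ge\tau)\,\E[q(\rho)-q(\tau)\mid \rho\ge\tau] \;\ge\; \tfrac{p}{2}\,\E[q(\rho)-q(\tau)\mid \rho\ge\tau].
\]

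\textbf{Step 3: quantitative cap-intersection estimate (main obstacle).} For large $d$, the pair $(\sqrt d X_{13}, \sqrt d X_{23})$ is approximately a bivariate Gaussian with correlation $\rho$. Then $\sqrt d\,\tau \approx t := \Phi^{-1}(1-p) \asymp \sqrt{\log(1/p)}$. Conditional on $\rho\ge\tau$, a Gaussian tail computation shows $\rho-\tau \gtrsim 1/(t\sqrt d)$ with constant probability, so $\rho \gtrsim t/\sqrt d$.

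The key estimate is the derivative of the bivariate normal orthant probability,
\[
\frac{\partial}{\partial r}\,\Prob(Z_1\ge t, Z_2\ge t) = \phi_2(t,t;r) \approx \phi(t)^2 \asymp p^2 t^2 .
\]
Combined with $\rho \gtrsim t/\sqrt d$ (and $\tau\asymp t/\sqrt d$), this gives $q(\rho)-q(\tau) \gtrsim p^2 t^2 (\rho-\tau)$. Choosing the event $\rho-\tau\gtrsim t/\sqrt d$, which has probability at least $\exp(-O(1))$ given $\rho\ge\tau$ only up to constants, needs care. Alternatively one can use the average: $\E[\rho \mid \rho\ge\tau] - \tau \approx 1/(t\sqrt d)$. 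The cleaner route is to bound $q(\rho)-p^2 \gtrsim p^2 t^2 \rho$ directly, via $\E[(G_{12}-p)\rho]$ and $\E[\rho\,\mathbf 1\{\rho\ge\tau\}] \approx p\,t/\sqrt d$. This gives a total of order $p\cdot p^2 t^2 \cdot t/\sqrt d = p^3\log^{3/2}(1/p)/\sqrt d$.

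The hard part is making the Gaussian approximation rigorous uniformly in small $p$. This requires $d \gtrsim \mathrm{polylog}(1/p)$, which is why the paper assumes large $d$. I would handle it with explicit density comparisons for the sphere marginals $(1-x^2)^{(d-3)/2}$ on the range $|x|\lesssim \sqrt{\log(1/p)/d}$. For $p\in[0.49,1/2]$ I would instead invoke Proposition~\ref{prop:signedcyclecount} with $\ell=3$.
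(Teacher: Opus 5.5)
Your Steps 1 and 2 are valid, but the inequality in Step 2 throws away the dominant part of the signal. As a result, no estimate in Step 3 can recover the factor $\log^{3/2}(1/p)$. The loss is harmless for fixed $p$, but that $p$-dependence is the whole point of the lemma.

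To see the loss, use your own heuristic: $\sqrt d\rho\approx Z\sim\calN(0,1)$, $\sqrt d\tau\approx t$, and $q(\rho)\approx p^2+\phi(t)^2\rho$. For small $p$, the two regions of $\E[(G_{12}-p)(q(\rho)-q(\tau))]$ contribute
\[
(1-p)\,\phi(t)^2\,\frac{\E[(Z-t)_+]}{\sqrt d}\asymp\frac{\phi(t)^3}{t^2\sqrt d}\quad\text{on }\{\rho\ge\tau\},\qquad p\,\phi(t)^2\,\frac{\E[(t-Z)_+]}{\sqrt d}\asymp\frac{\phi(t)^3}{\sqrt d}\quad\text{on }\{\rho<\tau\}.
\]
Since $\phi(t)\asymp p\sqrt{\log(1/p)}$ and $t^2\asymp\log(1/p)$, what survives Step 2 is only of order $p^3\log^{1/2}(1/p)/\sqrt d$. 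In this decomposition the signal lives on $\{\rho<\tau\}$, where $q(\rho)$ averages to about $p^2$, well below $q(\tau)\approx p^2+\phi(t)^2\tau$.

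This is also why Step 3 stalls. Given $\rho\ge\tau$, the overshoot $\rho-\tau$ is of order $1/(t\sqrt d)$. The event $\rho-\tau\gtrsim t/\sqrt d$ has conditional probability polynomially small in $p$, not $e^{-O(1)}$.

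Your ``cleaner route'' abandons Step 2. As written, it integrates a one-sided bound $q(\rho)-p^2\gtrsim p^2t^2\rho$ against $G_{12}-p$, which is negative on $\{\rho<\tau\}$, so it is not justified. The right reduction is simpler. Since $\E[q(\rho)]=\Prob(G_{13}=G_{23}=1)=p^2$, your Step 1 quantity equals $\E[\mathbf 1\{\rho\ge\tau\}(q(\rho)-p^2)]$. By monotonicity this is at least $p\,(q(\tau)-p^2)$, with no loss in order.

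The lemma is then exactly the two-cap estimate $q(\tau)-p^2\gtrsim p^2\log^{3/2}(1/p)/\sqrt d$, uniformly in $p$. Your proposal leaves this at the level of a bivariate-Gaussian heuristic, and it is the real gap. One-dimensional density comparisons for $(1-x^2)^{(d-3)/2}$ do not reach it until you reduce the bivariate event to a one-dimensional one. One way:
\begin{itemize}
\item Write $U_2=\tau U_1+\sqrt{1-\tau^2}\,W$ and decompose $U_3$ along $U_1$.
\item On $\{a:=\iprod{U_3}{U_1}\ge\tau\}$, the threshold for the normalized component of $U_3$ along $W$ (which is independent of $a$) drops by about $\tau a\ge\tau^2\asymp\log(1/p)/d$.
\item The spherical tail ratio at $\sqrt d\,\tau\asymp\sqrt{\log(1/p)}$ turns this shift into a factor $1+c\log^{3/2}(1/p)/\sqrt d$.
\item The lower-order corrections (the factor $\sqrt{(1-\tau^2)(1-a^2)}$ and the passage from $\mathbb S^{d-1}$ to $\mathbb S^{d-2}$) must be controlled uniformly in small $p$.
\end{itemize}

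For comparison, the paper does not reprove the lemma. It cites \cite[Lemma 1]{bubeck2016rgg} and \cite[Claim A.2]{liu2022rgg}, and Proposition~\ref{prop:signedcyclecount} with $\ell=3$ also gives it when $d\ge(5\log(1/p))^4$. In your notation, $q(\rho)=\sum_{m\ge0}c_m^2N_m^{-1/2}q_m(\rho)$, so that proposition reads $\E[\mathbf 1\{\rho\ge\tau\}(q(\rho)-p^2)]=\sum_{m\ge1}c_m^3/\sqrt{N_m}$. Its $m=1$ term $c_1^3/\sqrt d$ is your linear heuristic $q(\rho)-p^2\approx c_1^2\rho$ made exact, and the $m\ge2$ terms are shown to be negligible.
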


\subsection{Global test}\label{subsec:signedtrianglecount}
In this subsection, we investigate the global test, proving Theorem~\ref{thm:ubsignedtrianglecount}.

Recall the definition of $f_{\mathrm{tri}}(G) = \sum_{i < j < \ell \in [n] }(G_{ij}-p)(G_{j\ell}-p)(G_{i\ell}-p)$. We will calculate the first and second moments of $f_{\mathrm{tri}}(G)$ under $\calP$ and $\calQ$, in order to use Chebyshev's inequality. A straightforward calculation (see, for example, \cite[Lemma A.1]{liu2022rgg}) yields
    \begin{align*}
        \E_{G \sim \calQ}[f_{\mathrm{tri}}(G)] &= 0\,, \\
        \Var_{G \sim \calQ}[f_{\mathrm{tri}}(G)] &= \binom{n}{3}p^3(1-p)^3\,.
    \end{align*}

\paragraph*{Type I error.} We have
\[\Prob_{G \sim \calQ}(f_{\tri}(G) > \gamma_{\tri}) \leq \frac{\Var_{G \sim \calQ}[f_{\tri}(G)]}{\gamma_{\tri}^2}\,,
\]
where by Lemmas~\ref{lem:fouriersimple} and \ref{lem:signedtrianglelb},
\[\gamma_{\tri} = \frac{1}{2}\E_{G \sim \calP}[f_{\tri}(G)] \geq \frac{1}{2} \times  \binom{n}{3} \left( \frac{k}{n}\right)^3  \frac{p^3 \log^{3/2}(1/p)}{C_{\ref{lem:signedtrianglelb}}\sqrt{d}} = \Omega\left(\frac{k^3p^3\log^{3/2}(1/p)}{\sqrt{d}}\right)\,.
\]
Thus, 
\[\frac{\Var_{G \sim \calQ}[f_{\tri}(G)]}{\gamma_{\tri}^2} = O\left(\frac{dn^3}{k^6p^3\log^3{(1/p)}}\right)\,,
\]
where the last term is $o(1)$ if $d \ll \frac{k^6p^3}{n^3}\log^3(1/p)$.

\paragraph*{Type II error.} Similar to the type I error, from $2\gamma_{\tri} = \E_{G \sim \calP}[f_{\tri}(G)]$ we have
\[\Prob_{G \sim \calP}(f_{\tri}(G) \leq  \gamma_{\tri}) = \Prob_{G \sim \calP}(f_{\tri}(G) - \E_{G \sim \calP}[f_{\tri}(G)] \leq -\gamma_{\tri}) \leq   \frac{\Var_{G \sim \calP}[f_{\tri}(G)]}{\gamma_{\tri}^2}\,.
\]
Thus, it suffices to control the variance of the global signed triangle count under $\calP$.  We state this as a lemma, as it will be used elsewhere. %

    \begin{lemma}\label{lem:signedtrianglecountvar} If $1/k \leq p \leq 1/2$ and $d$ is sufficiently large with  $(5\log(1/p))^4 \leq d \ll k^6p^3 \log^3(1/p) / n^3$, then
        \[\gamma_{\tri} \gg \sqrt{\Var_{G \sim \calP}[f_{\tri}(G)]}\,.
        \]
    \end{lemma}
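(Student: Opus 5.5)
We expand $\Var_{G\sim\calP}[f_{\tri}(G)]$ as a sum over ordered pairs of triangles $(T_1,T_2)$ in $K_n$ and group the pairs by their overlap. Writing $X_T:=\prod_{ij\in E(T)}(G_{ij}-p)$, we have
\[
\Var_{\calP}[f_{\tri}]=\sum_{T_1,T_2}\Bigl(\E_{\calP}[X_{T_1}X_{T_2}]-\E_{\calP}[X_{T_1}]\E_{\calP}[X_{T_2}]\Bigr).
\]
We split into four cases: $T_1,T_2$ share no vertex, share exactly one vertex, share exactly one edge, or coincide.

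\textbf{Disjoint and one-vertex overlaps.} If $T_1,T_2$ are vertex-disjoint, then under $\calP_S$ and conditioned on $S$ the two products are independent, since they depend on disjoint sets of latent vectors and edges. The covariance is therefore only the covariance of the conditional means over $S$. By the proof of Lemma~\ref{lem:fouriersimple}, the conditional mean is $\mathbf 1\{V(T)\subseteq S\}\cdot t$ with $t:=\E_{\calG(n,p,d)}[X_{K_3}]$. Since the membership events $\{V(T_1)\subseteq S\}$ and $\{V(T_2)\subseteq S\}$ are independent for disjoint vertex sets, the covariance is $0$. When $T_1,T_2$ share exactly one vertex, the union is a bowtie. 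Its expectation under $\calP$ is $(k/n)^5$ times the bowtie expectation under $\calG(n,p,d)$. Conditioning on the shared latent vector $U_v$, the two triangles become independent, and by rotational invariance each conditional mean equals $t$. So the bowtie expectation is $t^2$, and the covariance is $(k/n)^5t^2-(k/n)^6t^2\ge0$. There are $O(n^5)$ such pairs, giving a contribution of $O(k^5t^2/n)$. Compared with $\gamma_{\tri}^2=\Theta(k^6t^2)$, this is $O(1/k)\gamma_{\tri}^2=o(\gamma_{\tri}^2)$.

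\textbf{Shared-edge and identical pairs.} For identical triangles, $\E[X_T^2]\le\E[\prod(G_{ij}-p)^2]$, which is $O(p^3)$ up to the correction terms in $t$. In any case it is at most $O(p^3)$ by expanding $(G_{ij}-p)^2=(1-2p)(G_{ij}-p)+p(1-p)$ and using Lemma~\ref{lem:fouriersimple} together with Corollary~\ref{cor:treesimple}. This contributes $O(n^3p^3)$, matching the null variance, and $n^3p^3/\gamma_{\tri}^2=O(dn^3/(k^6p^3\log^3(1/p)))=o(1)$ under the hypothesis. For a shared edge $ij$, the union is a 4-cycle with a chord. We expand $(G_{ij}-p)^2=(1-2p)(G_{ij}-p)+p(1-p)$, which gives
\[
\E_{\calP}[X_{T_1}X_{T_2}]=(1-2p)\,\E_{\calP}[X_{C_4\cup\text{chord}}]+p(1-p)\,\E_{\calP}[X_{C_4}].
\]
Both terms carry the factor $(k/n)^4$, and there are $O(n^4)$ such pairs. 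We bound the $C_4$ term using Proposition~\ref{prop:signedcyclecount} with $\ell=4$, which gives $\E_{\calG(n,p,d)}[X_{C_4}]\lesssim p^4\log^2(1/p)/d$. The chorded term we bound by $t\cdot p$-type estimates: condition on the chord and use Cauchy--Schwarz, or apply the spherical-harmonic expansion, to get $O(p^4\log^2(1/p)/d)$ as well. The total is then $O(k^4p^5\log^2(1/p)/d)$. Dividing by $\gamma_{\tri}^2\gtrsim k^6p^6\log^3(1/p)/d$ gives
\[
O\bigl(1/(k^2p\log(1/p))\bigr)=o(1),
\]
using $p\ge1/k$.

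\textbf{Main obstacle.} The delicate step is the chorded 4-cycle (the diamond) expectation. It is not directly covered by the cycle estimate, so it needs either a separate Gegenbauer computation or a crude bound such as $|\E[X_{\text{diamond}}]|\le \E[|X_{C_4}|]$-type estimates. A crude bound of order $p^5$ would contribute $k^4p^5/(k^6p^6/d)=d/(k^2p)$, which is too weak. I would therefore try conditioning on $U_i,U_j$: the two remaining vertices then contribute independent conditional signed wedges, each of size $O(p^2\sqrt{\log(1/p)/d}\,)$ times a factor depending on $\langle U_i,U_j\rangle$. Multiplying by $(G_{ij}-p)$ and averaging should give $O(p^{5}\log^2(1/p)/d)$.
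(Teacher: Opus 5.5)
Your proposal is correct and follows the paper's argument. It uses the same overlap decomposition: disjoint pairs contribute $0$, identical pairs $O(n^3p^3)$, and bowties $O(k^5t^2)$ (not the $O(k^5t^2/n)$ you wrote). Lemma~\ref{lem:fouriersimple} supplies the $(k/n)^{v(H)}$ factors, and Proposition~\ref{prop:signedcyclecount} at $\ell=4$ controls the shared-edge pairs.

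The diamond is not a real obstacle, and no extra factor of $p$ is needed. Write $W:=\E[(G_{ia}-p)(G_{ja}-p)\mid U_i,U_j]$ and condition on $U_i,U_j$, not merely on $G_{ij}$. Under $\calG(n,p,d)$ this gives directly $\E[X_{T_1}X_{T_2}]=\E[(G_{ij}-p)^2W^2]\le\E[W^2]=\E[X_{C_4}]$, with no need to expand $(G_{ij}-p)^2$. So the shared-edge total is $O(k^4p^4\log^2(1/p)/d)$, not the $O(k^4p^5\log^2(1/p)/d)$ you wrote. The resulting ratio $O\bigl(1/(k^2p^2\log(1/p))\bigr)$ is already $o(1)$ for $p\ge1/k$, exactly as in the paper.
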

    The proof of Lemma~\ref{lem:signedtrianglecountvar} is deferred to Appendix~\ref{appendix:signedtrianglecountvar}. By the lemma, the type II error is $o(1)$ and this completes the proof.

\subsection{Scan test}

Here, we analyze the performance of the scan test and prove Theorem~\ref{thm:ubscandense}.

        \paragraph{Type I error.} For $A \subseteq [n]$, let $f_A(G) := \sum_{i < j < \ell \in A}(G_{ij}-p)(G_{j \ell}-p)(G_{i \ell}-p)$. Then,
    \begin{align*}
        \Prob_{G \sim \calQ}(f_{\scan}(G) > \gamma_{\scan}) &= \Prob_{G \sim \calQ}\left(\max_{A \in \binom{[n]}{k^-}} f_A(G) > \gamma_{\scan}\right) \\
        &\leq \binom{n}{k^-}\Prob_{G \sim \calQ}\left(f_{[k^-]}(G)> \gamma_{\scan}\right) \\
        &\leq n^{k^-} \cdot 2\exp\left(-\frac{1}{C_{\ref{lem:ertriconc}}}\left(\frac{\gamma_{\scan}^2 \log^3(1/p)}{(k^-)^3} \wedge \frac{\gamma_{\scan} \log^{3/2}(1/p)}{\sqrt{k^-}} \wedge \gamma_{\scan}^{2/3} \log(1/p)\right)\right) \\
        &\leq \exp\left(k\log n - \Omega\left(\frac{k^3p^6\log^6(1/p)}{d} \wedge \frac{k^{5/2}p^3\log^3(1/p)}{\sqrt{d}} \wedge \frac{k^2p^2\log^2(1/p)}{d^{1/3}}\right)\right) \\
        & \leq  \exp\left(k\log n - \Omega\left(\frac{k^3p^6\log^6(1/p)}{d} \wedge \frac{k^2p^2\log^2(1/p)}{d^{1/3}}\right)\right) \,, 
    \end{align*}
    where the first inequality holds by the union bound, the second inequality holds by Lemma~\ref{lem:ertriconc}, the third inequality holds by Lemma~\ref{lem:signedtrianglelb}, and the last inequality holds by $d \leq k^3p^6\log^6(1/p)$ from the assumption. 
    \begin{itemize}
        \item If $d \geq k^{3/2}p^6\log^6(1/p)$, the first term $\frac{k^3p^6\log^6(1/p)}{d}$ is the minimum. Then $\Prob_{G \sim \calQ}(f_{\scan}(G) > \gamma_{\scan}) \leq \exp(-k\log n) = o(1)$ if $d \le \frac{k^2p^6\log^6(1/p)}{C_0 \log n}$, by choosing a sufficiently large $C_0>0$.
        \item If $d < k^{3/2}p^6\log^6(1/p)$, the second term $\frac{k^2p^2\log^2(1/p)}{d^{1/3}}$ is the minimum. Then $\Prob_{G \sim \calQ}(f_{\scan}(G) > \gamma_{\scan}) \leq \exp(-k\log n) = o(1)$ if $d \leq \frac{k^3p^6\log^6(1/p)}{C_1\log^3 n}$, by choosing a sufficiently large $C_1 > 0$. This is implied by the preceding inequality given $k \geq C_{\ref{thm:ubscandense}} \log^2 n$, for any constant $C_{\ref{thm:ubscandense}} > C_1/C_0$. %
    \end{itemize}
    Thus, any choice of $C_{\ref{thm:ubscandense}} > (C_1/C_0) \vee C_0 \vee C_1$ suffices for $\mathbb{P}_{G \sim \calQ}(f_{\scan}(G) > \gamma_{\scan}) = o(1)$.
    \paragraph{Type II error. } 
    By \prettyref{eq:whpcommunitysize} with event $E = \{f_{\scan}(G) \leq \gamma_{\scan}\}$, it suffices to show that 
    \[
    \max_{s \in [k^-, k^+]}\Prob_{ G\sim \calP'_s }\left(\max_{A \in \binom{[n]}{k^-}} f_A(G) \leq \gamma_{\scan} \right)= o(1)\,,
    \]
   where $\calP'_s$ is a mixture of $\calP_S$ with $S$ chosen uniformly over size-$s$ sets. 
     For any such $S$, there exists a subset $S_0 \subseteq S$ of size exactly $k^-$. Consider any deterministic rule for choosing such $S_0$ (\eg, the smallest $k^-$ elements of $S$). Since the maximum over all subsets is at least the value on this specific $S_0$, we have $\max_{A \in \binom{[n]}{k^-}} f_A(G) \geq f_{S_0}(G)$. Thus,
    \begin{equation}\label{eq:ubdensergg}
    \begin{aligned}
        \max_{s \in [k^{-}, k^+]} \Prob_{G \sim \calP'_s}(E) 
        &\leq \max_{|S| \in [k^{-}, k^+]} \Prob_{G \sim \calP_S}(f_{S_0}(G) \leq  \gamma_{\scan})\\
        &= \max_{|S| \in [k^{-}, k^+]} \Prob_{G \sim \calP_S}(f_{S_0}(G) \leq \E_{G\sim \calP_{S}}[f_{S_0}(G)] - \gamma_{\scan}) \\
        &\leq \frac{\Var_{G \sim \calP_{S_0}}[f_{S_0}(G)]}{\gamma_{\scan}^2}\,,
    \end{aligned}
    \end{equation}
    from $\E_{G \sim \calP_{S}}[f_{S_0}(G)] = 2\gamma_{\scan}$ and Chebyshev's inequality. For the last term of \eqref{eq:ubdensergg} being $o(1)$, 
    we apply Lemma~\ref{lem:signedtrianglecountvar} to $f_{S_0}$ by treating $G[S_0]$ as a stand-alone graph of size $k^-$ with a full community.
    
    The required conditions of Lemma~\ref{lem:signedtrianglecountvar} hold by the assumption of the theorem. The lower bound on $d$ clearly follows. The condition $p \geq 1/k^-$ follows from $1 \leq d = O(k^2p^6\log^6(1/p)/\log n) \leq O(k^2p^2/\log n)$  which implies $p = \Omega(\sqrt{\log n}/k)$; the condition $d \ll (k^-)^3p^3\log^3(1/p)$ follows from $k \gg 1/\log n = \Omega(p^3\log^3(1/p)/\log n)$ which implies $d = O(k^2p^6\log^6(1/p)/\log n) \ll k^3p^3\log^3(1/p)$.

\subsection{Constrained scan test}
Finally, we consider the constrained scan test and prove Theorem~\ref{thm:ubscangeneral}.

Recall (from the previous subsection) the notation $f_A(G) = \sum_{i < j < \ell \in A} (G_{ij}-p)(G_{j\ell}-p)(G_{i\ell}-p)$, for $A \subseteq [n]$. Here we first analyze the type II error, as the proof is conceptually simpler. 
\paragraph*{Type II error.}
We proceed similarly as in \eqref{eq:ubdensergg}; the difference here is in the new error event %
\[E = \left\{\widetilde{f}_{\scan}(G) \leq \gamma_{\scan}\right\} = \left\{\max\left\{ f_A(G): A \in \binom{[n]}{k^-}, \eqref{eq:constrainedscantestvar} \text{ and } \eqref{eq:constrainedscantestrange} \text{ hold} \right\} \leq \gamma_{\scan} \right\}\,,
\]
and from \eqref{eq:whpcommunitysize} it suffices to show that
\[\max_{s \in [k^-, k^+]}\Prob_{G \sim \calP'_s}(E) = o(1)\,.
\]
For each $S$ where $G \sim \calP_S$, consider a subset $S_0 \subseteq S$ with size $k^-$ that is chosen by a deterministic rule. The key claim here is that for $A = S_0$, \eqref{eq:constrainedscantestvar} and \eqref{eq:constrainedscantestrange} holds with high probability, implying that $\widetilde{f}_{\scan}(G) \geq f_{S_0}(G)$ under that event. Conceptually, this holds because the conditional variance (the left hand side of \eqref{eq:constrainedscantestvar}) and the maximum difference (the left hand side of \eqref{eq:constrainedscantestrange}) concentrate around their expectations within the planted community $S_0$, and the parameters $\sigma^2$ and $B$ are chosen to bound these typical values.

\begin{lemma}\label{lem:ubgeneralrggwhp} Let $S_0 \subseteq S \subseteq [n]$ be such that $|S| \in [k^-, k^+]$ and $|S_0| = k^-$. Also, let $E' = E'(S_0)$ be the event such that \eqref{eq:constrainedscantestvar} and \eqref{eq:constrainedscantestrange} hold for $A = S_0$. Then under the assumptions of Theorem~\ref{thm:ubscangeneral},
\[\Prob_{G \sim \calP_S}(E') = 1 - o(1)\,.
\]
The $o(1)$ term is universal over all choices of $S_0$ and $S$.
\end{lemma}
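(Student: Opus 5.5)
The plan is to treat the two constraints separately and combine them with a union bound. Under $\calP_S$ with $S_0\subseteq S$, the graph $G[S_0]$ is distributed as $\calG(k^-,p,d)$. So throughout we work with latent vectors $U_i$, $i\in S_0$, and write $W_{ij}=\sum_{\ell<i,\ell\in S_0}(G_{\ell i}-p)(G_{\ell j}-p)$. All bounds will depend only on $|S_0|=k^-$ and not on $S_0$ or $S$ themselves, which gives the claimed uniformity of the $o(1)$ term.

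\emph{Constraint \eqref{eq:constrainedscantestvar}.} Let $T:=\sum_{i<j\in S_0}W_{ij}^2$. I would first compute $\E[T]$ by expanding $W_{ij}^2$ into diagonal terms ($\ell=\ell'$) and off-diagonal terms ($\ell\neq\ell'$).
\begin{itemize}
\item For a diagonal term, write $(G-p)^2=(1-2p)(G-p)+p(1-p)$ and expand the product. The cross terms are signed forests (single edges and the wedge $\ell i,\ell j$), whose expectation is $0$ by Corollary~\ref{cor:treesimple}. Hence each diagonal term equals exactly $p^2(1-p)^2$, and their total is at most $\binom{k^-}{3}p^2\le k^3p^2/6$.
\item Each off-diagonal term is a signed $4$-cycle $\ell$--$i$--$\ell'$--$j$. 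By Proposition~\ref{prop:signedcyclecount}, its expectation is at most $C^4p^4\log^2(1/p)/d$. There are at most $k^4/12$ such terms.
\end{itemize}
Together these give $\E[T]\le \sigma^2/2$. It then suffices to show $\Var[T]=o(\sigma^4)$ and apply Chebyshev's inequality.

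Computing $\Var[T]$ is where I expect the main obstacle. I would expand $T^2$ as a sum over pairs of such terms, which are signed multigraphs on at most $8$ vertices, and group them by isomorphism type. The procedure has three steps.
\begin{enumerate}
\item Reduce repeated edges via $(G-p)^2=(1-2p)(G-p)+p(1-p)$, so each term becomes a combination of simple signed subgraphs.
\item Note that pairs of terms on disjoint vertex sets have independent latents and cancel against $\E[T]^2$. Terms whose reduced graph is a forest contribute zero by Corollary~\ref{cor:treesimple}.
\item Bound every remaining shape by a Fourier-type estimate (Lemma~\ref{lem:rggfourier}, or Proposition~\ref{prop:signedcyclecount} for cycles), and multiply by the count $k^{v}$ of vertex choices.
\end{enumerate}
Each surviving shape shares at least one vertex between its two factors, which costs a factor of $k$ relative to $\E[T]^2$. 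Using $p\ge n^{-1+\delta}$, $d\ge n^\delta$ and the upper bound on $d$ in Theorem~\ref{thm:ubscangeneral}, each shape should then contribute $o(k^6p^4+k^8p^8\log^4(1/p)/d^2)$. The delicate cases are shapes containing short cycles with doubled edges, where the $p$-powers are smallest. For those I would track the $p$-exponent carefully, using that a doubled edge contributes a factor $p$ rather than $p^2$.

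\emph{Constraint \eqref{eq:constrainedscantestrange}.} Here I would condition on $U_i,U_j$. Given these, the summands $X_\ell=(G_{\ell i}-p)(G_{\ell j}-p)$ for $\ell<i$ are i.i.d. over $U_\ell$, bounded by $1$, and have conditional variance at most $\Prob(G_{\ell i}G_{\ell j}=1\mid U_i,U_j)+p^2$. Their conditional mean is $\mu_{ij}=\Prob(G_{\ell i}=G_{\ell j}=1\mid U_i,U_j)-p^2$.
\begin{itemize}
\item With probability $1-o(k^{-2})$ we have $|\langle U_i,U_j\rangle|\le C\sqrt{\log k/d}$. On this event, a standard spherical-cap estimate (the same one behind Lemma~\ref{lem:signedtrianglelb}, valid because $d\ge n^\delta$ and $\tau\sqrt d=O(\sqrt{\log(1/p)})$) gives $\Prob(G_{\ell i}G_{\ell j}=1\mid U_i,U_j)\le 4p^2$. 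Hence $|\mu_{ij}|\le 4p^2$ and the conditional variance is at most $5p^2$.
\item Bernstein's inequality at deviation level $t$ with $\log(1/\text{prob})\asymp 3\log k$ then gives $|W_{ij}|\le 4kp^2+C(\sqrt{kp^2\log k}+\log k)$, except with probability $k^{-3}$.
\item This is at most $(2048kp^2+8)\lceil\log k\rceil$ once we use $\sqrt{kp^2\log k}\le kp^2+\log k$.
\end{itemize}
A union bound over the $O(k^2)$ pairs shows \eqref{eq:constrainedscantestrange} holds with probability $1-o(1)$. If the cap estimate turns out to be too lossy, the fallback is the paper's suggested route: bound $\E[W_{ij}^{2q}]$ with $q=\lceil\log k\rceil$ by expanding into signed subgraphs and using Lemma~\ref{lem:rggfourier}, then apply Markov's inequality and a union bound.
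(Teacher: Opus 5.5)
Your treatment of \eqref{eq:constrainedscantestvar} is the paper's. The diagonal terms have mean exactly $p^2(1-p)^2$ and the off-diagonal terms are signed $4$-cycles bounded by Proposition~\ref{prop:signedcyclecount}, so $\E[T]\le\sigma^2/2$. Chebyshev then finishes, with $\Var[T]$ bounded shape by shape through Lemma~\ref{lem:rggfourier}. One caution: the saving is not uniformly ``a factor of $k$''. For two $4$-cycles sharing a vertex, Lemma~\ref{lem:rggfourier} only gives $p^8\log^{9/2}d/d^{3/2}$. The paper beats the resulting $k^7p^8\log^{9/2}d/d^{3/2}$ by comparing it with the AM--GM mixture $k^{15/2}p^7\log^3(1/p)/d^{3/2}$ of the two parts of $\sigma^4$.

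For \eqref{eq:constrainedscantestrange} your route is genuinely different. The paper bounds $\E[W_{ij}^{2m}]$, with $m=\lceil\log k\rceil$, by expanding into signed subgraphs and applying Lemma~\ref{lem:rggfourier}, then uses Markov and a union bound. This needs nothing beyond the general Fourier bound, but it loses a $\log k$, which is why $B\asymp(kp^2+1)\log k$. You instead use that, given $(U_i,U_j)$, the summands are i.i.d.\ bounded functions of $U_\ell$, so Bernstein applies once the conditional mean is controlled. This is valid and sharper, giving $|W_{ij}|=O(kp^2+\log k)$, at the cost of a two-cap intersection estimate.

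The cleanest source for that estimate is Lemma~\ref{lem:capsandanticaps} with $\ell=2$ and $\gamma=(1,1)$. Since $n^{\delta}\le d\le k^2\le n^2$ and $M=O(\log n)$, it gives $\Prob(G_{\ell i}G_{\ell j}=1\mid U_i,U_j)\le 2p^2$ outside an event of probability $\exp(-\Omega(n^{\delta}/\log^3 n))$. So your preliminary step on $|\langle U_i,U_j\rangle|$ is unnecessary.

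You do need to track constants, because $B$ is a fixed threshold whose additive slack is only $8\lceil\log k\rceil$. Bernstein at tail $k^{-3}$ gives roughly $t\le 2.5\log k+\sqrt{30kp^2\log k}$. If you put a common constant $C$ in front of both terms and then use $\sqrt{kp^2\log k}\le kp^2+\log k$, you get $2C\log k$, which exceeds $8\lceil\log k\rceil$ when $C>4$. Instead, absorb the cross term into the $2048kp^2\lceil\log k\rceil$ part, e.g.\ $\sqrt{30kp^2\log k}\le 15kp^2+\tfrac12\log k$. That leaves roughly $|W_{ij}|\le 16kp^2+3\log k\le B$.
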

The proof is deferred to Appendix~\ref{appendix:ubgeneralrggwhp}. 
By Lemma~\ref{lem:ubgeneralrggwhp}, we have
\begin{align*}
        \max_{s \in [k^{-}, k^+]} \Prob_{G \sim \calP'_s}(E) &\leq \max_{|S| \in [k^{-}, k^+]} \left(\Prob_{G \sim \calP_S}(f_{S_0}(G) \leq  \gamma_{\scan}) + \Prob_{G \sim \calP_S}\left((E')^c\right)\right)\\
        &\leq \max_{|S| \in [k^{-}, k^+]} \Prob_{G \sim \calP_S}(f_{S_0}(G) \leq \E_{G\sim \calP_{S}}[f_{S_0}(G)] - \gamma_{\scan}) + o(1) \\
        &\leq \Var_{G \sim \calP_{S}}[f_{S_0}(G)] / \gamma_{\scan}^2 + o(1)\,,
    \end{align*}
following the same lines of arguments in \eqref{eq:ubdensergg}. In particular, the same arguments right after \eqref{eq:ubdensergg} imply that the last term vanishes whenever $d \ll k^3p^3 \log^3(1/p)$, and $d$ is sufficiently large with $d \geq (5\log(1/p))^4$; these already hold from the theorem's assumption on $d$.

\paragraph*{Type I error.} For the error under $\calQ$, we have
\begin{equation}\label{eq:ubgeneraler}
\begin{alignedat}{2}
    \Prob_{G \sim \calQ}\left(\widetilde{f}_{\scan}(G) > \gamma_{\scan}\right) &\leq \binom{n}{k^-} \Prob_{G \sim \calQ}\Bigg(&&f_{[k^-]}(G) > \gamma_{\scan}, \\
    & &&\sum_{i < j \in [k^-]} \left(\sum_{\ell < i, \ell \in [k^-]} (G_{\ell i}-p)(G_{\ell j}-p)\right)^2 \leq \sigma^2, \\
    & &&\max_{i<j\in [k^-]} \left|\sum_{\ell < i, \ell \in [k^-]} (G_{\ell i}-p)(G_{\ell j}-p)\right| \leq B\Bigg)\,,
\end{alignedat}
\end{equation}
by union bound and symmetry (letting $A = [k^-]$).

Now we construct a martingale. First, let $\{\calF_r\}$ be the filtration defined by exposing the edges one at a time, in the order of
\[G_{12}, G_{13}, G_{23}, G_{14}, G_{24}, G_{34}, \dots, G_{(k^-)-1, k^-}\,.
\]
Furthermore, let $\{M_r\}$ be the Doob martingale of $f_{[k^-]}(G)$ with respect to the $\{\calF_r\}$, i.e., $M_r = \E_{G \sim \calQ}[f_{[k^-]}(G) | \calF_r]$. Consider the time $r$ at which edge $ij$ with $i < j$ is revealed. Then 
\[U_r :=\left|\sum_{\ell < i, \ell \in [k^-]} (G_{\ell i}-p)(G_{\ell j}-p)\right|
\]
is $\calF_{r-1}$-measurable and serves as an upper bound for the martingale difference, in that
\begin{equation}\label{eq:ubgeneralerrange}
    M_r - M_{r-1} = (G_{ij}-p)\sum_{\ell < i, \ell \in [k^-]}(G_{\ell i}-p)(G_{\ell j}-p) \leq U_r\,.
\end{equation}
Also, the conditional variance of the martingale difference is equal to
\begin{equation}\label{eq:ubgeneralervar}
\begin{aligned}
    \Var_{G \sim \calQ}[M_r - M_{r-1}|\calF_{r-1}] &= \E_{G \sim \calQ}[(G_{ij}-p)^2]\left(\sum_{\ell < i, \ell \in [k^-]}(G_{\ell i}-p)(G_{\ell j}-p)\right)^2 \\
    &= p(1-p)\left(\sum_{\ell < i, \ell \in [k^-]}(G_{\ell i}-p)(G_{\ell j}-p)\right)^2\,.
\end{aligned}
\end{equation}
From \eqref{eq:ubgeneralerrange} and \eqref{eq:ubgeneralervar}, we can apply Freedman's inequality (Lemma~\ref{lem:freedmanconc}), with parameters
\begin{align*}
    t &= \gamma = \Theta(k^3p^3\log^{3/2}(1/p)/\sqrt{d})\,, \\
    v &= p(1-p)\sigma^2 = \Theta(k^3p^3 + k^4p^5\log^2(1/p)/d)\,, \\
    m &= B = \Theta((kp^2 + 1)\log k)\,.
\end{align*}
Recall that the choice of $t$ is from Lemma~\ref{lem:signedtrianglelb}, and $v$ and $m$ are respectively from \eqref{eq:constrainedscantestvar} and \eqref{eq:constrainedscantestrange}; for further discussion on these choices, see Remark \ref{remark:ubgeneralrole}.
From \eqref{eq:ubgeneraler}, 
\begin{align*}
    &\Prob_{G \sim \calQ}\left(\widetilde{f}_{\scan}(G) > \gamma_{\scan}\right) \\
    &\leq \exp\left(k\log n - \left(\frac{\gamma_{\scan}^2}{4v} \wedge \frac{\gamma_{\scan}}{2B}\right)\right) \\
    &\leq \exp\left(k\log n - \Omega\left(\frac{k^3p^3\log^3 (1/p)}{d} \wedge k^2p\log(1/p) \wedge \frac{k^2p\log^{3/2}(1/p)}{\sqrt{d}\log k} \wedge \frac{k^3p^3\log^{3/2} (1/p)}{\sqrt{d}\log k}\right)\right)\,.
\end{align*}
This term is $o(1)$, by a proper choice of constant $C_0 > 0$ such that the following holds:
\begin{align}
    d &\leq \frac{1}{C_0}\left(\frac{k^2p^3\log^3(1/p)}{\log n} \wedge \frac{k^2p^2 \log^3 (1/p)}{(\log^2 n)(\log^2 k)} \wedge \frac{k^4p^6\log^3 (1/p)}{(\log^2 n)(\log^2 k)}\right), \label{eq:ubgeneralfinalcond1} \\
    kp\log(1/p) &\geq C_0\log n \,. \label{eq:ubgeneralfinalcond2}
\end{align}
If $d \leq k^2p^3\log^3(1/p)/(C_0(\log^2 n) (\log^2 k))$, from $n^\delta \leq d$ and $1/p \leq n^{1-\delta}$ we have $k^2p^3 \geq C_0n^{\delta} \log^2 k/((1-\delta) \log n)$, which is larger than $(\log n) (\log^2k)$ for all sufficiently large $n$. For \eqref{eq:ubgeneralfinalcond1}, this implies that the third term cannot be minimum, and any choice of $C_{\ref{thm:ubscangeneral}} > C_0$ suffices. For \eqref{eq:ubgeneralfinalcond2}, we have $k^3p^3\log^3(1/p) \geq k^2p^3\log^3(1/p) \geq  C_0 n^\delta (\log^2n)(\log^2 k)$, where any choice of $C_{\ref{thm:ubscangeneral}} > C_0^{1/3}$ suffices for all sufficiently large $n$.

\begin{remark}[Role of the constraints]\label{remark:ubgeneralrole}
We note that such constraints may be necessary, as the suboptimality of the unconstrained scan test (Theorem~\ref{thm:ubscandense}) seems to be inherent. Here we provide an explanation; as context, we refer to the proof of Theorem~\ref{thm:ubscandense}. There, for the error to vanish under $\calQ$, the event of signed triangle count being larger than $t = \widetilde{\Theta}(k^3p^3/\sqrt{d})$ should happen with very small probability---at most $1/\binom{n}{k^-} = \exp(-\widetilde{\Omega}(k))$. However, this event can be attained if a clique of size $t^{1/3}$ exists, which happens with probability at least $\exp(-\widetilde{O}(t^{2/3})) = \exp(-\widetilde{O}(k^2p^2/d^{1/3}))$ (also consistent with the large-deviation type scaling of the event). Combined, this requires $d = \widetilde{O}(k^3p^6)$, which is strictly worse than the threshold $d = \widetilde{O}(k^2p^3)$ we obtain for the constrained scan test when $p = \widetilde{o}(1/k^{1/3})$.

In this sense, the constraints can be considered as preventing events that are bad for concentration. For example, it can be checked that for $p = \widetilde{o}(1/k^{1/3})$, the size-$t^{1/3}$ clique implies that the left hand side of \eqref{eq:constrainedscantestrange} is at least $\Omega(t^{1/3})$, violating the corresponding condition when $d = \widetilde{O}(k^2p^3)$.
\end{remark}

\section{Proofs for information-theoretic lower bound}\label{sec:lb}

\subsection{Truncated second moment}

Our first approach is to view both $\calP$ and $\calQ$ as generated by thresholding certain random matrices. 
This essentially reduces the detection problem between two (binary-valued) \emph{random graphs} to a detection problem between two (real-valued) \emph{random matrices} \cite{bubeck2016rgg}.

Similar to \eqref{eq:whpcommunitysize}, we begin with
\begin{align*}
    \dTV(\calP, \calQ) &= \dTV(\E_{s \sim \Binom(n, k/n)} [\calP'_{s}], \calQ) \\
    &\leq \E_{s \sim \Binom(n, k/n)}[\dTV(\calP'_{s}, \calQ)] \\ 
        &\leq \Prob_{s \sim \Binom(n, k/n)}(s \notin [k^-, k^+]) + \max_{s \in [k^{-}, k^+]} \dTV(\calP'_{s}, \calQ)\\
        &\leq 2\exp(-\Omega(k)) + \max_{s \in [k^{-}, k^+]} \dTV(\calP'_{s}, \calQ)\,,
\end{align*}
where the first inequality is by Jensen inequality, the second inequality is by $\dTV \leq 1$, and the last inequality is by Chernoff bound. Thus, it suffices to show that the TV distance vanishes uniformly over $s \in [k^-, k^+]$.

\paragraph*{From random graphs to random matrices.} Now we consider the random matrices from which $\calP'_s$ and $\calQ$ are generated. For $\calQ$, let $M \sim dI_n + \sqrt{d}\GOE(n)$, i.e., %
for any $i,j\in[n]$ with $i\leq j$, 
\[
M_{ij} \overset{\text{ind.}}{\sim} d\bm{1}\{i = j\} + \sqrt{d}\calN(0, 1 + \bm{1}\{i = j\})\,,
\]
and $M_{ji}=M_{ij}$. 
Also, define the map $\alpha : \mathbb{R}^{n(n+1)/2} \to \{0, 1\}^{n(n-1)/2}$ which thresholds the off-diagonal entries of a symmetric matrix $X$ as follows: 
\[
\alpha(X)_{ij} := \bm{1}\{X_{ij} \geq \sqrt{d}\Phi^{-1}(1-p)\}, \quad \text{for all } i < j \in [n]\,,
\]
where $\Phi$ is the cumulative distribution function of $\calN(0, 1)$. Then we have 
\[\alpha(M) \sim \calQ = \calG(n, p)\,,
\]
because for $\alpha(M)$, each edge is drawn independently with probability $p$. For $\calP$, the random matrix should follow different distributions depending on the community membership. For $S \subseteq [n]$, define a symmetric random matrix $W^S \in \reals^{n \times n}$ whose entries are given as
\[(W^S)_{ij} := \begin{cases}
    W_{ij} & i,j \in S \\
    M_{ij} & \text{else}\,,
\end{cases}
\]
where $W \sim \Wishart(n, d)$, i.e., for any $i,j \in [n]$, 
\[
W_{ij} = \iprod{Z_i}{Z_j}\,, 
\]
where $Z_i \iiddistr \calN(0, I_d)$ for $i\in [n]$. The edge $ij$ within the community of $\calP$ is then realized by thresholding $W_{ij}/\sqrt{W_{ii}W_{jj}} = \iprod{Z_i/\norm{Z_i}}{Z_j/\norm{Z_j}}$. In particular, define $\beta: \reals^{n(n+1)/2} \to \{0, 1\}^{n(n-1)/2}$ as 
\[\beta(X)_{ij} := \bm{1}\left\{X_{ij}/\sqrt{X_{ii}X_{jj}} > \tau(p, d)\right\}, \quad \text{for all }   i < j \in [n]\,.
\]
To generate a sample from $\calP$, $\beta$ should be used for edges within the community, and $\alpha$ should be used otherwise. Formally, by defining a map $\beta^S: \reals^{n(n+1)/2} \to \{0, 1\}^{n(n-1)/2}$ such that
\[\beta^S(X)_{ij} = \beta(X)_{ij}\bm{1}\{i, j \in S\} + \alpha(X)_{ij}\bm{1}\{i \notin S \text{ or } j \notin S\}\,,
\]
we have
\[\beta^S(W^S) \sim \calP_S\,.
\]
In this sense, with a slight overload of notation, $\calP'_s$ can be written as $\calP'_s = \E_{S \big| |S| = s} [\beta^S(W^S)]$, which is a mixture distribution. 
Then
\begin{equation}\label{eq:lbdensetwoterms}
\begin{aligned}
    \dTV(\calP'_s, \calQ) &= \dTV\left(\E_{S \big| |S| = s} [\beta^S(W^S)], \alpha(M)\right) \\
    &\leq \dTV\left(\E_{S \big| |S| = s} [\beta^S(W^S)], \E_{S \big| |S| = s} [\alpha(W^S)]\right) + \dTV\left(\E_{S \big| |S| = s} [\alpha(W^S)], \alpha(M)\right) \\
    &\leq \underbrace{\dTV\left(\E_{S \big| |S| = s} [\beta^S(W^S)], \E_{S \big| |S| = s} [\alpha(W^S)]\right)}_{\text{(I)}} + \underbrace{\dTV\left(\E_{S \big| |S| = s} [W^S], M\right)}_{\text{(II)}}\,,
\end{aligned}
\end{equation}
where the last inequality is by data processing inequality applied to the map $\alpha$, after observing that the distribution of $\E_{S \big| |S| = s}[\alpha(W^S)]$ (mixture of pushforward) is equal to $\alpha\left(\E_{S \big| |S| = s} [W^S]\right)$ (pushforward of mixture). 

For the rest of the proof, we show that both (I) and (II) vanish uniformly over $s \in [k^-, k^+]$.

\paragraph*{First term (I).} By convexity (and Jensen's inequality) and symmetry, (I) in \eqref{eq:lbdensetwoterms} is at most
\begin{equation}\label{eq:lbdensewts1}
    \dTV\left(\beta^S(W^S), \alpha(W^S)\right) = \dTV\left((\beta^{[s]}(W^{[s]})_{ij})_{i< j \in [s]}, (\alpha(W^{[s]})_{ij})_{i<j \in [s]}\right)\,,
\end{equation}
An equivalent, explicit form of \eqref{eq:lbdensewts1} is
\begin{equation}\label{eq:lbdensewts1explicit}
    \dTV\left(\left(\bm{1}\left\{\frac{\iprod{Z_i}{Z_j}}{\norm{Z_i}\norm{Z_j}} > \tau(p, d)\right\}\right)_{i < j \in [s]}, \left(\bm{1}\left\{\frac{\iprod{Z_i}{Z_j}}{d} > \frac{\Phi^{-1}(1-p)}{\sqrt{d}}\right\}\right)_{i < j \in [s]}\right)\,.
\end{equation}
By triangle inequality, \eqref{eq:lbdensewts1explicit} can be further upper bounded as
\begin{align*}
    &\dTV\left(\left(\bm{1}\left\{\frac{\iprod{Z_i}{Z_j}}{\norm{Z_i}\norm{Z_j}} > \tau(p, d)\right\}\right)_{i < j \in [s]}, \left(\bm{1}\left\{\frac{\iprod{Z_i}{Z_j}}{\norm{Z_i}\norm{Z_j}} > \frac{\Phi^{-1}(1-p)}{\sqrt{d}}\right\}\right)_{i < j \in [s]}\right) \\
    &\quad+ \dTV\left(\left(\bm{1}\left\{\frac{\iprod{Z_i}{Z_j}}{\norm{Z_i}\norm{Z_j}} > \frac{\Phi^{-1}(1-p)}{\sqrt{d}}\right\}\right)_{i < j \in [s]}, \left(\bm{1}\left\{\frac{\iprod{Z_i}{Z_j}}{d} > \frac{\Phi^{-1}(1-p)}{\sqrt{d}}\right\}\right)_{i < j \in [s]}\right) \\
    &\leq s^2\Prob\left(\frac{\iprod{Z_1}{Z_2}}{\norm{Z_1}\norm{Z_2}} \text{ is between } \tau(p, d), \frac{\Phi^{-1}(1-p)}{\sqrt{d}}\right) + \dTV\left(\left(\frac{\iprod{Z_i}{Z_j}}{\norm{Z_i}\norm{Z_j}}\right)_{i < j \in [s]}, \left(\frac{\iprod{Z_i}{Z_j}}{d}\right)_{i < j \in [s]}\right)\,,
\end{align*}
where we used coupling with union bound (over $i < j \in [s]$) and data processing inequality applied to the thresholding function. For the first term, as the probability is at most $O(1/d)$, it is upper bounded as $O(k^2/d) = o(1)$. To see this, we use the fact that up to scaling, normal distribution and spherical marginal distribution are very similar. In particular, from \cite[Lemma 7]{bubeck2016rgg} (originally in \cite[Lemma 1]{sodin2007tail}), the probability is at most
\begin{align*}
    &\left|\Prob\left(\frac{\iprod{Z_1}{Z_2}}{\norm{Z_1}\norm{Z_2}} \leq \tau(p, d)\right) - \Prob\left(\frac{\iprod{Z_1}{Z_2}}{\norm{Z_1}\norm{Z_2}} \leq \frac{\Phi^{-1}(1-p)}{\sqrt{d}}\right)\right| \\
    &\leq p(O(1/d) + 1 - \exp(-C_0\Phi^{-1}(1-p)^4/d))\,,
\end{align*}
for some constant $C_0 >0$, assuming that  $\Phi^{-1}(1-p) / \sqrt{d}$ is sufficiently small. The last term is at most $O(p\log^2(1/p)/d)$, from $\Phi^{-1}(1-p) \lesssim \sqrt{\log(1/p)}$ (from $\Prob(\calN(0, 1) > x) \leq \exp(-x^2/2)$ for $x > 0$), which also implies that $d = \Omega(\log(1/p))$ suffices for the condition on $\Phi^{-1}(1-p)/\sqrt{d}$.

The second term is the TV distance between Wishart ensemble and ``spherical Wishart'' ensemble (termed in \cite{paquette2021random}), where both distributions are non-product and have Gram matrix structures. While one  may approach this by comparing both matrices against GOE, that requires $d \gg k^3$ \cite{jiang2015wishart, bubeck2016rgg}. By directly comparing the strictly upper triangular parts of the two matrices, we show that they are asymptotically equivalent when $d \gg k^2$. 

\begin{proposition}[Comparison between Wishart and spherical Wishart]\label{prop:sphwisandwis} Let $Z_i \overset{\mathrm{i.i.d.}}{\sim} \calN(0, I_d)$ for $i \in [k]$. If $d \gg k^2$ and $k \to \infty$,
\[
\dTV \left( \left(\frac{\iprod{Z_i}{Z_j}}{\norm{Z_i}\norm{Z_j}} \right)_{i < j \in [k]} , \left(\frac{\iprod{Z_i}{Z_j}}{d} \right)_{i < j \in [k]} \right) = o(1)\,.
\]
\end{proposition}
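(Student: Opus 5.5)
Write $Z_i = R_i\theta_i$ with $R_i=\norm{Z_i}$ and $\theta_i\sim\calU(\mathbb{S}^{d-1})$, all mutually independent. The first vector is $X:=(\iprod{\theta_i}{\theta_j})_{i<j}$. The second is $Y:=(r_ir_j\iprod{\theta_i}{\theta_j})_{i<j}$ with $r_i := R_i/\sqrt d$, and the $r_i$ are independent of $X$. So the task is to show that multiplying the off-diagonal Gram entries by a random rank-one ``scaling'' $r_ir_j$ is undetectable when $d\gg k^2$. The fluctuation is $r_i = 1+\xi_i$ with $\xi_i \approx \calN(0,1/(2d))$. Heuristically, this perturbs $\iprod{\theta_i}{\theta_j}\approx d^{-1/2}g_{ij}$ by $(\xi_i+\xi_j)d^{-1/2}g_{ij}$. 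This is a multiplicative perturbation of relative size $d^{-1/2}$ on $\binom{k}{2}$ coordinates, driven by only $k$ latent variables, and the total signal-to-noise is roughly $k^2/d$ times lower-order corrections.

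\textbf{Reduction to a mixture likelihood ratio.} The law of $Y$ is the mixture over $r=(r_1,\dots,r_k)$ of the pushforward of the law of $X$ under $D_r: x_{ij}\mapsto r_ir_jx_{ij}$. For $d\ge k$ the joint density $f$ of $X$ is explicit: the spherical Wishart (correlation-type) density $f(x)\propto \det(\Sigma(x))^{(d-k-1)/2}$ on the set where the matrix $\Sigma(x)$ with unit diagonal and off-diagonal $x$ is PSD. The density of $D_rX$ is then $f(D_r^{-1}y)\prod_{i<j}(r_ir_j)^{-1}$. I will bound $\dTV \le \tfrac12\sqrt{\chi^2}$, or more robustly work with a truncated second moment. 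Fix $y=D_{r}x$ and study the ratio
\[
L(x;r)=\prod_{i<j}(r_ir_j)^{-1}\left(\frac{\det\Sigma(D_r^{-1}x)}{\det\Sigma(x)}\right)^{(d-k-1)/2}.
\]
Note $\Sigma(D_r^{-1}x)=\mathrm{diag}(r)^{-1}\,\widetilde\Sigma\,\mathrm{diag}(r)^{-1}$, where $\widetilde\Sigma$ has diagonal entries $r_i^2$ and off-diagonal part equal to that of $\Sigma(x)$. Hence $\det\Sigma(D_r^{-1}x)/\det\Sigma(x)=\prod_i r_i^{-2}\det(I+\Sigma(x)^{-1}\mathrm{diag}(r_i^2-1))$.

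\textbf{Expansion.} Let $\Delta=\mathrm{diag}(r_i^2-1)$, with entries of size $d^{-1/2}$. On the typical event $\norm{\Sigma(x)-I}_{\op}\lesssim\sqrt{k/d}$, I expand $\log\det(I+\Sigma^{-1}\Delta)=\Tr(\Sigma^{-1}\Delta)-\tfrac12\Tr((\Sigma^{-1}\Delta)^2)+\dots$. The leading terms combine with $\prod r_i^{-(k-1)}\prod r_i^{-(d-k-1)}$. After multiplying by $(d-k-1)/2$, the terms linear in $\Delta$ cancel against the Jacobian/radial factors up to $O(k)$-size sums. What remains is
\[
\log L \approx -\tfrac{d}{2}\sum_{i}\big((\Sigma^{-1})_{ii}-1\big)\Delta_{ii} + (\text{quadratic terms}),
\]
where $(\Sigma^{-1})_{ii}-1\approx\sum_{j}x_{ij}^2\approx k/d$. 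These contribute fluctuations of order $d\cdot (\sqrt{k}/d)\cdot d^{-1/2}\cdot\sqrt{k}=k/\sqrt d$ overall, plus quadratic pieces of order $d\cdot k\cdot\norm{\Sigma-I}^2/d\lesssim k^2/d$. I then average $L$ over $r$ (the mixture) and compute the second moment $\E_X[(\E_r L)^2]=\E_{r,r'}\E_X[L(X;r)L(X;r')]$. This reduces to a Gaussian-like overlap computation in $(\xi,\xi')$, giving $\exp(O(k^2/d))=1+o(1)$.

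\textbf{Main obstacle.} The hard step is making the determinant expansion rigorous with uniform control, including the truncation. Rare $x$ with $\norm{\Sigma-I}_{\op}$ large, and rare $r$ with $|\xi_i|\gg d^{-1/2}\sqrt{\log k}$, can blow up the $\chi^2$. I would first truncate to the event $\{\norm{\Sigma(X)-I}_{\op}\le C\sqrt{k/d},\ \max_i|\xi_i|\le C\sqrt{\log k/d}\}$, which has probability $1-o(1)$. The cost is $o(1)$ in TV, and the conditioned-mixture second moment is computed only there. I expect the delicate point to be verifying that the cross term, which is linear in $\xi$ with a random coefficient $\sum_j x_{ij}^2 - \E$, has variance $O(k^2/d)$ rather than $O(k^3/d)$. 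That would use the concentration of $\sum_j x_{ij}^2$ around $(k-1)/d$ with fluctuation $\sqrt{k}/d$.
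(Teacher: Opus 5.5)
Your route differs from the paper's. You keep the mixture over $r$ and aim for a truncated $\chi^2$ bound on it. The paper instead discards the mixture at the outset by convexity, $\dTV(Y,W)\le\mathbb{E}_D[\dTV(Y,T_D(Y))]$. For each fixed good $D$ it then bounds $\dTV(Y,T_D(Y))^2$ by a KL-type functional, after restricting to $\norm{\overline{y}}_{\op}\le 0.1$ with a smooth cutoff. It expands $\log\det$ as you do, but evaluates the first-order term \emph{in expectation} by the divergence theorem, i.e.\ a Stein identity for the density $\propto\det(I_k+\overline{y})^{(d-k-1)/2}$. This produces $\sum_{i<j}(D_{ii}D_{jj}-1)$, which cancels the Jacobian term $-(k-1)\log\det D$ up to second order. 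The second-order term is bounded crudely in Frobenius norm, giving $O(\sum_{i<j}(D_{ii}D_{jj}-1)^2)$, which averages to $O(k^2/d)$. The gain is that a KL bound for a fixed scaling only ever uses first moments of the log-likelihood ratio.

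In your route that same linear term must be exponentiated, and this is the step you leave open. It is the heart of the proof, not bookkeeping.

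\begin{itemize}
\item The term linear in $\Delta$ is $\frac12\sum_i\Delta_{ii}\bigl[(d-k-1)((\Sigma^{-1})_{ii}-1)-(k-1)\bigr]$. Your display has the opposite sign and is uncentered. Its non-fluctuating part $\approx\frac{k}{2}\sum_i\Delta_{ii}$ alone would contribute a factor $e^{\Theta(k^3/d)}$ after averaging over $r,r'$.
\item The term is centered only because of the exact identity $(\Sigma^{-1})_{ii}-1\overset{d}{=}\chi^2_{k-1}/\chi^2_{d-k+1}$, a ratio of independent variables with mean $(k-1)/(d-k-1)$.
\item On your truncation event the best pointwise bound is $|(\Sigma^{-1})_{ii}-1|\lesssim k/d$. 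So the centered term can be of order $k^2\sqrt{\log k/d}$, which is unbounded throughout $k^2\ll d\ll k^4$. Hence $\mathbb{E}_X[L(X;r)L(X;r')\mathbf{1}_A]$ is not controlled by a uniform Taylor expansion.
\item What you need is a genuine truncated mgf bound for $\frac{d-k-1}{2}\sum_i c_i\bigl((\Sigma^{-1})_{ii}-\mathbb{E}(\Sigma^{-1})_{ii}\bigr)$, with $c=\Delta+\Delta'$. The summands are dependent F-type variables whose untruncated mgf is infinite. The bound should have the form $\exp\bigl(O(k\|c\|^2+(\sum_i c_i)^2)\bigr)$, and you need analogous control of the random quadratic remainders.
\item The untruncated $\chi^2$ you mention as an option is actually infinite. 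When all $r_i>1$, the support $\{\mathrm{diag}(r)^2+\overline{y}\succ 0\}$ of $D_rX$ strictly contains that of $X$, so truncation is mandatory.
\end{itemize}

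Your variance heuristic ($O(k^2/d)$ for the cross term) is right, so the route looks viable. If completed it would give more than the paper needs: averaging over $r,r'$ kills the odd terms and leaves a cross term $\approx 2k\sum_i\xi_i\xi_i'$, so heuristically the $\chi^2$ is of order $k^3/d^2$. The cheaper way to close your argument, however, is to apply convexity first and then follow the paper's KL plus integration-by-parts computation, which needs no exponential moments.
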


As $s = \Theta(k)$, Proposition~\ref{prop:sphwisandwis} concludes that $\text{(I)} = o(1)$.
The proof of \prettyref{prop:sphwisandwis} is deferred to Appendix~\ref{appendix:sphwisandwis}. Notably, this result identifies another matrix ensemble that converges to the Wishart distribution in the regime where the dependence between the entries remains. 

\begin{remark}[Relevant results in the literature]\label{remark:wisandsphwis} Several recent works have shown indistinguishability results with respect to (variants of) the Wishart distribution, which hold for ranges below $d = k^3$. In particular, some variants of Wishart distribution are known to be indistinguishable to GOE for $d \gg k^2$, e.g., \cite[Corollary 4.3]{brennan2021finetti}, \cite[Theorem 22]{bangachev2025random}. More relatedly, \cite[Theorem 1]{chetelat2019middlescale} provides an explicit characterization of a distribution indistinguishable from Wishart when $k^2 \ll d \ll k^3$. That result is not directly comparable to ours, as we are considering a Wishart distribution without the diagonal entries. %
\end{remark}

\paragraph*{Second term (II).} For (II) in \eqref{eq:lbdensetwoterms}, it is convenient to write the total variation in terms of the likelihood ratio. Let $w_{\ell, d}, m_{\ell, d}$ respectively be the densities of $\Wishart(\ell, d)$ and $dI_\ell + \sqrt{d}\GOE(\ell)$. Then the likelihood ratio $L_s$ between the distributions of $\E_{S \big| |S| = s} [W^S]$ and $M$ satisfies
\begin{equation}\label{eq:lbdensewts2}
    \dTV\left(\E_{S \big| |S| = s} [W^S], M\right) = \frac{1}{2}\E_{X \sim m_{n, d}}[|L_s(X)-1|]\,, \quad \text{where }L_s(X) := \E_{S \big| |S| = s}\left[\frac{w_{s, d}(X_{S \times S})}{m_{s, d}(X_{S \times S})}\right]  \,. 
\end{equation}
\[
\]
Here, we use $X_{A \times B} \in \reals^{|A| \times |B|}$ to denote the submatrix of $X$ corresponding to rows $A$ and columns $B$. For a conditioning argument, we define a truncated likelihood 
\[\widetilde{L}_s(X) := \E_{S \big| |S| = s}\left[\frac{w_{s, d}(X_{S \times S})\bm{1}\{X_{S \times S} \in \calE_s\}}{m_{s, d}(X_{S \times S})}\right]\,,
\]
where for any $\ell \leq s$, 
\begin{equation}\label{eq:lbdensewhpevent}
\calE_\ell := \left\{X \in \reals^{\ell \times \ell} \text{ symmetric: } \norm{X_{Q \times Q} - dI_{|Q|}}_{\op} \leq 10(1+\sqrt{\log s})\sqrt{d|Q|} \text{ for all }\emptyset \subsetneq Q \subseteq [\ell] \right\}\,.
\end{equation}
As we will see later, this prevents matrix entries from being too large---an obstacle for second moment type calculation. For (II) to be $o(1)$, it suffices to show 
\begin{align}
    \min_{s \in [k^-, k^+]}\E_{X \sim m_{n, d}}[\widetilde{L}_s(X)] &\to 1\,, \label{eq:lbdensetrlikelihood1}\\
    \max_{s \in [k^-, k^+]}\E_{X \sim m_{n, d}}[\widetilde{L}_s(X)^2] &= 1 + o(1) \label{eq:lbdensetrlikelihood2}\,.
\end{align}
This is from
\begin{align*}
    \E_{X \sim m_{n, d}}[|L_s(X)-1|] &\leq \E_{X \sim m_{n, d}}[|\widetilde{L}_s(X)-1|] + \E_{X \sim m_{n, d}}[L_s(X) - \widetilde{L}_s(X)] \\
    &\leq \sqrt{\E_{X \sim m_{n, d}}[\widetilde{L}_s(X)^2] - 1 + 2(1-\E_{X \sim m_{n, d}}[\widetilde{L}_s(X)])} + 1- \E_{X \sim m_{n, d}}[\widetilde{L}_s(X)]\,,
\end{align*}
where taking maximum over $s \in [k^-, k^+]$ along with \eqref{eq:lbdensetrlikelihood1} and \eqref{eq:lbdensetrlikelihood2} implies $\text{(II)} = o(1)$.

The first statement \eqref{eq:lbdensetrlikelihood1} is simple, following from the concentration of the Wishart distribution. First, we have
\[0 \leq \E_{X \sim m_{n, d}}[1 - \widetilde{L}_s(X)] = \Prob_{X \sim w_{n, d}}(X_{[s] \times [s]} \notin \calE_s)\,,
\]
from $\widetilde{L} \leq L$ and symmetry with respect to $S$. The latter can be further upper bounded as
\begin{align*}
    \Prob_{X \sim w_{n, d}}(X_{[s] \times [s]} \notin \calE_s) &\leq \sum_{q=1}^{s} \binom{s}{q}\Prob_{X \sim w_{n, d}}\left(\norm{X_{[q] \times [q]} - dI_q}_{\op} > 10(1+\sqrt{\log s})\sqrt{dq}\right) \\
    &\leq \sum_{q=1}^{s} 2\exp(q \log s - (3(1+\sqrt{\log s})\sqrt{q})^2/2) \leq s \times 2s^{-3}\,.
\end{align*}
where the first inequality is from union bound and symmetry and the second inequality is from standard Wishart concentration \cite[Theorem II.13]{davidsonszarek} with $d + 10(1+\sqrt{\log s})\sqrt{dq} > (\sqrt{d} + \sqrt{q} + 3(1+\sqrt{\log s})\sqrt{q})^2$ for all sufficiently large $d$. Since $s = \Omega(k)$ for $s \in [k^-, k^+]$ and $k \to \infty$, this proves \eqref{eq:lbdensetrlikelihood1}.

For the second statement \eqref{eq:lbdensetrlikelihood2}, let $S'$ be an i.i.d. copy of $S$. Then,
\begin{equation}\label{eq:lbdenselikelihoodsquared}
\begin{aligned}
    &\E_{X \sim m_{n, d}}[\widetilde{L}_s(X)^2] \\
    &= \E_{X \sim m_{n, d}} \left[\E_{S \big| |S| = s}\left[\frac{w_{s, d}(X_{S \times S})\bm{1}\{X_{S \times S} \in \calE_s\}}{m_{s, d}(X_{S \times S})}\right] \E_{S' \big| |S'| = s}\left[\frac{w_{s, d}(X_{S' \times S'})\bm{1}\{X_{S' \times S'} \in \calE_s\}}{m_{s, d}(X_{S' \times S'})}\right]\right] \\
    &= \E_{S, S' \big| |S| = |S'| = s} \left[\E_{X \sim m_{n, d}} \left[\frac{w_{s, d}(X_{S \times S})\bm{1}\{X_{S \times S} \in \calE_s\}w_{s, d}(X_{S' \times S'})\bm{1}\{X_{S' \times S'} \in \calE_s\}}{m_{s, d}(X_{S \times S})m_{s, d}(X_{S' \times S'})}\right]\right] \\
    &\leq  \E_{S, S' \big| |S| = |S'| = s} \left[\E_{X \sim m_{n, d}} \left[\frac{w_{s, d}(X_{S \times S})w_{s, d}(X_{S' \times S'})\bm{1}\{X_{(S \cap S') \times (S \cap S')} \in \calE_{|S \cap S'|}\}}{m_{s, d}(X_{S \times S})m_{s, d}(X_{S' \times S'})}\right]\right]\,.
\end{aligned}
\end{equation}
Our next step is to show that the inner expectation in \eqref{eq:lbdenselikelihoodsquared} only depends on the entries with indices that overlap over $S$ and $S'$, i.e., $X_{(S \cap S') \times (S \cap S')}$. For notational convenience, let $R := S \cap S'$ and $N := |R|$. Define the (conditional) densities $g, g', h, h'$ of submatrices\footnote{Below, $X_{S\times S \setminus R \times R}$ denotes the entries $X_{(S \setminus R) \times (S \setminus R)} \cup X_{(S\setminus R) \times R} \cup X_{R \times (S \setminus R)}$.} as follows:
\begin{align*}
    X_{S \times S \setminus R \times R}|X_{R \times R} \sim g, \quad X_{S' \times S' \setminus R \times R} | X_{R \times R} \sim g' \quad &\text{where }X \sim w_{n, d}\,, \\
    X_{S \times S \setminus R \times R}|X_{R \times R} \sim h, \quad X_{S' \times S' \setminus R\times R}| X_{R \times R} \sim h'\quad &\text{where }X \sim m_{n, d}\,.
\end{align*}
Notably, as $m_{s, d}$ is a product distribution, $h$ and $h'$ do not depend on $X_{R \times R}$. Then
\begin{align*}
    &\E_{X \sim m_{n, d}} \left[\frac{w_{s, d}(X_{S \times S})w_{s, d}(X_{S' \times S'})\bm{1}\{X_{(S \cap S') \times (S \cap S')} \in \calE_{|S \cap S'|}\}}{m_{s, d}(X_{S \times S})m_{s, d}(X_{S' \times S'})}\right] \\
    &= \E_{X \sim m_{n, d}} \left[\frac{w^2_{N, d}(X_{R \times R})\bm{1}\{X_{R \times R} \in \calE_{N}\}g(X_{S \times S \setminus R \times R} | X_{R \times R})g'(X_{S' \times S' \setminus R \times R} | X_{R \times R})}{m^2_{N, d}(X_{R \times R})h(X_{S \times S \setminus R \times R}) h'(X_{S' \times S' \setminus R \times R})}\right]\\
    &= \E_{X_{R \times R} \sim m_{N, d}} \bigg[\frac{w^2_{N, d}(X_{R \times R})\bm{1}\{X_{R \times R} \in \calE_{N}\}}{m^2_{N, d}(X_{R \times R})} \\
    &\quad \times \E_{X_{S \times S \setminus R \times R} \sim h}\left[\frac{g(X_{S \times S \setminus R \times R} | X_{R \times R})}{h(X_{S \times S \setminus R \times R}) }\right] \times \E_{X_{S' \times S' \setminus R \times R} \sim h'} \left[\frac{g'(X_{S' \times S' \setminus R \times R} | X_{R \times R})}{h'(X_{S' \times S' \setminus R \times R})} \right]\bigg]\\
    &= \E_{X_{R \times R} \sim m_{N, d}} \left[\frac{w^2_{N, d}(X_{R \times R})\bm{1}\{X_{R \times R} \in \calE_{N}\}}{m^2_{N, d}(X_{R \times R})}\right]\,,
\end{align*}
where the first equality is by definition, second equality is from $m_{n, d}$ being a product distribution, and the last equality is from $g$ and $g'$ being densities. Since the last term only depends on $X_{R \times R}$, by simplifying the notation as $X \sim m_{N, d}$, \eqref{eq:lbdenselikelihoodsquared} is at most
\begin{equation}\label{eq:lbdensetrlikelihood3}
    \E_{N \big| |S| = |S'| = s} \left[\E_{X \sim m_{N, d}} \left[\frac{w^2_{N, d}(X)\bm{1}\{X \in \calE_{N}\}}{m^2_{N, d}(X)}\right]\right]\,.
\end{equation}
The next step is to evaluate the inner expectation in \eqref{eq:lbdensetrlikelihood3} with respect to $X$ as a function of $N$ and $d$. This is done by using Taylor expansion to approximate $w_{N, d}^2/m_{N, d}^2$ as a simple function of the spectrum of $m_{N, d}$, along with a control of the entries from the high probability event \eqref{eq:lbdensewhpevent}. As the calculations are at least conceptually similar to those in prior works \cite{bubeck2016rgg, raczrichey}, here we focus on their implication as presented in the following lemma.

\begin{lemma}[Comparison between Wishart and GOE]\label{lem:lbdensef1throughf3} Let $d \gg k^2$ and $k \to \infty$. Then there exists a constant $C_{\ref{lem:lbdensef1throughf3}} > 0$ and positive random variables $f_1(X), f_2(X), f_3(X)$ such that
\begin{equation}\label{eq:lbdensef1throughf3}
     \frac{w^2_{N, d}(X)\bm{1}\{X \in \calE_{N}\}}{m^2_{N, d}(X)} \leq (1+o(1))f_1(X)f_2(X)f_3(X)\,,
\end{equation}
where each $f_i(X)$ (which depends on $N, d$) satisfies
\[\E_{X \sim m_{N, d}} [f_i(X)^3] \leq \exp\left(C_{\ref{lem:lbdensef1throughf3}}\frac{N^3}{d}\right), \quad 1 \leq i \leq 3\,.
\]
Here, the $1+o(1)$ factor in \eqref{eq:lbdensef1throughf3} is universal over $s \in [k^-, k^+]$.
\end{lemma}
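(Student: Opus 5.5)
The plan is to write both densities explicitly and reduce the ratio to spectral quantities of $X$. For $X \succ 0$ we have $w_{N,d}(X) \propto \det(X)^{(d-N-1)/2}e^{-\Tr(X)/2}$. The reference density is
\[
m_{N,d}(X) \propto \exp\bigl(-\norm{X-dI_N}_F^2/(4d)\bigr).
\]
Write $X = dI_N + \sqrt{d}\,Y$ and let $\lambda_1,\dots,\lambda_N$ be the eigenvalues of $Y$. On $\calE_N$ (take $Q=[N]$) we have $\max_i|\lambda_i| \le 10(1+\sqrt{\log s})\sqrt{N}$, and since $d\gg k^2\ge N^2$ this gives $|\lambda_i|/\sqrt d = o(1)$ uniformly. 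In particular $X\succ0$ and the Taylor expansion of $\log(1+x)$ is legitimate.

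Expanding $\log\det X = N\log d+\sum_i\log(1+\lambda_i/\sqrt d)$ to fourth order, and using Stirling's formula for the normalising constants (the multivariate Gamma function against $(4\pi d)^{N(N+1)/4}2^{N/2}$), the log-ratio $\log(w_{N,d}/m_{N,d})$ should become
\[
c_N+\tfrac{-N-1}{2}\sum_i\tfrac{\lambda_i}{\sqrt d}+\tfrac{N+1}{4}\sum_i\tfrac{\lambda_i^2}{d}+\tfrac{d-N-1}{6}\sum_i\tfrac{\lambda_i^3}{d^{3/2}}-\tfrac{d}{8}\sum_i\tfrac{\lambda_i^4}{d^2}+\text{error}.
\]
The leading terms $-\tfrac{\sqrt d}{2}\Tr Y$ and the quadratic $-\norm{Y}_F^2/4$ cancel exactly between the two densities. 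The error, of order $d\sum_i|\lambda_i|^5/d^{5/2}\lesssim N^{7/2}\mathrm{polylog}/d^{3/2}$, is $o(1)$; this is where the truncation enters, and it is the source of the universal $(1+o(1))$ factor. The constant $c_N$ should be $O(N^3/d)$ plus terms that cancel against $\E[\cdot]$ of the polynomial pieces; I would absorb it into the $f_i$.

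After squaring, group the surviving terms as three factors:
\[
f_1=\exp\bigl(-(N+1)\Tr Y/\sqrt d\bigr),\qquad
f_2=\exp\bigl(\tfrac{N+1}{2d}\Tr Y^2\bigr),\qquad
f_3=\exp\Bigl(\tfrac{1}{3\sqrt d}\Tr Y^3-\tfrac{1}{4d}\Tr Y^4\Bigr).
\]
Constants (including $e^{2c_N}$) are shared among them, and small positive slack is included so that each $f_i$ is positive. Then $\E[f_1f_2f_3]\le\prod_i\E[f_i^3]^{1/3}$ by Hölder, which is why the lemma asks for third moments.

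Under $m_{N,d}$, $Y$ is $\GOE(N)$ with $\Tr Y\sim\calN(0,2N)$, so the moment bounds go as follows.
\begin{itemize}
\item For $f_1$: $\E[f_1^3]=\exp(9(N+1)^2N/d)=\exp(O(N^3/d))$ directly.
\item For $f_2$: $\Tr Y^2$ is a scaled $\chi^2$ with about $N^2/2$ degrees of freedom. Hence $\E\exp(\tfrac{3(N+1)}{2d}\Tr Y^2)=\exp(O(N\cdot N^2/d))$, valid since $N/d=o(1)$.
\item For $f_3$: this is the main obstacle, because $\Tr Y^3/\sqrt d$ is an unbounded cubic with no naive exponential moment. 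The $-\Tr Y^4/(4d)$ term rescues it: on the eigenvalues, $\tfrac{\lambda^3}{\sqrt d}-\tfrac{3\lambda^4}{4d}\le\tfrac{\lambda^2}{3}$ pointwise by AM--GM, and a finer version leaves only an $O(\lambda^2 /\ldots)$ piece that combines with the Gaussian weight. The plan is to compute $\E\exp(\tfrac{1}{\sqrt d}\Tr Y^3-\tfrac{3}{4d}\Tr Y^4)$ by moment/Wick expansion, as in \cite{bubeck2016rgg}, where $\E[(\Tr Y^3)^2]\asymp N^3$ gives the $N^3/d$ scale. The quartic is used to control the large-deviation tail, and the truncation on $\calE_N$ (which I would keep inside $f_3$ as an indicator) makes all series converge. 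The target is $\exp(O(N^3/d))$.
\end{itemize}

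Finally, I would check that the neglected constant terms of order $N^2/\sqrt d$ (from $\E\Tr Y^3$-type centering and Stirling) cancel exactly. If they do not, they are bounded by $N^3/d+N/d$, which is also $O(N^3/d)$ up to harmless additive $o(1)$ terms.
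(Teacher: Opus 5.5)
You follow the paper's architecture: explicit densities, expansion of $\log\det X$ in the eigenvalues of $Y=(X-dI_N)/\sqrt d$, three exponential factors combined by H\"older, the Gaussian MGF for $f_1$ and the $\chi^2$ MGF for $f_2$. Those pieces are fine.

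\textbf{The remainder estimate is false in the stated regime.} The bound $N^{7/2}\,\mathrm{polylog}/d^{3/2}$ is $o(1)$ only when $d\gg k^{7/3}\mathrm{polylog}$, not under $d\gg k^2$ (take $d=k^2\omega$ with $\omega\ll k^{1/3}$). Spectra of order $\sqrt N$ that are skewed, while all principal blocks stay bounded, exist inside $\calE_N$, so the worst case is attained. The same objection applies to replacing the cubic coefficient $(d-N-1)/d^{3/2}$ by $1/\sqrt d$. You need no two-sided remainder at all. Since $\log(1+u)\le u-u^2/2+u^3/3$ for all $u>-1$ and $d-N-1>0$, the third-order polynomial is an exact upper bound. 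This is the paper's step: a third-order Taylor expansion with nonpositive Lagrange remainder. The quartic can then be dropped, and the only $o(1)$ errors come from Stirling on the normalizing constants, which give $-N^3/(12d)+o(1)$. The truncation plays no role there.

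\textbf{The real gap is $f_3$.} This is the content of the lemma, and the tools you name do not deliver it.
\begin{itemize}
\item The pointwise bound $\lambda^3/\sqrt d-3\lambda^4/(4d)\le\lambda^2/3$ leads to $\mathbb{E}\,e^{\Tr(Y^2)/3}=\infty$ under GOE.
\item Even on $\calE_N$, where it sharpens to $\lambda^3/\sqrt d\le o(1)\lambda^2$, it only gives $\exp(O(N^{5/2}\sqrt{\log s}/\sqrt d))$. This is far above $\exp(O(N^3/d))$, because any pointwise bound discards the first-order cancellation that produces the $N^3/d$ scale.
\item A Wick or power-series expansion of $\mathbb{E}\exp(\Tr(Y^3)/\sqrt d)$ diverges, since $\Tr(Y^3)$ has no exponential moment (already $Y_{11}^3$ does not). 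With the truncation it converges, but controlling it needs exactly the tail information below.
\item The quartic term is merely $\le 0$ and does no work.
\end{itemize}

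The missing argument has two steps.
\begin{enumerate}
\item By the symmetry $Y\mapsto -Y$, $\Tr(Y^3)\mathbf{1}\{X\in\calE_N\}$ has mean zero. Then $e^x\le 1+x+\tfrac{x^2}{2}e^{|x|}$, Cauchy--Schwarz and hypercontractivity (giving $\mathbb{E}[\Tr(Y^3)^4]^{1/2}\lesssim N^3$) yield $\mathbb{E}[f_3^3]\le 1+O(N^3/d)\bigl(\mathbb{E}[e^{2|\Tr(Y^3)|/\sqrt d}\mathbf{1}\{X\in\calE_N\}]\bigr)^{1/2}$.
\item The last expectation is at most $\exp(O(N^3/d))$. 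Integrate the two-regime tail $\mathbb{P}(|\Tr(Y^3)|\ge t)\le 2\exp(-c(t^2/N^3\wedge t^{2/3}))$, from Adamczak--Wolff, up to the cap $|\Tr(Y^3)|\le N\|Y\|_{\mathrm{op}}^3\lesssim N^{5/2}\log^{3/2}s$ imposed by $\calE_N$. In the heavy-tail range, $e^{2t/\sqrt d}$ is beaten by $e^{-ct^{2/3}}$ precisely because $t\lesssim N^{5/2}\log^{3/2}s\ll d^{3/2}$ when $d\gg k^2$.
\end{enumerate}
This tail-plus-truncation step is where $\calE_N$ is genuinely needed. Without it, or an equivalent, your proof of the lemma is incomplete.
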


The proof is deferred to Appendix~\ref{appendix:lbdensef1throughf3}. The upper bounds on $f_i(X)$ suggest that it suffices to calculate the expectation of $\exp(O(N^3/d))$ over $N = |S \cap S'|$, conditioned on $|S| = |S'| = s = \Theta(k)$. The following lemma shows that this expectation is $1+o(1)$ under the desired condition $d \gg k^2 \vee k^6/n^3$. We mention that this result previously appeared in \cite[Proposition 4.3]{kunisky2025directed} for a similar goal of establishing detection lower bound, although in a different model.

\begin{lemma}[Upper bound on cubed hypergeometric]\label{lem:hypergeomcubemgf} 
    Let $C_{\ref{lem:hypergeomcubemgf}} > 0$ be an arbitrary constant. If $k \leq n/5$ and $d \gg k^2 \vee k^6/n^3$ then
    \[\max_{s \in [k^-, k^+]}\E_{N \big| |S| = |S'| = s} \left[\exp\left(C_{\ref{lem:hypergeomcubemgf}}\frac{N^3}{d}\right)\right] = 1 + o(1)\,.
    \]
\end{lemma}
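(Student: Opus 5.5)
We need to show that $\E[\exp(C N^3/d)] = 1+o(1)$ uniformly over $s\in[k^-,k^+]$, where $N\sim$ Hypergeometric$(n,s,s)$. The plan is to bound the tail of $N$ by binomial-type estimates and split the expectation into ranges of $N$. The mean is $\mu := \E N = s^2/n \le 1.21k^2/n + O(1)$. The standard domination of hypergeometric moment generating functions by binomial ones (Hoeffding) gives $\Prob(N\ge m)\le \binom{s}{m}(s/(n-s))^m \le (es^2/(m(n-s)))^m$. Since $k\le n/5$, we have $n-s\ge n/2$ for $s\le k^+$, so $\Prob(N\ge m)\le (3s^2/(mn))^m \le (4k^2/(mn))^m$.

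We write $\E[e^{CN^3/d}] - 1 = \sum_{m\ge1}\Prob(N=m)(e^{Cm^3/d}-1)$ and split at a cutoff $m_0 := \epsilon_n\sqrt{d}$, where $\epsilon_n\to0$ is chosen slowly.

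\emph{Range $m\le m_0$.} Here $Cm^3/d\le C\epsilon_n^2 m$, so $e^{Cm^3/d}-1 \le (Cm^3/d)\,e^{C\epsilon_n^2 m}$. It therefore suffices to show $\E[N^3 e^{\eta N}]/d = o(1)$ for $\eta=C\epsilon_n^2\to 0$. Binomial domination gives $\E[N^3e^{\eta N}]\lesssim (\mu+1)^3$ for small $\eta$ (the mgf of Bin$(s,s/(n-s))$ at $\eta$ is $\exp(O(\mu\eta))$, and the tilted mean stays $O(\mu+1)$). Hence this range contributes $O((1+k^6/n^3)/d) = o(1)$, using $d\gg k^6/n^3$ and $d\to\infty$. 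Alternatively, one can use directly $\Prob(N=m)\le (4k^2/(mn))^m$ for $m\ge 2e\mu$, and Chernoff concentration for $m$ near $\mu$.

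\emph{Range $m> m_0$.} Here we use $N\le s\le 1.1k+1$, so $Cm^3/d\le C m (1.2k)^2/d = o(m)$ because $d\gg k^2$. This is the crucial point, and it is where $d\gg k^2$ enters: the exponent is at most linear in $m$ with a vanishing coefficient, i.e. $e^{Cm^3/d}\le e^{\delta_n m}$ with $\delta_n\to0$. We then bound $\sum_{m>m_0} (4k^2/(mn))^m e^{\delta_n m}$. Because $m>m_0=\epsilon_n\sqrt d \gg \epsilon_n k^3/n^{3/2}$, we have $4k^2/(mn)\lesssim n^{1/2}/(\epsilon_n k)$, which is not necessarily small. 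This is the main obstacle. The resolution is to choose $\epsilon_n$ carefully and to exploit that $\sqrt d \gg k \ge$ whatever is needed. Note also that $m\le s$, so the ratio $4k^2/(mn)\ge 4k/n\cdot(k/m)\ge 3.6k/n$; the tail is only useful when $m\gtrsim k^2/n$. Since $\sqrt d\gg k^3/n^{3/2}$ and $\sqrt d\gg k$, we get $m_0\gg \epsilon_n\max(k, k^3/n^{3/2})$. If $k^2/n\le k$, which always holds, then choosing $\epsilon_n$ so that $m_0\ge 8e\,k^2/n$ is possible whenever $\sqrt d\gg k^2/n$. This follows since $k^2/n\le \max(k,k^3/n^{3/2})$ (by AM–GM, $k^2/n$ is the geometric mean of $k$ and $k^3/n^2\le k^3/n^{3/2}$). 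Then each term is at most $(1/(2e))^m e^{\delta_n m}\le 2^{-m}$, and the sum over $m>m_0\to\infty$ is $o(1)$.

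Combining the two ranges yields $1+o(1)$, uniformly in $s$, since all bounds depend on $s$ only through $s\le k^+$.
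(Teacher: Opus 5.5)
Your ingredients are the right ones: a binomial-type tail for $N$, linearizing the exponent via $N\le s$ and $d\gg k^2$, and using $d\gg k^6/n^3$ for the bulk. However, the first range does not go through as written. The bound $\mathbb{E}[N^3e^{\eta N}]\lesssim(\mu+1)^3$ needs $\mu\eta=O(1)$, not merely $\eta\to0$. The moment generating function factor is $\exp(\Theta(\mu\eta))$, and by Jensen (since $x^3e^{\eta x}$ is convex on $[0,\infty)$) you even have $\mathbb{E}[N^3e^{\eta N}]\ge\mu^3e^{\eta\mu}$.

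With $\eta=C\epsilon_n^2$ and $\epsilon_n\to0$ ``chosen slowly'', this step fails whenever $\mu\asymp k^2/n\to\infty$. For example, $k=n/5$, $d=n^4$ and $\epsilon_n=1/\log n$ satisfy all hypotheses and your second-range requirement $m_0\ge 8ek^2/n$. Yet they give $\mathbb{E}[N^3e^{\eta N}]/d\gtrsim n^{-1}e^{cn/\log^2 n}\to\infty$.

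So $\epsilon_n$ is squeezed from both sides. The tail range needs $\epsilon_n\sqrt d\ge 8ek^2/n$ and $\epsilon_n\sqrt d\to\infty$; the bulk range needs $\epsilon_n^2k^2/n=O(1)$. You never state the second constraint, and ``slowly'' points the wrong way. The two constraints are compatible precisely because $d\gg k^6/n^3$. For instance, $\epsilon_n:=\max\{8ek^2/(n\sqrt d),\,d^{-1/4}\}$ gives $\epsilon_n\to0$, $\epsilon_n\sqrt d\to\infty$, and $\epsilon_n^2\mu\lesssim k^6/(n^3d)+k/\sqrt d=o(1)$. With that choice your argument is complete.

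A cleaner repair removes $m_0$ altogether. Since $N\le s$, the linearization $CN^3/d\le (Cs^2/d)N=\delta_nN$ holds for every value of $N$, not just above $m_0$. The only natural cutoff is therefore $m_1\asymp s^2/n$, where the tail bound starts to bite.
\begin{itemize}
\item For $m\le m_1$ you have $e^{Cm^3/d}-1=O(k^6/(n^3d))=o(1)$.
\item For $m>m_1$, combine $\Prob(N\ge m)\le (es^2/(mn))^m\le 2^{-m}$ with $e^{Cm^3/d}-1\le (Cm^3/d)e^{\delta_n m}$; the sum is $O(1/d)$.
\end{itemize}
This is the ``alternatively'' route you mention in passing, and it is exactly the paper's proof. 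The paper dominates $N$ by $M\sim\mathrm{Binom}(s,s/(n-s))$ and splits at $M=4s^2/n$ (i.e.\ $u=64s^6/n^3$), bounding the bulk by $\exp(64Cs^6/(n^3d))$. It then beats the Chernoff tail $e^{-u^{1/3}/100}$ against $e^{Cu/d}\le e^{Cu^{1/3}s^2/d}$.
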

The proof is deferred to Appendix~\ref{appendix:hypergeomcubemgf}.
From \eqref{eq:lbdensetrlikelihood3} and \eqref{eq:lbdensef1throughf3}, we obtain
\begin{align*}
    &\max_{s \in [k^-, k^+]}\E_{N \big| |S| = |S'| = s} \left[\E_{X \sim m_{N, d}} \left[\frac{w^2_{N, d}(X)\bm{1}\{X \in \calE_{N}\}}{m^2_{N, d}(X)}\right]\right] \\
    &\leq 
    (1+o(1))\max_{s \in [k^-, k^+]}\E_{N \big| |S| = |S'| = s}[\E_{X \sim m_{N, d}}[f_1(X)f_2(X)f_3(X)]] \\ 
    &\leq (1+o(1))\prod_{i=1}^3 \left(\max_{s \in [k^-, k^+]}\E_{N \big| |S| = |S'| = s}[\E_{X \sim m_{N, d}}[f_i(X)^3]]\right)^{1/3} \\
    &\leq (1+o(1))\prod_{i=1}^3 \left(\max_{s \in [k^-, k^+]}\E_{N \big| |S| = |S'| = s}\left[\exp\left(C_{\ref{lem:lbdensef1throughf3}}\frac{N^3}{d}\right)\right]\right)^{1/3} \\
    &\leq 1+o(1)\,,
\end{align*}
where the second inequality is by H\"older inequality, the third inequality is by Lemma~\ref{lem:lbdensef1throughf3}, and the last inequality is by Lemma~\ref{lem:hypergeomcubemgf}. This proves \eqref{eq:lbdensetrlikelihood2} and completes the proof of Theorem~\ref{thm:lbdense}.

\subsection{Tensorization of KL divergence}
In this approach, we consider a sequential process where at each time $i = 1, \dots, n$, a new vertex and its edges with respect to the previous vertices are introduced.

Formally, let $B$ be the adjacency matrix of a sample from $\calQ$, with the neighborhood vector $B_i \in \{0, 1\}^{i-1}$ for $i \in [n]$ %
denoting the adjacency between $i$ and vertices $j < i$. Similarly, we define $A$ (and similarly $A_i \in \{0, 1\}^{i-1}$ for $ i \in [n]$) to be the adjacency matrix of a sample from $\calP$, which depends on the joint latent vector $X_i := (U_i, V_i) \in \reals^{d + 1}, i \in [n]$. To be specific, $U_i \sim \calU(\mathbb{S}^{d-1})$ (the feature vector) and $V_i \sim \mathrm{Ber}(k/n)$ (the community membership indicator) for all $i \in [n]$ independently. Note that here we assume every vertex $i$ to have a feature vector $U_i$, and it is the membership indicator $V_i$ that determines whether the feature vector would be used.

    Throughout, we will use notation such as $A_{[i]}:= (A_1, \dots, A_i)$. Then we have
    \begin{align*}
        2\dTV(\calP, \calQ)^2 = 2\dTV(A, B)^2 &\leq \dKL(A || B) \\
        &\leq \sum_{i=0}^{n-1}\E_{A_{[i]}} [\dKL(A_{i+1} | A_{[i]} || B_{i+1})]\\
        &\leq \sum_{i=0}^{n-1}\E_{A_{[i]}, X_{[i]}} [\dKL(A_{i+1} | A_{[i]}, X_{[i]} || B_{i+1})] \\
        &= \sum_{i=0}^{n-1}\E_{X_{[i]}} [\dKL(A_{i+1} |X_{[i]} || B_{i+1})]\,,
    \end{align*}
    from Pinsker's inequality, chain rule for KL divergence (plus the independence between $B_{i+1}$ and $B_{[i]}$), and the convexity of KL divergence and the fact that $A_{[i]}$ and $A_{i+1}$ are conditionally independent given $X_{[i]}$. By the same argument as in \cite[Claim 8.2]{liu2022rgg}, the last term can be further upper bounded as
    \[\sum_{i=0}^{n-1}\E_{X_{[i]}} [\dKL(A_{i+1} |X_{[i]} || B_{i+1})] \leq n\E_{X_{[n-1]}}[\dKL(A_n | X_{[n-1]}|| B_n)]\,.
    \]
    Thus, it suffices to show that
    \begin{equation}\label{eq:lbklwts}
        \E_{X_{[n-1]}}[\dKL(A_n | X_{[n-1]}|| B_n)] = o\left(\frac{1}{n}\right)\,.
    \end{equation}
    Let $\calP_n(\cdot | X_{[n-1]}): \{0, 1\}^{n-1} \to [0, 1]$ be  the law of $A_n|X_{[n-1]}$ and $\calQ_n(\cdot): \{0, 1\}^{n-1} \to [0, 1]$ be the law of $B_n$. Then \eqref{eq:lbklwts} can be rewritten as
    \begin{equation}\label{eq:lbklterm}
    \begin{aligned}
        \E_{X _{[n-1]}} [\dKL(A_n | X_{[n-1]} || B_n)] &= \E_{X_{[n-1]}}\left[\E_{\Gamma \sim \calP_n (\cdot | X_{[n-1]})} \left[\log \frac{\calP_n(\Gamma | X_{[n-1]})}{\calQ_n(\Gamma)} \right]\right]\\
        &= \E_{X_{[n-1]}}\left[\E_{\Gamma \sim \calP_n(\cdot | X_{[n-1]})} \left[\log\left(\Delta(\Gamma, X_{[n-1]}) + 1\right)\right]\right]\,,
    \end{aligned}
    \end{equation}
    where $\Gamma := (\Gamma_1, \dots, \Gamma_{n-1}) \in \{0, 1\}^{n-1}$ and  $\Delta(\Gamma, X_{[n-1]}) := \calP_n(\Gamma | X_{[n-1]})/\calQ_n(\Gamma) - 1$. 

    Our next argument is that the size of $\Delta$ can be significantly reduced in two ways. First, this gives a smaller ratio between the neighborhood distributions. This comes from the fact that vertex $n$ itself may not be in the community ($V_n$ is not revealed yet), in which case the neighborhood distribution is simply $\calQ_n$.   
    Indeed, we can write as
    \[\calP_n(\Gamma|X_{[n-1]}) = \frac{k}{n}\overline{\calP}_n(\Gamma|X_{[n-1]}) + \left(1-\frac{k}{n}\right)\calQ_n(\Gamma)\,,
    \]
    where $\overline{\calP}_n(\cdot | X_{[n-1]})$ is the distribution of $A_n | X_{[n-1]}$ conditioned on $V_n = 1$. Thus, 
    \begin{equation}\label{eq:lbgeneraldeltadef1}
        \Delta(\Gamma, X_{[n-1]}) = \frac{k}{n}\left(\frac{\overline{\calP}_n(\Gamma|X_{[n-1]})}{\calQ_n(\Gamma)} - 1\right)\,. 
    \end{equation}
    
    Second, $\Delta$ is small in that it typically compares neighborhoods of size much smaller than $n$. If a previous vertex is not in the community, its edge between vertex $n$ is $\Bern(p)$, canceling out the corresponding factor in $\calQ_n$; note that such information on the previous vertices is available from the conditioning on $X_{[n-1]}$. 
    This yields a smaller ratio between the neighborhood distributions. 
    As each previous vertex is in the community with probability $k/n$, on average, at most $O(k)$ vertices are relevant.

    In order to capture the average-case behavior explained so far, we consider the following event:
    \[\{(\Gamma, U, V) \in \calE\} := \left\{ \sum_{i=1}^{n-1} V_i\leq 2k, \sum_{i \in [n-1]: V_i = 1}\Gamma_i \leq 4kp, |\Delta(\Gamma, X_{[n-1]})| \leq \frac{k}{n} \right\}\,.
    \]
    Here, $U := (U_1, \dots, U_{n-1})$. The following lemma shows that this is indeed a high probability event.

    \begin{lemma}\label{lem:lbgeneralhighprobevent} Assume $p \in (0, 1/2]$. Then there exists a constant $C_{\ref{lem:lbgeneralhighprobevent}} > 0$ such that if $n$ is sufficiently large and
    \[kp \geq C_{\ref{lem:lbgeneralhighprobevent}}\log n \quad \text{and} \quad  d \geq C_{\ref{lem:lbgeneralhighprobevent}}(kp\log(1/p)\log(d/p))^2\log n\,,
    \]
    then the following holds:
    \begin{equation}\label{eq:lbgeneralhighprobevent}
    \begin{aligned}
        n\log(1/p)\Prob_{X_{[n-1]}, \Gamma \sim \calP_n(\cdot | X_{[n-1]})}((\Gamma, U, V) \in \calE^c) &= o(1/n)\,, \\
        n\log(1/p)\Prob_{X_{[n-1]}, \Gamma \sim \calQ_n}((\Gamma, U, V) \in \calE^c) &= o(1/n)\,.
    \end{aligned}
    \end{equation}
    \end{lemma}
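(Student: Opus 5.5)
We union bound over the three defining conditions of $\calE$ and show each failure probability is $o(1/(n^2\log(1/p)))$ under both $\Gamma\sim\calP_n(\cdot\mid X_{[n-1]})$ and $\Gamma\sim\calQ_n$.

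\emph{First condition.} Since $\sum_{i<n} V_i\sim\Binom(n-1,k/n)$, a Chernoff bound gives $\Prob(\sum V_i>2k)\le \exp(-k/3)$. The hypothesis $kp\ge C\log n$ gives $k\ge C\log n$, so this is $n^{-C/3}$. For large $C$ this beats $n^{-2}/\log(1/p)$; here we use $\log(1/p)\le \log k\le \log n$, which follows from $p\ge C\log n/k$.

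\emph{Second condition.} We condition on $V$ with $\sum V_i\le 2k$. Under $\calQ_n$, $\sum_{V_i=1}\Gamma_i$ is a sum of at most $2k$ independent $\Bern(p)$ variables, and Chernoff gives a tail of $\exp(-\Omega(kp))$ above $4kp$. Under $\calP_n$ the only extra case is $V_n=1$. Given $U_n$, the edges $\Gamma_i=\bm 1\{\langle U_i,U_n\rangle\ge\tau\}$ for $V_i=1$ are conditionally i.i.d. $\Bern(p)$ over the independent $U_i$. So the same Chernoff bound applies after averaging, again giving $\exp(-\Omega(kp))=n^{-\Omega(C)}$.

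\emph{Third condition, $|\Delta|\le k/n$.} This is the main obstacle. By \eqref{eq:lbgeneraldeltadef1} it suffices to show $|\overline{\calP}_n(\Gamma\mid X_{[n-1]})/\calQ_n(\Gamma)-1|\le 1$ with the required probability. Only the coordinates with $V_i=1$ matter, since the others cancel, so let $T=\{i<n: V_i=1\}$ with $|T|\le 2k$. Then
\[
\frac{\overline{\calP}_n(\Gamma\mid X)}{\calQ_n(\Gamma)}=\E_{U_n}\Big[\prod_{i\in T}\frac{\Prob(\Gamma_i\mid U_i,U_n)}{p^{\Gamma_i}(1-p)^{1-\Gamma_i}}\Big],
\]
where $\Prob(\Gamma_i\mid U_i,U_n)$ denotes the $0/1$ indicator of the observed edge status under the geometric rule. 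This is the neighborhood likelihood ratio of the full model $\calG(|T|+1,p,d)$ on at most $2k$ vertices. I would therefore invoke the full-model estimate as in \cite{liu2022rgg}: write the ratio as $\E_{U_n}[\cdot]$, reveal $U_i$, $i\in T$, one at a time, and treat the ratio as a martingale in this revealing order. The increments are controlled by the spherical cap measure of the intersection of the caps determined so far. The standard cap-intersection estimates give a multiplicative perturbation of order $\sqrt{\log(1/p)\log(d/p)/d}$ per revealed neighbor, and there are at most $4kp$ neighbors by the second condition.

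Summed over the neighbors, this gives a deviation of order $kp\log(1/p)\log(d/p)\sqrt{\log n/d}$ with failure probability $n^{-\Omega(C)}$ via Azuma/Freedman applied to the martingale. This deviation is at most $1/2$ exactly when $d\ge C(kp\log(1/p)\log(d/p))^2\log n$, which is the assumed condition. The same bound holds for $\Gamma\sim\calQ_n$, because the argument is over the randomness of $U_T$ and uses only that $\Gamma$ restricted to $T$ has at most $4kp$ ones, which the second condition supplies under either law.

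The delicate point is making the cap-intersection concentration uniform over the atypical configurations where the $U_i$ cluster. I would handle this by truncating on the event that all pairwise $|\langle U_i,U_j\rangle|\lesssim\sqrt{\log n/d}$, which fails with probability $n^{-\Omega(C)}$.
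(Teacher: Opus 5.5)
Your handling of the first two conditions is correct and matches the paper. So is the third condition when $\Gamma\sim\calQ_n$; the paper simply invokes Lemma~\ref{lem:capsandanticaps} with $\nu\le 2k$ and $\sum_i\Gamma_i\le 4kp$ rather than re-deriving the revealing martingale.

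The gap is the third condition under $\calP_n(\cdot\mid X_{[n-1]})$ on the component $V_n=1$, which is where the real work of this lemma lies. There $\Gamma_i=\bm{1}\{\langle U_i,U_n\rangle\ge\tau(p,d)\}$ for $i\in T$, so $\Gamma_T$ depends on $U_T$. Your final sentence claims the bound transfers because the argument ``is over the randomness of $U_T$ and uses only that $\Gamma$ has at most $4kp$ ones''. That is false: the revealing martingale and the cap estimates are statements about $U_T$ uniform and independent of a fixed $\gamma$.

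Write $R(\gamma,U_T):=\widetilde{\calP}_{\nu+1}(\gamma\mid U_T)/\calQ_{\nu+1}(\gamma)$. On this component the marginal law of $\Gamma_T$ is still $\calQ_{\nu+1}$. But conditionally on $\Gamma_T=\gamma$, the law of $U_T$ has density exactly $R(\gamma,\cdot)$ with respect to the uniform measure. In other words, it is tilted by the very quantity you want to show is at most $2$ (note $|\Delta|>k/n$ iff $R>2$). Hence the failure probability on this component equals $\E_{\calQ}[R\,\bm{1}\{R>2\}]$, with $U_T$ uniform and independent of $\Gamma_T\sim\calQ_{\nu+1}$.

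A priori you only know $R\le 1/\calQ_{\nu+1}(\gamma)\le\exp(O(kp\log(1/p)))$. The tail in Lemma~\ref{lem:capsandanticaps} saturates at $t=1$, so integrating it does not help either. Combining the two yields the required $o(1/(n^2\log(1/p)))$ only when $d=\widetilde{\Omega}(k^3p^3)$, which the hypothesis $d\gtrsim(kp\log(1/p)\log(d/p))^2\log n$ does not give. Your truncation on small pairwise inner products does not address this, since it constrains the reference measure, not the tilt.

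What is missing is a moment bound on $R$ under the null. The paper applies Cauchy--Schwarz,
$\E_{\calQ}[R\,\bm{1}_B]\le \Prob_{\calQ}(B)^{1/2}\,\E_{\calQ}[R^2\bm{1}\{\sum_i\Gamma_i\le 4kp\}]^{1/2}$.
It bounds the first factor by the $\calQ$-case and proves the second is $O(1)$ for $d=\widetilde{\Omega}(k^2p^2)$. That step needs a new argument:

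\begin{enumerate}
\item Factor $R_\nu=\prod_t\varphi_t$ with $\varphi_t=\rho(E_t\cap L_{t-1})/(\rho(E_t)\rho(L_{t-1}))$, and use $\E[R_t^2]=\E[R_{t-1}^2(1+\E[(\varphi_t-1)^2\mid\calF_{t-1}])]$.
\item On $\{R_{t-1}\ge a\}$, bound the conditional second moment via Lemma~\ref{lem:onecapconcsmallerthan1}. Here $\rho(L_{t-1})\ge a\,\calQ_{\nu+1}(\gamma)$ keeps $\log\norm{\mu}_\infty\le\log(1/a)+10kp\log(1/p)$.
\item On $\{R_{t-1}<a\}$, use $\varphi_t\le 1/p$.
\end{enumerate}

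Iterating with $a=p/k$ gives $\E[R_\nu^2]\le\exp(\widetilde{O}(k^2p^2/d))(1+O(p/k))=O(1)$. Without some such control of the likelihood ratio's moments under $\calQ$, your argument establishes only the $\calQ_n$ half of the lemma (together with the $V_n=0$ component of $\calP_n$).
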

    The proof is deferred to Appendix \ref{appendix:lbgeneralhighprobevent}.
    Continuing from \eqref{eq:lbklterm}, we have 
    \begin{align*}
        &\E_{X_{[n-1]}}\left[\E_{\Gamma \sim \calP_n(\cdot | X_{[n-1]})} \left[\log\left(\Delta(\Gamma, X_{[n-1]}) + 1\right)\right]\right] \\
        &\leq o\left(\frac{1}{n}\right) + \E_{X_{[n-1]}}\left[\E_{\Gamma \sim \calP_n(\cdot | X_{[n-1]})} \left[\log\left(\Delta(\Gamma, X_{[n-1]}) + 1\right)\bm{1}\{(\Gamma, U, V) \in \calE\}\right]\right] \\
        &\leq o\left(\frac{1}{n}\right) + \E_{X_{[n-1]}}\left[\E_{\Gamma \sim \calQ_n}\left[\left(\Delta(\Gamma, X_{[n-1]}) + 1\right)\Delta(\Gamma, X_{[n-1]})\bm{1}\{(\Gamma, U, V) \in \calE\}\right]\right] \\
        &\leq o\left(\frac{1}{n}\right) + \E_{X_{[n-1]}}\left[\E_{\Gamma \sim \calQ_n}\left[\Delta(\Gamma, X_{[n-1]})^2\bm{1}\{(\Gamma, U, V) \in \calE\}\right]\right]\,,
    \end{align*}
    where the first inequality is from Lemma~\ref{lem:lbgeneralhighprobevent} and $\log (\Delta + 1) \leq n\log(1/p)$ from $\calP_n \leq 1, \calQ_n \geq p^n$; the second inequality is from $\log(x+1) \leq x$; and the third inequality is from
    \begin{align*}
        &\left|\E_{X_{[n-1]}}\left[\E_{\Gamma \sim \calQ_n}\left[\Delta(\Gamma, X_{[n-1]})\bm{1}\{(\Gamma, U, V) \in \calE\}\right]\right]\right| \\
        &= |\Prob_{X_{[n-1]}, \Gamma \sim \calQ_n}((\Gamma, U, V) \in \calE^c) - \Prob_{X_{[n-1]}, \Gamma \sim \calP_n (\cdot | X_{[n-1]})}((\Gamma, U, V) \in \calE^c)| = o(1/n)\,,
    \end{align*}
    again by Lemma~\ref{lem:lbgeneralhighprobevent}.

For notational convenience, define $\nu$ to be the number of vertices $i \in [n-1]$ such that $V_i = 1$, and $i_1 < \dots < i_{\nu}$ be such indices. Then 
   \begin{equation}\label{eq:lbgeneraldeltadef2}
       \Delta(\Gamma, X_{[n-1]}) =\frac{k}{n}\left(\frac{\widetilde{\calP}_{\nu+1}(\Gamma_{i_1}, \dots, \Gamma_{i_{\nu}}|U_{i_1}, \dots, U_{i_{\nu}})}{p^{\sum_{j=1}^\nu \Gamma_{i_j}}(1-p)^{\nu - \sum_{j=1}^\nu \Gamma_{i_j}}}  - 1\right)\,,
   \end{equation}
    where $\widetilde{\calP}_{l+1}( \cdot | U_{m_1}, \dots, U_{m_l})$ denotes the distribution of edges between vertex $n$ and vertices $m_1 < \dots < m_l$ under $\calG(l+1, p, d)$ (over vertices $m_1, \dots, m_l, n$ in this case), conditioned on the corresponding latents $U_{m_1}, \dots, U_{m_l}$. Thus, it suffices to show that

    \begin{equation}\label{eq:lbgeneralchisqdiv}
    \begin{aligned}
        &\E_{X_{[n-1]}}\left[\E_{\Gamma \sim \calQ_n}\left[\Delta(\Gamma, X_{[n-1]})^2\bm{1}\{(\Gamma, U, V) \in \calE\}\right]\right] \\
        &=\left(\frac{k}{n}\right)^2 \E_{X_{[n-1]}}\left[\E_{\Gamma \sim \calQ_n}\left[\left(\frac{\widetilde{\calP}_{\nu+1}(\Gamma_{i_1}, \dots, \Gamma_{i_{\nu}}|U_{i_1}, \dots, U_{i_{\nu}})}{p^{\sum_{j=1}^\nu \Gamma_{i_j}}(1-p)^{\nu - \sum_{j=1}^\nu \Gamma_{i_j}}} - 1\right)^2\bm{1}\{(\Gamma, U, V) \in \calE\}\right]\right]\,,
    \end{aligned}
    \end{equation}
    is $o(1/n)$. For this, consider any fixed $V$ and $\Gamma$ with $\nu \leq 2k$ and $\sum_{j=1}^\nu \Gamma_{i_j} \leq 4kp$ (the first two conditions of $\calE$). By only invoking the randomness of $U_{i_1}, \dots, U_{i_\nu}$,
    \begin{align*}
        &\E_{U_{i_1}, \dots, U_{i_\nu}} \left[\left(\frac{\widetilde{\calP}_{\nu+1}(\Gamma_{i_1}, \dots, \Gamma_{i_{\nu}}|U_{i_1}, \dots, U_{i_{\nu}})}{p^{\sum_{j=1}^\nu \Gamma_{i_j}}(1-p)^{\nu - \sum_{j=1}^\nu \Gamma_{i_j}}} - 1\right)^2\bm{1}\{(\Gamma, U, V) \in \calE\}\right]\\
        &\leq \int_0^1\Prob_{U_{i_1}, \dots, U_{i_\nu}}\left(\left(\frac{\widetilde{\calP}_{\nu+1}(\Gamma_{i_1}, \dots, \Gamma_{i_{\nu}}|U_{i_1}, \dots, U_{i_{\nu}})}{p^{\sum_{j=1}^\nu \Gamma_{i_j}}(1-p)^{\nu - \sum_{j=1}^\nu \Gamma_{i_j}}} - 1\right)^2 \geq t\right)dt \\
        &\leq \int_0^{1/(k^2(\log n)/n)}1dt \\
        &\quad + \int_{1/(k^2(\log n)/n)}^1 \exp\left(-\frac{1}{C_{\ref{lem:capsandanticaps}}}\frac{dt}{C_0kp(kp\log(1/p) + \log(d/p))\log(1/p)\log(d/p)} + C_{\ref{lem:capsandanticaps}}\log(3k)\right)dt \\
        &\leq \frac{1}{\log n}\frac{n}{k^2} + \frac{C_1(kp\log(1/p)\log(d/p))^2}{d}\exp\left(-\frac{dn}{C_1(k^2p\log(1/p)\log(d/p))^2\log n}+ C_1\log(3k)\right)\,,
    \end{align*}
    for some constants $C_0 > 0$ and $C_1 > 0$. The first inequality is from $|\Delta(\Gamma, X_{[n-1]})| \leq k/n$ (third condition of $\calE$; see~\eqref{eq:lbgeneraldeltadef2}) and the second inequality is from Lemma~\ref{lem:capsandanticaps}, using $\nu \leq 2k$ and $\sum_{j=1}^\nu \Gamma_{i_j} \leq 4kp$; the final inequality is from $kp\log(1/p) + \log(d/p) \leq kp\log(1/p)\log(d/p)$. Thus, \eqref{eq:lbgeneralchisqdiv} is at most
    \[\frac{1}{n\log n} + \frac{C_1(k^2p\log(1/p)\log(d/p))^2}{n^2d}\exp\left(-\frac{dn}{C_1(k^2p\log(1/p)\log(d/p))^2\log n}+ C_1\log(3k)\right)\,.
    \]
    This is $o(1/n)$, as long as
    \[d \geq C_2\frac{k^4p^2}{n}\log^2(1/p)\log^2(d/p)\log^2n\,,
    \]
    for some constant $C_2 > 0$. In combination to Lemma~\ref{lem:lbgeneralhighprobevent}, we showed that for \eqref{eq:lbklwts} to hold it suffices to have
    \[d \geq (C_{\ref{lem:lbgeneralhighprobevent}} \vee C_2) \left(k^2p^2 \vee \frac{k^4p^2}{n}\right) \log^2(d/p) \log^2(1/p) \log^2 n \,.
    \]
    This is implied by $d \geq C_{\ref{thm:lbgeneral}} (k^2p^2 \vee k^4p^2/n) \log^2(k/p) \log^2(1/p) \log^3n $, for an appropriate choice of the constant $C_{\ref{thm:lbgeneral}} > 0$.

\section{Proofs for computational lower bound}\label{sec:comp}

\subsection{Low-degree lower bound}
In this subsection, we prove Theorem~\ref{thm:lowdeglb}.

For any subgraph $H$ of $K_n$, define its Fourier coefficient (with respect to the orthonormal basis in $\calL^2(\calQ)$) as
\[\Phi_\calP(H) := \E_{G \sim \calP}\left[\prod_{ij \in E(H)} \frac{G_{ij}-p}{\sqrt{p(1-p)}}\right]\,.
\]
From \cite[Claim 3.1]{bangachev2024fourier}, to show that there is no polynomial of degree at most $D$ achieving weak separation, it suffices to have
\begin{equation}\label{eq:lowdeglbwts}
    \sum_{H\subseteq K_n: 1 \leq e(H) \leq D}\Phi_\calP(H)^2 = o(1)\,.
\end{equation}
Now let $D = \lfloor(\log n / \log (\log n))^2\rfloor$.
Note that for each $H$, there are at most $n^{v(H)}$ subgraphs in $K_n$ that are isomorphic to $H$. Furthermore, such subgraphs all share the same value of $\Phi_\calP(H)$. Thus for \eqref{eq:lowdeglbwts}, it suffices to show
\begin{equation}\label{eq:lowdeglbwts2}
    \sum_{\text{non-iso. }H\subseteq K_n : 1 \leq e(H) \leq D}n^{v(H)}\Phi_\calP(H)^2 = o(1)\,,
\end{equation}
where non-iso. $H \subseteq K_n$ denotes the enumeration of all nonisomorphic subgraphs $H$ of $K_n$.
For each $H$, let $H_1, \dots, H_r$ be its connected components. Then $\Phi_{\calP}(H) = \prod_{i = 1}^r \Phi_{\calP}(H_i)$ and in particular if any of $H_i$ is a tree, $\Phi_{\calP}(H) = 0$ by Corollary~\ref{cor:treesimple}. Thus for $H$ with $\Phi_{\calP}(H) \neq 0$, one can assume that $v(H_i) \geq 3$ and $e(H_i) \geq v(H_i)$. Then for each $H_i$, we have
\begin{equation}\label{eq:lowdegfourierbound}
\begin{aligned}
    |\Phi_\calP(H_i)| &= \left|(p(1-p))^{-e(H_i)/2}\E_{G \sim \calP}\left[\prod_{j\ell \in E(H_i)}(G_{j\ell}-p)\right]\right| \\
    &\leq(p(1-p))^{-e(H_i)/2}(8p)^{e(H_i)}\left(\frac{C_{\ref{lem:rggfourier}}(\log^6 n)(\log^{3/2}d)}{\sqrt{d}}\right)^{\lceil (v(H_i) - 1)/2 \rceil} \left(\frac{k}{n}\right)^{v(H_i)}\\
    &\leq (8\sqrt{2p})^{e(H_i)}\left(\frac{C_{\ref{lem:rggfourier}}(\log^6 n)(\log^{3/2} d)}{\sqrt{d}}\right)^{v(H_i)/3}  \left(\frac{k}{n}\right)^{v(H_i)} \\
    &\leq 8^{e(H_i)}\left(\frac{4C_{\ref{lem:rggfourier}}k^{3}p^{3/2} (\log^6 n)(\log^{3/2}d)}{n^3\sqrt{d}}\right)^{v(H_i)/3}\,.
\end{aligned}
\end{equation}
Here, the first inequality is from Lemma~\ref{lem:rggfourier} with $v(H_i), e(H_i) \leq \log^3 n$, and Lemma~\ref{lem:fouriersimple}; the second inequality is from $1-p \geq 1/2$ and $\lceil(v(H_i)-1)/2 \rceil \geq v(H_i)/3$; the last inequality is from $\sqrt{2p} \leq 1$ and $e(H_i) \geq v(H_i)$.

From \eqref{eq:lowdegfourierbound}, we can bound the left hand side of \eqref{eq:lowdeglbwts2} as
\begin{equation}\label{eq:lowdegfinal}
\begin{aligned}
    &\sum_{\text{non-iso. }H\subseteq K_n : 1 \leq e(H) \leq D}n^{v(H)}\Phi_\calP(H)^2 \\
    &\leq \sum_{\text{non-iso. }H\subseteq K_n : 1 \leq e(H) \leq D} 64^{e(H)}\left(\frac{16C_{\ref{lem:rggfourier}}^2k^6p^3(\log^{12} n) (\log^3 d)}{n^3d}\right)^{v(H)/3} \\
    &\leq \sum_{\text{non-iso. }H\subseteq K_n : 1 \leq e(H) \leq D} 64^{e(H)}n^{-((\eps \wedge \delta)/6)v(H)} \leq \sum_{\text{non-iso. }H\subseteq K_n : 1 \leq e(H) \leq D} n^{-((\eps \wedge \delta)/12)v(H)}\,.
\end{aligned}
\end{equation}
Here, for each constant $\eps > 0$, the second inequality holds as long as $n$ is sufficiently large. To see this, fix a constant $C_0 > 0$ such that $d \mapsto d/\log^3 d$ is increasing for $d \geq C_0$. If $k^6p^3/n^3 < C_0$ then $\frac{16C_{\ref{lem:rggfourier}}^2k^6p^3(\log^{12} n) (\log^3 d)}{n^3d} \leq \frac{16C_{\ref{lem:rggfourier}}^2 C_0(\log^{12} n) (\log^3 d)}{d} \leq n^{-\delta/2}$ for all sufficiently large $n$; otherwise,  $\frac{16C_{\ref{lem:rggfourier}}^2k^6p^3(\log^{12} n) (\log^3 d)}{n^3d} \leq \frac{16C_{\ref{lem:rggfourier}}^2(\log^{12} n) (\log^3 (k^6p^3/n^{3-\eps}))}{n^\eps} \leq n^{-\eps/2}$ for all sufficiently large $n$. The last inequality in~\eqref{eq:lowdegfinal} follows from $e(H) \leq v(H)^2 \wedge D \leq v(H)\sqrt{D} \leq v(H)\log n / \log (\log n)$, which implies $64^{e(H)} \leq n^{((\eps \wedge \delta)/12)v(H)}$ for all sufficiently large $n$.

Our arguments for controlling the last term in \eqref{eq:lowdegfinal} are similar to those in \cite[Proposition 3.1]{bangachev2024fourier}. As $v(H) \leq 2e(H) \leq 2D$, we bound the number of non-isomorphic graphs as a function of $v(H)$. In particular, the number of non-isomorphic graphs with $v(H) = v$ and $e(H) \leq D$ is at most 
\begin{equation}\label{eq:noniso}
    2^{\binom{v}{2}} \wedge \left(\binom{v}{2}+1\right)^D \leq 2^{v^2} \wedge \exp(2D\log v)\,.
\end{equation}
Here, $2^{\binom{v}{2}}$ follows from enumerating all of the possible choices $\{0, 1\}^{\binom{v}{2}}$ for the edges, and $\left(\binom{v}{2} + 1\right)^D$ follows from deciding to add edge among the $\binom{v}{2}$ possible choices (or to not add) at each time.

Now fix any function $g: \mathbb{N} \to \mathbb{N}$ such that $g(n) = \omega(1)$ and $g(n) = o(\log (\log n))$. If $v \leq \sqrt{D}g(n)$ then \eqref{eq:noniso} is at most $2^{v^2} \leq 2^{v(\sqrt{D} g(n)+1)} \leq 2^{v \log n / \log (\log n) \times  g(n) + v} = 2^{o(v \log n)}$. Similarly, if $v > \sqrt{D}g(n)$ then \eqref{eq:noniso} is at most $\exp(v \times 2D(\log v) / v) \leq \exp(v \times 2D\log (\sqrt{D} g(n)) / (\sqrt{D}g(n))) \leq \exp(v \times 2\log n / (\log(\log n)) \times \log( \log n)/g(n)) = \exp(o(v \log n))$, for all sufficiently large $n$. From these, the last term in~\eqref{eq:lowdegfinal} is at most
\[\sum_{v=1}^{2D} \exp(o(v\log n)) n^{-((\eps \wedge \delta)/12)v} \leq \sum_{v=1}^{2D} n^{((\eps \wedge \delta)/24 - (\eps \wedge \delta)/12)v} \leq 2Dn^{-(\eps \wedge \delta)/24} = o(1)\,,
\]
as $\exp(o(v\log n)) \leq n^{((\eps \wedge \delta)/24)v}$ for all $1 \leq v \leq 2D$, for all sufficiently large $n$.

\subsection{Suboptimality of longer cycle counts}

Here, we prove Proposition~\ref{prop:signedcycle}.
A key result is the following proposition, which provides a tight characterization of the expectation of signed cycle under $\calG(n, p, d)$. This can be directly translated into bounds under $\calP = \calG(n, p, d, k)$ via Lemma~\ref{lem:fouriersimple}. Let $\mathrm{Cyc}_\ell$ be a length-$\ell$ cycle.

\begin{proposition}[Expectation of signed cycle count]\label{prop:signedcyclecount}
Suppose that $p \leq 1/2$ and $d$ is sufficiently large with
    \[d \geq  (5\log(1/p))^4\,.
    \]
    Then there exists a constant $C_{\ref{prop:signedcyclecount}} > 0$ such that for any $3 \leq \ell \leq n$,
    \[\frac{1}{C_{\ref{prop:signedcyclecount}}^\ell} \frac{p^\ell\log^{\ell/2} (1/p)}{d^{\ell/2-1}} \leq \E_{G \sim \calG(n, p, d)}\left[\prod_{ij \in E(\mathrm{Cyc}_\ell)}(G_{ij}-p)\right] \leq C_{\ref{prop:signedcyclecount}}^\ell \frac{p^\ell\log^{\ell/2} (1/p)}{d^{\ell/2-1}}\,.
    \]
\end{proposition}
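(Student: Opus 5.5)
We expand the threshold kernel in spherical harmonics, as anticipated in the technical overview. Write $\kappa(t) := \bm{1}\{t \geq \tau(p,d)\} - p$ for $t \in [-1,1]$. Then $G_{ij}-p = \kappa(\langle U_i, U_j\rangle)$, and $\kappa$ lies in $L^2$ of the marginal law of $\langle U_1,U_2\rangle$. Expand $\kappa$ in normalized Gegenbauer polynomials:
\[
\kappa(t) = \sum_{m\geq 1} c_m \sqrt{N_m}\, \bar P_m(t),
\]
where $N_m$ is the dimension of degree-$m$ spherical harmonics. The $m=0$ term vanishes because $\E[\kappa]=0$. By the addition theorem, $\sqrt{N_m}\bar P_m(\langle u,v\rangle) = N_m^{-1/2}\sum_{r=1}^{N_m} Y_{m,r}(u)Y_{m,r}(v)$ for an orthonormal basis $\{Y_{m,r}\}$. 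Substituting this into $\prod_{j=1}^{\ell}\kappa(\langle U_j,U_{j+1}\rangle)$ around the cycle and integrating out each $U_j$ in turn forces equal degrees $m$ and equal indices $r$ on consecutive edges, which gives a trace. The result is
\[
\E\Big[\prod_{ij\in E(\mathrm{Cyc}_\ell)}(G_{ij}-p)\Big] = \sum_{m\geq1} \frac{c_m^\ell}{N_m^{\ell/2-1}}.
\]
Here $\sum_m c_m^2 = \Var(\kappa) = p(1-p)$ by Parseval. The main task is then to show that the $m=1$ term dominates and to estimate it.

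\textbf{The $m=1$ term.} Here $N_1 = d$ and $\bar P_1(t)=\sqrt d\, t$, so $c_1 = \sqrt d\,\E[\kappa(T)T]$ with $T=\langle U_1,U_2\rangle$. Since $\E[T]=0$, this equals $\sqrt d\, \E[T\bm 1\{T\geq \tau\}]$. The density of $T$ is proportional to $(1-t^2)^{(d-3)/2}$, so the right tail behaves like a Gaussian of variance $1/d$; Sodin's comparison (as used in Section~\ref{sec:lb}) gives $\tau \asymp \sqrt{\log(1/p)/d}$. The condition $d \geq (5\log(1/p))^4$ keeps $\tau$ small enough for this comparison to hold. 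A tail computation (Mills-ratio type) then gives $\E[T\bm 1\{T\geq\tau\}] \asymp p\sqrt{\log(1/p)/d}$, hence $c_1 \asymp p\sqrt{\log(1/p)}$. The $m=1$ term is therefore $c_1^\ell/d^{\ell/2-1} = \Theta(1)^\ell\, p^\ell\log^{\ell/2}(1/p)/d^{\ell/2-1}$, which is the claimed order, and it is positive.

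\textbf{Controlling $m\geq2$.} Using $\sum_m c_m^2 \le p$ alone gives $\sum_{m\ge2}|c_m|^\ell N_m^{-(\ell/2-1)} \le \max_{m\ge2}|c_m|^{\ell-2}N_m^{-(\ell/2-1)}\cdot p$. Since $N_m \gtrsim d^m/m!$, this is already enough for large $m$, but it is too weak for $m=2,3$ and small $p$. The main obstacle is a sharper coefficient bound such as
\[
|c_m| \lesssim p\,(C\log(1/p))^{m/2}/\sqrt{m!},
\]
at least for $m \lesssim \log(1/p)$, together with the trivial Parseval bound beyond that. I would prove it by writing $c_m = \sqrt{N_m}\,\E[\bm 1\{T\geq\tau\}\bar P_m(T)]$ and using the Rodrigues formula for Gegenbauer polynomials to integrate by parts once. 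This reduces $c_m$ to a boundary term at $\tau$ of the form density$(\tau)\times$(a degree-$(m-1)$ polynomial evaluated at $\tau$). That value is comparable to the Hermite value $He_{m-1}(\sqrt d\,\tau)/\sqrt{m!}$ with $\sqrt d\,\tau\approx\sqrt{2\log(1/p)}$, and the density factor is about $p\sqrt{\log(1/p)}$. With the lower bound $N_m \ge (d/m)^m$, the ratio of the $m$-th term to the first is at most $(C^2\log(1/p)\, m/d)^{(m-1)\ell/2}$ up to factorials. The assumption $d\ge(5\log(1/p))^4$ makes this geometrically small in $m$ for $m\le d^{1/2}$; for larger $m$, Parseval plus $N_m\ge d^{m/2}$ finishes.

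\textbf{Conclusion.} Summing the tail gives $\sum_{m\ge2} \le \tfrac12\, c_1^\ell/d^{\ell/2-1}$ for $d$ larger than an absolute constant. Both bounds then follow with a suitable $C_{\ref{prop:signedcyclecount}}$, uniformly in $3\le\ell\le n$, because every comparison is an $\ell$-th power of a per-edge constant.
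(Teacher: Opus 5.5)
Your argument matches the paper up to the identity $\E[\prod_{ij\in E(\mathrm{Cyc}_\ell)}(G_{ij}-p)]=\sum_{m\ge1}c_m^\ell/N_m^{\ell/2-1}$ and the estimate $c_1\asymp p\sqrt{\log(1/p)}$. Centering the kernel so that $c_0=0$ is a slightly cleaner way to drop the $m=0$ term than the paper's forest identity.

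The routes split on $m\ge2$. The paper never derives sharp coefficients. By induction on the three-term recurrence it shows $\int_\tau^1|q_m|\,d\mu\le C_0p\sqrt{\log(1/p)}(0.9d)^{(m-1)/8}$. For $2\le m\le d^{1/4}$ this is beaten by $N_m\ge\frac{d^2}{2}(0.9d^3)^{(m-2)/4}$. For $m>d^{1/4}$ the paper uses Krasikov's uniform Jacobi bound and a long Stirling computation.

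Your Parseval tail genuinely simplifies that last part. Since $N_m$ is nondecreasing in $m$ (for $d\ge3$), $\sum_{m\ge M}|c_m|^\ell N_m^{1-\ell/2}\le (p(1-p))^{\ell/2}N_M^{1-\ell/2}$. With $N_M\ge (d/M)^M$, $\log(1/p)\le d^{1/4}/5$ and $\ell\ge3$, this is negligible against $C^{-\ell}p^\ell\log^{\ell/2}(1/p)\,d^{1-\ell/2}$ once $M\ge 2+4\log(1/p)/\log d$ and $d$ is large. In particular it covers all $m>d^{1/4}$, so the Krasikov/Stirling part is avoidable.

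Your Hermite-type bound on $c_m$, by contrast, is sharper than needed; the paper shows that losing up to $d^{1/8}$ per degree is affordable. It is also the one step you only sketch. The boundary identity from the Rodrigues formula is exact, but ``comparable to the Hermite value'' must become a pointwise bound. One way is $|\tilde q_{m-1}(\tau)|\le (C(\sqrt d\,\tau+\sqrt m))^{m-1}/\sqrt{(m-1)!}$, where $\tilde q_{m-1}$ is the degree-$(m-1)$ orthonormal polynomial for the weight $(1-x^2)^{(d-1)/2}$. This follows from its three-term recurrence using $a_j\lesssim\sqrt{d/(j+1)}$, rather than the paper's cruder $a_j\le\sqrt d$.

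Some bookkeeping needs correcting.
\begin{itemize}
\item The ratio of the $m$-th term to the first is $(c_m/c_1)^\ell (d/N_m)^{\ell/2-1}$, so the power of $1/d$ is $(m-1)(\ell/2-1)$, not $(m-1)\ell/2$. It still decays: $\log(1/p)\le d^{1/4}/5$ turns the $d$-dependence into $d^{(m-1)(1-3\ell/8)}\le d^{-(m-1)\ell/24}$, which absorbs the $C^{O(m\ell)}$ constants once $d$ is large. This is where $d\ge(5\log(1/p))^4$ is used.
\item You assert the coefficient bound only for $m\lesssim\log(1/p)$, yet invoke geometric decay up to $m\le d^{1/2}$. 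The handoff to Parseval should happen at the threshold $M=O(1+\log(1/p)/\log d)$ above, which lies inside the range of your coefficient bound when $p$ is small. When $p$ is bounded below, Parseval alone already works from $m=2$.
\item The claim $N_m\ge d^{m/2}$ fails for $m\gg d$, where $N_m$ grows only polynomially in $m$; monotonicity of $N_m$ is what you actually need.
\item For $p$ bounded away from $0$ the Mills-ratio step is outside its regime. There $c_1\asymp 1$ follows directly from the closed form $c_1=\frac{\sqrt d\,\Gamma(d/2)}{(d-1)\sqrt\pi\,\Gamma((d-1)/2)}(1-\tau^2)^{(d-1)/2}$ together with $\tau\le\sqrt{3\log(1/p)/d}$.
\end{itemize}
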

The proof is deferred to Appendix~\ref{app:signedcyclecount}.

\begin{remark}[Tightness and implications] While there are previous works giving upper bounds on this value (\eg, \cite{liu2023expansion, bangachev2024fourier}), our upper bound identifies the correct asymptotic dependence in all parameters (up to a constant factor in the base, with $\ell$ as an exponent). Furthermore, we provide a matching lower bound which may be of independent interest. For example, we expect that it can be used to show the tightness of the bound on the second largest eigenvalue in \cite{cao2025spectra}; see also Remark~\ref{remark:spectraltest}. 
\end{remark}

As a direct corollary of Proposition~\ref{prop:signedcyclecount}, we show that longer cycle counts are strictly less powerful than the triangle count.

\begin{proof}[Proof of Proposition~\ref{prop:signedcycle}] Let $f_\ell(G)$ be the signed count of $\mathrm{Cyc}_\ell$ of $G$. By Proposition~\ref{prop:signedcyclecount} and Lemma~\ref{lem:fouriersimple}.
\[0 \leq \E_{G \sim \calP}[f_\ell(G)] - \E_{G \sim \calQ}[f_\ell(G)] \leq \binom{n}{\ell}\frac{(\ell-1)!}{2} \times \left(\frac{k}{n}\right)^\ell\frac{C_{\ref{prop:signedcyclecount}}^\ell p^\ell\log^{\ell/2}(1/p)}{d^{\ell/2-1}}\,,
\]
as the number of distinct length-$\ell$ cycles in $K_n$ is $\binom{n}{\ell}\frac{(\ell-1)!}{2}$. Furthermore, a straightforward calculation yields
\[\Var_{G \sim \calQ}[f_\ell(G)] = \binom{n}{\ell}\frac{(\ell-1)!}{2}(p(1-p))^\ell \,.
\]
Then arranging $\E_{G \sim \calP}[f_\ell(G)] - \E_{G \sim \calQ}[f_\ell(G)] \gg \sqrt{\Var_{G \sim \calQ}[f_\ell(G)]}$ yields the result.
\end{proof}

We briefly discuss spectral algorithms below, as signed cycle counts are closely related to those via trace method \cite{wignertracemethod, vutracemethod}. 

\begin{remark}[Spectral tests]\label{remark:spectraltest} Several recent papers \cite{liu2023expansion, bangachev2024fourier, cao2025spectra} have studied the spectral property of the full model $\calG(n, p, d)$. For notation, let $\overline{A}$ be the centered adjacency matrix of $G \sim \calG(n,p, d)$, i.e., $\overline{A}_{ij} := G_{ij} - p$ if $i \neq j$ and $\overline{A}_{ii} := 0$. \cite{cao2025spectra} showed that if $d = \widetilde{\Omega}(np)$,
\begin{enumerate}
    \item[(i)] The empirical distribution of the eigenvalues of $\overline{A}$ converges to the same semicircle law as in $\calG(n, p)$.
    \item[(ii)] The largest eigenvalue of $\overline{A}$ is of the same order (up to polylogarithmic factors) as that of $\calG(n, p)$.
\end{enumerate}
Informally, these results suggest that ``na\"ive'' spectral tests (e.g., thresholding the largest eigenvalue of $\overline{A}$) may be suboptimal compared to the global signed triangle count, which succeeds all the way up to $d = o(n^3p^3\log^3(1/p))$. The result of \cite{cao2025spectra} is based on the trace method, where in part Proposition~\ref{prop:signedcyclecount} can be applied. Based on that, we expect that (i) and (ii) would hold for $G \sim \calG(n,p,d,k)$ if $d = \widetilde{\Omega}(k^2p/n)$, and hence the corresponding suboptimality of na\"ive spectral tests would persist.

However, this has a simple fix, which is to consider $(\overline{A})^3$ instead of $\overline{A}$. Then the trace of $(\overline{A})^3$ is equal to\footnote{Formally, this is up to a factor of $6$ due to the duplicates among the summands in $\Tr((\overline{A})^3)$.} the global signed triangle count, and we expect that a test that thresholds the largest eigenvalue of $(\overline{A})^3$ would have the same performance. We choose not to pursue this direction, as it does not seem to have any particular advantage over (and is conceptually identical to) the global test (Theorem~\ref{thm:ubsignedtrianglecount}).
\end{remark}

\section{Discussion}

We studied %
the detection problem for a new random graph model, in which a small community with latent high-dimensional geometry is hidden inside a larger \ER graph.
The model is designed intentionally to be minimal: it introduces nontrivial geometric structures into a community without altering marginal vertex statistics. In that sense, the model provides a clean setting for understanding how a structural signal (rather than  increased density alone, for example) affects the limits of detection. We characterized detection thresholds via tests based on signed triangles, and examined hardness both information-theoretically and computationally, revealing a computational--statistical gap. En route, we characterized a regime where the Wishart and the spherical Wishart distributions are asymptotically equivalent while being distinct from GOE, as well as tight bounds on signed cycle counts which may be of independent interest.

Focusing on the detection problem, for any fixed $p$ our upper and lower bounds match up to a logarithmic factor. For vanishing $p$, however, a gap remains. Improving the dependence on $p$ is a major open question even for the full model where $k = n$~\cite{liu2022rgg}, and seems to require substantially new ideas. Other interesting open directions are to sharpen the dependence on $k/n$, or to develop fine-grained results in specific regimes (e.g., sparse $p = O(1/k)$) where different techniques may apply~\cite{liu2022rgg}.

In our model, the community structure introduces a new layer for statistical tasks beyond detection. An immediate question concerns the threshold for exact or approximate recovery of the community vertices. Certain  models with latent-based communities exhibit a detection-recovery gap~\cite{mao2023planteddensecycle, kunisky2025directed, mao2025planteddensecycle}, and it is plausible that a similar phenomenon occurs here. More broadly, revisiting the statistical and computational properties of the full model $\calG(n,p,d)$ (as recently established in, e.g., \cite{bangachev2024fourier, bangachev2025sandwiching, cao2025spectra}) in our localized setting $\calG(n,p,d,k)$ could shed light on the interplay between high-dimensional geometry and community structure.

\section*{Acknowledgements}
We thank Kiril Bangachev for helpful discussions on the reference \cite{liuracz} and the ideas therein, and Jiaming Xu for helpful discussions on the detection problem at an early stage of the project.

\addcontentsline{toc}{section}{References}
\bibliographystyle{alpha}
{\small
\bibliography{ref}

\newcommand{\etalchar}[1]{$^{#1}$}
\begin{thebibliography}{MNWS{\etalchar{+}}25}

\bibitem[ABAL{\etalchar{+}}26]{addario2026statistical}
Louigi Addario-Berry, Omer Angel, G{\'a}bor Lugosi, Mikl{\'o}s~Z. R{\'a}cz, and Tselil Schramm.
\newblock The statistical threshold for planted matchings and spanning trees.
\newblock {\em arXiv preprint arXiv:2602.07669}, 2026.

\bibitem[Abb17]{Abbe2018SBMSurvey}
Emmanuel Abbe.
\newblock Community detection and stochastic block models: recent developments.
\newblock {\em J. Mach. Learn. Res.}, 18(177):1--86, 2017.

\bibitem[ABD21]{avrachenkov2021geometric}
Konstantin Avrachenkov, Andrei Bobu, and Maximilien Dreveton.
\newblock Higher-order spectral clustering for geometric graphs.
\newblock {\em J. Fourier Anal. Appl.}, 27(22):1--29, 2021.

\bibitem[ABI24]{abdalla2024synch}
Pedro Abdalla, Afonso~S. Bandeira, and Clara Invernizzi.
\newblock Guarantees for spontaneous synchronization on random geometric graphs.
\newblock {\em SIAM J. Appl. Dyn. Syst.}, 23(1):779--790, 2024.

\bibitem[ABS21]{abbe2021community}
Emmanuel Abbe, Fran\c{c}ois Baccelli, and Abishek Sankararaman.
\newblock Community detection on {E}uclidean random graphs.
\newblock {\em Inf. Inference}, 10(1):109--160, 2021.

\bibitem[ACBL12]{arias-castro2012correlation}
Ery Arias-Castro, S\'ebastien Bubeck, and G\'abor Lugosi.
\newblock Detection of correlations.
\newblock {\em Ann. Statist.}, 40(1):412--435, 2012.

\bibitem[ACBL15]{arias-castro2015correlation}
Ery Arias-Castro, S\'ebastien Bubeck, and G\'abor Lugosi.
\newblock Detecting positive correlations in a multivariate sample.
\newblock {\em Bernoulli}, 21(1):209--241, 2015.

\bibitem[ACBLV18]{arias-castro2018markov}
Ery Arias-Castro, S\'ebastien Bubeck, G\'abor Lugosi, and Nicolas Verzelen.
\newblock Detecting {M}arkov random fields hidden in white noise.
\newblock {\em Bernoulli}, 24(4B):3628--3656, 2018.

\bibitem[ACV14]{pdsdense}
Ery Arias-Castro and Nicolas Verzelen.
\newblock Community detection in dense random networks.
\newblock {\em Ann. Statist.}, 42(3):940--969, 2014.

\bibitem[AKL24]{avrachenkov2024community}
Konstantin Avrachenkov, B.~R.~Vinay Kumar, and Lasse Leskel{\"a}.
\newblock Community detection on block models with geometric kernels.
\newblock {\em arXiv preprint arXiv:2403.02802}, 2024.

\bibitem[AS65]{abramowitz1965handbook}
Milton Abramowitz and Irene~A. Stegun.
\newblock {\em Handbook of mathematical functions: with formulas, graphs, and mathematical tables}, volume~55.
\newblock Courier Corporation, 1965.

\bibitem[ATK15]{AkogluTongKoutra2015AnomalySurvey}
Leman Akoglu, Hanghang Tong, and Danai Koutra.
\newblock Graph based anomaly detection and description: a survey.
\newblock {\em Data Min. Knowl. Discov.}, 29(3):626--688, 2015.

\bibitem[AW15]{adamczak2015concentration}
Rados{\l}aw Adamczak and Pawe{\l} Wolff.
\newblock Concentration inequalities for non-{L}ipschitz functions with bounded derivatives of higher order.
\newblock {\em Probab. Theory Related Fields}, 162(3-4):531--586, 2015.

\bibitem[Bar16]{Barabasi2016NetworkScience}
Albert-L{\'a}szl{\'o} Barab{\'a}si.
\newblock {\em Network Science}.
\newblock Cambridge University Press, 2016.

\bibitem[BB20]{brennan2020plantedcliquereduction}
Matthew Brennan and Guy Bresler.
\newblock Reducibility and statistical-computational gaps from secret leakage.
\newblock In {\em Proceedings of Thirty Third Conference on Learning Theory}, volume 125 of {\em PMLR}, pages 648--847, 2020.

\bibitem[BB24a]{bangachev2024geometry}
Kiril Bangachev and Guy Bresler.
\newblock Detection of {$L_{\infty}$} geometry in random geometric graphs: suboptimality of triangles and cluster expansion.
\newblock In {\em Proceedings of Thirty Seventh Conference on Learning Theory}, volume 247 of {\em PMLR}, pages 427--497, 2024.

\bibitem[BB24b]{bangachev2024fourier}
Kiril Bangachev and Guy Bresler.
\newblock On the {F}ourier coefficients of high-dimensional random geometric graphs.
\newblock In {\em {P}roceedings of the 56th {A}nnual {ACM} {S}ymposium on {T}heory of {C}omputing}, pages 549--560, 2024.

\bibitem[BB25a]{bangachev2025random}
Kiril Bangachev and Guy Bresler.
\newblock Random algebraic graphs and their convergence to {E}rd{\H o}s-{R}{\'e}nyi.
\newblock {\em Random Structures Algorithms}, 66(1):e21276, 1--43, 2025.

\bibitem[BB25b]{bangachev2025sandwiching}
Kiril Bangachev and Guy Bresler.
\newblock Sandwiching random geometric graphs and {E}rd{\H o}s-{R}{\'e}nyi with applications: sharp thresholds, robust testing, and enumeration.
\newblock In {\em Proceedings of the 57th Annual ACM Symposium on Theory of Computing}, pages 310--321, 2025.

\bibitem[BBAP05]{baik2005spiked}
Jinho Baik, G\'erard Ben~Arous, and Sandrine P\'ech\'e.
\newblock Phase transition of the largest eigenvalue for nonnull complex sample covariance matrices.
\newblock {\em Ann. Probab.}, 33(5):1643--1697, 2005.

\bibitem[BBCvdH20]{bet2020botnet}
Gianmarco Bet, Kay Bogerd, Rui~M. Castro, and Remco van~der Hofstad.
\newblock Detecting a botnet in a network.
\newblock {\em Math. Stat. Learn.}, 3(3-4):315--343, 2020.

\bibitem[BBH21]{brennan2021finetti}
Matthew Brennan, Guy Bresler, and Brice Huang.
\newblock De finetti-style results for {W}ishart matrices: combinatorial structure and phase transitions.
\newblock {\em arXiv preprint arXiv:2103.14011}, 2021.

\bibitem[BBH24]{brennan2024anisotropy}
Matthew Brennan, Guy Bresler, and Brice Huang.
\newblock Threshold for detecting high dimensional geometry in anisotropic random geometric graphs.
\newblock {\em Random Structures Algorithms}, 64(1):125--137, 2024.

\bibitem[BBN20]{brennan2020rgg}
Matthew Brennan, Guy Bresler, and Dheeraj Nagaraj.
\newblock Phase transitions for detecting latent geometry in random graphs.
\newblock {\em Probab. Theory Related Fields}, 178(3-4):1215--1289, 2020.

\bibitem[BCC{\etalchar{+}}10]{bhaskara2010logdensity}
Aditya Bhaskara, Moses Charikar, Eden Chlamtac, Uriel Feige, and Aravindan Vijayaraghavan.
\newblock Detecting high log-densities: an {$O(n^{1/4})$} approximation for densest $k$-subgraph.
\newblock In {\em Proceedings of the 42nd Annual ACM Symposium on Theory of Computing}, pages 201--210, 2010.

\bibitem[BDER16]{bubeck2016rgg}
S\'ebastien Bubeck, Jian Ding, Ronen Eldan, and Mikl\'os~Z. R\'acz.
\newblock Testing for high-dimensional geometry in random graphs.
\newblock {\em Random Structures Algorithms}, 49(3):503--532, 2016.

\bibitem[BDT{\etalchar{+}}20]{bagaria2020hamiltoniancycle}
Vivek Bagaria, Jian Ding, David Tse, Yihong Wu, and Jiaming Xu.
\newblock Hidden {H}amiltonian cycle recovery via linear programming.
\newblock {\em Oper. Res.}, 68(1):53--70, 2020.

\bibitem[BG18]{bubeck2018entropicclt}
S\'ebastien Bubeck and Shirshendu Ganguly.
\newblock Entropic {CLT} and phase transition in high-dimensional {W}ishart matrices.
\newblock {\em Int. Math. Res. Not. IMRN}, 2018(2):588--606, 2018.

\bibitem[BGK{\etalchar{+}}01]{brieden2001cap}
Andreas Brieden, Peter Gritzmann, Ravindran Kannan, Victor Klee, L\'aszl\'o{} Lov\'asz, and Mikl\'os Simonovits.
\newblock Deterministic and randomized polynomial-time approximation of radii.
\newblock {\em Mathematika}, 48(1-2):63--105, 2001.

\bibitem[BGPS25]{baguley2025toroidal}
Samuel Baguley, Andreas G{\"o}bel, Marcus Pappik, and Leon Schiller.
\newblock Testing thresholds and spectral properties of high-dimensional random toroidal graphs via {E}dgeworth-style expansions.
\newblock In {\em Proceedings of Thirty Eighth Conference on Learning Theory}, volume 291 of {\em PMLR}, pages 200--201, 2025.

\bibitem[BHJK25]{buhai2025quasi}
Rares-Darius Buhai, Jun-Ting Hsieh, Aayush Jain, and Pravesh~K. Kothari.
\newblock The quasi-polynomial low-degree conjecture is false.
\newblock In {\em IEEE 66th Annual Symposium on Foundations of Computer Science}, pages 2577--2590, 2025.

\bibitem[BKL19]{bringmann2019geometricinhomogeneous}
Karl Bringmann, Ralph Keusch, and Johannes Lengler.
\newblock Geometric inhomogeneous random graphs.
\newblock {\em Theoret. Comput. Sci.}, 760:35--54, 2019.

\bibitem[BMS25]{bet2025localized}
Gianmarco Bet, Riccardo Michielan, and Clara Stegehuis.
\newblock Localized geometry detection in scale-free random graphs.
\newblock {\em J. Appl. Probab.}, page 1–17, 2025.

\bibitem[BO04]{BarabasiOltvai2004NetworkBiology}
Albert-L{\'a}szl{\'o} Barab{\'a}si and Zolt{\'a}n~N. Oltvai.
\newblock Network biology: understanding the cell's functional organization.
\newblock {\em Nat. Rev. Genet.}, 5(2):101--113, 2004.

\bibitem[BR13]{berthet2013plantedcliquereduction}
Quentin Berthet and Philippe Rigollet.
\newblock Complexity theoretic lower bounds for sparse principal component detection.
\newblock In {\em Proceedings of the 26th Annual Conference on Learning Theory}, volume~30 of {\em PMLR}, pages 1046--1066, 2013.

\bibitem[CW19]{chetelat2019middlescale}
Didier Ch\'etelat and Martin~T. Wells.
\newblock The middle-scale asymptotics of {W}ishart matrices.
\newblock {\em Ann. Statist.}, 47(5):2639--2670, 2019.

\bibitem[CZ25]{cao2025spectra}
Yifan Cao and Yizhe Zhu.
\newblock Spectra of high-dimensional sparse random geometric graphs.
\newblock {\em arXiv preprint arXiv:2507.06556}, 2025.

\bibitem[DDC23]{rggsurvey}
Quentin Duchemin and Yohann De~Castro.
\newblock Random geometric graph: some recent developments and perspectives.
\newblock In {\em High dimensional probability {IX}---the ethereal volume}, volume~80 of {\em Progr. Probab.}, pages 347--392. Birkh\"auser, 2023.

\bibitem[DGLU11]{devroye2011rgg}
Luc Devroye, Andr\'as Gy\"orgy, G\'abor Lugosi, and Frederic Udina.
\newblock High-dimensional random geometric graphs and their clique number.
\newblock {\em Electron. J. Probab.}, 16:2481--2508, 2011.

\bibitem[DKMZ11]{DecelleEtAl2011AsymptoticSBM}
Aur{\'e}lien Decelle, Florent Krzakala, Cristopher Moore, and Lenka Zdeborov{\'a}.
\newblock Asymptotic analysis of the stochastic block model for modular networks and its algorithmic applications.
\newblock {\em Phys. Rev. E}, 84:066106, 2011.

\bibitem[DLW25]{deka2025rare}
Prabhanka Deka, Fangzhou Luo, and Baichuan Wu.
\newblock Rare event probabilities in random geometric graphs.
\newblock {\em arXiv preprint arXiv:2510.09196}, 2025.

\bibitem[DS01]{davidsonszarek}
Kenneth~R. Davidson and Stanislaw~J. Szarek.
\newblock Local operator theory, random matrices and {B}anach spaces.
\newblock In {\em Handbook of the geometry of {B}anach spaces, {V}ol. {I}}, pages 317--366. North-Holland, 2001.

\bibitem[DWXY23]{ding2023plantedmatching}
Jian Ding, Yihong Wu, Jiaming Xu, and Dana Yang.
\newblock The planted matching problem: sharp threshold and infinite-order phase transition.
\newblock {\em Probab. Theory Related Fields}, 187(1-2):1--71, 2023.

\bibitem[DX13]{dai2013approximation}
Feng Dai and Yuan Xu.
\newblock {\em Approximation Theory and Harmonic Analysis on Spheres and Balls}.
\newblock Springer Monographs in Mathematics. Springer, 2013.

\bibitem[EH25]{elimelech2025arbitrary}
Dor Elimelech and Wasim Huleihel.
\newblock Detecting arbitrary planted subgraphs in random graphs.
\newblock In {\em Proceedings of Thirty Eighth Conference on Learning Theory}, volume 291 of {\em PMLR}, pages 1691--1798, 2025.

\bibitem[EM20]{eldan2020anisotropy}
Ronen Eldan and Dan Mikulincer.
\newblock Information and dimensionality of anisotropic random geometric graphs.
\newblock In {\em Geometric aspects of functional analysis. {V}ol. {I}}, volume 2256 of {\em Lecture Notes in Math.}, pages 273--324. Springer, 2020.

\bibitem[ER59]{erdosrenyi}
Paul Erd\H{o}s and Alfr\'ed R\'enyi.
\newblock On random graphs {I}.
\newblock {\em Publ. Math. Debrecen}, 6:290--297, 1959.

\bibitem[For10]{Fortunato2010CommunityDetection}
Santo Fortunato.
\newblock Community detection in graphs.
\newblock {\em Phys. Rep.}, 486(3-5):75--174, 2010.

\bibitem[FVD{\etalchar{+}}16]{ferrara2016socialbots}
Emilio Ferrara, Onur Varol, Clayton Davis, Filippo Menczer, and Alessandro Flammini.
\newblock The rise of social bots.
\newblock {\em Commun. ACM}, 59(7):96–104, 2016.

\bibitem[GGNW24]{gaudio2024exact}
Julia Gaudio, Charlie Guan, Xiaochun Niu, and Ermin Wei.
\newblock Exact label recovery in {E}uclidean random graphs.
\newblock {\em arXiv preprint arXiv:2407.11163}, 2024.

\bibitem[GJ25]{gaudio2025exact}
Julia Gaudio and Andrew Jin.
\newblock Exact recovery in the geometric {SBM}.
\newblock {\em arXiv preprint arXiv:2512.22773}, 2025.

\bibitem[GJ26]{gaudio2026exactrecoverygeometrichidden}
Julia Gaudio and Andrew Jin.
\newblock Exact recovery in the geometric hidden community model.
\newblock {\em arXiv preprint arXiv:2601.17591}, 2026.

\bibitem[GMPS18]{galhotra2018geometricblock}
Sainyam Galhotra, Arya Mazumdar, Soumyabrata Pal, and Barna Saha.
\newblock The geometric block model.
\newblock {\em Proceedings of the AAAI Conference on Artificial Intelligence}, 32(1), 2018.

\bibitem[GMPS23]{galhotra2023geometricblock}
Sainyam Galhotra, Arya Mazumdar, Soumyabrata Pal, and Barna Saha.
\newblock Community recovery in the geometric block model.
\newblock {\em J. Mach. Learn. Res.}, 24(338):1--53, 2023.

\bibitem[GN02]{girvan2002community}
Michelle Girvan and Mark E.~J. Newman.
\newblock Community structure in social and biological networks.
\newblock {\em Proc. Natl. Acad. Sci. U.S.A.}, 99(12):7821--7826, 2002.

\bibitem[GNW24]{gaudio2024gsbm}
Julia Gaudio, Xiaochun Niu, and Ermin Wei.
\newblock Exact community recovery in the geometric {SBM}.
\newblock In {\em Proceedings of the 2024 {A}nnual {ACM}-{SIAM} {S}ymposium on {D}iscrete {A}lgorithms}, pages 2158--2184, 2024.

\bibitem[GSXY25]{gaudio2025plantedcycles}
Julia Gaudio, Colin Sandon, Jiaming Xu, and Dana Yang.
\newblock Finding planted cycles in a random graph.
\newblock {\em arXiv preprint arXiv:2511.04058}, 2025.

\bibitem[HHLM99]{HartwellEtAl1999ModularCellBiology}
Leland~H. Hartwell, John~J. Hopfield, Stanislas Leibler, and Andrew~W. Murray.
\newblock From molecular to modular cell biology.
\newblock {\em Nature}, 402(6761 Suppl):C47--C52, 1999.

\bibitem[HK11]{hazan2011plantedcliquegametheory}
Elad Hazan and Robert Krauthgamer.
\newblock How hard is it to approximate the best {N}ash equilibrium?
\newblock {\em SIAM J. Comput.}, 40(1):79--91, 2011.

\bibitem[HKK{\etalchar{+}}26]{hsieh2026rigorous}
Jun-Ting Hsieh, Daniel~M. Kane, Pravesh~K. Kothari, Jerry Li, Sidhanth Mohanty, and Stefan Tiegel.
\newblock Rigorous implications of the low-degree heuristic.
\newblock {\em arXiv preprint arXiv:2601.05850}, 2026.

\bibitem[HLL83]{holland1983sbm}
Paul~W. Holland, Kathryn~Blackmond Laskey, and Samuel Leinhardt.
\newblock Stochastic blockmodels: first steps.
\newblock {\em Soc. Netw.}, 5(2):109--137, 1983.

\bibitem[Hop18]{hopkins2018statistical}
Samuel Hopkins.
\newblock {\em Statistical inference and the sum of squares method}.
\newblock PhD thesis, Cornell University, 2018.

\bibitem[HRH02]{hoff2002latent}
Peter~D. Hoff, Adrian~E. Raftery, and Mark~S. Handcock.
\newblock Latent space approaches to social network analysis.
\newblock {\em J. Amer. Statist. Assoc.}, 97(460):1090--1098, 2002.

\bibitem[HRT07]{handcock2007latentcluster}
Mark~S. Handcock, Adrian~E. Raftery, and Jeremy~M. Tantrum.
\newblock Model-based clustering for social networks.
\newblock {\em J. Roy. Statist. Soc. Ser. A}, 170(2):301--354, 2007.

\bibitem[Hul22]{huleihel2022hidden}
Wasim Huleihel.
\newblock Inferring hidden structures in random graphs.
\newblock {\em IEEE Trans. Signal Inform. Process. Netw.}, 8:855--867, 2022.

\bibitem[HW21]{holmgren2021lowdeg}
Justin Holmgren and Alexander~S. Wein.
\newblock Counterexamples to the low-degree conjecture.
\newblock In {\em 12th Innovations in Theoretical Computer Science Conference}, volume 185 of {\em LIPIcs}, pages 75:1--75:9, 2021.

\bibitem[HWX15]{hajek2015computationalcommunity}
Bruce Hajek, Yihong Wu, and Jiaming Xu.
\newblock Computational lower bounds for community detection on random graphs.
\newblock In {\em Proceedings of The 28th Conference on Learning Theory}, volume~40 of {\em PMLR}, pages 899--928, 2015.

\bibitem[IS03]{ingster2003nonparametric}
Yu~I. Ingster and Irina~A. Suslina.
\newblock {\em Nonparametric Goodness-of-Fit Testing Under Gaussian Models}, volume 169 of {\em Lecture Notes in Statistics}.
\newblock Springer, 2003.

\bibitem[Jer92]{jerrum1992plantedclique}
Mark Jerrum.
\newblock Large cliques elude the {M}etropolis process.
\newblock {\em Random Structures Algorithms}, 3(4):347--359, 1992.

\bibitem[JL15]{jiang2015wishart}
Tiefeng Jiang and Danning Li.
\newblock Approximation of rectangular beta-{L}aguerre ensembles and large deviations.
\newblock {\em J. Theoret. Probab.}, 28(3):804--847, 2015.

\bibitem[Joh01]{johnstone2001spiked}
Iain~M. Johnstone.
\newblock On the distribution of the largest eigenvalue in principal components analysis.
\newblock {\em Ann. Statist.}, 29(2):295--327, 2001.

\bibitem[JP00]{juels2000plantedcliquecrypto}
Ari Juels and Marcus Peinado.
\newblock Hiding cliques for cryptographic security.
\newblock {\em Des. Codes Cryptogr.}, 20(3):269--280, 2000.

\bibitem[JV26]{jia2026lowdegree}
He~Jia and Aravindan Vijayaraghavan.
\newblock Low-degree method fails to predict robust subspace recovery.
\newblock {\em arXiv preprint arXiv:2603.02594}, 2026.

\bibitem[Kra07]{krasikov2007jacobi}
Ilia Krasikov.
\newblock An upper bound on {J}acobi polynomials.
\newblock {\em J. Approx. Theory}, 149(2):116--130, 2007.

\bibitem[KSWY25]{kunisky2025directed}
Dmitriy Kunisky, Daniel~A. Spielman, Alexander~S. Wein, and Xifan Yu.
\newblock Statistical inference of a ranked community in a directed graph.
\newblock In {\em {P}roceedings of the 57th {A}nnual {ACM} {S}ymposium on {T}heory of {C}omputing}, pages 2107--2117, 2025.

\bibitem[Ku{\v{c}}95]{kucera1995plantedclique}
Lud{\v{e}}k Ku{\v{c}}era.
\newblock Expected complexity of graph partitioning problems.
\newblock {\em Discrete Appl. Math.}, 57(2-3):193--212, 1995.

\bibitem[KWB22]{kunisky2019notes}
Dmitriy Kunisky, Alexander~S. Wein, and Afonso~S. Bandeira.
\newblock Notes on computational hardness of hypothesis testing: predictions using the low-degree likelihood ratio.
\newblock In {\em Mathematical analysis, its applications and computation}, volume 385 of {\em Springer Proc. Math. Stat.}, pages 1--50. Springer, 2022.

\bibitem[LKJ09]{lkj}
Daniel Lewandowski, Dorota Kurowicka, and Harry Joe.
\newblock Generating random correlation matrices based on vines and extended onion method.
\newblock {\em J. Multivariate Anal.}, 100(9):1989--2001, 2009.

\bibitem[LM00]{laurentmassart}
B{\'e}atrice Laurent and Pascal Massart.
\newblock Adaptive estimation of a quadratic functional by model selection.
\newblock {\em Ann. Statist.}, 28(5):1302--1338, 2000.

\bibitem[LMSY22]{liu2022rgg}
Siqi Liu, Sidhanth Mohanty, Tselil Schramm, and Elizabeth Yang.
\newblock Testing thresholds for high-dimensional sparse random geometric graphs.
\newblock In {\em {P}roceedings of the 54th {A}nnual {ACM} {S}ymposium on {T}heory of {C}omputing}, pages 672--677, 2022.

\bibitem[LMSY23]{liu2023expansion}
Siqi Liu, Sidhanth Mohanty, Tselil Schramm, and Elizabeth Yang.
\newblock Local and global expansion in random geometric graphs.
\newblock In {\em Proceedings of the 55th Annual ACM Symposium on Theory of Computing}, pages 817--825, 2023.

\bibitem[LPRZ25]{lee2025plantedsubgraph}
Daniel~Z. Lee, Francisco Pernice, Amit Rajaraman, and Ilias Zadik.
\newblock The fundamental limits of recovering planted subgraphs (extended abstract).
\newblock In {\em Proceedings of Thirty Eighth Conference on Learning Theory}, volume 291 of {\em PMLR}, pages 3578--3579, 2025.

\bibitem[LR23a]{liu2023noisyrgg}
Suqi Liu and Mikl\'os~Z. R\'acz.
\newblock Phase transition in noisy high-dimensional random geometric graphs.
\newblock {\em Electron. J. Stat.}, 17(2):3512--3574, 2023.

\bibitem[LR23b]{liuracz}
Suqi Liu and Mikl\'os~Z. R\'acz.
\newblock A probabilistic view of latent space graphs and phase transitions.
\newblock {\em Bernoulli}, 29(3):2417--2441, 2023.

\bibitem[LS23]{li2024spectralclusteringgaussianmixture}
Shuangping Li and Tselil Schramm.
\newblock Spectral clustering in the {G}aussian mixture block model.
\newblock {\em arXiv preprint arXiv:2305.00979}, 2023.

\bibitem[Mat13]{matousek2013lectures}
Ji\v{r}\'{\i} Matou\v{s}ek.
\newblock {\em Lectures on {D}iscrete {G}eometry}, volume 212 of {\em Graduate Texts in Mathematics}.
\newblock Springer-Verlag, 2013.

\bibitem[MMX21]{moharrami2021plantedmatching}
Mehrdad Moharrami, Cristopher Moore, and Jiaming Xu.
\newblock The planted matching problem: phase transitions and exact results.
\newblock {\em Ann. Appl. Probab.}, 31(6):2663--2720, 2021.

\bibitem[MNWS{\etalchar{+}}25]{mossel2025inference}
Elchanan Mossel, Jonathan Niles-Weed, Youngtak Sohn, Nike Sun, and Ilias Zadik.
\newblock Sharp thresholds in inference of planted subgraphs.
\newblock {\em Ann. Appl. Probab.}, 35(1):523--563, 2025.

\bibitem[MO18]{morselli2018collusion}
Carlo Morselli and Marie Ouellet.
\newblock Network similarity and collusion.
\newblock {\em Soc. Netw.}, 55:21--30, 2018.

\bibitem[MST19]{massoulie2019plantedtrees}
Laurent Massouli\'{e}, Ludovic Stephan, and Don Towsley.
\newblock Planting trees in graphs, and finding them back.
\newblock In {\em Proceedings of the Thirty-Second Conference on Learning Theory}, volume~99 of {\em PMLR}, pages 2341--2371, 2019.

\bibitem[MWX26]{mao2026rggsmooth}
Cheng Mao, Yihong Wu, and Jiaming Xu.
\newblock Random geometric graphs with smooth kernels: sharp detection threshold and a spectral conjecture.
\newblock {\em arXiv preprint arXiv:2602.14998}, 2026.

\bibitem[MWZ23]{mao2023planteddensecycle}
Cheng Mao, Alexander~S. Wein, and Shenduo Zhang.
\newblock Detection-recovery gap for planted dense cycles.
\newblock In {\em Proceedings of Thirty Sixth Conference on Learning Theory}, volume 195 of {\em PMLR}, pages 2440--2481, 2023.

\bibitem[MWZ25]{mao2025planteddensecycle}
Cheng Mao, Alexander~S. Wein, and Shenduo Zhang.
\newblock Information-theoretic thresholds for planted dense cycles.
\newblock {\em IEEE Trans. Inform. Theory}, 71(2):1266--1282, 2025.

\bibitem[MZ24]{mao2024latent}
Cheng Mao and Shenduo Zhang.
\newblock Impossibility of latent inner product recovery via rate distortion.
\newblock In {\em 2024 60th Annual Allerton Conference on Communication, Control, and Computing}, pages 01--08, 2024.

\bibitem[NC16]{newman2016annotated}
Mark E.~J. Newman and Aaron Clauset.
\newblock Structure and inference in annotated networks.
\newblock {\em Nat. Commun.}, 7:11863, 2016.

\bibitem[New10]{Newman2010Networks}
Mark E.~J. Newman.
\newblock {\em Networks: An Introduction}.
\newblock Oxford University Press, 2010.

\bibitem[PCMP05]{PriebeEtAl2005ScanEnron}
Carey~E. Priebe, John~M. Conroy, David~J. Marchette, and Youngser Park.
\newblock Scan statistics on {E}nron graphs.
\newblock {\em Comput. Math. Organiz. Theor.}, 11:229--247, 2005.

\bibitem[Pen03]{penrosebook}
Mathew Penrose.
\newblock {\em Random Geometric Graphs}, volume~5 of {\em Oxford Studies in Probability}.
\newblock Oxford University Press, 2003.

\bibitem[PHT{\etalchar{+}}21]{pacheco2021uncovering}
Diogo Pacheco, Pik{-}Mai Hui, Christopher Torres{-}Lugo, Bao~Tran Truong, Alessandro Flammini, and Filippo Menczer.
\newblock Uncovering coordinated networks on social media: methods and case studies.
\newblock In {\em Proceedings of the ICWSM}, volume~15, pages 455--466, 2021.

\bibitem[PP20]{peche2020community}
Sandrine Peche and Vianney Perchet.
\newblock Robustness of community detection to random geometric perturbations.
\newblock In {\em Advances in Neural Information Processing Systems}, volume~33, pages 17827--17837, 2020.

\bibitem[PW21]{paquette2021random}
Elliot Paquette and Andrew~Vander Werf.
\newblock Random geometric graphs and the spherical {W}ishart matrix.
\newblock {\em arXiv preprint arXiv:2110.10785}, 2021.

\bibitem[RR19]{raczrichey}
Mikl\'os~Z. R\'acz and Jacob Richey.
\newblock A smooth transition from {W}ishart to {GOE}.
\newblock {\em J. Theoret. Probab.}, 32(2):898--906, 2019.

\bibitem[SM03]{SpirinMirny2003ProteinModules}
Victor Spirin and Leonid~A. Mirny.
\newblock Protein complexes and functional modules in molecular networks.
\newblock {\em Proc. Natl. Acad. Sci. U.S.A.}, 100(21):12123--12128, 2003.

\bibitem[Sod07]{sodin2007tail}
Sasha Sodin.
\newblock Tail-sensitive {G}aussian asymptotics for marginals of concentrated measures in high dimension.
\newblock In {\em Geometric aspects of functional analysis}, volume 1910 of {\em Lecture Notes in Math.}, pages 271--295. Springer, 2007.

\bibitem[VAC15]{pdssparse}
Nicolas Verzelen and Ery Arias-Castro.
\newblock Community detection in sparse random networks.
\newblock {\em Ann. Appl. Probab.}, 25(6):3465--3510, 2015.

\bibitem[Ver18]{hdpbook}
Roman Vershynin.
\newblock {\em High-Dimensional Probability}, volume~47 of {\em Cambridge Series in Statistical and Probabilistic Mathematics}.
\newblock Cambridge University Press, 2018.

\bibitem[Vu07]{vutracemethod}
Van~H. Vu.
\newblock Spectral norm of random matrices.
\newblock {\em Combinatorica}, 27(6):721--736, 2007.

\bibitem[War16]{warnke2016typical}
Lutz Warnke.
\newblock On the method of typical bounded differences.
\newblock {\em Combin. Probab. Comput.}, 25(2):269--299, 2016.

\bibitem[Wei25]{wein2025computational}
Alexander~S. Wein.
\newblock Computational complexity of statistics: new insights from low-degree polynomials.
\newblock {\em arXiv preprint arXiv:2506.10748}, 2025.

\bibitem[Wig58]{wignertracemethod}
Eugene~P. Wigner.
\newblock On the distribution of the roots of certain symmetric matrices.
\newblock {\em Ann. of Math. (2)}, 67:325--327, 1958.

\bibitem[WK19]{wachs2019collusion}
Johannes Wachs and J{\'a}nos Kert{\'e}sz.
\newblock A network approach to cartel detection in public auction markets.
\newblock {\em Sci. Rep.}, 9(1):10818, 2019.

\bibitem[WM25]{wee2025cluster}
Timothy L.~H. Wee and Cheng Mao.
\newblock Cluster expansion of the log-likelihood ratio: optimal detection of planted matchings.
\newblock {\em arXiv preprint arXiv:2512.14567}, 2025.

\bibitem[YZZ25]{yu2025countingstars}
Xifan Yu, Ilias Zadik, and Peiyuan Zhang.
\newblock Counting stars is constant-degree optimal for detecting any planted subgraph.
\newblock {\em Math. Stat. Learn.}, 8(1-2):105--164, 2025.

\end{thebibliography}
}
\newpage
\appendix

\section{Auxiliary lemmas}

\subsection{Concentration inequalities}
Here, we record several concentration inequalities that will be used throughout our analysis. Each inequality will be referenced in different parts, and readers may visit these results later as needed.

The first two results (Lemmas~\ref{lem:ertriconc} and \ref{lem:goecubetraceconc}) consider certain polynomials of well-behaved distributions. These can be derived as corollaries of a general concentration inequality for polynomials of subgaussian variables \cite[Theorem 1.4]{adamczak2015concentration}.

\begin{lemma}[Concentration of signed triangle count]\label{lem:ertriconc}
    Let $G \sim \calG(n, p)$. Then there exists a constant $C_{\ref{lem:ertriconc}} > 0$ such that for $f_{\tri}(G) = \sum_{i < j < \ell \in [n]}(G_{ij}-p)(G_{j\ell}-p)(G_{i\ell}-p)$, for any $t > 0$,
    \[\Prob(|f_{\tri}(G)| \geq t) \leq 2\exp\left(-\frac{1}{C_{\ref{lem:ertriconc}}}\left(\frac{t^2\log^3(1/p)}{n^3} \wedge \frac{t\log^{3/2}(1/p)}{\sqrt{n}} \wedge t^{2/3}\log(1/p)\right)\right)\,. 
    \]
\end{lemma}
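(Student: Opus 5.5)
The plan is to apply the Adamczak--Wolff inequality for polynomials of independent subgaussian variables \cite[Theorem 1.4]{adamczak2015concentration} to the degree-3 multilinear polynomial $f_{\tri}(G)=\sum_{i<j<\ell} X_{ij}X_{j\ell}X_{i\ell}$, where $X_{e}:=G_e-p$ are i.i.d.\ centered. The first step is to record the subgaussian norm of a centered Bernoulli: $\|X_e\|_{\psi_2}\lesssim 1/\sqrt{\log(1/p)}$ for $p\le 1/2$ (standard; a centered $\Bern(p)$ has $\psi_2$-norm of order $(\log(1/p))^{-1/2}$). Rescale $Y_e := X_e\sqrt{\log(1/p)}/c$, so that $\|Y_e\|_{\psi_2}\le 1$ and $f_{\tri}=(c^3/\log^{3/2}(1/p))\,g(Y)$ with $g(Y)=\sum_{\text{triangles}}Y_{ij}Y_{j\ell}Y_{i\ell}$. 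Thresholds for $g$ then become $s=t\log^{3/2}(1/p)/c^3$, and the three claimed exponents $t^2\log^3/n^3$, $t\log^{3/2}/\sqrt n$, $t^{2/3}\log$ correspond respectively to $s^2/n^3$, $s/\sqrt n$, $s^{2/3}$.

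Next, compute the quantities in the Adamczak--Wolff bound. Since $g$ is multilinear with mean zero, $\E[\nabla^r g]$ vanishes for $r=1,2$ (each first/second partial derivative is a product of remaining centered variables, which have zero mean), so only the third-order derivative tensor $\nabla^3 g$ contributes, and it is deterministic: it is the symmetrized indicator tensor $T$ indexed by edge triples $(e_1,e_2,e_3)$, nonzero iff they form a triangle. The bound reads $\Prob(|g-\E g|\ge s)\le 2\exp\bigl(-c'\min_{\mathcal J}(s/\|T\|_{\mathcal J})^{2/|\mathcal J|}\bigr)$ over partitions $\mathcal J$ of $\{1,2,3\}$. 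I must bound the three types of norms: the Hilbert--Schmidt norm (partition into one block), $\|T\|_{\{1,2,3\}}=\Theta(n^{3/2})$ since there are $\Theta(n^3)$ triangles, yielding $s^2/n^3$; the norms for partitions into two blocks $\{1\}\{2,3\}$, which equal the operator norm of the $\binom n2\times\binom n2^{\!2}$ flattening, giving exponent $s/\|T\|$; and the fully split partition $\{1\}\{2\}\{3\}$, i.e.\ the injective norm $\sup_{\|x\|=\|y\|=\|z\|=1}\sum T x y z$, giving $s^{2/3}/\|T\|^{2/3}$.

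The main work, and the step I expect to be the obstacle, is bounding the two-block and fully split norms sharply. For $\{1\}\{2,3\}$: the flattened matrix maps $e_1$ to the set of $(e_2,e_3)$ completing it to a triangle; each edge lies in $n-2$ triangles and each pair $(e_2,e_3)$ determines $e_1$ uniquely, so the rows have disjoint supports of size $O(n)$, and the operator norm is $O(\sqrt n)$, giving the $s/\sqrt n$ term. For the injective norm I will show $O(1)$: writing $\sum_{i,j,\ell} x_{ij}y_{j\ell}z_{i\ell}$ as $\mathrm{Tr}(XYZ^\top)$ for the $n\times n$ matrices built from $x,y,z$, one has $|\mathrm{Tr}(XYZ^\top)|\le \|X\|_{op}\|Y\|_F\|Z\|_F\le \|X\|_F\|Y\|_F\|Z\|_F\le O(1)$ (up to symmetrization constants), yielding the $s^{2/3}$ term. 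Substituting $s=t\log^{3/2}(1/p)/c^3$ and absorbing constants into $C_{\ref{lem:ertriconc}}$ gives the stated three-regime bound.
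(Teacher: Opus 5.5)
Your proposal is correct and follows the paper's route. The paper only states that Lemma~\ref{lem:ertriconc} is a corollary of \cite[Theorem 1.4]{adamczak2015concentration}, and you supply the missing details: $\|G_e-p\|_{\psi_2}\lesssim \log^{-1/2}(1/p)$, the vanishing of $\E[\nabla f_{\tri}]$ and $\E[\nabla^2 f_{\tri}]$, and the bounds $\Theta(n^{3/2})$, $O(\sqrt{n})$ and $O(1)$ on the norms of the deterministic third-derivative tensor for the one-, two- and three-block partitions, which produce exactly the three stated exponents.
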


\begin{lemma}[Concentration of trace of GOE]\label{lem:goecubetraceconc}
Let $Y \sim \GOE(n)$, that is, $Y \in \reals^{n \times n}$ is a symmetric matrix with $Y_{ii} \sim \calN(0, 2), i \in [n]$ and $Y_{ij} = Y_{ji} \sim \calN(0, 1), i < j \in [n]$ independently. Then there exists a constant $C_{\ref{lem:goecubetraceconc}} > 0$ such that for 
\begin{align*}
    f_2(Y) := \Tr(Y^2) \quad \text{and} \quad f_3(Y) := \Tr(Y^3)\,,
\end{align*}
for any $t > 0$,
\begin{align*}
    \Prob(|\Tr(Y^2) - (n^2 + n)| \geq t) &\leq 2\exp\left(-\frac{1}{C_{\ref{lem:goecubetraceconc}}} \left(\frac{t^2}{n^2} \wedge \frac{t}{n}\right)\right)\,, \\
    \Prob(|\Tr(Y^3)| \geq t) &\leq 2\exp\left(-\frac{1}{C_{\ref{lem:goecubetraceconc}}}\left(\frac{t^2}{n^3} \wedge t^{2/3}\right)\right)\,.
\end{align*}
\end{lemma}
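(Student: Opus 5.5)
Both bounds in Lemma~\ref{lem:goecubetraceconc} will come out of the general polynomial concentration inequality of \cite[Theorem 1.4]{adamczak2015concentration}. Write $Y$ in terms of i.i.d.\ standard Gaussians $g_{ii}$ and $g_{ij}$ ($i<j$) via $Y_{ii}=\sqrt{2}g_{ii}$ and $Y_{ij}=g_{ij}$. For a polynomial $f$ of degree $D$ in independent subgaussian variables, that theorem gives
\[
\Prob(|f-\E f|\ge t)\le 2\exp\Big(-\tfrac{1}{C_D}\min_{1\le r\le D}\min_{\mathcal J\in P_r}\big(t/\|\E\nabla^r f\|_{\mathcal J}\big)^{2/|\mathcal J|}\Big),
\]
where $P_r$ ranges over partitions of $[r]$. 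Our job is therefore to compute the expected derivative tensors and bound their partition norms. Since $D\le 3$ is fixed, $C_D$ is an absolute constant.

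\textbf{Quadratic case.} Here $\Tr(Y^2)=\sum_i Y_{ii}^2+2\sum_{i<j}Y_{ij}^2$, which is a diagonal quadratic form in the Gaussians. Its mean is $2n+2\binom n2=n^2+n$, as required.
\begin{itemize}
\item The gradient has zero mean.
\item The Hessian is a constant diagonal matrix with entries $4$, of dimension $\binom{n+1}{2}$.
\end{itemize}
Its Frobenius norm is therefore $\asymp n$, and its operator norm is $O(1)$. This yields the tail $\exp(-c(t^2/n^2\wedge t))$, which is stronger than the claimed $t^2/n^2\wedge t/n$. Alternatively, one can use Laurent--Massart \cite{laurentmassart} directly on the weighted chi-square.

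\textbf{Cubic case.} Expand
\[
\Tr(Y^3)=\sum_{i,j,\ell}Y_{ij}Y_{j\ell}Y_{\ell i}.
\]
By symmetry of the Gaussian, the mean is $0$. We then go through the derivative orders one at a time.
\begin{itemize}
\item \emph{First derivative.} We have $\partial_{Y_{ab}}\Tr(Y^3)=c_{ab}(Y^2)_{ab}$ with $c_{ab}\in\{3,6\}$. Its expectation is nonzero only on the diagonal, where $\E(Y^2)_{aa}=n+1$. After rescaling to the $g$ variables, $\|\E\nabla f\|_2\asymp\sqrt n\cdot n=n^{3/2}$, which contributes the term $t^2/n^3$.
\item \emph{Second derivative.} The Hessian is linear in $Y$, so it has zero mean.
\item \emph{Third derivative.} This is a constant tensor $T$ supported on index triples forming a closed triangle $(ab),(b c),(c a)$ of matrix positions. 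For the full partition $\{\{1\},\{2\},\{3\}\}$, the norm $\sup\sum T\,x\,y\,z$ over unit vectors is bounded by $O(1)$ times the operator norm. The point is that $\sum_{a,b,c}X_{ab}X'_{bc}X''_{ca}=\Tr(XX'X'')$ with $\|X\|_F\le1$, and this is at most $\|X\|_{\op}\|X'\|_F\|X''\|_F\le 1$. So this partition gives the term $t^{2/3}$.
\item \emph{Other partitions at order 3.} For $\{\{1,2\},\{3\}\}$ the norm is again bounded via $\Tr$ inequalities by $O(\sqrt n)$. Similarly $\|T\|_{HS}\asymp n^{3/2}$, so $\{\{1,2,3\}\}$ gives $(t/n^{3/2})^{2/3}=t^{2/3}/n$.
\end{itemize}

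The $\{\{1,2,3\}\}$ term is not dominated by $t^2/n^3\wedge t^{2/3}$ in general. However, it only matters when $t^{2/3}/n<t^2/n^3$, i.e.\ $t>n^{3/2}$. That regime is handled by the fact that the $\{\{1,2,3\}\}$ term corresponds to the Gaussian chaos variance already captured by the first-order term. More carefully, I would recompute $\|T\|_{\{1,2,3\}}$, since it is only an upper-level bound and the true minimum in Adamczak--Wolff is over all partitions. I expect the mixed-partition norms to be the main obstacle: confirming that every partition's contribution is dominated by $t^2/n^3\wedge t^{2/3}$, possibly after enlarging $C$. If that fails, the fallback is to treat the diagonal and off-diagonal parts separately. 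The off-diagonal part is a decoupled Gaussian cubic chaos, handled by Latała's moment bounds. The diagonal-coupled parts are lower-order sums, handled by Bernstein plus the quadratic case.
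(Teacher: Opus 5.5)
Your route is the one the paper intends: the paper gives no separate proof and only says the lemma follows from \cite[Theorem 1.4]{adamczak2015concentration}. Most of your computations are correct. In the quadratic case the Hessian is $4I$, which gives $t^2/n^2\wedge t$, stronger than claimed. In the cubic case, $\|\E\nabla f\|_2\asymp n^{3/2}$ and $\E\nabla^2 f=0$. The singleton-partition norm is $O(1)$ via $|\Tr(XX'X'')|\le\|X\|_{\mathrm{op}}\|X'\|_F\|X''\|_F$.

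There is, however, a concrete error, and it is what creates the obstacle you describe. In the inequality you quoted, the exponent is $2/|\mathcal{J}|$. For the one-block partition $\{\{1,2,3\}\}$ we have $|\mathcal{J}|=1$, so its contribution is $(t/\|T\|_{\mathrm{HS}})^{2}\asymp t^2/n^3$, not $(t/n^{3/2})^{2/3}=t^{2/3}/n$. The exponent $2/3$ belongs to the three-block partition $\{\{1\},\{2\},\{3\}\}$, which you treated correctly. You also used the right exponent $2$ for the Frobenius term in the quadratic case. So the term you flagged is simply the same $t^2/n^3$ as the first-order term, and the obstacle does not exist.

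Your proposed way around it would not have been valid anyway. The bound takes a minimum over all partitions, so each term must be individually bounded below by a multiple of $t^2/n^3\wedge t^{2/3}$. No term can be ``already captured'' by another.

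What your write-up leaves unverified is the two-block partitions $\{\{1,2\},\{3\}\}$ and its permutations, which are equal by symmetry of $T$. Here the exponent is $1$ and the norm is $O(\sqrt n)$. For a unit $z$, the contraction $T(\cdot,\cdot,z)$ is the Hessian of $\Tr(Y^3)$ at $Y=Z$. Up to the $\sqrt2$ rescaling of diagonal coordinates, it is the operator $H\mapsto 3(ZH+HZ)$ restricted to symmetric matrices. Its Hilbert--Schmidt norm is at most $3\bigl(\sum_{i,j}(\lambda_i+\lambda_j)^2\bigr)^{1/2}\le 6\sqrt n\,\|Z\|_F$, where $\lambda_i$ are the eigenvalues of $Z$.

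The resulting term $t/\sqrt n$ is harmless, because
$$\min(t^2/n^3,\,t^{2/3})\le (t^2/n^3)^{1/4}(t^{2/3})^{3/4}=t/n^{3/4}\le t/\sqrt n.$$
So the minimum over all partitions is $\gtrsim t^2/n^3\wedge t^{2/3}$, which is exactly the claim. No fallback to decoupling or Lata{\l}a's moment bounds is needed.
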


We also use the following localized version of martingale concentration (i.e., Freedman's inequality). Its proof can be found in, \eg, \cite[Lemma 2.2]{warnke2016typical}. 
\begin{lemma}[Freedman's inequality]\label{lem:freedmanconc} Let $\{M_i\}_{0 \leq i \leq n}$ be a martingale with respect to filtration $\{\calF_i\}_{0 \leq i \leq n}$ and $U_i, i \in [n]$ be $\calF_{i-1}$-measurable random variable such that $M_i - M_{i-1} \leq U_i$ for all $i \in [n]$. Then for any $t > 0, v > 0$, and $m>0$,
\[\Prob\left(M_n - M_0 \geq  t, \sum_{i=1}^n \Var[M_i - M_{i-1} | \calF_{i-1}] \leq v, \max_{1 \leq i \leq n} U_i \leq m\right) \leq \exp\left(-\left(\frac{t^2}{4v} \wedge \frac{t}{2m}\right) \right)\,.
\]
\end{lemma}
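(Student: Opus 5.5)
The plan is the classical exponential-supermartingale argument, combined with a stopping time that makes the random bounds $U_i$ and the random predictable variance effectively deterministic. Write $X_i := M_i - M_{i-1}$ and $V_j := \sum_{i \le j}\Var[X_i \mid \calF_{i-1}]$. Both $U_{i}$ and $V_{i}$ are $\calF_{i-1}$-measurable. Define
\[
\tau := \min\{\, j \ge 0 : j = n,\ \text{or } U_{j+1} > m,\ \text{or } V_{j+1} > v \,\}\,.
\]
The event $\{\tau \le j\}$ is determined by $U_1,\dots,U_{j+1}$ and $V_1,\dots,V_{j+1}$, all of which are $\calF_j$-measurable, so $\tau$ is a stopping time. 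On the event $\mathcal{A} := \{V_n \le v,\ \max_i U_i \le m\}$ we have $\tau = n$, and hence $M_{\tau} = M_n$. Moreover, every increment actually used by the stopped process, i.e.\ $X_i$ with $i \le \tau$, satisfies $X_i \le U_i \le m$, and the stopped variance satisfies $V_{\tau} \le v$.

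Next I would prove the one-step exponential bound. Let $\psi(\lambda) := (e^{\lambda m} - 1 - \lambda m)/m^2$ for $\lambda > 0$. The function $g(x) := (e^x - 1 - x)/x^2$ is increasing on $\mathbb{R}$. Hence if $X \le m$ and $\E[X \mid \calF] = 0$, then $e^{\lambda X} \le 1 + \lambda X + \lambda^2 X^2 g(\lambda m)$, and taking conditional expectations gives
\[
\E[e^{\lambda X} \mid \calF] \le 1 + \psi(\lambda)\Var[X \mid \calF] \le \exp\bigl(\psi(\lambda)\Var[X \mid \calF]\bigr)\,.
\]
Applying this to the stopped increments, $Z_j := \exp\bigl(\lambda(M_{j \wedge \tau} - M_0) - \psi(\lambda) V_{j \wedge \tau}\bigr)$ is a nonnegative supermartingale with $Z_0 = 1$. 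On $\{M_n - M_0 \ge t\} \cap \mathcal{A}$ we have $Z_n \ge \exp(\lambda t - \psi(\lambda) v)$, since $V_\tau \le v$. Markov's inequality then gives
\[
\Prob\bigl(\{M_n - M_0 \ge t\} \cap \mathcal{A}\bigr) \le \exp\bigl(-\lambda t + \psi(\lambda) v\bigr)\,.
\]

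Finally I would optimize over $\lambda$. Using $\psi(\lambda) \le \lambda^2 / (2(1 - \lambda m/3))$ for $\lambda m < 3$, the choice $\lambda = t/(v + mt/3)$ yields the Bernstein form $\exp\bigl(-t^2/(2(v + mt/3))\bigr)$. Then split into two cases:
\begin{itemize}
\item if $v \ge mt/3$, the exponent is at least $t^2/(4v)$;
\item otherwise it is at least $3t/(4m) \ge t/(2m)$.
\end{itemize}
In either case we get the claimed bound $\exp\bigl(-(t^2/(4v) \wedge t/(2m))\bigr)$.

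The only delicate point is the localization step. Because $U_i$ and $V_j$ are random, the supermartingale property must be verified for the stopped process. This works precisely because of predictability, which lets us stop \emph{before} a step that would violate a bound. Everything else is routine.
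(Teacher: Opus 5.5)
Your proof is correct and is essentially the route behind the result the paper relies on: the paper gives no proof of its own but cites \cite[Lemma 2.2]{warnke2016typical}, which is the classical Freedman exponential-supermartingale argument with the predictable bounds $U_i$ handled by localization, as you do. Your intermediate Bernstein-form bound $\exp\bigl(-t^2/(2(v+mt/3))\bigr)$ is slightly stronger than needed, and your case split at $v = mt/3$ correctly gives the stated $\exp\bigl(-(\frac{t^2}{4v}\wedge\frac{t}{2m})\bigr)$; incidentally, stopping on $V$ is unnecessary because $V$ already appears in the exponent, so truncating the increments by the predictable event $\{U_i\le m\}$ would suffice.
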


The following result captures how the neighborhood distributions under $\calG(n, p)$ and $\calG(n, p, d)$ are different.\footnote{The inequality in Lemma~\ref{lem:capsandanticaps} is stated with logarithmic terms in $d, 1/p$ instead of logarithmic terms in $n$ as in \cite[Corollary 6.1]{liu2022rgg}. This is only to avoid the formal condition of $d \leq \mathrm{poly}(n)$ used in the latter, and the current form can be derived as in the proof of \cite[Corollary 6.1]{liu2022rgg} from \cite[Lemma 5.1]{liu2022rgg}.} Note that for a single vertex, the distributions are marginally equal; the lemma compares the distributions conditioned on the latents $\{U_i\}$.
\begin{lemma}[Concentration of spherical caps; {\cite[Lemma 5.1, Corollary 6.1]{liu2022rgg}}]\label{lem:capsandanticaps} Assume $1/n \leq p \leq 1/2$, and consider the neighborhood distributions of vertex $\ell+1$ with vertices $1, \dots, \ell$ under $\calG(n, p)$ and $\calG(n, p, d)$. That is, for $\gamma \in \{0, 1\}^\ell$, let
\begin{align*}
    \calQ_{\ell+1}(\gamma) &:= p^{\sum_{i=1}^\ell \gamma_i}(1-p)^{\ell - \sum_{i=1}^\ell \gamma_i}, \\
    \widetilde{\calP}_{\ell+1}(\gamma | U_1, \dots, U_\ell) &:= \E_{U_{\ell+1}}\left[\prod_{i \in [\ell]: \gamma_i = 1}\bm{1}\{\iprod{U_{\ell+1}}{U_i} \geq \tau\}\prod_{i \in [\ell]: \gamma_i = 0}\bm{1}\{\iprod{U_{\ell+1}}{U_i} < \tau\}\right]\,,
\end{align*}
where $U_i \overset{\mathrm{i.i.d.}}{\sim} \calU(\mathbb{S}^{d-1})$. Then there exists a constant $C_{\ref{lem:capsandanticaps}} > 0$ such that for all $t \geq 0$ and  sufficiently large $n$,
\[\Prob_{U_1, \dots, U_\ell}\left(\left|\frac{\widetilde{\calP}_{\ell+1}(\gamma | U_1, \dots, U_\ell)}{\calQ_{\ell+1}(\gamma)} - 1\right| \geq t\right) \leq C_{\ref{lem:capsandanticaps}}\exp\left(-\frac{d(t^2 \wedge 1)}{C_{\ref{lem:capsandanticaps}}M \log(1/p)\log(d/p)} + C_{\ref{lem:capsandanticaps}}\log n\right)\,,
\]
where $M = M(\ell, p, \gamma):= 
\left(\left(\sum_{i=1}^\ell \gamma_i\right)\log(1/p) + \ell p + \log(d/p)\right)\left(\sum_{i=1}^\ell \gamma_i + \ell p\right)$.
\end{lemma}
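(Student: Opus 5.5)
The plan is to derive the bound from the martingale-concentration argument behind \cite[Lemma 5.1]{liu2022rgg}, tracking logarithms in $d,1/p$ rather than in $n$. Let $C_i:=\{x\in\mathbb{S}^{d-1}:\iprod{x}{U_i}\geq\tau\}$ be the cap around $U_i$, let $D_i$ be $C_i$ if $\gamma_i=1$ and $C_i^c$ if $\gamma_i=0$, and set $R_j:=\bigcap_{i\leq j}D_i$. Write $\sigma$ for the Haar measure and $s:=\sum_i\gamma_i$. Then
\[
\widetilde{\calP}_{\ell+1}(\gamma|U_1,\dots,U_\ell)=\sigma(R_\ell),\qquad \frac{\sigma(R_\ell)}{\calQ_{\ell+1}(\gamma)}=\prod_{j=1}^{\ell}\frac{\sigma(R_j)}{\sigma(R_{j-1})\,q_j}=:X_\ell ,
\]
where $q_j=p$ if $\gamma_j=1$ and $q_j=1-p$ otherwise. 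The key exact identity is rotational invariance: for any fixed measurable $R$, $\E_{U_j}[\sigma(R\cap C_j)]=p\,\sigma(R)$. Hence each factor $Y_j:=\sigma(R_j)/(\sigma(R_{j-1})q_j)$ has conditional mean $1$ given $U_{[j-1]}$. So $X_j$ is a product martingale in the filtration generated by revealing $U_1,U_2,\dots$, and it suffices to show $\log X_\ell$ is small.

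Step 1 is the conditional variance of $Y_j$. Let $x,y$ be independent uniform points of $R_{j-1}$. Then
\[
\Var(Y_j\mid U_{[j-1]})=\frac{\E_{x,y}\bigl[\Prob_{U_j}(x,y\in D_j)\bigr]-q_j^2}{q_j^2}.
\]
The numerator depends only on $\iprod{x}{y}$. Using the Gaussian approximation of cap probabilities at threshold $\tau\approx\sqrt{2\log(1/p)/d}$ (as in \cite{sodin2007tail}), a pair with overlap $\rho$ has a two-cap probability exceeding $p^2$ by a factor of roughly $\exp(O(\rho\, d\tau^2))-1$. Thus each cap step ($\gamma_j=1$) contributes variance about $\E[\rho]\log(1/p)$, up to the tail of $\rho$. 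An anticap step contributes $p^2/(1-p)^2$ times this, which is of order $p$ relative to a cap step.

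Step 2 is to show that $R_{j-1}$ is well spread. Points of an intersection of $s'$ caps have a common drift: the component of $x$ along $\mathrm{span}\{U_i:\gamma_i=1\}$ has size about $\tau$ in each direction. So I expect $\iprod{x}{y}\lesssim (s'\log(1/p)+\ell p+\log(d/p))/d$, where the $\ell p$ term comes from the mild repulsion of the anticaps and the $\log(d/p)$ term from the fluctuation of $\iprod{x}{y}$ itself. Summing the per-step variances over $j$ then gives total variance
\[
\lesssim \frac{M\log(1/p)\log(d/p)}{d},
\]
with the factor $(s+\ell p)$ in $M$ counting the effective number of cap steps. To make this rigorous I would intersect with a typicality event $\mathcal{T}$ on which $|\iprod{U_i}{U_{i'}}|\lesssim\sqrt{\log n/d}$ for all pairs and the Gram matrix of the $U_i$ with $\gamma_i=1$ is well conditioned. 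A union bound gives $\Prob(\mathcal{T}^c)\leq\exp(-\Omega(d)+O(\log n))$, and this is the source of the additive $C_{\ref{lem:capsandanticaps}}\log n$ in the exponent.

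Step 3 applies Freedman's inequality (Lemma~\ref{lem:freedmanconc}) to $\sum_j\log Y_j$, or to $X_j-1$ directly. The increments are truncated by capping $|Y_j-1|$ at a constant on $\mathcal{T}$; on that event, each single-step bound for $|Y_j-1|$ is of order $\sqrt{\log(1/p)\log(d/p)/d}$ times a spreading factor. The two branches $t^2/v$ and $t/m$ combine into the $t^2\wedge 1$ form stated, since deviations with $t\geq 1$ are no more likely than deviations with $t=1$ at this scale. Two-sided control of $X_\ell-1$ follows from the same bound applied to $-\log X_\ell$.

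The main obstacle is Step 2: uniform control of the pair-overlap distribution inside $R_{j-1}$ throughout the process, especially in the regime where $\sigma(R_{j-1})$ is tiny (about $p^{s'}$). There, atypical concentration of $R_{j-1}$ could inflate the variance. My first attempt would mimic \cite{liu2022rgg}. Decompose $x$ into its projection onto $\mathrm{span}\{U_i:\gamma_i=1,\ i<j\}$ plus an orthogonal part. The orthogonal part is nearly uniform on a sphere of dimension $d-s'$, so its contribution to $\iprod{x}{y}$ is about $1/\sqrt d$ with Gaussian tails. The projected part is confined to a small, well-conditioned region on $\mathcal{T}$, which gives the $s'\log(1/p)/d$ bound.
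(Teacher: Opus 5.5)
The paper does not reprove this lemma. It imports it from \cite[Lemma 5.1, Corollary 6.1]{liu2022rgg}, whose argument (partly reproduced in the appendix around Lemma~\ref{lem:onecapconcsmallerthan1}) is built on exactly your product martingale $X_j=\prod_{i\le j}Y_i$. The two steps that carry the weight, however, do not go through as you propose.

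In Step 2, there are three concrete problems. First, $\iprod{x}{y}$ for independent $x,y$ uniform on $R_{j-1}$ fluctuates at scale $d^{-1/2}$, so only its mean $\norm{\E[x]}^2$ can be $O(\mathrm{polylog}/d)$. Your ``variance $\approx \E[\iprod{x}{y}]\log(1/p)$'' therefore silently needs the second-order term to cancel; it does, by orthogonality in the Gegenbauer expansion, but that must be shown. Second, projecting onto $\mathrm{span}\{U_i:\gamma_i=1\}$ ignores the $\ell-s$ anticap constraints. These act outside that span and jointly cut the measure by about $e^{-\ell p}$, so the orthogonal part is not nearly uniform when $\ell p$ is large. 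You also cannot add the anticap centers to the span, since $\ell$ may exceed $d$ (in the application, $\nu\le 2k$ while $d$ can be much smaller than $k$). Third, an event forcing all overlaps to be $\lesssim\sqrt{\log n/d}$ fails with probability $n^{-O(1)}$, not $\exp(-\Omega(d)+O(\log n))$. It therefore cannot underwrite a bound as small as $\exp(-d(t^2\wedge 1)/(CM\log(1/p)\log(d/p)))$.

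The missing ingredient is the measure-based cap concentration of Lemma~\ref{lem:onecapconcsmallerthan1}. It needs no Gram-matrix information, only $\log\norm{\mu}_\infty$ for $\mu$ uniform on $R_{j-1}$. This quantity is controlled by the martingale itself, since $\norm{\mu}_\infty=1/\sigma(R_{j-1})=1/(X_{j-1}\prod_{i<j}q_i)$. While $X_{j-1}\ge 1/2$, one gets $\log\norm{\mu}_\infty\le s\log(1/p)+2\ell p+\log 2$. This handles caps and anticaps uniformly and is exactly where the first factor of $M$ comes from; its $\ell p$ term is $(\ell-s)\log\frac{1}{1-p}$, not a repulsion effect.

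Step 3 fails as well. Freedman's inequality with a predictable increment bound $m$ gives $\exp(-(t^2/4v\wedge t/2m))$. This has the claimed form $\exp(-c(t^2\wedge 1)/v)$ only if $m\lesssim v/t$. But a cap step fluctuates at scale $\sigma_\star\asymp(\lambda\log(1/p)\log(d/p)/d)^{1/2}$, with $\lambda$ the first factor of $M$, while $v\asymp(s+\ell p^2)\sigma_\star^2$. For $t$ near $1$, such a small $m$ is violated with probability far above the target. Conversely, any $m$ that holds outside an event of the target probability must be (up to logarithms) at least of order $t/\sqrt{s+\ell p}$, and then the branch $t/2m$ no longer grows with $d$. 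With your $m=O(1)$ the bound is just $\exp(-\Omega(t))$.

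What is needed is the sub-Gaussian conditional tail $\Prob(|Y_j-1|>u\mid U_{[j-1]})\le 2\exp(-du^2/(C\lambda\log(1/p)\log(d/p)))$ for $u\le 1$, again from Lemma~\ref{lem:onecapconcsmallerthan1}; at anticap steps replace $u$ by $(1-p)u/p$. This should be fed into an Azuma-type bound via conditional moment generating functions for the stopped additive martingale $X_j-1$, after discarding the event that some step has $|Y_j-1|>1$. That event has probability at most $2\ell\exp(-d/(C\lambda\log(1/p)\log(d/p)))$. This union bound over $\ell\le n$ steps is what produces both the truncation $t^2\wedge 1$ and the additive $C\log n$, not a Gram-matrix typicality event.
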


Finally, we record the following standard results. The first one is on the concentration of the $\chi^2$ distribution; the second one states that the eigenvalues of the spherical Wishart are concentrated around $1$.

\begin{lemma}[Concentration of $\chi^2(d)$]\label{lem:chisqconc}
    Let $Z_i \sim \calN(0, I_d), i \in [k]$ be independent. Then with probability at least $1 - 2k\exp(-d/1000)$,
    \[\frac{\norm{Z_i}}{\sqrt{d}} \in [0.9, 1.1] \text{ for all }i \in [k]\,.
    \]
\end{lemma}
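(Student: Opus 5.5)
Since $\|Z_i\|^2 \sim \chi^2(d)$, the plan is to apply a standard chi-square tail bound to each $i$ and then take a union bound over $i \in [k]$. The natural tool is the Laurent--Massart inequality \cite{laurentmassart}: for $X \sim \chi^2(d)$ and $x > 0$,
\[
\Prob\bigl(X - d \geq 2\sqrt{dx} + 2x\bigr) \leq e^{-x}, \qquad \Prob\bigl(X - d \leq -2\sqrt{dx}\bigr) \leq e^{-x}.
\]
The event $\|Z_i\|/\sqrt{d} \in [0.9, 1.1]$ is the same as $\|Z_i\|^2 \in [0.81d, 1.21d]$. It therefore suffices to bound separately the probability that $\|Z_i\|^2 > 1.21d$ and the probability that $\|Z_i\|^2 < 0.81d$, each by $\exp(-d/1000)$.

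\emph{Upper tail.} Take $x = d/1000$. Then
\[
2\sqrt{dx} + 2x = d\bigl(2/\sqrt{1000} + 2/1000\bigr) \approx 0.0652\,d < 0.21\,d .
\]
Hence $\Prob(\|Z_i\|^2 > 1.21d) \leq \Prob\bigl(\|Z_i\|^2 - d \geq 2\sqrt{dx} + 2x\bigr) \leq e^{-d/1000}$.

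\emph{Lower tail.} With the same $x = d/1000$ we have $2\sqrt{dx} \approx 0.0632\,d < 0.19\,d$. So $\Prob(\|Z_i\|^2 < 0.81d) \leq e^{-d/1000}$.

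\emph{Union bound.} Each $i$ fails with probability at most $2e^{-d/1000}$. A union bound over the $k$ indices gives total failure probability at most $2k\exp(-d/1000)$, which is the stated bound.

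None of these steps is a real obstacle; the only thing to check is that $x = d/1000$ leaves enough slack in both tails, and the arithmetic above shows it does. If one prefers not to cite Laurent--Massart, a Chernoff bound on the $\chi^2$ moment generating function gives the same conclusion with a generous constant.
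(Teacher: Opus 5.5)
Your proof is correct and is exactly the paper's argument: the paper likewise applies Laurent--Massart's Lemma~1 to each $\|Z_i\|^2 \sim \chi^2(d)$ and then takes a union bound over $i \in [k]$. Your choice $x = d/1000$ checks out in both tails, since $0.0652\,d < 0.21\,d$ for the upper tail and $0.0632\,d < 0.19\,d$ for the lower tail.
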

\begin{proof}
    The proof follows from~\cite[Lemma 1]{laurentmassart} and a union bound.
\end{proof}

\begin{lemma}[Spectrum of spherical Wishart]\label{lem:sphwishartspec}
    Let $d \geq k$ and $U = \begin{pmatrix}U_1 | \dots | U_k\end{pmatrix}\in \reals^{d \times k}$ be such that $U_i \sim \calU(\mathbb{S}^{d-1}), i \in [k]$ are independent. Then there exists a constant $C_{\ref{lem:sphwishartspec}} > 0$ such that with probability at least $1 - (2k+2)\exp(-k/C_{\ref{lem:sphwishartspec}})$,
    \[\norm{U^TU - I_k}_{\op} \leq C_{\ref{lem:sphwishartspec}}\sqrt{\frac{k}{d}}\,.
    \]
\end{lemma}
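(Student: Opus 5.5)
Since $U_i = Z_i/\norm{Z_i}$ with $Z_i \iiddistr \calN(0, I_d)$, I would write $U^TU = D^{-1/2} W D^{-1/2}$, where $W = Z^TZ \sim \Wishart(k,d)$ and $D = \mathrm{diag}(W_{11},\dots,W_{kk})$. The aim is then to bound
\[\norm{U^TU - I_k}_{\op} \leq \norm{D^{-1/2}(W - D)D^{-1/2}}_{\op} \leq \Big(\min_i W_{ii}\Big)^{-1}\norm{W - D}_{\op}.\]
This uses that $U^TU$ has unit diagonal, so $U^TU - I_k = D^{-1/2}(W-D)D^{-1/2}$.

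The diagonal of $D$ is controlled by Lemma~\ref{lem:chisqconc}: with probability at least $1 - 2k\exp(-d/1000) \geq 1 - 2k\exp(-k/1000)$ (using $d \geq k$), every $W_{ii}/d = \norm{Z_i}^2/d$ lies in $[0.81, 1.21]$. On this event, $|W_{ii} - d| \leq 0.21 d$. For the full matrix, I would apply the standard Wishart concentration \cite[Theorem II.13]{davidsonszarek}, already used in the proof of Theorem~\ref{thm:lbdense}. It says the singular values of $Z$ lie in $[\sqrt d - \sqrt k - t, \sqrt d + \sqrt k + t]$ with probability at least $1 - 2\exp(-t^2/2)$. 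Taking $t = \sqrt{k}$ and using $d \geq k$, this gives $\norm{W - dI_k}_{\op} \leq C\sqrt{dk}$ with probability $1 - 2\exp(-k/2)$.

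This alone is not enough, because $\norm{W - D}_{\op} \leq \norm{W - dI_k}_{\op} + \max_i |W_{ii} - d|$ and the diagonal term is only controlled at order $0.21d$, far larger than $\sqrt{dk}$. So I would sharpen the diagonal bound. Each $W_{ii} - d$ is a centered $\chi^2(d)$ variable, and the Laurent--Massart bound \cite[Lemma 1]{laurentmassart} with deviation parameter $x = k$ gives $|W_{ii} - d| \leq 2\sqrt{dk} + 2k \leq 4\sqrt{dk}$ with probability at least $1 - 2e^{-k}$. A union bound over $i \in [k]$ costs a factor $2k$ in the failure probability, which is exactly the $2k$ term in the stated bound. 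Then $\norm{W-D}_{\op} \leq (C+4)\sqrt{dk}$, and dividing by $\min_i W_{ii} \geq 0.81d$ yields $\norm{U^TU - I_k}_{\op} \leq C'\sqrt{k/d}$.

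The total failure probability is at most $2k e^{-k} + 2e^{-k/2} + 2ke^{-k/1000}$, which is bounded by $(2k+2)\exp(-k/C_{\ref{lem:sphwishartspec}})$ once $C_{\ref{lem:sphwishartspec}} \geq 1000$. The only subtle point is the one handled above: the crude $[0.9, 1.1]$ diagonal control from Lemma~\ref{lem:chisqconc} is too weak for the operator-norm estimate, so the diagonal must be controlled at scale $\sqrt{dk}$, and Lemma~\ref{lem:chisqconc} is needed only for the lower bound $\min_i W_{ii} \gtrsim d$.
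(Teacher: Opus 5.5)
Your proposal is correct and is essentially the paper's argument. The paper writes $U = ZD'$ with $D' = \mathrm{diag}(1/\norm{Z_i})$ (your $D^{-1/2}$) and bounds $\norm{U^TU - I_k}_{\op} \leq \norm{\sqrt{d}D'(Z^TZ/d - I_k)\sqrt{d}D'}_{\op} + \norm{d(D')^2 - I_k}_{\op}$. This is exactly your split of $W - D$ into $(W - dI_k) + (dI_k - D)$: the first piece is handled by Gaussian matrix concentration (\cite[Theorem 4.6.1]{hdpbook} instead of Davidson--Szarek), and the second by Laurent--Massart at scale $4\sqrt{dk}$, with a union bound giving the $2k$ term.

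Your separate use of Lemma~\ref{lem:chisqconc} to secure $\min_i \norm{Z_i}^2 \geq 0.81d$ makes explicit a step the paper leaves implicit. The $4\sqrt{dk}$ deviation bound alone does not keep $\norm{Z_i}^2$ of order $d$ when $d$ is only a constant multiple of $k$.
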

\begin{proof}
    Write $U_i = Z_i / \norm{Z_i}$ where $Z_i \overset{\mathrm{i.i.d.}}{\sim}\calN(0, I_d), i \in [k]$ and $Z = (Z_1 | \dots | Z_k)$. Then by \cite[Theorem 4.6.1]{hdpbook}, we have
    \[\norm{\frac{Z^T Z}{d} - I_k}_{\mathrm{op}} = O\left(\sqrt{\frac{k}{d}}\right)\,,
    \]
    with probability at least $1 - 2\exp(-\Omega(k))$. As $U = ZD$ where $D := \mathrm{diag}(1/\norm{Z_1}, \dots, 1/\norm{Z_k})$, we have
    \begin{align*}
        \norm{U^TU - I_k}_{\op} &= \norm{D Z^TZD - I_k}_{\op} \\
        &\leq \norm{\sqrt{d}D\left(\frac{Z^TZ}{d} - I_k\right)\sqrt{d}D}_{\op} + \norm{dD^2 - I_k}_{\op} \\
        &= O\left(\sqrt{\frac{k}{d}}\right)\,,
    \end{align*}
    with probability at least $1 - (2k+2)\exp(-\Omega(k))$, where we used that $\Prob(|\|Z_1\|^2- d| > 4\sqrt{dk}) \leq 2\exp(-\Omega(k))$~\cite[Lemma 1]{laurentmassart} and union bound. 

\end{proof}

\subsection{Upper bound on signed subgraph counts}
Aside from specific cases, it is difficult to directly calculate the expectation of signed subgraph count under the random geometric graph. A recent work \cite{bangachev2024fourier} provided a neat solution to this, showing an upper bound that applies to all subgraphs of moderate size and only depends on the numbers of their vertices and edges. 
\begin{lemma}[Upper bound on signed subgraph count; {\cite[Theorem 1.1]{bangachev2024fourier}}]\label{lem:rggfourier} Assume that there exists a constant $\delta > 0$ such that $d \geq n^{\delta}$ and $n^{-1+\delta} \leq p \leq 1/2$. Then there exists a constant $C_{\ref{lem:rggfourier}} = C_{\ref{lem:rggfourier}}(\delta) > 0$ such that the following holds: for any connected graph $H$ that satisfies $C_{\ref{lem:rggfourier}}v(H)e(H)\log^{3/2}d \leq \sqrt{d}$,
\[\left|\E_{G \sim \calG(n, p, d)}\left[\prod_{ij \in E(H)} (G_{ij} - p)\right] \right| \leq (8p)^{e(H)}\left(\frac{C_{\ref{lem:rggfourier}}v(H)e(H)\log^{3/2}d}{\sqrt{d}}\right)^{\lceil (v(H)-1)/2 \rceil}\,.
\]
\end{lemma}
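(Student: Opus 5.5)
The statement is Lemma~\ref{lem:rggfourier}, which the paper imports from \cite[Theorem 1.1]{bangachev2024fourier}. The plan is to cite it, checking only that our conventions match theirs. If a self-contained argument is wanted, I would follow the strategy of that paper, outlined below.

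\textbf{Setup.} Write each centered edge indicator as $G_{ij}-p = \sigma(\iprod{U_i}{U_j})$, where $\sigma(x) := \bm{1}\{x\ge\tau\}-p$, and expand $\sigma$ in Gegenbauer polynomials. Equivalently, use the spherical-harmonic decomposition $\sigma(\iprod{U_i}{U_j}) = \sum_{m\ge1} c_m \sum_{r} Y_{m,r}(U_i)Y_{m,r}(U_j)$. There is no $m=0$ term because the edge is centered.

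\textbf{Reduction to a sum over labelings.} Multiply these expansions over the edges of $H$ and integrate out each vertex $U_v$ using orthonormality of the harmonics. This turns the expectation into a sum over degree labelings $m: E(H)\to\mathbb{N}_{\ge1}$. Each term is a product of coefficients $\prod_e c_{m_e}$ times a contraction of harmonic tensors at the vertices. A vertex of degree one forces its edge to carry harmonic degree $0$, which is excluded, so that term vanishes. This recovers Corollary~\ref{cor:treesimple} and shows that only labelings whose "support" is free of leaves survive.

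\textbf{Bounding each term.} Two estimates are needed.
\begin{itemize}
\item The coefficients satisfy $|c_m|\lesssim p\,(C\log(1/p)\,m/d)^{m/2}$ for small $m$, with $\sum_m c_m^2 N_m\le p$. This yields the factor $(8p)^{e(H)}$.
\item Each vertex contraction is controlled by a hypercontractivity bound on products of spherical harmonics. This gains a factor $1/\sqrt d$ per unit of "excess degree" at each vertex.
\end{itemize}

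\textbf{Combinatorial step and main obstacle.} The remaining step, which I expect to be the main obstacle, is combinatorial. One must show that every surviving labeling collects at least $\lceil (v(H)-1)/2\rceil$ factors of $1/\sqrt d$. It must also be shown that the number of labelings at a given total degree is at most $(C\,v(H)e(H)\log^{3/2}d)^{\text{excess}}$, so that the sum converges under the hypothesis $C v(H) e(H)\log^{3/2} d\le\sqrt d$. Following \cite{bangachev2024fourier}, the first try is a cluster-expansion or peeling argument: repeatedly remove vertices of minimal degree and charge each removal a factor $d^{-1/2}$ per pair of vertices. High harmonic degrees $m\gtrsim\log d$ are handled separately using the decay of $c_m$. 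This is where the polylogarithmic losses enter.

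Since this is a known result that the paper states as an auxiliary lemma, the proof in the paper will simply invoke \cite[Theorem 1.1]{bangachev2024fourier} verbatim.
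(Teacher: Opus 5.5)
Your plan matches the paper: Lemma~\ref{lem:rggfourier} is imported verbatim from \cite[Theorem 1.1]{bangachev2024fourier} with no proof given. Citing it, after checking that the hypotheses ($d \geq n^{\delta}$, $n^{-1+\delta} \leq p \leq 1/2$, $C v(H)e(H)\log^{3/2} d \leq \sqrt{d}$) and the exponent $\lceil (v(H)-1)/2\rceil$ agree, is all that is needed. Your optional harmonic-expansion sketch can be dropped: as you acknowledge, it leaves the key combinatorial step open, and it does not describe the cited reference's method, which localizes the dependence to a few ``fragile'' edges and applies a noise operator to the remaining edges rather than expanding in spherical harmonics.
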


\section{Deferred proofs in Section~\ref{sec:ub}}

\subsection{Variance of signed triangle count (Lemma~\ref{lem:signedtrianglecountvar})}\label{appendix:signedtrianglecountvar}

We express the condition
\begin{equation}\label{eq:ubsignedtrianglecount2}
    \gamma_{\tri} \gg \sqrt{\Var_{G \sim \calP}[f_{\tri}(G)]}\,,
\end{equation}
as an inequality with respect to a specific parameter $\eps = \eps(p, d) > 0$, and invoke its lower bound to translate that as a condition on $d$. Namely, define
    \[\eps = \eps(p, d) := \frac{\Prob_{G \sim \calG(n, p, d)}(G_{12}G_{13}G_{23}=1|G_{23}=1)}{p^2}-1\,.
    \]
    Using the pairwise independence of edges incident to a common vertex (i.e., $ \E_{G \sim \calG(n, p, d)}[G_{12}G_{13}] = p^2$), the expectation of the signed triangle simplifies to:
    \[
          \E_{G \sim \calG(n, p, d)}[(G_{12}-p)(G_{13}-p)(G_{23}-p)] =  \E_{G \sim \calG(n, p, d)}[G_{12}G_{13}G_{23}] - p^3 = p^3 \eps\,,
    \]
    where the last equality holds by $ \E_{G \sim \calG(n, p, d)}[G_{12}G_{13}G_{23}] = p  \Prob_{G \sim \calG(n, p, d)}(G_{12}G_{13}G_{23}=1 | G_{23}=1)$ and the definition of $\eps$. Then, we get
    \begin{equation}\label{eq:signedtrianglecounteps}
        \gamma_{\tri} = \frac{1}{2}\E_{G \sim \calP}[f_{\tri}(G)] = \frac{1}{2}\binom{n}{3} \left(\frac{k}{n}\right)^3 \E_{G \sim \calG(n, p, d)}[(G_{12}-p)(G_{13}-p)(G_{23}-p)] = \Omega(k^3p^3\eps) \,,
    \end{equation}
    where the first equality holds by  Lemma~\ref{lem:fouriersimple}, and the second holds by the previous displayed equality. Following through the lines of \cite[Equations 21--25]{liu2022rgg} with Lemma~\ref{lem:fouriersimple}, 
    for $\overline{T}_{ijl} := (G_{ij}-p)(G_{jl}-p)(G_{il}-p) - \E_{G \sim \calP}[(G_{ij}-p)(G_{jl}-p)(G_{il}-p)]$ we obtain    \begin{equation}\label{eq:signedtrianglevardecomp}
        \Var_{G \sim \calP}[f_{\tri}(G)] = \binom{n}{3}\E_{G \sim \calP}[(\overline{T}_{123})^2] + 12\binom{n}{4}\E_{G \sim \calP}[\overline{T}_{123}\overline{T}_{124}] + 30\binom{n}{5}\E_{G \sim \calP}[\overline{T}_{123}\overline{T}_{145}]\,.
    \end{equation}
    For the first term in \eqref{eq:signedtrianglevardecomp}, we have  \begin{equation}\label{eq:signedtrianglevartrisquared}
    \begin{aligned}
         \E_{G \sim \calP}[(\overline{T}_{123})^2]
         & \leq \E_{G \sim \calP}[(T_{123})^2] \\&= (1-2p)^3\E_{G \sim \calP}[G_{12}G_{23}G_{13}] + 3(1-2p)^2p^4 + 3(1-2p)p^5 + p^6 \\
        &= (1-2p)^3p^3(1 + (k/n)^3\eps) + 3(1-2p)^2p^4 + 3(1-2p)p^5 + p^6 \\
        &= O\left(p^3(1 + (k/n)^3 \eps)\right)\,.
    \end{aligned}
    \end{equation}
    For the second term in \eqref{eq:signedtrianglevardecomp}, we first have
    \begin{align*}
        \E_{G \sim \calP}[\overline{T}_{123}\overline{T}_{124}] &\leq  \E_{G \sim \calP}[T_{123}T_{124}] \\ 
        &= \left(\frac{k}{n}\right)^4 \E_{G \sim \calG(n, p, d)}[T_{123}T_{124}] \\
        &\leq \left(\frac{k}{n}\right)^4 \E_{\iprod{U_1}{U_2}}\left[\left(\E_{U_1}[G_{13}G_{23} | \iprod{U_1}{U_2}]- p^2\right)^2\right]\,,
    \end{align*}
    where the equality is from Lemma~\ref{lem:fouriersimple}, and the second inequality is from \cite[Equation 24]{bubeck2016rgg}. The expectation in the last term is equal to a signed 4-cycle count under $\calG(n, p, d)$, since:
    \begin{align*}
        &\E_{\iprod{U_1}{U_2}}\left[\left(\E[G_{13}G_{23} | \iprod{U_1}{U_2}]- p^2\right)^2\right] \\&= \E_{\iprod{U_1}{U_2}}\left[\left(\E_{U_3}[(G_{13}-p)(G_{23}-p)|U_1, U_2]\right)^2\right] \\
        &= \E_{\iprod{U_1}{U_2}}\left[\E_{U_3, U_4}[(G_{13}-p)(G_{23}-p)(G_{14}-p)(G_{24}-p)|U_1, U_2]\right] \\
        &= \E_{\iprod{U_1}{U_2}}\left[\E_{U_3, U_4}[(G_{13}-p)(G_{23}-p)(G_{14}-p)(G_{24}-p)|\iprod{U_1}{U_2}]\right]\,,
    \end{align*}
    where the first and the last equalities follow from the rotational invariance (i.e., conditioning on $(U_1, U_2)$ is equivalent to conditioning on $\iprod{U_1}{U_2}$), and the second equality is from the conditional independence of $(G_{13}-p)(G_{23}-p)$ and $(G_{14}-p)(G_{24}-p)$ given $(U_1, U_2)$. Thus,
\begin{equation}    \begin{aligned}\label{eq:signedtrianglevarK4minusedge}
    \E_{G \sim \calP}[\overline{T}_{123}\overline{T}_{124}] &\leq \left(\frac{k}{n}\right)^4 \E_{G \sim \calG(n, p, d)}[(G_{13}-p)(G_{23}-p)(G_{14}-p)(G_{24}-p)] \\
    &= O\left(\left(\frac{k}{n}\right)^4\frac{p^4\log^2(1/p)}{d}\right)\,,
\end{aligned}
\end{equation}
where the last line follows from Proposition~\ref{prop:signedcyclecount}.
We note that the argument for this part in \cite{liu2022rgg} assumes the conditional independence of $(G_{13}-p)(G_{23}-p)$ and $(G_{14}-p)(G_{24}-p)$ given 
$G_{12}$; however, this does not hold as both quantities depend on the common latent vectors $U_1$ and $U_2$. We therefore take a different approach via 
Proposition~\ref{prop:signedcyclecount}, which involves a mild assumption of $d$ being sufficiently large and $d \geq (5\log(1/p))^4$.
Finally, for the third term in \eqref{eq:signedtrianglevardecomp}, we have
\begin{equation}\label{eq:signedtrianglevarribbon}
    \E_{G \sim \calP}[\overline{T}_{123}\overline{T}_{145}] \leq \E_{G \sim \calP}[T_{123}T_{145}] = \left(\frac{k}{n}\right)^5p^6\eps^2\,.
\end{equation}
Combining \eqref{eq:signedtrianglevartrisquared}, \eqref{eq:signedtrianglevarK4minusedge} and \eqref{eq:signedtrianglevarribbon} into  \eqref{eq:signedtrianglevardecomp}, we have
\[\Var_{G \sim \calP}[f_{\tri}(G)] = O(n^3p^3 + k^3p^3\eps + k^4p^4\log^2(1/p)/d + k^5p^6\eps^2)\,.
\]
Comparing this with \eqref{eq:signedtrianglecounteps}, for establishing \eqref{eq:ubsignedtrianglecount2} it suffices to show that
\begin{equation}\label{eq:signedtrianglecountfinal1}
    k^6p^6\eps^2 \gg n^3p^3 + k^3p^3\eps + k^4p^4\log^2(1/p)/d + k^5p^6\eps^2\,.
\end{equation}
Clearly $k^6p^6\eps^2 \gg k^5p^6\eps^2$ holds. Also, from Lemma~\ref{lem:signedtrianglelb}, we have $\eps = \Omega(\log^{3/2}(1/p)/\sqrt{d})$. Thus a sufficient condition for \eqref{eq:signedtrianglecountfinal1} is
\[d \ll \frac{k^6p^3\log^3(1/p)}{n^3} \wedge k^6p^6\log^3(1/p) \quad \text{and} \quad k^2p^2\log(1/p) \gg 1\,.
\]
If $p \geq 1/n$ then the first inequality on $d$ is satisfied as long as $d \ll k^6p^3\log^3(1/p)/n^3$. The second inequality on $k, p$ is satisfied as long as $p \gg 1/(k\sqrt{\log k})$.

\subsection{Typical behavior of signed wedge count (Lemma~\ref{lem:ubgeneralrggwhp})}\label{appendix:ubgeneralrggwhp}
Without loss of generality, let $S = [s]$ and $S_0 = [k^-]$. Throughout this subsection, we always consider the distribution of $G$ to be $\calP_{[s]}$ and omit the notation $G \sim \calP_{[s]}$ for brevity.

\paragraph*{Verifying \eqref{eq:constrainedscantestrange}.}
We begin with the event \eqref{eq:constrainedscantestrange} on the range surrogate, whose complement has probability at most (by union bound and Markov's inequality) 
\begin{equation}\label{eq:ubgeneralrggmarkov}
\begin{aligned}
    &\sum_{i < j \in [k^-]}\Prob\left(\left|\sum_{\ell < i, \ell \in [k^-]} (G_{\ell i}-p)(G_{\ell j}-p)\right| > B\right) \\
    &\leq k^2 \max_{i < j \in [k^-]}\Prob\left(\left|\sum_{\ell < i, \ell \in [k^-]} (G_{\ell i}-p)(G_{\ell j}-p)\right| > B\right) \\
    &\leq k^2 \frac{\max_{i < j \in [k^-]}\E\left[\left(\sum_{\ell < i, \ell \in [k^-]} (G_{\ell i}-p)(G_{\ell j}-p)\right)^{2m}\right]}{B^{2m}}\,,
\end{aligned}
\end{equation}
for any $m \geq 1$; later, we will choose $m = \lceil \log k \rceil$. To obtain an upper bound on the moment of the wedge count, consider any fixed $i < j \in [k^-]$. Then
\begin{equation}\label{eq:ubgeneralrggmoment}
\begin{aligned}
  &\E\left[\left(\sum_{\ell < i, \ell \in [k^-]}
     (G_{\ell i}-p)(G_{\ell j}-p)\right)^{2m}\right] \\
  &= \sum_{\ell_1, \dots, \ell_{2m}}\E\left[ \prod_{a = 1}^{2m}(G_{\ell_a i}-p)(G_{\ell_a j}-p)\right]\leq \sum_{b = 1}^{2m}\sum_{|\{\ell_1, \dots, \ell_{2m}\}| = b}
       \left|\E \left[\prod_{a = 1}^{2m}(G_{\ell_a i}-p)(G_{\ell_a j}-p)\right]\right|\,,
\end{aligned}
\end{equation}
where each $\ell_1, \dots, \ell_{2m}$ is less than $i$.

The next step is to express each summand in \eqref{eq:ubgeneralrggmoment} as a form of proper subgraph count, to apply Lemma~\ref{lem:rggfourier}. Consider any fixed $\ell_1, \dots, \ell_{2m}$ with $|\{\ell_1, \dots, \ell_{2m}\}| = b$. By (without loss of generality) letting the distinct elements to be $\ell_1, \dots, \ell_b$,
\begin{equation}\label{eq:ubrggwhpexpand}
    \left|\E \left[\prod_{a = 1}^{2m}(G_{\ell_a i}-p)(G_{\ell_a j}-p)\right]\right| = |\E [(G_{\ell_1i}-p)^{m_1}(G_{\ell_1j}-p)^{m_1}\dots (G_{\ell_bi}-p)^{m_b}(G_{\ell_bj}-p)^{m_b}]|\,,
\end{equation}
where $m_a \geq 1$ (each depending on $\ell_1, \dots, \ell_{2m}$) and $\sum_a m_a = 2m$. Since each $G_{\ell_ai}, G_{\ell_aj} \in \{0, 1\}$, we can always express any higher order term as a lower order term. In particular, we have
\begin{align*}
    (x-p)^{m_a} &= ((1-p)^{m_a}-(-p)^{m_a})(x-p)  +p(1-p)((1-p)^{m_a-1} - (-p)^{m_a-1})\\
    &=: C_a(x-p) + D_a\,,
\end{align*}
for $x \in \{0, 1\}$. As $C_a, D_a \geq 0$ (from $p \leq 1/2$) and $C_a \leq 1, D_a \leq p$, \eqref{eq:ubrggwhpexpand} is at most
\begin{align*}
    &|\E [(G_{\ell_1i}-p)^{m_1}(G_{\ell_1j}-p)^{m_1}\dots (G_{\ell_bi}-p)^{m_b}(G_{\ell_bj}-p)^{m_b}]|\\
    &= \left|\sum_{U \subseteq [b]} \sum_{U' \subseteq [b]}\left(\prod_{a \in U}C_a\right)\left(\prod_{a' \in U'}C_{a'}\right)\left(\prod_{a \notin U}D_a\right)\left(\prod_{a' \not \in U'}D_{a'}\right) \E \left[\prod_{a \in U}(G_{ai}-p)\prod_{a' \in U'}(G_{a'j}-p)\right]\right| \\
    &\leq \sum_{U \subseteq [b]} \sum_{U' \subseteq [b]}\left(\prod_{a \in U}C_a\right)\left(\prod_{a' \in U'}C_{a'}\right)\left(\prod_{a \notin U}D_a\right)\left(\prod_{a' \not \in U'}D_{a'}\right) \left|\E \left[\prod_{a \in U}(G_{ai}-p)\prod_{a' \in U'}(G_{a'j}-p)\right]\right| \\
&\leq  \sum_{U \subseteq [b]} \sum_{U' \subseteq [b]} p^{b - |U| + b - |U'|} (8p)^{|U| + |U'|}\leq 2^b \times 2^b \times (8p)^{2b} = (16p)^{2b}\,.
\end{align*}
Here, the first inequality is from $C_a, D_a \geq 0$, and the second inequality is from Lemma~\ref{lem:rggfourier} and $C_a \leq 1, D_a \leq p$. To be specific, the graph corresponding to edges $\{ai: a \in U\} \cup \{a'j: a' \in U'\}$ has at most 2 connected components with $|U| + |U'| \leq 4m$ edges. If $|U| = |U'| = 0$ then the expectation is $1 = (8p)^{|U| + |U'|}$; otherwise, by applying Lemma~\ref{lem:rggfourier} to each component and using that $C_{\ref{lem:rggfourier}} v(H)e(H) \log^{3/2}d / \sqrt{d} \leq C_{\ref{lem:rggfourier}} (4m)^2(\log^{3/2}d)/\sqrt{d} \leq 1$ (for all sufficiently large $d$, with $m = \lceil \log k \rceil$), we obtain the corresponding upper bound $(8p)^{|U| + |U'|}$.
As the final upper bound $(16p)^{2b}$ holds whenever $|\{\ell_1, \dots, \ell_{2m}\}| = b$, plugging this into \eqref{eq:ubgeneralrggmoment} yields
\begin{align*}
    \E\left[\left(\sum_{\ell < i, \ell \in [k^-]}
     (G_{\ell i}-p)(G_{\ell j}-p)\right)^{2m}\right] &\leq \sum_{b = 1}^{2m}\sum_{|\{\ell_1, \dots, \ell_{2m}\}| = b} (16p)^{2b} \\
     &\leq  \sum_{b=1}^{2m}\binom{j-2}{b}b^{2m} (16p)^{2b} \\
     &\leq \sum_{b=1}^{2m}k^{b}b^{2m}(16p)^{2b} = \sum_{b=1}^{2m} b^{2m} (256kp^2)^b\,.
\end{align*}
Here, the second inequality is from $\ell_1, \dots, \ell_{2m}$ being chosen from $[i-1] \subseteq [j-2]$, and the next inequality is from $j \leq k$.

If $256kp^2 \leq 1$, the last term is at most $\sum_{b=1}^{2m}b^{2m} \leq (2m)^{2m+1}$; otherwise, it is at most $\sum_{b=1}^{2m} b^{2m} (256kp^2)^{2m} \leq (2m)(512kp^2m)^{2m}$. Thus for \eqref{eq:ubgeneralrggmarkov}, we obtain an upper bound of
\[k^2 \frac{(2m)^{2m+1} \vee (2m)(512kp^2m)^{2m}}{B^{2m}}\,.
\]
Finally, we choose $m$ and $B$ such that this term is $o(1)$ as $k \to \infty$. In particular, $m = \lceil \log k \rceil$ and $B = 2048kp^2m + 8m$ is valid.

\paragraph*{Verifying \eqref{eq:constrainedscantestvar}.} The result on the variance surrogate is by a simple application of Chebyshev inequality (yet with tedious calculation). For this, we need to calculate the mean and variance of the statistic in \eqref{eq:constrainedscantestvar}. Writing as $W_{\ell ij} := (G_{\ell i}-p)^2(G_{\ell j}-p)^2$ and $Q_{\ell qij} := (G_{\ell i}-p)(G_{\ell j}-p)(G_{qi}-p)(G_{qj}-p)$, the statistic in \eqref{eq:constrainedscantestvar} is equal to
\begin{equation}\label{eq:ubgeneralrggvarterm}
\begin{aligned}
    &\sum_{i<j \in [k^-]} \sum_{\ell < i, \ell \in [k^-]} W_{\ell ij} + 2\sum_{i < j \in [k^-]} \sum_{\ell < q < i,  \ell \in [k^-], q \in [k^-]} Q_{\ell qij} \\
    &= \sum_{\ell < i < j\in [k^-]} W_{\ell ij} + 2\sum_{\ell < q < i < j \in [k^-]}  Q_{\ell qij}\,.
\end{aligned} 
\end{equation}
This follows from expansion:
\begin{align*}
&\sum_{i<j \in [k^-]} \left(\sum_{\ell < i, \ell \in [k^-]}(G_{\ell i}-p)(G_{\ell j}-p)\right)^2 \\
&= \sum_{i<j \in [k^-]} \sum_{\ell < i, \ell \in [k^-]}(G_{\ell i}-p)(G_{\ell j}-p) \sum_{q < i, q \in [k^-]}(G_{qi}-p)(G_{qj}-p) \\
&= \sum_{i<j \in [k^-]} \sum_{\ell < i, \ell \in [k^-]}(G_{\ell i}-p)^2(G_{\ell j}-p)^2 \\
&\quad + 2\sum_{i < j \in [k^-]} \sum_{\ell < q < i, \ell \in [k^-], q \in [k^-]}(G_{\ell i}-p)(G_{\ell j}-p)(G_{qi}-p)(G_{qj}-p)\,,
\end{align*}
where the last equality follows from categorizing based on $\ell = q$ or $\ell \neq q$.

We first calculate the mean of \eqref{eq:ubgeneralrggvarterm}. From $ \E [W_{\ell ij}] = p^2(1-p)^2$ and Proposition~\ref{prop:signedcyclecount} for cycle of length 4, the mean of \eqref{eq:ubgeneralrggvarterm} is at most
\begin{equation}\label{eq:ubgeneralrggmeanub}
\binom{k^-}{3}p^2(1-p)^2 + 2\binom{k^-}{4}  C_{\ref{prop:signedcyclecount}}^4\frac{p^4 \log^2(1/p)}{d} \leq \frac{\sigma^2}{2}\,,
\end{equation}
where the inequality follows from the choice of $\sigma^2$ in \eqref{eq:constrainedscantestvar}.

For the variance of \eqref{eq:ubgeneralrggvarterm}, consider the centered variables $\overline{W}_{\ell ij}:= W_{\ell ij} - \E[W_{\ell ij}]$ and $\overline{Q}_{\ell qij}:= Q_{\ell qij} - \E[Q_{\ell qij}]$. Then the variance of \eqref{eq:ubgeneralrggvarterm} is equal to
\begin{equation}\label{eq:ubgeneralrggvarterm2}
\begin{aligned}
&\E\left[\left(\sum_{\ell < i < j \in [k^-]} \overline{W}_{\ell ij} + 2\sum_{\ell < q < i < j \in [k^-]}  \overline{Q}_{\ell qij}\right)^2\right] \\
&\leq 2 \E\left[\left(\sum_{\ell < i < j \in [k^-]} \overline{W}_{\ell ij} \right)^2\right] + 8 \E\left[\left(\sum_{\ell < q < i < j \in [k^-]}  \overline{Q}_{\ell qij}\right)^2\right]\,.
\end{aligned}    
\end{equation}
The expectations after the inequality can be upper bounded using Lemma~\ref{lem:rggfourier}; it can be shown that (see Appendix~\ref{appendix:ubgeneralrggwedgevar} for details) \eqref{eq:ubgeneralrggvarterm2} is at most
\begin{equation}\label{eq:ubgeneralrggvarub}
O\left(k^4p^3 + \frac{k^5p^6\log^{3} d}{d} + \frac{k^7p^8\log^{9/2} d}{d^{3/2}}\right)\,.
\end{equation}
Finally, we invoke Chebyshev's inequality using the bounds on the mean \eqref{eq:ubgeneralrggmeanub} and the variance \eqref{eq:ubgeneralrggvarub}. Let $X$ be the sum of random variables as in \eqref{eq:ubgeneralrggvarterm}. Then the complementary event of \eqref{eq:constrainedscantestvar} has probability
\[\Prob(X > \sigma^2) \leq \Prob(X - \E[X] > \sigma^2/2) \leq 4\Var[X]/\sigma^4\,.
\]
Thus, it suffices to prove that $\sigma^4 \gg \Var[X]$. As $\sigma^2$ is of order $\Omega(k^3p^2 + k^4p^4\log^2(1/p)/d)$, it suffices to show that
\begin{equation}\label{eq:ubgeneralrggvarcomparison}
(k^3p^2)^2 + \left(\frac{k^4p^4 \log^2(1/p)}{d}\right)^2 \gg k^4p^3+ \frac{k^5p^6\log^3 d}{d} + \frac{k^7p^8\log^{9/2} d}{d^{3/2}}\,.
\end{equation}
We evaluate this asymptotic inequality, for each term on the right hand side of \eqref{eq:ubgeneralrggvarcomparison}. For the first term, $(k^4p^4\log^2(1/p)/d)^2 \gg k^4p^3$ is equivalent to
\[d \ll k^2p^{5/2}\log^2(1/p)\,.
\]
This holds due to the theorem's assumption $d \ll k^2p^3\log^3(1/p)$, combined with $p^{1/2}\log(1/p) = O(1)$.
For the remaining terms, by applying AM-GM inequalities on the left hand side of \eqref{eq:ubgeneralrggvarcomparison} (with respective weights $(1/2, 1/2), (1/4, 3/4)$), we obtain that it is at least
\[\Omega\left(\frac{k^7p^6\log^2(1/p)}{d} \vee \frac{k^{15/2}p^7\log^3 (1/p)}{d^{3/2}}\right)\,.
\]
For this to dominate the remaining terms $\frac{k^5p^6\log^{3} d}{d} + \frac{k^7p^8\log^{9/2} d}{d^{3/2}}$ in \eqref{eq:ubgeneralrggvarcomparison}, it should be that
\[k \gg \frac{\log^{3/2}d}{\log(1/p)} \vee \frac{p^2\log^9 d}{\log^6(1/p)}\,.
\]
From the condition $p \geq n^{-1+\delta}$ and $d = \widetilde{O}(k^2p^3)$, it can be observed that the right hand side is at most polylogarithmic in $n$, whereas the left hand side in at least polynomial in $n$. Thus for each fixed $\delta > 0$, this holds for all sufficiently large $n$.

\subsubsection{Variance of signed wedge count}\label{appendix:ubgeneralrggwedgevar}
Here we record the details on the earlier claim that \eqref{eq:ubgeneralrggvarterm2} is at most \eqref{eq:ubgeneralrggvarub}.

\paragraph*{$\overline{W}_{\ell ij}$ term.} We have
\begin{align*}
    & \E\left[\left(\sum_{\ell < i < j\in [k^-]} \overline{W}_{\ell ij} \right)^2\right] \\
    &= \sum_{\ell < i < j \in [k^-]} \sum_{\ell' < i' < j' \in [k^-]} \E\left[\overline{W}_{\ell ij}\overline{W}_{\ell'i'j'}\right] \\
    &\leq k^3 \E\left[\left(\overline{W}_{123}\right)^2\right] + k^4\left|\E\left[\overline{W}_{123}\overline{W}_{124}\right]\right| + k^4\left|\E\left[\overline{W}_{123}\overline{W}_{234}\right]\right|  + k^4 \left|\E \left[ \overline{W}_{134} \overline{W}_{234} \right]\right| \,,
\end{align*}
because for all other combinations of $ \ell, i, j$ and $\ell', i', j'$, the expectation is 0 by Corollary~\ref{cor:treesimple}. From $(x-p)^2 = (1-2p)(x-p) + p(1-p)$ for $x \in \{0, 1\}$, we have $\overline{W}_{\ell ij} = (1-2p)^2(G_{\ell i}-p)(G_{\ell j}-p) + (1-2p)p(1-p)(G_{\ell i}-p + G_{\ell j}-p)$. Thus
\begin{align*}
    \E\left[\left(\overline{W}_{123}\right)^2\right] &= \E\left[\left((1-2p)^2(G_{12}-p)(G_{13}-p) + (1-2p)p(1-p)(G_{12}-p + G_{13}-p)\right)^2\right] \\
    &\leq 3\E[(G_{12}-p)^2(G_{13}-p)^2] + 3p^2\E[(G_{12}-p)^2] + 3p^2 \E[(G_{13}-p)^2] \\
    &= O(p^2)\,,
\end{align*}
where the inequality follows from $(a+b+c)^2 \leq 3(a^2 + b^2 + c^2)$. By similar expansion, we have
\begin{align*} \E\left[\overline{W}_{123}\overline{W}_{124}\right] = &\E\Big[\left((1-2p)^2(G_{12}-p)(G_{13}-p) + (1-2p)p(1-p)(G_{12}-p + G_{13}-p)\right) \\
    &\quad \times \left((1-2p)^2(G_{12}-p)(G_{14}-p) + (1-2p)p(1-p)(G_{12}-p + G_{14}-p)\right)\Big] \\
    &= p^2(1-p)^2(1-2p)^2\E[(G_{12}-p)^2] \\
    &= O(p^3)\,,
\end{align*}
and
\begin{align*}
    \E\left[\overline{W}_{123}\overline{W}_{234}\right] &= \E\Big[\left((1-2p)^2(G_{12}-p)(G_{13}-p) + (1-2p)p(1-p)(G_{12}-p + G_{13}-p)\right) \\
    &\quad \times \left((1-2p)^2(G_{23}-p)(G_{24}-p) + (1-2p)p(1-p)(G_{23}-p + G_{24}-p)\right)\Big] \\
    &= p(1-p)(1-2p)^3\E[(G_{12}-p)(G_{13}-p)(G_{23}-p)]\\
    &= O(p^4\log^{3/2}(1/p)/d^{1/2})\,,
\end{align*}
where the last line follows from Proposition~\ref{prop:signedcyclecount}.  Also,
\begin{align*}
    \E\left[\overline{W}_{134}\overline{W}_{234}\right] &= \E\Big[\left((1-2p)^2(G_{13}-p)(G_{14}-p) + (1-2p)p(1-p)(G_{13}-p + G_{14}-p)\right) \\
    &\quad \times \left((1-2p)^2(G_{23}-p)(G_{24}-p) + (1-2p)p(1-p)(G_{23}-p + G_{24}-p)\right)\Big] \\
    &= (1-2p)^4\E[(G_{13}-p)(G_{14}-p)(G_{23}-p)(G_{24}-p)] \\
    &= O(p^4\log^2 (1/p)/d)\,,
\end{align*}
again from Proposition~\ref{prop:signedcyclecount}. 
Therefore,
\begin{equation}\label{eq:ubgeneralrggvarwedge}
\begin{aligned}
    \E\left[\left(\sum_{\ell < i < j \in [k^-]} \overline{W}_{\ell ij} \right)^2\right] &= O\left(k^3p^2 + k^4p^3 + \frac{k^4p^4\log^{3/2} (1/p)}{d^{1/2}}  + \frac{k^4p^4\log^2(1/p)}{d} \right) \\
    &= O(k^4p^3)\,,
\end{aligned}
\end{equation}
where the last equality follows from $kp \gtrsim 1$,  $p\log^{3/2}(1/p)/d^{1/2} = o(1),$ and $p\log^2(1/p)/d = o(1)$.
\paragraph*{$\overline{Q}_{\ell qij}$ term.}
Similarly, we have
\begin{align*}
    \E\left[\left(\sum_{\ell < q < i < j \in [k^-]}  \overline{Q}_{\ell qij}\right)^2\right] &= \sum_{\ell < q < i < j \in [k^-]}\sum_{\ell' < q' < i' < j' \in [k^-]}\E\left[\overline{Q}_{\ell qij}\overline{Q}_{\ell'q'i'j'}\right] \\
    &= \sum_{\ell < q < i < j \in [k^-]}\sum_{\ell' < q' < i' < j' \in [k^-]}(\E[Q_{\ell qij}Q_{\ell'q'i'j'}] - (\E[Q_{1234}])^2)\,.
\end{align*}

We use Lemma~\ref{lem:rggfourier}, categorizing based on the union graph $H$ formed by the configuration of $\ell, q, i, j$ and $\ell', q', i', j'$. In the following, we record the upper bound on $|\E[Q_{\ell qij}Q_{\ell'q'i'j'}]|$ based on different values of $(v(H), e(H))$, along with the number of distinct cases with respect to the indices.\footnote{Within this calculation, it is possible that $Q_{\ell qij}Q_{\ell'i'j'q'}$ includes factors that have exponent of 2 (\eg, $(G_{\ell i} - p)^2$), in which case we consider the bound based on its lower-order expansion, namely, $(G_{\ell i}-p)^2 = (1-2p)(G_{\ell i}-p) + p(1-p)$. The case where the cycles with respect to $\ell, q, i, j$ and $\ell', q', i', j'$ do not share a common vertex is not included, as each contribution there is $\E[Q_{\ell qij}Q_{\ell'q'i'j'}] - (\E[Q_{1234}])^2 = 0$.}
\begin{itemize}
    \item $(v(H), e(H)) = (4, 4)$: upper bound of $O(p^4)$ holds; there are $O(k^4)$ such cases.
    \item $(v(H), e(H)) = (4, 6)$: upper bound of $O(p^6(\log d)^3/d)$ holds; there are $O(k^4)$ such cases.
    \item $(v(H), e(H)) = (5, 6)$: upper bound of $O(p^6(\log d)^3/d)$ holds; there are $O(k^5)$ such cases.
    \item $(v(H), e(H)) = (5, 7)$: upper bound of $O(p^7(\log d)^3/d)$ holds; there are $O(k^5)$ such cases.
    \item $(v(H), e(H)) = (6, 7)$: upper bound of $O(p^7(\log d)^{9/2}/d^{3/2})$ holds; there are $O(k^6)$ such cases.
    \item $(v(H), e(H)) = (6, 8)$ (from now on, the two 4-cycles no longer share an edge): upper bound of $O(p^8 (\log d)^{9/2}/d^{3/2})$ holds; there are $O(k^6)$ such cases.
    \item $(v(H), e(H)) = (7, 8)$: upper bound of $O(p^8 (\log d)^{9/2}/d^{3/2})$ holds; there are $O(k^7)$ such cases.
\end{itemize}
Combining these, we have\footnote{Note that $k^6p^7(\log d)^{9/2}/d^{3/2} \ll k^7p^8(\log d)^{9/2}/d^{3/2}$ holds, which is equivalent to $kp \gg 1$. To see this, from the assumption of Theorem~\ref{thm:ubscangeneral} we have $k^3p^3\log^3(1/p)\geq k^2p^3\log^3(1/p) = \Omega(n^{\delta})$ and $\log(1/p) = O(\log n)$, which together implies $kp = \Omega(n^{\delta/3}/\log n)$.}
\begin{equation}\label{eq:ubgeneralrggvarcycle}
    \E\left[\left(\sum_{\ell < q < i < j \in [k^-]} \overline{Q}_{\ell qij} \right)^2\right] = O\left(k^4p^4 + \frac{k^5p^6\log^{3}d}{d}+ \frac{k^7p^8\log^{9/2}d}{d^{3/2}}\right)\,.
\end{equation}
Combining \eqref{eq:ubgeneralrggvarwedge} and \eqref{eq:ubgeneralrggvarcycle} (plus $k^4p^3 \gtrsim k^4p^4$), we obtain \eqref{eq:ubgeneralrggvarub}.

\section{Deferred proofs in Section~\ref{sec:lb}}

\subsection{TV distance between Wishart and spherical Wishart (\prettyref{prop:sphwisandwis})}\label{appendix:sphwisandwis}

    Let $K := \binom{k}{2}$, $Y \in \reals^K$ be a vector with entries $\frac{\iprod{Z_i}{Z_j}}{\norm{Z_i}\norm{Z_j}}, i < j \in [k]$ and $W \in \reals^K$ be a vector with entries $\frac{\iprod{Z_i}{Z_j}}{d}, i < j \in [k]$. 

    The starting point is to express $W$ as a nearly isotropic linear transformation of $Y$. Namely, let $D \in \reals^{k \times k}$ be diagonal with $\norm{Z_i}/\sqrt{d}$ being its $(i, i)$ entry, and $T_D: \reals^K \to \reals^K$ be a linear map defined as $(T_D(x))_{ij} := D_{ii}D_{jj}x_{ij}, i < j \in [k]$. As key facts, one can observe that $W = T_D(Y)$, and $D$ is independent of $Y$ (due to the independence between $Z_i/\norm{Z_i}$ and $\norm{Z_i}$). These together imply that
    \[
    W \overset{\mathrm{d}}{=}\E_D[T_D(Y)] \,.
    \]
    Furthermore, the map $T_D$ is close to an identity map: by defining $\calE$ as $\{D \in \calE\} = \{D_{ii} \in [0.9, 1.1] \text{ for all } i\}$, we have $\Prob(D \notin \calE) = o(1)$ by Lemma~\ref{lem:chisqconc}.
    Then,
    \begin{equation}\label{eq:sphwisandwis2}
    \begin{aligned}
        \dTV(Y, W) &= \dTV(Y, \E_D[T_D(Y)]) \\
        & \overset{(a)}{\leq} \E_D[\dTV(Y, T_D(Y))] \\
        &\overset{(b)}{\leq} \Prob(D \notin \calE) + \E_D[\dTV(Y, T_D(Y))\bm{1}\{D \in \calE\}] \\
        &\overset{}{\leq}  o(1) + \E_D[\dTV(Y, T_D(Y))\bm{1}\{D \in \calE\}] \\
        &\overset{(c)}{\leq} o(1) + \sqrt{\E_D[\dTV(Y, T_D(Y))^2\bm{1}\{D \in \calE\}]} \,,
    \end{aligned}
    \end{equation}
    where $(a)$ holds by Jensen's inequality and the convexity of TV distance; $(b)$ holds by $\dTV \leq 1$; $(c)$ holds by Cauchy-Schwarz inequality. %

    To upper bound $\dTV(Y, T_D(Y))$ appearing in the last line of~\eqref{eq:sphwisandwis2}, we invoke the explicit formulae of their densities. 
    For $y \in \reals^K$, define $\overline{y} \in \reals^{k \times k}$ to be a symmetric matrix such that the diagonals are $0$ and the off-diagonals are $y_{ij}$. Then, the density of $Y$ at $y$ is given as follows\footnote{The distribution is also known in the literature as LKJ distribution \cite{lkj}.} (see, \eg, \cite[Lemma 12]{paquette2021random}):
    \begin{equation}\label{eq:sphwisdensity}
        f(y) = \det(I_k + \overline{y})^{(d-k-1)/2}\bm{1}\{I_k + \overline{y} \succ 0\} / Z(k, d)\,,
    \end{equation}
    where $Z(k, d)$ is a normalization constant. As a linear transformation of $Y$ (with $D \in \calE$), the density $g_D$ of $T_D(Y)$ at $w$ is
    \begin{equation}\label{eq:sphwisandwisvariablechange}
        g_D(w) = f\left(T_D^{-1}(w)\right) \left |\det \frac{\partial T_D^{-1}(w)}{\partial w} \right| = f\left(T_D^{-1}(w)\right) \det(D)^{-(k-1)}\,.
    \end{equation}
    Note that for $D \in \calE$, $T_D^{-1}$ is well-defined.

    In order to prevent the ratio $g_D/f$ being unbounded, we compare the densities in a smaller set $\calE'$ defined as
    \[\calE' := \{y \in \reals^K: \norm{\overline{y}}_{\op} \leq  0.1\}\,.
    \]
    Notably, both $Y \in \calE' \Leftrightarrow T_D(Y) \in T_D(\calE')$ and $Y \in T_D(\calE')$ are high probability events, for any $D \in \calE$. This follows from: $\Prob(Y \notin \calE') \leq 2\exp(-\Omega(k))$ by Lemma~\ref{lem:sphwishartspec} with $k/d=o(1)$, and similarly $\Prob_Y(Y \notin T_D(\calE')) \leq 2\exp(-\Omega(k))$ from $\norm{\overline{T_D^{-1}(Y)}}_{\op} = \norm{D^{-1}\overline{Y}D^{-1}}_{\op} \leq \norm{D^{-1}}_{\op}^2\norm{\overline{Y}}_{\op} \leq (1/0.9)^2 \norm{\overline{Y}}_{\op}$. By letting $r_D(y) := g_D (y)/ f(y)$, 
    \begin{align*}
        \dTV(Y, T_D(Y)) &= \frac{1}{2}\int_{\reals^K}|f(y) - g_D(y)|dy \\
        &\leq \frac{1}{2}\Prob(Y \not \in \calE') + \frac{1}{2}\Prob_Y(Y \not \in T_D(\calE')) + \frac{1}{2}\int_{\reals^K}|f(y) - g_D(y)|\bm{1}\{y \in T_D(\calE')\} dy\\
        &\leq o(1) + \frac{1}{2}\E_{Y}[|r_D(Y)-1|\bm{1}\{Y \in T_D(\calE')\}]\,.
    \end{align*}
    By taking square on both sides with $(a+b)^2 \leq 2(a^2+b^2)$, we have
    \begin{align*}
        \dTV(Y, T_D(Y))^2 
        & \leq o(1) + \frac{1}{2}(\E_Y[|r_D(Y)-1|\bm{1}\{Y \in T_D(\calE')\}])^2 \\
        & = o(1)+ \frac{1}{2}(\E_Y[|r_D(Y)-1|\bm{1}\{Y \in T_D(\calE')\}])^2 \,.
   \end{align*}
    The second term can be further upper bounded as
    \begin{equation}\label{eq:sphwisandwisnonsmooth}
    \begin{aligned}
        \frac{1}{2}(\E_Y[|r_D(Y)-1|\bm{1}\{Y \in T_D(\calE')\}])^2 
        & \overset{(a)}{\leq} \frac{1}{2}\frac{(\E_Y[|r_D(Y)-1|\bm{1}\{Y \in T_D(\calE')\}])^2}{\E_Y[r_D(Y)/3 + 2/3]} \\
        &\overset{(b)}{\leq} \frac{1}{2}\E_Y\left[\frac{(r_D(Y) - 1)^2}{r_D(Y)/3 + 2/3}\bm{1}\{Y \in T_D(\calE')\}\right]\\
        &\overset{(c)}{\leq} \E_Y[(r_D(Y)\log r_D(Y) - r_D(Y)+1)\bm{1}\{Y \in T_D(\calE')\}]\,,
    \end{aligned}
    \end{equation}
    where $(a)$ holds because $r_D(Y) \geq 0$ and $ \E_Y[r_D(Y)/3 + 2/3] \leq 1$; $(b)$ holds by Cauchy-Schwarz inequality; $(c)$ holds by $(x-1)^2/(x/3 + 2/3) \leq 2(x\log x - x + 1)$ for all $x \geq 0$.

    The next step is to calculate the final term in \eqref{eq:sphwisandwisnonsmooth}, which is a KL-type expectation. The calculation turns out to be rather simple, once we can apply the integration by parts formula (i.e., the divergence theorem). However, this requires a differentiable approximation for the (nonsmooth) indicator function. The function is formally defined as follows; the proof is deferred to later.

    \begin{lemma}\label{lem:sphwisandwissmoothfunc}
        There exists a continuously differentiable function $\phi: \reals^{K} \to [0, 1]$ such that the following holds:
        \begin{enumerate}
            \item[(a)] $\phi(y) = 1$ if $\norm{\overline{y}}_{\op} \leq 0.1$.
            \item[(b)] $\phi(y) = 0$ if $\norm{\overline{y}}_{\op} \geq 0.15$.
            \item[(c)] $\norm{\nabla \phi(y)} = O(1)$ for all $0.1 \leq \norm{\overline{y}}_{\op} \leq 0.15$.
        \end{enumerate}
    \end{lemma}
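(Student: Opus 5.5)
The plan is to build $\phi$ as a composition $\phi(y) = \psi(F(y))$. Here $\psi:\reals\to[0,1]$ is a fixed $C^1$ cutoff with $\psi(t)=1$ for $t\le 0.1$, $\psi(t)=0$ for $t\ge 0.15$, and $|\psi'|\le 40$; a standard smooth-step on $[0.1,0.15]$ works. The function $F$ is a $C^1$ surrogate for $\norm{\overline{y}}_{\op}$. Using the operator norm itself fails because it is only Lipschitz, not differentiable where the top eigenvalue is degenerate. I will therefore choose a smooth spectral function that is uniformly comparable to the operator norm and has bounded gradient.

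Concretely, I will take the Schatten-type soft maximum $F(y) := \big(\Tr(\overline{y}^{2q})\big)^{1/(2q)}$ with $q = \lceil \log k\rceil$. This satisfies
\[
\norm{\overline{y}}_{\op}\le F(y)\le k^{1/(2q)}\norm{\overline{y}}_{\op}\le e^{1/2}\norm{\overline{y}}_{\op}.
\]
A factor of $e^{1/2}$ is too lossy for the thresholds $0.1$ and $0.15$, so I will instead take $q = \lceil C\log k\rceil$ with $C$ large enough that $k^{1/(2q)}\le 1.2$. Then I rescale the cutoffs: let $\psi$ equal $1$ for $t\le 0.12$ and $0$ for $t\ge 0.15$, which is valid since $1.2\cdot 0.1=0.12<0.15$.

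With this choice, (a) follows because $\norm{\overline{y}}_{\op}\le 0.1$ gives $F\le 0.12$, hence $\phi=1$. Property (b) follows because $\norm{\overline{y}}_{\op}\ge0.15$ gives $F\ge 0.15$, hence $\phi=0$.

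For differentiability, $\Tr(\overline{y}^{2q})$ is a polynomial in $y$ and $t\mapsto t^{1/(2q)}$ is smooth for $t>0$. The transition region has $F\in[0.12,0.15]$, which is bounded away from $0$, and $\psi$ is constant near $F=0$. Hence $\phi$ is $C^1$ everywhere, with $\phi\equiv1$ on a neighborhood of $\{F=0\}$.

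For (c), the gradient is $\nabla\phi = \psi'(F)\nabla F$. By matrix calculus, $\partial F/\partial y_{ij} = 2\,(\overline{y}^{2q-1})_{ij}/F^{2q-1}$, counting both symmetric entries. Therefore
\[
\norm{\nabla F}^2 \le 4\,\norm{\overline{y}^{2q-1}}_F^2/F^{4q-2} = 4\,\Tr(\overline{y}^{4q-2})/F^{4q-2}.
\]
Since $\Tr(\overline{y}^{4q-2}) = \sum_i\lambda_i^{4q-2} \le \big(\sum_i \lambda_i^{2q}\big)^{(4q-2)/(2q)} = F^{4q-2}$, we get $\norm{\nabla F}\le 2$. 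Combined with $|\psi'|=O(1)$, this gives $\norm{\nabla\phi}=O(1)$ uniformly in $k$.

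The step needing the most care is this gradient bound, because it must be uniform in $k$ and $K$. The $\ell_{2q}$-norm monotonicity argument above handles it. A secondary point is the $C^1$ regularity at $\overline{y}=0$, which is handled by $\phi$ being locally constant there.
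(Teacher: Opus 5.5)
Your proof is correct and follows the same strategy as the paper: compose a one-dimensional $C^1$ cutoff with a smooth spectral surrogate for $\norm{\overline{y}}_{\op}$ whose gradient is bounded uniformly in $k$. The only difference is the choice of surrogate. The paper uses the matrix log-sum-exp $\beta^{-1}\log(\Tr(\exp(\beta\overline{y}))+\Tr(\exp(-\beta\overline{y})))$ with $\beta = 100\log(2k)$; this is smooth everywhere and overestimates the operator norm by at most an additive $0.01$. Your Schatten-$2q$ norm with $q \asymp \log k$ instead gives a multiplicative $1.2$ error and is not differentiable at $\overline{y}=0$, which your local-constancy argument there correctly handles.
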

    Then, by \prettyref{lem:sphwisandwissmoothfunc}, the last term in \eqref{eq:sphwisandwisnonsmooth} can be upper bounded as
    \begin{equation}\label{eq:sphwisandwissmooth}
    \begin{aligned}
        &\E_Y[(r_D(Y)\log r_D(Y) - r_D(Y)+1)\bm{1}\{T_D^{-1}(Y) \in \calE'\}] \\
        &\leq \E_Y[(r_D(Y)\log r_D(Y) - r_D(Y)+1)\phi(T_D^{-1}(Y))]\\
        & \leq \E_Y[(r_D(Y)\log r_D(Y))\phi(T_D^{-1}(Y))] + o(1)\,,
    \end{aligned}   
    \end{equation}
    where the last inequality is from
    \begin{align*}
        \E_Y[\phi(T_D^{-1}(Y))] - \E_Y[\phi(T_D^{-1}(Y))r_D(Y)] &= \E_Y[\phi(T_D^{-1}(Y))] - \E_{Y}[\phi(Y)] \\
        &= \E_Y[1 - \phi(Y)] - \E_{Y}[1 - \phi(T_D^{-1}(Y))] \\
        &\leq\Prob\left(\norm{\overline{Y}}_{\op} \geq 0.1\right)\,,
    \end{align*}
    where the inequality follows from $0 \leq \phi \leq 1$ and $\phi(Y) = 1$ for $\norm{\overline{Y}}_{\op} \leq 0.1$, with the last term being $o(1)$ by Lemma~\ref{lem:sphwishartspec}.
    The last term of \eqref{eq:sphwisandwissmooth} can be expressed as
    \begin{equation}\label{eq:sphwisandwiskl}
    \begin{aligned}
        &\E_Y[(r_D(Y)\log r_D(Y))\phi(T_D^{-1}(Y))] \\
        &= \int_{y \in T_D(2\calE')}g_D(y)\phi(T_D^{-1}(y))\log \frac{g_D(y)}{f(y)} dy \\
        &= \int_{w \in 2\calE'}f(w)\phi(w)\log \frac{f(w)(\det D)^{-(k-1)}}{f(T_D(w))}dw \\
        &= \E_{Y}\left[\left(-\frac{d-k-1}{2}(\log\det(I_k + D\overline{Y}D) - \log \det(I_k + \overline{Y})) - (k-1)\log \det D\right)\phi(Y)\right] \\
        &= \E_{Y}\left[\left(-\frac{d-k-1}{2}\log\det(I_k + A) - (k-1)\log \det D\right)\phi(Y)\right]\,,
    \end{aligned}
    \end{equation}
    where the second equality follows from the change of variables $y = T_D(w)$ with Jacobian $dy = (\det D)^{k-1} dw$ and \eqref{eq:sphwisandwisvariablechange}, 
    the third equality holds by 
    substituting the density form from \eqref{eq:sphwisdensity} and using $\overline{T_D(w)} = D\overline{w}D$,
    and the last equality holds by the identity
    \[
        \det(I_k + A)
        = \det \left((I_k+\overline{Y})^{-1/2}(I_k+D\overline{Y}D)(I_k+\overline{Y})^{-1/2}\right)
        = \frac{\det(I_k+D\overline{Y}D)}{\det(I_k+\overline{Y})}\,,
    \]
    where $A := (I_k + \overline{Y})^{-1/2}(D\overline{Y}D-\overline{Y})(I_k + \overline{Y})^{-1/2}$.
    Next, we approximate the log-determinant term in \eqref{eq:sphwisandwiskl}. We claim that 
    \begin{align}
        |\log \det(I_k + A) - \Tr(A)| \leq  \Tr \left(A^2\right) \,.\label{eq:taylor_trace}
    \end{align}
   By  Taylor expansion, and letting $\{\mu_i\}_{i=1}^k$ denote the eigenvalues of $A$, we have
    \[
    \log \det(I_k + A) = \sum_{i=1}^k \log (1+\mu_i) \,.
    \]
    These eigenvalues are small for $Y \in 2\calE'$, as
    \begin{align*}
        \norm{A}_{\op} &\leq \norm{(I_k + \overline{Y})^{-1/2}}_{\op}^2 \norm{D\overline{Y}D - \overline{Y}}_{\op} \\
        &\leq \norm{(I_k + \overline{Y})^{-1/2}}_{\op}^2 \left(\norm{D-I_k}_{\op}^2 \norm{\overline{Y}}_{\op} + 2\norm{D - I_k}_{\op} \norm{\overline{Y}}_{\op}\right) \\
        &\leq (1-0.2)^{-1}(0.1^2 \times 0.2 + 2 \times 0.1 \times 0.2) \leq 1/2\,,
    \end{align*}
    where the penultimate inequality is from $\norm{\overline{Y}}_{\op} \leq 0.2$ and $D \in \calE$. Then, since $|\log(1+x) - x| \leq x^2$ for $|x| \leq 1/2$,  we have
    \[ \left|\sum_{i=1}^k (\log(1+\mu_i) - \mu_i) \right| \leq \sum_{i=1}^k \mu_i^2\,.
    \]
    Hence, \prettyref{eq:taylor_trace} follows. Applying this to \eqref{eq:sphwisandwiskl}, we obtain
    \begin{equation}\label{eq:sphwisandwisdecomp}
    \begin{aligned}
        &\E_{Y}\left[\left(-\frac{d-k-1}{2}\log\det(I_k + A) - (k-1)\log \det D\right)\phi(Y)\right] \\
        &\leq \underbrace{\E_{Y}\left[\left(-\frac{d-k-1}{2}\Tr(A) -(k-1)\log \det D\right)\phi(Y)\right]}_{\text{(I)}} + \underbrace{\E_{Y}\left[\frac{d-k-1}{2}\Tr(A^2)\phi(Y)\right]}_{\text{(II)}} \,.
    \end{aligned}
    \end{equation}
    For (I) in \eqref{eq:sphwisandwisdecomp}, in the first term we have
    \begin{align}
        -\frac{d-k-1}{2}\E_{Y}[\Tr(A)\phi(Y)] 
        &= -\frac{d-k-1}{2}\E_{Y}[\Tr((I_k+\overline{Y})^{-1}(D\overline{Y}D - \overline{Y})\phi(Y)] \nonumber \\
        &\overset{(a)}{=} -\int_{2\calE'} f(y)\iprod{\frac{\partial \log  f}{\partial y}}{\phi(y)(T_D(y) - y)}_{\reals^K}dy  \nonumber \\
        &= -\int_{2\calE'} \iprod{\frac{\partial f}{\partial y}}{\phi(y)(T_D(y) - y)}_{\reals^K}dy  \nonumber \\
        &\overset{(b)}{=} \int_{2\calE'}\mathrm{div}_y(\phi(y)(T_D(y) - y))f(y)dy \nonumber \\
        &\quad- \int_{\partial (2\calE')} f(y)\iprod{\phi(y)(T_D(y) - y)}{\nu(y)}_{\reals^K}d\Sigma(y)\nonumber \\
        &\overset{(c)}{\leq} \E_{Y}[\phi(Y)]\sum_{i < j \in [k]} (D_{ii}D_{jj} - 1) + o(1) \,. \label{eq:termI_1}
    \end{align}
    Here, $(a)$ follows from
    \begin{align*}
    \left\langle \frac{\partial \log f}{\partial y}(y),\,T_D(y)-y\right\rangle_{\reals^K}
    &= \frac{d-k-1}{2}\left\langle \frac{\partial}{\partial y}\log\det(I_k+\overline Y),\,T_D(y)-y\right\rangle_{\mathbb{R}^K} \\
    &= \frac{d-k-1}{2}\left.\frac{d}{dt}\right|_{t=0}
       \log\det\left(I_k+\overline Y + t(D\overline YD-\overline Y)\right) \\
    &= \frac{d-k-1}{2}\,\Tr\!\left((I_k+\overline Y)^{-1}(D\overline YD-\overline Y)\right)\,, 
    \end{align*}
    $(b)$ is from the divergence theorem for the vector field $v(y) := \phi(y)(T_D(y) - y)$ which vanishes on $\partial(2\calE')$
    with:
    $\mathrm{div}_y v(y) := \sum_{\ell=1}^K \frac{\partial v_\ell}{\partial y_\ell}(y)$,
    $\nu(y)$ being the outward unit normal vector on $\partial(2\calE')$,
    and $d\Sigma(y)$ being the surface measure, i.e.,
    \[
    -\int_{2\calE'} \Big\langle \frac{\partial f}{\partial y}(y), v(y) \Big\rangle_{\mathbb{R}^K} dy
    = \int_{2\calE'} \mathrm{div}_y v(y)\,f(y)\,dy
      - \int_{\partial(2\calE')} f(y)\Big\langle v(y), \nu(y)\Big\rangle_{\mathbb{R}^K} d\Sigma(y)\,,
    \]
    and $(c)$ follows from 
    \begin{align*}
        \mathrm{div}_y(\phi(y)(T_D(y)-y)) &= \sum_{i<j \in [k]} \frac{\partial}{\partial y_{ij}}\left[\phi(y)(T_D(y)-y)_{ij}\right] \\
    &= \sum_{i<j \in [k]} \phi(y) (D_{ii}D_{jj}-1) + \iprod{\nabla \phi(y)}{T_D(y) - y}_{\reals^K}\,,
    \end{align*}
    where $|\E_{Y}[ \iprod{\nabla \phi(Y)}{T_D(Y) - Y}_{\reals^K}]| \leq \sqrt{\E_{Y}[\norm{\nabla \phi(Y)}^2] \E_Y[\norm{T_D(Y) - Y}^2]} \leq O(k/\sqrt{d})\sqrt{\Prob\left(\norm{\overline{Y}}_{\op} \geq 0.1\right)} = o(1)$ by Lemma~\ref{lem:sphwishartspec}.
   Combining \prettyref{eq:termI_1} with $(k-1)\log \det D = \sum_{i < j \in [k]}\log(D_{ii}D_{jj})$, we have 
    \begin{align*}
        \text{(I)} 
        &\leq \E[\phi(Y)] \sum_{i < j \in [k]}(D_{ii}D_{jj} - 1 - \log(D_{ii}D_{jj})) + o(1) \\
        &\leq \sum_{i < j \in [k]}(D_{ii}D_{jj}-1)^2 + o(1)\,,
    \end{align*}
    where the second inequality holds by $D_{ii}D_{jj} \in [0.9^2, 1.1^2]$ for all $i < j \in [k]$ when $D \in \calE$ and $0 \leq x - 1 - \log x \leq (x-1)^2$ for all $x \geq 1/2$.

    For (II) in \eqref{eq:sphwisandwisdecomp}, we have
    \begin{align*}
        &\frac{d-k-1}{2}\E_{Y}[\Tr(A^2)\phi(Y)] \\
        &= \frac{d-k-1}{2}\E_{Y}[\Tr((I_k+\overline{Y})^{-1}(D\overline{Y}D - \overline{Y})(I_k+\overline{Y})^{-1}(D\overline{Y}D - \overline{Y}))\phi(Y)] \\
        &\leq \frac{d-k-1}{2}\E_{Y}\left[\norm{(I_k+\overline{Y})^{-1}(D\overline{Y}D - \overline{Y})}_{\mathrm{F}}^2\phi(Y)\right] \\
        &\leq  \frac{d-k-1}{2}\E_{Y}\left[\norm{(I_k+\overline{Y})^{-1}}_{\op}^2\norm{D\overline{Y}D - \overline{Y}}_{\mathrm{F}}^2\phi(Y)\right] \\
        &\leq 2\sum_{i < j \in [k]} (D_{ii}D_{jj}-1)^2\,,
    \end{align*}
    where the last inequality is from $\norm{(I_k + \overline{Y})^{-1}}_{\op} \leq (1-0.15)^{-1}$ for $Y \in \mathrm{supp}({\phi})$ and $\E_Y[Y_{ij}^2] = 1/d$.
    Summing up our calculations on (I) and (II), for \eqref{eq:sphwisandwisdecomp} we have
    \begin{equation}\label{eq:sphwisandwisklfinal}
        \text{(I)} + \text{(II)} \leq 3\sum_{i<j\in[k]}(D_{ii}D_{jj}-1)^2 + o(1)\,.
    \end{equation}
    Now by tracing back from \eqref{eq:sphwisandwisklfinal} to \eqref{eq:sphwisandwis2}, we have
    \begin{align*}
        \dTV(Y, W) &\leq o(1) + \sqrt{o(1) + 3\E_{D}\left[\sum_{i<j\in[k]}(D_{ii}D_{jj}-1)^2\bm{1}\{D \in \calE\}\right]} \\
        &\leq o(1) + \sqrt{o(1) + 3k^2\E_{D}\left[(D_{11}D_{22}-1)^2\right]} \\
        &= o(1) + \sqrt{o(1) + 6k^2\left(1 - \left(\E\left[\frac{\norm{Z_1}}{\sqrt{d}}\right]\right)^2\right)} \\
        &\leq o(1) + \sqrt{o(1) + O(k^2/d)}\,,
    \end{align*}
    where the equality follows from the independence of $D_{11}$ and $D_{22}$ and $\E[D_{ii}^2]=1$, which implies $\E[(D_{11}D_{22}-1)^2] = \E[D_{11}^2]\E[D_{22}^2] - 2(\E [D_{11}])^2 + 1 = 2(1 - (\E [D_{11}])^2)$; the last inequality is from $\E\left[\norm{Z_1}/\sqrt{d}\right] = \sqrt{2}\Gamma((d+1)/2)/(\Gamma(d/2)\sqrt{d}) = 1-1/(4d) + O(1/d^2)$ \cite[Equation 6.1.47]{abramowitz1965handbook}. The last term is $o(1)$, as desired.

We complete the proof of Proposition~\ref{prop:sphwisandwis} by verifying the technical lemma that was used earlier for approximating the indicator function (Lemma~\ref{lem:sphwisandwissmoothfunc}).
\begin{proof}[Proof of Lemma~\ref{lem:sphwisandwissmoothfunc}]
    First, it is easy to see that there exists a univariate function with such properties. Namely, there exists a continuously differentiable function $g: \reals \to [0, 1]$ such that $g(x) = 1$ for $|x| \leq 0.11$, $g(x) = 0$ for $|x| \geq 0.15$, and $|g'(x)| \leq C_0$ for some constant $C_0 > 0$.

    Thus, for our purpose, it suffices to find an approximation $h(y)$ of the operator norm $\norm{\overline{y}}_{\op}$ and take the composition of the two functions, i.e.,
    \[\phi := g \circ h\,.
    \]
    We use the standard log-sum-exp function, which is smooth:
    \[h(y) := \frac{1}{\beta}\log(\Tr(\exp(\beta \overline{y})) + \Tr(\exp(-\beta \overline{y})))\,,
    \]
    where $\beta := 100\log(2k)$. As a function of eigenvalues, one can check that $\norm{\overline{y}}_{\op} \leq h(y) \leq \norm{\overline{y}}_{\op} + 0.01$. Furthermore, from
    \[\frac{\partial}{\partial y_{ij}}h(y) = 2\left(\frac{\exp(\beta \overline{y}) - \exp(-\beta \overline{y})}{\Tr(\exp(\beta \overline{y})) + \Tr(\exp(-\beta \overline{y}))}\right){_{ij}}, i < j \in [k]\,,
    \]
    we have
    \[\norm{\nabla h(y)} \leq 2\norm{\frac{\exp(\beta \overline{y})}{\Tr(\exp(\beta \overline{y})) + \Tr(\exp(-\beta \overline{y}))}}_{F} + 2\norm{\frac{\exp(-\beta \overline{y})}{\Tr(\exp(\beta \overline{y})) + \Tr(\exp(-\beta \overline{y}))}}_{F} \leq 4\,,
    \]
    because each matrix $\exp(\pm \beta \overline{y})/(\Tr(\exp(\beta \overline{y})) + \Tr(\exp(-\beta \overline{y})))$ is positive semidefinite with trace at most 1.
    To check the properties (a)--(c) in Lemma~\ref{lem:sphwisandwissmoothfunc}: (a) and (b) are straightforward from the definition, and (c) follows from $\norm{\nabla \phi(y)} = \norm{\nabla h(y)}|g'(h(y))| \leq 4C_0$.
\end{proof}

\subsection{Decomposition of likelihood ratio (\prettyref{lem:lbdensef1throughf3})}\label{appendix:lbdensef1throughf3}

The statement is in two parts; the first is to identify the functions $f_i(X)$ for $1 \leq i\leq 3$, and the second is to show their moment bounds as a function of $N$ and $d$. 

\paragraph*{Characterizing $f_i(X)$.}
From the formula for densities $w_{N, d}$ and $m_{N, d}$ (see, e.g., \cite[Section 2]{raczrichey}) we have 
\[
w^2_{N, d}(X) / m^2_{N, d}(X) = \exp(2\alpha_{N, d}(X))\,,
\]
where for eigenvalues $\lambda_1, \dots, \lambda_N$ of $X$,
\begin{align*}
\alpha_{N,d}(X) &:= \frac{1}{2} \sum_{i=1}^{N}
\left((d - N - 1)\log\lambda_i - \lambda_i + \frac{1}{2d}(\lambda_i - d)^2\right) \\
&\quad + \left(\frac{N(N+3)}{4} - \frac{dN}{2}\right)\log 2 + \frac{N}{2}\log \pi  + \frac{N(N+1)}{4}\log d - \sum_{i=1}^N\log \Gamma\left(\frac{d+1-i}{2}\right)\,.
\end{align*}
From Stirling's formula $\log \Gamma(z) = (z-1/2)\log z - z + (1/2)\log(2\pi) + O(1/z)$, the last term can be approximated as
\begin{equation}\label{eq:lbdensewishartgoeexpansion1}
\begin{aligned}
\alpha_{N,d}(X) &= \frac{1}{2} \sum_{i=1}^{N}
\left((d - N - 1)\log\lambda_i - \lambda_i + \frac{1}{2d}(\lambda_i - d)^2\right) \\
&\quad + \frac{N(N+1)}{4} \log d 
       - \frac{1}{2} \sum_{i=1}^{N}(d - i)\log(d + 1 - i)  + \frac{1}{2} \sum_{i=1}^{N}(d + 1 - i) 
       + O\left(\frac{N}{d}\right)\,.
\end{aligned}
\end{equation}
Since $N \leq s \leq \lceil 1.1k \rceil$, the $O(N/d)$ term is of order at most $O(k/d) = o(1)$. From
\[\log(d+1-i) = \log d + \log\left(1-\frac{i-1}{d}\right) = \log d - \frac{i-1}{d} - \frac{(i-1)^2}{2d^2} + O\left(\frac{(i-1)^3}{d^3}\right)
\]
and $0 \leq \sum_{i=1}^N (d-i) (i-1)^3/d^3 \leq \sum_{i=1}^N (i-1)^3/d^2 = O(k^4/d^2) = o(1)$, the expansion \eqref{eq:lbdensewishartgoeexpansion1} can be concisely written as
\begin{equation}\label{eq:lbdensewishartgoeexpansion2}
    \alpha_{N, d}(X) = \sum_{i=1}^N t(\lambda_i) - \frac{N^3}{12d} + o(1)\,,
\end{equation}
where
\[t(x) := \frac{1}{2}\left((d-N -1)\log(x/d)-(x-d)+\frac{1}{2d}(x-d)^2\right)\,.
\]
Now consider the third-order Taylor expansion of $t$ at $x = d$. Then
\begin{align*}
    t(x) &= -\frac{N+1}{2d}(x - d) + \frac{N+1}{4d^2}(x - d)^2 
+ \frac{d - N - 1}{6d^3}(x - d)^3 - \frac{d - N - 1}{8\xi^4}(x - d)^4 \\
&\leq -\frac{N+1}{2d}(x - d) + \frac{N+1}{4d^2}(x - d)^2 
+ \frac{d - N - 1}{6d^3}(x - d)^3\,,
\end{align*}
where $\xi$ is between $x$ and $d$, and $d \gg k^2$ implies $d > N+1$.

From now on, it is more convenient to consider the affine transformation $Y$ of $X$, defined as
\[Y := (X-dI_N)/\sqrt{d}\,.
\]
Then for $X \sim m_{N, d}$, $Y_{ii} \sim \calN(0, 2) , i \in [N]$ and $Y_{ij} = Y_{ji} \sim \calN(0, 1), i < j \in [N]$ independently. Furthermore, $X \in \calE_N \Leftrightarrow Y \in \calF_N$, where
\[\calF_N := \left\{Y \in \reals^{N \times N}: \norm{Y_{Q \times Q}}_{\op} \leq 10\sqrt{|Q|}(1+\sqrt{\log s}) \text{ for all } \emptyset \subsetneq Q \subseteq [N]\right\}\,.
\]
Plugging in the upper bound on $t$ to \eqref{eq:lbdensewishartgoeexpansion2} yields
\begin{equation}\label{eq:lbdensewishartgoeexpansion3}
\begin{aligned}
    \alpha_{N, d}(X) &\leq -\frac{N+1}{2d^{1/2}}  \sum_{i=1}^{N} \mu_i + \frac{N+1}{4d} \sum_{i=1}^{N} \mu_i^2  + \frac{d - N - 1}{6d^{3/2}}  \sum_{i=1}^{N} \mu_i^3 + o(1)\,, \\
    &= -\frac{N+1}{2d^{1/2}}\Tr(Y) + \frac{N+1}{4d} \Tr(Y^2) + \frac{d - N - 1}{6d^{3/2}}\Tr(Y^3) + o(1)\,,
\end{aligned}
\end{equation}
where $\mu_1, \dots, \mu_N$ are eigenvalues of $Y$. From \eqref{eq:lbdensewishartgoeexpansion3} for \eqref{eq:lbdensef1throughf3} we have
\begin{align*}
    &\exp(2\alpha_{N, d}(X))\bm{1}\{X \in \calE_N\} \\
    &= \exp(2\alpha_{N, d}(X))\bm{1}\{Y \in \calF_N\} \\
    &\leq (1+o(1))\underbrace{\exp\left(-\frac{N+1}{d^{1/2}}\Tr(Y)\right)}_{=:f_1(X)}\underbrace{\exp\left(\frac{N+1}{2d} \Tr(Y^2)\right)}_{=:f_2(X)} \underbrace{\exp\left(\frac{d - N - 1}{3d^{3/2}}\Tr(Y^3)\right)\bm{1}\{Y \in \calF_N\}}_{=:f_3(X)}\,.
\end{align*}
Here, we identified $f_i(X)$ for $1 \leq i \leq 3$ as exponentiated traces of powers of $Y$.

\paragraph*{Calculating expectations.} Given their definitions, we show that $f_1(X)$ through $f_3(X)$ has expectation (over $X \sim m_{N, d}$) at most $\exp(O(N^3/d))$. Crucial to those calculations is the concentration of traces of GOE (Lemma~\ref{lem:goecubetraceconc}).

\begin{itemize}
    \item For $f_1(X)$,
    we have $\Tr(Y) \sim \calN(0, 2N)$. Thus from the moment generating function of normal distribution, 
    \[\E_{X \sim m_{N, d}}[f_1(X)^3] = \exp\left(\frac{9(N+1)^2}{2d} \times 2N\right) = \exp\left(O\left(\frac{N^3}{d}\right)\right)\,.
    \]
    \item For $f_2(X)$, 
    $\Tr(Y^2)$ concentrates around $N(N+1)$ with subexponential tail (with norm $\norm{\cdot}_{\psi_1} = O(N)$)  
    by Lemma~\ref{lem:goecubetraceconc}. By standard conversion from subexponential concentration to moment generating function \cite[Proposition 2.7.1]{hdpbook},
    \[\E_{X \sim m_{N, d}}[f_2(X)^3] \leq \exp\left(\frac{3N(N+1)^2}{2d} + \left(\frac{3(N+1)}{2d}\right)^2 \times O(N^2)\right) = \exp\left(O\left(\frac{N^3}{d}\right)\right)\,.
    \]
    Note that here we used $N^4/d^2 = O(k^4/d^2) = o(1)$ and $N^4/d^2 \leq (N^3/d) \times O(k/d) = o(N^3/d)$.
    \item For $f_3(X)$,  
    first from $\exp(x) \leq 1 + x + (x^2/2)\exp(|x|)$ we have
        \begin{equation}\label{eq:lbdensef3}
    \begin{aligned}
        &\E_{X \sim m_{N, d}} \left[ f_3(X)^3 \right] \\
        &= \E_{X \sim m_{N, d}} \left[ \exp\left(\frac{d - N - 1}{d^{3/2}}\Tr(Y^3)\right) \bm{1}\{Y \in \calF_N\} \right] \\
        &\leq 1 + \frac{1}{2}\E_{X \sim m_{N, d}} \left[ \left(\frac{(d-N-1)\Tr(Y^3)}{d^{3/2}}\right)^2 \exp\left(\frac{(d-N-1)|\Tr(Y^3)|}{d^{3/2}}\right)\bm{1}\{Y \in \calF_N\} \right] \\
        &\leq 1 + \frac{1}{2d}\sqrt{\E_{X \sim m_{N, d}} \left[ \Tr(Y^3)^4 \right]} \sqrt{\E_{X \sim m_{N, d}} \left[ \exp\left(\frac{2|\Tr(Y^3)|}{d^{1/2}}\right)\bm{1}\{Y \in \calF_N\} \right]} \\
        &\leq 1 + O\left(\frac{N^3}{d}\right) \times \sqrt{\E_{X \sim m_{N, d}} \left[ \exp\left(\frac{2|\Tr(Y^3)|}{d^{1/2}}\right)\bm{1}\{Y \in \calF_N\} \right]}\,.
    \end{aligned}
\end{equation}

    Here, for the second inequality we used that $\Tr(Y^3)\bm{1}\{Y \in \calF_N\}$ has mean 0, as each $Y_{ij}$ is symmetric around 0 (hence also after symmetric truncation $Y \in \calF_N$); the third inequality is by Cauchy-Schwarz, with $0 \leq (d-N-1)/d^{3/2} \leq 1/d^{1/2}$; the final inequality is from Gaussian hypercontractivity $\sqrt{\E[(\Tr(Y^3))^4]} \lesssim \E[(\Tr(Y^3))^2] = O(N^3)$. 

    Note that for $Y \in \calF_N$, we have
    \[|\Tr(Y^3)| \leq N\norm{Y}_{\op}^3 \leq N \times (10\sqrt{N}(1+\sqrt{\log s}))^3 \leq  C_0N^{5/2}\log^{3/2}s\,,
    \]
    where the first inequality follows from $|\sum_{i=1}^N \mu_i^3| \leq N \max_{1 \leq i \leq N} |\mu_i|^3$, and the last inequality holds for some constant $C_0 > 0$.
    Thus,
    \begin{equation}\label{eq:lbdensef3terms1}
    \begin{aligned}
        &\E_{X \sim m_{N, d}} 
        \left[\exp((2/d^{1/2})|\Tr(Y^3)|)\bm{1}\{Y \in \calF\} \right]\\
        &\leq \int_{0}^{\exp(2C_0N^{5/2}\log^{3/2} s/d^{1/2})} \Prob_{X \sim m_{N, d}}(\exp((2/d^{1/2})|\Tr(Y^3)|) \geq u)du \\
        &\leq 1 + 2\sqrt{\frac{N^3}{d}} \int_{0}^{C_0N\log^{3/2} s} \Prob_{X \sim m_{N, d}}(|\Tr(Y^3)| \geq N^{3/2}u)\exp\left(2u\sqrt{\frac{N^3}{d}}\right)du \\
        &\leq 1 + 4\sqrt{\frac{N^3}{d}} \int_0^{N^{3/4}} \exp\left(-\frac{1}{C_{\ref{lem:goecubetraceconc}}}{u^2} + 2u\sqrt{\frac{N^3}{d}}\right)du \\
        &\quad + 4\sqrt{\frac{N^3}{d}}\int_{N^{3/4}}^{C_0N\log^{3/2} s}\exp\left(-\frac{Nu^{2/3}}{C_{\ref{lem:goecubetraceconc}}} + 2u\sqrt{\frac{N^3}{d}}\right)du \\
        &\leq 1 + O\left(\sqrt{\frac{N^3}{d}}\right)\exp\left(O\left(\frac{N^3}{d}\right)\right) + 4\sqrt{\frac{N^3}{d}}\int_{N^{3/4}}^{C_0N\log^{3/2} s}\exp\left(-\frac{Nu^{2/3}}{C_{\ref{lem:goecubetraceconc}}} + 2u\sqrt{\frac{N^3}{d}}\right)du\,,
    \end{aligned}
    \end{equation}
    where the second inequality follows from the change of variables $v = \exp((2N^{3/2}/d^{1/2})u)$, and the third inequality holds by Corollary~\ref{lem:goecubetraceconc} (splitting the integral at $u=N^{3/4}$, where the tail behavior changes).
    Here, note that the function $u \mapsto \exp(-Nu^{2/3}/C_{\ref{lem:goecubetraceconc}} + 2u\sqrt{N^3/d})$ for $u \geq 0$ is decreasing for $u \leq  (\sqrt{d^3/N^3})/(27C_{\ref{lem:goecubetraceconc}}^3)$ and increasing afterwards. Hence, the maximum of this function over $u \in [N^{3/4}, C_0N\log^{3/2} s]$ is obtained at the boundary which has value 
    \begin{align*}
        &\exp(-N^{3/2}/C_{\ref{lem:goecubetraceconc}} + 2N^{9/4}/\sqrt{d}) \vee \exp(-(C_0^{2/3}/C_{\ref{lem:goecubetraceconc}})N^{5/3} \log s + 2C_0N^{5/2} \log^{3/2} s/\sqrt{d}) \\
        &= \exp\left(-\frac{N^{3/2}}{C_{\ref{lem:goecubetraceconc}}}\left(1 - O\left(\frac{N^{3/4}}{\sqrt{d}}\right)\right)\right) \vee \exp\left(-\left(\frac{C_0^{2/3}}{C_{\ref{lem:goecubetraceconc}}}\right)N^{5/3} \log s\left(1 - O\left(\frac{N^{5/6}\log^{1/2}s}{\sqrt{d}
        }\right)\right)\right)\\
        & \le \exp\left(-\frac{N^{3/2}}{2C_{\ref{lem:goecubetraceconc}}}\right) \vee \exp\left(-\frac{C_0^{2/3}}{2C_{\ref{lem:goecubetraceconc}}}N^{5/3}\log s\right)\,,
    \end{align*}
    where the inequality holds for all sufficiently large $k$, as $N = O(k)$ implies $N^{3/4}/\sqrt{d} = o(1)$ and $N^{5/6}\log^{1/2} s/\sqrt{d} = o(1)$ for $d \gg k^2$. 
    Then, it follows 
    \begin{equation}\label{eq:lbdensef3terms2}
    \begin{aligned}
        &\int_{N^{3/4}}^{C_0N\log^{3/2} s}\exp\left(-Nu^{2/3} + 2u\sqrt{\frac{N^3}{d}}\right)du \\
        &\leq C_0N\log^{3/2}s \left(\exp\left(-\frac{N^{3/2}}{2C_{\ref{lem:goecubetraceconc}}}\right) \vee \exp\left(-\frac{C_0^{2/3}}{2C_{\ref{lem:goecubetraceconc}}}N^{5/3}\log s\right)\right) = O\left(\frac{\log^{3/2}s}{N}\right)\,.
    \end{aligned}
    \end{equation}
    By  \eqref{eq:lbdensef3}, \eqref{eq:lbdensef3terms1} and \eqref{eq:lbdensef3terms2}, we obtain
    \begin{align*}
        \E_{X \sim m_{N, d}}[f_3(X)^3] 
        &\leq 1 + O\left(\frac{N^3}{d}\right)\left(1 + O\left(\sqrt{\frac{N^3}{d}}\right)\exp\left(O\left(\frac{N^3}{d}\right)\right) + O\left(\sqrt{\frac{N}{d}}\log^{3/2}k\right)\right) \\
        &\leq 1 + O\left(\frac{N^3}{d}\right)\left(1 + o(1) +  O\left(\sqrt{\frac{N^3}{d}}\right)\exp\left(O\left(\frac{N^3}{d}\right)\right)\right) \\
        &\leq \exp\left(O\left(\frac{N^3}{d}\right)\right)\,,
    \end{align*}
    where the second inequality is from $O(\sqrt{N/d}\log^{3/2}k) = o(1)$ given that $N = O(k)$ and $ d \gg k^2$, and the last inequality is from $1+x(2+\sqrt{x}e^{x}) \leq 1+2x+xe^{2x} \leq 1 + 2x + e^{2x}(e^x-1) \leq e^{3x}$ for all $x \geq 0$.
\end{itemize}

\subsection{Upper bound on cubed hypergeometric (\prettyref{lem:hypergeomcubemgf})}\label{appendix:hypergeomcubemgf}
    Let $C_0> 0$ be arbitrary and consider any fixed $s \in [k^-, k^+]$. Then the conditional distribution of $N = |S \cap S'|$ given $|S| = |S'| = s$ is $\mathrm{Hypergeom}(n, s, s)$,
    which is stochastically dominated by $M \sim \Binom(s, s/(n-s))$. Then
    \begin{align*}
        \E_{N \big||S| = |S'| = s} [\exp(C_0N^3/d)] &\leq \E [\exp(C_0M^3/d)] \\
        &\leq \exp\left(64C_0\frac{s^6}{n^3d}\right) + \int_{64s^6/n^3}^{s^3} \Prob(M \geq u^{1/3})\frac{C_0}{d}\exp\left(\frac{C_0u}{d}\right)du \\
        &\leq \exp\left(64C_0\frac{s^6}{n^3d}\right) + \int_{64s^6/n^3}^{s^3} \Prob(M - \E [M]\geq (1/2)u^{1/3})\frac{C_0}{d}\exp\left(\frac{C_0u}{d}\right)du \\
        &\leq \exp\left(64C_0\frac{s^6}{n^3d}\right) + \frac{C_0}{d}\int_{64s^6/n^3}^{s^3}\exp(-u^{1/3}/100 + C_0u/d)du\,,
    \end{align*}
    where the second and third inequalities are from 
    $\E [M] = s^2/(n-s) \leq 2s^2/n \leq (1/2)u^{1/3}$ for $u \in [64s^6/n^3, s^3]$ and Chernoff bound. For all such $u$, since $C_0u/d \leq C_0u^{1/3}{(k^+)^2/d}$ where $(k^+)^2/d = o(1)$, there exists a constant $C_1 > 0$ that only depends on $C_0$ such that 
    for all sufficiently large $k$,
    \[\exp(-u^{1/3}/100 + C_0u/d) \leq \exp(-u^{1/3}/C_1)\,.
    \]
    Thus we have
    \begin{align*}
        &\E_{N \big| |S| = |S'| = s} [\exp(C_0N^3/d)] \\
        &\leq \exp\left(64C_0\frac{s^6}{n^3d}\right) + \frac{C_0}{d}\int_{64s^6/n^3}^{s^3}\exp(-u^{1/3}/C_1)du \\
        &\leq \exp\left(64C_0\frac{s^6}{n^3d}\right) + \frac{3C_0}{d}(C_1(64s^6/n^3)^{2/3} + 2C_1^2(64s^6/n^3)^{1/3} + 2C_1^3)\exp(-4s^2/(nC_1)) \\
        &\leq \exp\left(64C_0\frac{(k^+)^6}{n^3d}\right) + 3C_0\left(C_1\left(\frac{64(k^+)^6}{n^3d}\right)^{2/3} + 2C_1^2 \left(\frac{64(k^+)^6}{n^3d}\right)^{1/3} +  \frac{2C_1^3}{d} \right) \\
        &= 1 + o(1)\,,
    \end{align*}
    as desired. Here, the second inequality is from $\int_b^{\infty} \exp(-u^{1/3}/a)du = 3(ab^{2/3} + 2a^2b^{1/3} + 2a^3)\exp(-b^{1/3}/a)$, and the third inequality is from $(1/d)x^{2/3}\leq (x/d)^{2/3}, (1/d)x^{1/3} \leq (x/d)^{1/3}, \exp(-4s^2/(nC_1))\leq 1$ with $s \leq k^+ = \lceil 1.1k \rceil$. The final line  follows from the assumption $d \gg k^2 \vee k^6/n^3$.

\subsection{Typical behavior of neighborhood distributions (Lemma~\ref{lem:lbgeneralhighprobevent})}\label{appendix:lbgeneralhighprobevent} 
Recall that $U = (U_1, \dots, U_{n-1}), V = (V_1, \dots, V_{n-1})$. 
The probability $\Prob((\Gamma, U, V) \in \calE^c)$ (regardless of $\Gamma \sim \calP_n (\cdot | X_{[n-1]})$ or $\calQ_n$) is upper bounded by
\begin{align*}
    &\underbrace{\Prob_{X_{[n-1]}, \Gamma}\left(\sum_{i=1}^{n-1}V_i > 2k\right)}_{\text{(I)}} + \underbrace{\Prob_{X_{[n-1]}, \Gamma}\left(\sum_{i \in [n-1]: V_i = 1} \Gamma_i> 4kp, \sum_{i=1}^{n-1}V_i \leq 2k\right)}_{\text{(II)}} \\
    &\quad+ \underbrace{\Prob_{X_{[n-1]}, \Gamma}\left(|\Delta(\Gamma, U, V)| > \frac{k}{n}, \sum_{i=1}^{n-1}V_i \leq 2k, \sum_{i \in [n-1]: V_i = 1} \Gamma_i \leq 4kp\right)}_{\text{(III)}}\,.
\end{align*}
We show that each term is $o(1/(n^2\log(1/p)))$; the following analysis for the first two terms (I) and (II) apply for both $\Gamma \sim \calP_n(\cdot | X_{[n-1]})$ and $\Gamma \sim \calQ_n$.

Since 
$\sum_{i=1}^{n-1}V_i \sim \Binom(n-1, k/n)$,
(I) is at most $\exp(-k/3)$ by standard Chernoff bound. This is $o(1/(n^2\log(1/p)))$ as long as $k \gtrsim \log n$.

For (II), first note that for any 
$V$,
$\Gamma$ follows $\mathrm{Ber}(p)^{\otimes (n-1)}$.
Thus %
$\sum_{i \in [n-1]: V_i = 1} \Gamma_i \sim \Binom(\sum_{i=1}^{n-1}V_i, p)$ conditioned on $V$,
which is stochastically dominated by $\Binom(2k, p)$ under the event $\{\sum_{i=1}^{n-1}V_i \leq 2k\}$. Thus, (II) is at most $\exp(-2kp/3)$ again by Chernoff bound; this is $o(1/(n^2\log(1/p)))$ as long as $kp \gtrsim \log n$.

For (III), first consider the case where $\Gamma \sim \calQ_n$. Let $\nu$ be the number of $i \in [n-1]$ with $V_i = 1$ (i.e., vertices included in the community), and $i_1 < \dots < i_{\nu}$ be such indices. Then by Lemma~\ref{lem:capsandanticaps}, the probability conditioned on $\Gamma$ and $V$ is at most
\[\exp\left(-\frac{1}{C_{\ref{lem:capsandanticaps}}}\frac{d}{M \log(1/p)\log(d/p)} + C_{\ref{lem:capsandanticaps}} \log (3k)\right)\,,  
\]
where $M = M(\nu, p, (\Gamma_{i_1}, \dots, \Gamma_{i_\nu}))= 
((\sum_{j=1}^\nu \Gamma_{i_j})\log(1/p) + \nu p + \log(d/p))(\sum_{j=1}^\nu \Gamma_{i_j} + \nu p) \leq C_0k^2p^2\log(1/p)\log(d/p)$, for some constant $C_0 > 0$ from 
$\nu \leq 2k, \sum_{j=1}^\nu  \Gamma_{i_j} \leq 4kp$ and $kp \geq 1$. Thus, (III) is at most
\begin{equation}\label{eq:lbgeneralconcunderQ}
\exp\left(-\frac{1}{C_{\ref{lem:capsandanticaps}}}\frac{d}{C_0(kp\log(1/p)\log(d/p))^2} + C_{\ref{lem:capsandanticaps}} \log(3k)\right)\,,
\end{equation}
which is $o(1/(n^2\log(1/p)))$ as long as $d \gtrsim (kp\log(1/p)\log(d/p))^2 \log n$.

Now we consider (III) with $\Gamma \sim \calP_n(\cdot | X_{[n-1]})$.
First, (III) conditioned on $V$ (with $\nu \leq 2k$) is equal to

\begin{equation}\label{eq:lbgeneralterm3wts}
\begin{aligned}
    &\Prob_{U_{i_1}, \dots, U_{i_\nu}, (\Gamma_{i_1}, \dots, \Gamma_{i_\nu}) \sim \widetilde{\calP}_{\nu+1}(\cdot | U_{i_1}, \dots, U_{i_\nu})}\left(|\Delta(\Gamma, U, V)| > \frac{k}{n}, \sum_{j = 1}^{\nu} \Gamma_{i_j} \leq 4kp\right) \\
    &= \E_{U_{i_1}, \dots, U_{i_\nu}, (\Gamma_{i_1}, \dots, \Gamma_{i_\nu}) \sim \calQ_{\nu+1}}\left[\bm{1}\left\{|\Delta(\Gamma, U, V)| > \frac{k}{n}, \sum_{j = 1}^{\nu} \Gamma_{i_j} \leq 4kp\right\} \frac{\widetilde{\calP}_{\nu+1}(\Gamma_{i_1}, \dots, \Gamma_{i_{\nu}}|U_{i_1}, \dots, U_{i_{\nu}})}{p^{\sum_{j=1}^\nu \Gamma_{i_j}}(1-p)^{\nu - \sum_{j=1}^\nu \Gamma_{i_j}}}\right] \\
    &\leq \left(\E_{U_{i_1}, \dots, U_{i_\nu}, (\Gamma_{i_1}, \dots, \Gamma_{i_\nu}) \sim \calQ_{\nu + 1}}\left[\bm{1}\left\{|\Delta(\Gamma, U, V)| > \frac{k}{n}, \sum_{j = 1}^{\nu} \Gamma_{i_j} \leq 4kp\right\}\right]\right)^{1/2}
     \\
     &\quad \times\left(\E_{U_{i_1}, \dots, U_{i_\nu}, (\Gamma_{i_1}, \dots, \Gamma_{i_\nu}) \sim \calQ_{\nu + 1}}\left[\frac{\widetilde{\calP}_{\nu+1}^2(\Gamma_{i_1}, \dots, \Gamma_{i_{\nu}}|U_{i_1}, \dots, U_{i_{\nu}})}{p^{2\sum_{j=1}^\nu \Gamma_{i_j}}(1-p)^{2(\nu - \sum_{j=1}^\nu \Gamma_{i_j})}}\bm{1}\left\{\sum_{j = 1}^{\nu} \Gamma_{i_j} \leq 4kp\right\}\right]\right)^{1/2} \\
    &\leq \exp\left(-\frac{1}{2C_{\ref{lem:capsandanticaps}}}\frac{d}{C_0(kp\log(1/p)\log(d/p))^2} + \frac{C_{\ref{lem:capsandanticaps}}}{2} \log(3k)\right) \\
    &\quad \times \left(\E_{U_{i_1}, \dots, U_{i_\nu}, (\Gamma_{i_1}, \dots, \Gamma_{i_\nu}) \sim \calQ_{\nu + 1}}\left[\frac{\widetilde{\calP}_{\nu+1}^2(\Gamma_{i_1}, \dots, \Gamma_{i_{\nu}}|U_{i_1}, \dots, U_{i_{\nu}})}{p^{2\sum_{j=1}^\nu \Gamma_{i_j}}(1-p)^{2(\nu - \sum_{j=1}^\nu \Gamma_{i_j})}}\bm{1}\left\{\sum_{j = 1}^{\nu} \Gamma_{i_j} \leq 4kp\right\}\right]\right)^{1/2}\,,
\end{aligned}
\end{equation}
where the first inequality is from Cauchy-Schwarz, and the second inequality is from~\eqref{eq:lbgeneralconcunderQ} with $kp \geq 1$.

As the notation $i_1, \dots, i_{\nu}$ reads complicated, without loss of generality we change those to simpler indices. Note that 
\[\frac{\widetilde{\calP}_{\nu+1}^2(\Gamma_{i_1}, \dots, \Gamma_{i_{\nu}}|U_{i_1}, \dots, U_{i_{\nu}})}{p^{2\sum_{j=1}^\nu \Gamma_{i_j}}(1-p)^{2(\nu - \sum_{j=1}^\nu \Gamma_{i_j})}}\bm{1}\left\{\sum_{j = 1}^{\nu} \Gamma_{i_j} \leq 4kp\right\}
\]
over $U_{i_1}, \dots, U_{i_\nu}, (\Gamma_{i_1}, \dots, \Gamma_{i_{\nu}}) \sim \calQ_{\nu+1}$ (with $i_1 < \dots < i_{\nu}$ being the indices in $[n-1]$ with $V_i = 1$) has the same distribution as 
\[\frac{\widetilde{\calP}_{\nu+1}^2(\Gamma_{[\nu]} | U_{[\nu]})}{\calQ_{\nu+1}^2(\Gamma_{[\nu]})}\bm{1}\left\{\sum_{i=1}^\nu \Gamma_i \leq 4kp\right\}
\]
over $U_{[\nu]}, \Gamma_{[\nu]} \sim \calQ_{\nu + 1}$ (with $[\nu]$ being the indices in $[n-1]$ with $V_i = 1$) due to symmetry; recall that $\Gamma_{[\nu]} = (\Gamma_1, \dots, \Gamma_\nu)$ and $U_{[\nu]} = (U_1, \dots, U_\nu)$. Hence for the second factor in the last term of~\eqref{eq:lbgeneralterm3wts}, with the corresponding change in $V$ we have
\begin{align*}
    &\E_{U_{i_1}, \dots, U_{i_\nu}, (\Gamma_{i_1}, \dots, \Gamma_{i_\nu}) \sim \calQ_{\nu + 1}}\left[\frac{\widetilde{\calP}_{\nu+1}^2(\Gamma_{i_1}, \dots, \Gamma_{i_{\nu}}|U_{i_1}, \dots, U_{i_{\nu}})}{p^{2\sum_{j=1}^\nu \Gamma_{i_j}}(1-p)^{2(\nu - \sum_{j=1}^\nu \Gamma_{i_j})}}\bm{1}\left\{\sum_{j = 1}^{\nu} \Gamma_{i_j} \leq 4kp\right\}\right] \\
    &= \E_{U_{[\nu]}, \Gamma_{[\nu]} \sim \calQ_{\nu+1}}\left[\frac{\widetilde{\calP}_{\nu+1}^2(\Gamma_{[\nu]} | U_{[\nu]})}{\calQ_{\nu+1}^2(\Gamma_{[\nu]})}\bm{1}\left\{\sum_{i=1}^\nu \Gamma_i \leq 4kp\right\}\right]\,.
\end{align*}
Now the main technical portion lies in controlling the term
\begin{equation}\label{eq:lbgeneralsecondmoment}
    \E_{U_{[\nu]}, \Gamma_{[\nu]} \sim \calQ_{\nu+1}}\left[\frac{\widetilde{\calP}_{\nu+1}^2(\Gamma_{[\nu]} | U_{[\nu]})}{\calQ_{\nu+1}^2(\Gamma_{[\nu]})}\bm{1}\left\{\sum_{i=1}^\nu \Gamma_i \leq 4kp\right\}\right]\,.
\end{equation}
For the rest of the proof, we will show that this term is $O(1)$. If this holds, then the last term in~\eqref{eq:lbgeneralterm3wts} is indeed $o(1/(n^2\log(1/p)))$ for $d \gtrsim (kp\log(1/p)\log(d/p))^2 \log n$.

We mention that a na\"ive application of Lemma~\ref{lem:capsandanticaps} is not sufficient for controlling \eqref{eq:lbgeneralsecondmoment} within our desired regime. In particular, the squared likelihood ratio $\widetilde{\calP}^2_{\nu+1} / \calQ^2_{\nu+1}$ can be as large as $\exp(\widetilde{\Omega}(kp))$, and dominating this with the bound $\exp(-d/\widetilde{\Omega}(k^2p^2))$ in Lemma~\ref{lem:capsandanticaps} and \eqref{eq:lbgeneralterm3wts} requires $d = \widetilde{\Omega}(k^3p^3)$. Our strategy is to directly bound the second moment of the likelihood ratio, instead of controlling \eqref{eq:lbgeneralsecondmoment} from the concentration. This is done by using the martingale structure of $\widetilde{\calP}_{\nu+1} / \calQ_{\nu+1}$, where we recursively bound the current step's moment with the previous step's moment bound and martingale difference's moment bound (see, e.g., \eqref{eq:lbgeneralcaprecur}). By iterating this, we show that \eqref{eq:lbgeneralsecondmoment} is at most $\exp(\widetilde{O}(k^2p^2/d))$ which is $O(1)$ for $d = \widetilde{\Omega}(k^2p^2)$.

From now on, we further condition on $\Gamma_{[\nu]}$ (with $\sum_{i=1}^\nu \Gamma_i \leq 4kp$). As notation, let $\rho$ be the uniform probability measure on $\mathbb{S}^{d-1}$. Also, for any distribution $\mu$ on $\mathbb{S}^{d-1}$, denote $\norm{\mu}_{\infty} := \sup_x |(d\mu / d\rho)(x)|$. In particular, if $\mu = \rho (\cdot \cap L) / \rho(L)$ for a measurable set $L \subseteq \mathbb{S}^{d-1}$, we have $\norm{\mu}_{\infty} = 1 / \rho(L)$. 

We begin with the following lemma, which will later characterize the concentration (and hence control the moment) of each ``martingale difference'', as will be defined shortly.

\begin{lemma}[{\cite[Corollary 4.9]{liu2022rgg}}]\label{lem:onecapconcsmallerthan1}
    There exists a constant $C_{\ref{lem:onecapconcsmallerthan1}} > 0$ such that the following holds: 
    let $\mu$ be a distribution on $\mathbb{S}^{d-1}$, and for $z \in \mathbb{S}^{d-1}$ let $X(z) := \Prob_{x \sim \mu}(\iprod{x}{z} > \tau(p, d))$. For any $0 \leq s \leq 1$,
    \[\Prob_{z \sim \rho}(|X(z) - p| > ps) \leq 2\exp\left(-\frac{ds^2}{C_{\ref{lem:onecapconcsmallerthan1}} \log(1/p)\log(d/p)(\log \norm{\mu}_{\infty} + \log(d/p))}\right).
    \]
\end{lemma}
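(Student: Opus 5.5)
The plan is to reduce the tail bound to a statement about two measures with bounded densities, and then prove that statement by smoothing the threshold and using concentration on the sphere.

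\textbf{Step 1 (duality).} Fix $0\le s\le 1$ and let $A:=\{z:X(z)>p(1+s)\}$; the lower tail is handled the same way. Suppose $\rho(A)=:e^{-a}$ and let $\nu:=\rho(\cdot\cap A)/\rho(A)$, so that $\norm{\nu}_\infty=e^{a}$. By Fubini,
\[
\E_{z\sim\nu}[X(z)]=\Prob_{x\sim\mu,\,z\sim\nu}\big(\iprod{x}{z}>\tau(p,d)\big)>p(1+s).
\]
It therefore suffices to prove a two-measure estimate: if $\mu,\nu$ have densities bounded by $M_1,M_2$, then the probability above lies in $p(1\pm s)$ whenever
\[
ds^2\gtrsim \log(1/p)\log(d/p)\big(\log M_1+\log M_2+\log(d/p)\big).
\]
Applying this with $M_2=e^{a}$ and $M_1=\norm{\mu}_\infty$, and solving for $a$, gives exactly the claimed exponential bound on $\rho(A)$. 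When $a\lesssim\log\norm{\mu}_\infty+\log(d/p)$ the term $\log M_2$ is absorbed into the other terms. When $a$ is larger, it appears linearly on the right-hand side, and the contradiction argument requires $a\gtrsim ds^2/(\log(1/p)\log(d/p))$.

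\textbf{Step 2 (smoothing the threshold).} Replace the indicator $\bm{1}\{t>\tau\}$ by smooth functions $\phi_\pm$ that sandwich it. The transition window should have width $\eta\approx s/(\sqrt{d}\,\sqrt{\log(1/p)})$, so that $\E_{\rho\otimes\rho}[\phi_\pm(\iprod{x}{z})]=p(1\pm s/2)$. This uses the sphere-marginal tail estimates from Sodin already used in the proof of Theorem~\ref{thm:lbdense}. There, the density of $\iprod{x}{z}$ near $\tau$ is of order $p\,d\tau\approx p\sqrt{d\log(1/p)}$, which fixes the choice of $\eta$. It then remains to show that $\E_{\mu\otimes\nu}[\phi_\pm]$ is within $ps/2$ of its value under $\rho\otimes\rho$.

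\textbf{Step 3 (transfer from $\rho$ to $\mu,\nu$).} Let $h(x):=\E_{z\sim\nu}\phi(\iprod{x}{z})$ and use the transport/log-Sobolev inequality on $\mathbb{S}^{d-1}$: for any $L$-Lipschitz $g$,
\[
|\E_\mu g-\E_\rho g|\le L\sqrt{2\log\norm{\mu}_\infty/d}.
\]
Apply this in $x$ first, and then symmetrically in $z$ for $\E_\rho h$ against $\nu$. The needed Lipschitz bound is
\[
\nabla h(x)=\E_\nu\big[\phi'(\iprod{x}{z})\,z\big],
\]
and the naive bound on its norm, of order $p/\eta$, loses too much. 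The refinement is to bound $\sup_{u\perp x}\E_\nu[|\phi'|\,|\iprod{z}{u}|]$ instead. Since $\nu$ has density at most $M_2$, the projection $\iprod{z}{u}$ is at most $\sqrt{(\log M_2+\log(d/p))/d}$ outside an event of $\nu$-mass at most $p/d$ (this is where the $\log(d/p)$ factor enters). On the band where $\phi'\neq0$, which has $\rho$-mass about $p\,\eta\sqrt{d\log(1/p)}$, this gives an effective Lipschitz constant of order
\[
p\sqrt{\log(1/p)}\sqrt{\log M_2+\log(d/p)}.
\]
Plugging this into the transport inequality should give a deviation of order $p\sqrt{\log(1/p)\,(\log M+\log(d/p))\log(d/p)/d}$, which is at most $ps/2$ under the stated condition.

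\textbf{Main obstacle.} The difficult step is this gradient estimate in Step 3. A crude Lipschitz bound for the smoothed cap function costs an extra factor of $s$ or $\sqrt{d}$. Getting the right dependence requires exploiting that $\phi'$ is supported on a thin band and that bounded-density measures have small projections onto any fixed direction. If the one-shot transport argument does not close, the fallback is to prove the estimate by moments: expand $\E_z[(X(z)-p)^{q}]$ as an average over $x_1,\dots,x_q\sim\mu$ of the joint cap correlations, and bound it using the Gegenbauer expansion from Proposition~\ref{prop:signedcyclecount} with $q\approx ds^2/(\log(1/p)\log(d/p)\log\norm{\mu}_\infty)$.
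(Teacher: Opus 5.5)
You have a genuine gap: Step~3's Lipschitz estimate is false, and the two-measure estimate Step~1 reduces to is false as stated.

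\textbf{The two-measure estimate.} The duality device in Step~1 is sound, but the deviation scales with the \emph{product} of the two log-densities, not their sum. Take $\mu=\nu$ uniform on the cap $\{x:\iprod{x}{e_1}\ge\beta\}$ of measure $e^{-a}$, so $\beta\approx\sqrt{2a/d}$. Decomposing along $e_1$ gives $\iprod{x}{z}\approx\beta^2+\iprod{x'}{z'}$, with $x',z'$ essentially uniform on $e_1^\perp$. Hence the joint probability is about $p\exp(d\tau\beta^2)\approx p\,(1+2a\sqrt{2\log(1/p)/d})$, a relative deviation of order $a\sqrt{\log(1/p)/d}$, i.e.\ $ds^2\asymp\log(1/p)\log M_1\log M_2$. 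Take $s$ to be half of it. Then your condition $ds^2\gtrsim\log(1/p)\log(d/p)(2a+\log(d/p))$ holds once $\log(d/p)\ll a\ll\sqrt{d/\log(1/p)}$, yet the deviation exceeds $s$.

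Correspondingly, Step~1's output, $\rho(A)\le\exp(\log\norm{\mu}_\infty+\log(d/p)-c\,ds^2/(\log(1/p)\log(d/p)))$, is too strong. For $\mu$ uniform on that cap, $X(z)/p-1\approx 2\sqrt{a\log(1/p)}\,\iprod{z}{e_1}$, whose tail is $\exp(-ds^2/(8a\log(1/p)))$. This is larger than your bound once $a\gg\log(d/p)$ and $ds^2$ is a large multiple of $a(a+\log(d/p))\log(1/p)$.

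Your own Step~3, if its Lipschitz bound held, would give the product-type deviation $p\sqrt{2\log(1/p)\log M_1(\log M_2+\log(d/p))/d}$. The expression you wrote silently replaces $\log M_1$ by $\log(d/p)$. With the product form, duality does put $\log\norm{\mu}_\infty+\log(d/p)$ in the denominator, which is where the lemma's shape comes from. But Step~1's bookkeeping must be redone, and the additive $\log(d/p)$ attached to $\log M_2$ leaves a prefactor $d/p$ on $\rho(A)$ that you still have to remove.

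\textbf{The Lipschitz estimate.} This is the decisive gap. Since $\E_{x\sim\rho}\phi(\iprod{x}{z})$ does not depend on $z$, your second transfer is vacuous. All dependence on $\nu$ must therefore come from $\mathrm{Lip}(h)=\sup_x\sup_{u\perp x}|\E_\nu[\phi'(\iprod{x}{z})\iprod{z}{u}]|$, which is a worst case over $x$. You bounded it using the $\rho$-mass of the band $\{z:\iprod{x}{z}\in\operatorname{supp}\phi'\}$, but what enters is its $\nu$-mass. A density bound gives only $\nu(\text{band})\le M_2\,\rho(\text{band})$, which is linear in $M_2$, whereas you need logarithmic dependence; in Step~1, $M_2=e^{a}$ is exponentially large.

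This is not slack in the analysis. Let $\nu$ be uniform on the half-band $\{z:\iprod{x_0}{z}\in\operatorname{supp}\phi',\ \iprod{z}{u}>0\}$ with $u\perp x_0$. By your choice of $\eta$, $\log M_2\approx\log(1/(ps))$, an admissible value. Yet the derivative of $h$ at $x_0$ in direction $u$ is of order $\eta^{-1}\E_\nu\iprod{z}{u}\asymp 1/(\eta\sqrt d)\asymp\sqrt{\log(1/p)}/s$. Up to logarithms, that exceeds your claimed bound by a factor $1/(ps)$. Transferring in $z$ instead fails the same way, with $\mu$ placed on such a half-band; there $\log\norm{\mu}_\infty\approx\log(1/(ps))$ lies well inside the lemma's range.

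So a one-shot $W_1$-transport argument with a worst-case Lipschitz constant cannot give logarithmic dependence on the density. You need an argument that uses the rarity of such bad points, such as an entropy/Herbst argument with averaged gradients, an induction on $\log\norm{\mu}_\infty$, or a genuine moment computation. That is the real content of the result, which the paper does not prove but imports from \cite[Corollary~4.9]{liu2022rgg}. Your Gegenbauer fallback does not supply it either. Proposition~\ref{prop:signedcyclecount} concerns i.i.d.\ uniform latents, whereas averaging joint cap correlations over $\mu^{\otimes q}$ under only a density bound is exactly the hard step.
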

We use the same martingale as in \cite[Observation 5.3]{liu2022rgg}. 
Given the sequence $\Gamma_i, i \in [\nu]$ from conditioning, define the following sequence of random sets $E_i, i \in [\nu]$ of \emph{$p$-caps} (when $\Gamma_i = 1$) and \emph{$p$-anticaps} (when $\Gamma_i = 0$) as
\[E_i := \begin{cases}
    \{z \in \mathbb{S}^{d-1}: \iprod{U_i}{z} \geq \tau(p, d)\} & \Gamma_i = 1, \\
    \{z \in \mathbb{S}^{d-1}: \iprod{U_i}{z} < \tau(p, d)\} & \Gamma_i = 0\,.
\end{cases}
\]
Note that among $E_i$, $\sum_{i=1}^\nu \Gamma_i$ of those are $p$-caps and $\nu - \sum_{i=1}^\nu \Gamma_i$ 
of those are $p$-anticaps. Now for $0 \leq t \leq \nu$, define the following quantity
\[R_t := \frac{\rho(L_t)}{\prod_{i=1}^t \rho(E_i)} = \frac{\rho(L_t)}{p^{\sum_{i=1}^t \Gamma_i}(1-p)^{t - \sum_{i=1}^t \Gamma_i}}\,,
\]
where $R_0 := 1$ and $L_t := \cap_{i=1}^t E_i$, with $L_0 := \mathbb{S}^{d-1}$; note that the equality follows from the definition of each $E_i$. By \cite[Observation 5.3]{liu2022rgg}, $\{R_t\}$ is a martingale with respect to the filtration $\calF_t := \sigma(\{U_1, \dots, U_t\})$. Furthermore, by definition we have
\begin{equation}\label{eq:lbgeneralRequivalence}
    \frac{\widetilde{\calP}_{\nu+1}(\Gamma_{[\nu]} | U_{[\nu]})}{\calQ_{\nu+1}(\Gamma_{[\nu]})} | \Gamma_{[\nu]} \overset{\mathrm{d}}{=} R_\nu\,.
\end{equation}
Thus, it suffices to control the second moment of $R_\nu$. This is done by factorizing $R_\nu$ as a product of (multiplicative) martingale differences, and analyzing each martingale difference using Lemma~\ref{lem:onecapconcsmallerthan1}. Note that this is different from the approach in \cite[Lemma 5.1]{liu2022rgg}, where $R_\nu$ is expressed as a sum of (additive) martingale differences. Formally, we have
\[R_\nu = R_0 \prod_{i=1}^\nu \varphi_i\,,
\]
where for $i \in [\nu]$,
\[\varphi_i := R_i / R_{i-1} = \frac{\rho(E_i \cap L_{i-1})}{\rho(E_i)\rho(L_{i-1})} \,.
\]
The next step is to build recursive inequalities between $\E[R_t^2]$ and $\E[R_{t-1}^2]$. For any $t \in [\nu]$,
\begin{equation}\label{eq:lbgeneralRrecurbase}
\begin{aligned}
    \E[R^2_t] = \E[R_{t-1}^2\E[\varphi_t^2|\calF_{t-1}]] &= \E[R_{t-1}^2 + R_{t-1}^2 \E[(\varphi_t^2-1)|\calF_{t-1}]]  \\
    &= \E[R_{t-1}^2 + R_{t-1}^2 \E[(\varphi_t-1)^2|\calF_{t-1}]]\,,
\end{aligned}
\end{equation}
where the last equality follows from $\E[\varphi_t|\calF_{t-1}] = 1$.

\paragraph*{Case 1: $\Gamma_t = 1$.} First, consider the case when $\Gamma_t = 1$, i.e., $E_t$ is a $p$-cap. If $R_{t-1} \geq a$ (the value of $a$ will be determined later), then
\[\rho(L_{t-1}) = R_{t-1}  \prod_{i=1}^{t-1} \rho(E_i) \geq ap^{\sum_{i=1}^\nu \Gamma_i}(1-p)^{\nu - \sum_{i=1}^\nu \Gamma_i}\,,
\]
which implies
\begin{equation}\label{eq:lbgeneralmuinfty}
    \norm{\mu}_{\infty} = 1/\rho(L_{t-1}) \leq 1/(ap^{\sum_{i=1}^\nu \Gamma_i}(1-p)^{\nu - \sum_{i=1}^\nu \Gamma_i})\,.
\end{equation}
Then by applying Lemma~\ref{lem:onecapconcsmallerthan1} with $\mu = \rho(\cdot | L_{t-1}) / \rho(L_{t-1})$, on $\{R_{t-1} \geq a\} \in \calF_{t-1}$ we have
\begin{equation}\label{eq:lbgeneralcapRlarge}
\begin{aligned}
    &\E[(\varphi_t-1)^2|\calF_{t-1}]\bm{1}\{R_{t-1} \geq a\}\\
    &\leq \int_{0}^{(1/p)^2} \Prob((\varphi_t-1)^2 > s|\calF_{t-1})ds \bm{1}\{R_{t-1} \geq a\} \\
    &= \int_{0}^{1/p}2u\Prob(|\varphi_t - 1| >u|\calF_{t-1})du \bm{1}\{R_{t-1} \geq a\} \\
    &\leq \left(\int_0^1 4u\exp\left(-\frac{du^2}{C_{\ref{lem:onecapconcsmallerthan1}}\log(1/p)\log(d/p)\lambda_a}\right)du + \frac{2}{p^2}\exp\left(-\frac{d}{C_{\ref{lem:onecapconcsmallerthan1}}\log(1/p)\log(d/p)\lambda_a}\right)\right) \bm{1}\{R_{t-1} \geq a\}\\
    &\leq \left(\frac{C_{\ref{lem:onecapconcsmallerthan1}}\log(1/p)\log(d/p)\lambda_a}{d} + \frac{2}{p^2}\exp\left(-\frac{d}{C_{\ref{lem:onecapconcsmallerthan1}}\log(1/p)\log(d/p)\lambda_a}\right)\right)\bm{1}\{R_{t-1} \geq a\}\,,
\end{aligned}    
\end{equation}
where $\lambda_a := \log \norm{\mu}_{\infty} + \log(d/p)$; the first inequality follows from $\varphi_t \leq 1/p$, the second inequality follows from $\Prob(|\varphi_t - 1| > u | \calF_{t-1}) = \Prob(|X(z) - p|>pu)$ in Lemma~\ref{lem:onecapconcsmallerthan1} (for $u \in [1, 1/p]$ we simply use the bound for $u = 1$). Note that by \eqref{eq:lbgeneralmuinfty},
\begin{equation}\label{eq:lbgeneralMbound}
\begin{aligned}
    \lambda_a &\leq \log(1/a) + \left(\sum_{i=1}^\nu \Gamma_i \right)\log(1/p) + \left(\nu - \sum_{i=1}^\nu \Gamma_i \right)\log(1/(1-p)) + \log(d/p) \\
    &\leq \log(1/a) +  10kp\log(1/p) + \log(d/p)=: \lambda'_a\,,
\end{aligned}
\end{equation}
from $\nu \leq 2k$ and $\sum_{i=1}^\nu \Gamma_i \leq 4kp$, along with $\log(1/(1-p)) \leq 2p$ and $\log(1/p) \geq 2/3$. In particular, while $\lambda_a$ itself depends on $t$, the last term $\lambda'_a$ in \eqref{eq:lbgeneralMbound} does not depend on $t$.

On the other hand, on $\{R_{t-1}< a\} \in \calF_{t-1}$ we have 
\begin{equation}\label{eq:lbgeneralcapRsmall}
    \E[R_t^2\bm{1}\{R_{t-1} < a\}]=\E[R_{t-1}^2\E[\varphi_t^2|\calF_{t-1}]\bm{1}\{R_{t-1} < a\}]  \leq a^2/p^2\,,
\end{equation}
from $\varphi_t \leq 1/p$.
Combining \eqref{eq:lbgeneralcapRlarge} and \eqref{eq:lbgeneralcapRsmall} for \eqref{eq:lbgeneralRrecurbase}, for $\Gamma_t = 1$ we have
\begin{equation}\label{eq:lbgeneralcaprecur}
    \E[R_t^2] = \E[R_t^2(\bm{1}\{R_{t-1} \geq a\} + \bm{1}\{R_{t-1} < a\})] \leq \E[R_{t-1}^2](1 + \eta_a) + \frac{a^2}{p^2}\,,
\end{equation}
where
\begin{equation}\label{eq:lbgeneralcapfactor}
    \eta_a := \frac{C_{\ref{lem:onecapconcsmallerthan1}}\log(1/p)\log(d/p)\lambda'_a}{d} + \frac{2}{p^2}\exp\left(-\frac{d}{C_{\ref{lem:onecapconcsmallerthan1}}\log(1/p)\log(d/p)\lambda'_a}\right)\,.
\end{equation}

\paragraph*{Case 2: $\Gamma_t = 0$.}
The bound for the case $\Gamma_t = 0$ can be obtained similarly as in the first case. On $\{R_{t-1} \geq a\}$,
\begin{align*}
    &\E[(\varphi_t-1)^2|\calF_{t-1}]\bm{1}\{R_{t-1} \geq a\}\\
    &\leq \int_{0}^{1}2u\Prob(|\varphi_t - 1| >u|\calF_{t-1})du \bm{1}\{R_{t-1} \geq a\} \\
    &\leq \int_0^{p/(1-p)} 4u\exp\left(-\frac{d(1-p)^2u^2}{C_{\ref{lem:onecapconcsmallerthan1}}p^2\log(1/p)\log(d/p)\lambda'_a}\right)du\bm{1}\{R_{t-1} \geq a\} \\
    &\quad+ \int_{p/(1-p)}^{1} 4u\exp\left(-\frac{d}{C_{\ref{lem:onecapconcsmallerthan1}}\log(1/p)\log(d/p)\lambda'_a}\right)du \bm{1}\{R_{t-1} \geq a\}\\
    &\leq \left(\frac{2C_{\ref{lem:onecapconcsmallerthan1}}p^2\log(1/p)\log(d/p)\lambda'_a}{d(1-p)^2} + 4\exp\left(-\frac{d}{C_{\ref{lem:onecapconcsmallerthan1}}\log(1/p)\log(d/p)\lambda'_a}\right)\right)\bm{1}\{R_{t-1} \geq a\} \\
    &\leq \left(\frac{8C_{\ref{lem:onecapconcsmallerthan1}}p^2\log(1/p)\log(d/p)\lambda'_a}{d} + 4\exp\left(-\frac{d}{C_{\ref{lem:onecapconcsmallerthan1}}\log(1/p)\log(d/p)\lambda'_a}\right)\right)\bm{1}\{R_{t-1} \geq a\}\,,
\end{align*} 
where the first inequality is from $|\varphi_t - 1| \leq 1 \vee p/(1-p) \leq 1$, and the second inequality is from Lemma~\ref{lem:onecapconcsmallerthan1} with $\mu = \rho(\cdot | L_{t-1}) / \rho(L_{t-1})$, where $\Prob(|\varphi_t - 1| > u | \calF_{t-1}) = \Prob(|X(z) - p|>(1-p)u)$ (applied for $u < p/(1-p)$; for $ u \in [p/(1-p), 1]$, the bound for $u = p/(1-p)$ is used); 
the final inequality is from $p \leq 1/2$. 

On $\{R_{t-1}< a\} \in \calF_{t-1}$, we have
\[\E[R_t^2\bm{1}\{R_{t-1} < a\}]=\E[R_{t-1}^2\E[\varphi_t^2|\calF_{t-1}]\bm{1}\{R_{t-1} < a\}]  \leq a^2/(1-p)^2 \leq 4a^2\,.
\]
Combining these as in \eqref{eq:lbgeneralcaprecur}, we have
\begin{equation}\label{eq:lbgeneralanticaprecur}
    \E[R_t^2] \leq \E[R_{t-1}^2](1 + \theta_a) + 4a^2\,,
\end{equation}
where
\begin{equation}\label{eq:lbgeneralanticapfactor}
    \theta_a := \frac{8C_{\ref{lem:onecapconcsmallerthan1}}p^2\log(1/p)\log(d/p)\lambda'_a}{d} + 4\exp\left(-\frac{d}{C_{\ref{lem:onecapconcsmallerthan1}}\log(1/p)\log(d/p)\lambda'_a}\right)\,.
\end{equation}
Now by successively applying \eqref{eq:lbgeneralcaprecur} and \eqref{eq:lbgeneralanticaprecur} for $t = \nu, \nu-1, \cdots, 1$, we have (recall $\E[R_0^2] = 1$)
\begin{align*}
    \E[R_\nu^2] &\leq (1+\eta_a)^{\sum_{i=1}^\nu \Gamma_i}(1+\theta_a)^{\nu - \sum_{i=1}^\nu \Gamma_i}\left(1 + \left(\sum_{i=1}^\nu \Gamma_i\right)\frac{a^2}{p^2} + \left(\nu - \sum_{i=1}^\nu \Gamma_i\right)4a^2\right) \\
    &\leq \exp\left(\left(\sum_{i=1}^\nu \Gamma_i\right)\eta_a + \left(\nu - \sum_{i=1}^\nu \Gamma_i\right)\theta_a \right)\times \left(1 + \left(\sum_{i=1}^\nu \Gamma_i\right)\frac{a^2}{p^2} + \left(\nu - \sum_{i=1}^\nu \Gamma_i\right)4a^2\right) \\
    &\leq \exp(4kp\eta_a + 2k\theta_a) \times \left(1 + \frac{4ka^2}{p} + 8ka^2\right)\,,
\end{align*}
where the second inequality is from $1 + x \leq \exp(x)$, and the last inequality is from $\nu \leq 2k, \sum_{i=1}^\nu \Gamma_i \leq 4kp$.

Finally, set $a = p/k$. Then as $\lambda'_a = \log(dk/p^2) + 10kp \log(1/p)$, we have
\[\E[R_\nu^2] \leq \exp\left(\frac{C_1kp\log(1/p)\log(d/p)\lambda'_a}{d} + \frac{C_1k}{p}\exp\left(-\frac{d}{C_1\log(1/p)\log(d/p)\lambda'_a}\right)\right) \times (1+ C_1p/k)\,,
\]
for some constant $C_1 > 0$, following from the definitions of $\eta_a$ and $\theta_a$ respectively from \eqref{eq:lbgeneralcapfactor} and \eqref{eq:lbgeneralanticapfactor}. The last term (and hence \eqref{eq:lbgeneralsecondmoment}, from \eqref{eq:lbgeneralRequivalence}) is $O(1)$, as long as $d \gtrsim kp\log(1/p) \log(d/p)\lambda'_a = kp\log(1/p)\log(d/p)(10kp\log(1/p) +\log(dk/p^2))$ and $kp \gtrsim \log n$.

Combining all lower bounds on $d$ and $kp$ for terms (I)--(III), we have that Lemma~\ref{lem:lbgeneralhighprobevent} holds if
\[kp \geq C_{\ref{lem:lbgeneralhighprobevent}} \log n \quad \text{and} \quad d \geq C_{\ref{lem:lbgeneralhighprobevent}}(kp\log(1/p)\log(d/p))^2 \log n\,,
\]
for an appropriate choice of the constant $ C_{\ref{lem:lbgeneralhighprobevent}} > 0$, as desired.

\section{Deferred proofs in Section~\ref{sec:comp}}

\subsection{Tight bounds on signed cycle counts (Proposition~\ref{prop:signedcyclecount})}\label{app:signedcyclecount}
Our proof strategy for Proposition~\ref{prop:signedcyclecount} is rather straightforward (despite involving technical calculations), starting by writing down the expectation of signed subgraph with respect to an appropriate orthonormal basis for the latents $U_1, \dots, U_{\ell}$. The orthonormality substantially simplifies the expression, reducing it to a tractable sum of combinatorial objects.

\subsubsection{Gegenbauer polynomials and spherical harmonics}
Here we present several key facts on the orthonormal basis used for the calculation. These can be found in textbooks on harmonic analysis, \eg, \cite{dai2013approximation}; for a summary, see \cite[Section 3 \& Lemma 4.10]{li2024spectralclusteringgaussianmixture}. For independent $U_1, U_2 \sim \calU(\mathbb{S}^{d-1})$, the distribution $\mu$ of $\iprod{U_1}{U_2}$ is given as
\[\mu(dx) = \frac{\Gamma(d/2)}{\Gamma((d-1)/2)\sqrt{\pi}}(1-x^2)^{(d-3)/2}dx, \quad x \in [-1, 1]\,.
\]
The polynomials $q_0, q_1, \dots$ that are orthonormal in $\mathcal{L}^2(\mu)$ are known as Gegenbauer polynomials. The first few polynomials are given as:
\[q_0(x) = 1, \quad q_1(x) = \sqrt{d}x, \quad q_2(x) = \frac{1}{\sqrt{2}}\sqrt{\frac{d+2}{d-1}}(dx^2 - 1)\,.
\]
Furthermore, these polynomials satisfy the following recursive property:
\begin{equation}\label{eq:gegenbauerrecursive}
    q_{m+1}(x) = \sqrt{\frac{(2m+d)(2m+d-2)}{(m+1)(m+d-2)}}xq_m(x) - \sqrt{\frac{m(m+d-3)(m+d/2)}{(m+1)(m+d-2)(m+d/2 - 2)}}q_{m-1}(x)
\end{equation}
The Gegenbauer polynomials admit further decomposition into spherical harmonics, which are orthonormal with respect to the distribution of $U_1$. Namely, there exists a set of functions $\{\phi_{m, t}: m \geq 0, t \in [N_m]\}$ such that
\[\E_{U_1}[\phi_{m, t}(U_1)\phi_{m', t'}(U_1)] = \bm{1}\{m = m', t = t'\}\,,
\]
and
\[q_m(\iprod{U_1}{U_2}) = \frac{1}{\sqrt{N_m}}\sum_{t \in [N_m]}  \phi_{m, t}(U_1)\phi_{m, t}(U_2)\,.
\]
Here, $N_m$ denotes the number of distinct degree-$m$ spherical harmonics (corresponding to $\mathbb{S}^{d-1}$), which satisfies
\[N_0 = 1, \quad N_m = \frac{d+2m-2}{m}\binom{d+m-3}{m-1}, m \geq 1\,.
\]
Because $\bm{1}\{\cdot \geq \tau(p, d)\} \in \calL^2(\mu)$, we can write as
\begin{equation}\label{eq:thresassphharmonics}
    \bm{1}\{\iprod{U_1}{U_2} \geq \tau(p, d)\} = \sum_{m=0}^\infty c_m q_m(\iprod{U_1}{U_2}) = \sum_{m=0}^\infty \frac{c_m}{\sqrt{N_m}}\sum_{t=1}^{N_m}  \phi_{m, t}(U_1)\phi_{m, t}(U_2)\,,
\end{equation}
where $c_m := \iprod{\bm{1}\{\cdot \geq \tau(p, d)\}}{q_m}_{\calL^2(\mu)}$.

\subsubsection{Tight bounds on signed cycle counts}
After establishing these facts, we move on to proving Proposition \ref{prop:signedcyclecount}. Recall the main quantity of interest:
\[\E_{G \sim \calG(n, p, d)}\left[\prod_{ij \in E(\mathrm{Cyc}_\ell)}(G_{ij}-p)\right]\,.
\]
First, note that any strict subgraph of $\mathrm{Cyc}_\ell$ is a forest. Thus by Corollary~\ref{cor:treesimple},
\begin{equation}\label{eq:signedcyclecountsimple}
    \E_{G \sim \calG(n, p, d)}\left[\prod_{ij \in E(\mathrm{Cyc}_\ell)}G_{ij}\right] - p^\ell = \E_{G \sim \calG(n, p, d)}\left[\prod_{ij \in E(\mathrm{Cyc}_\ell)}(G_{ij}-p)\right]\,.
\end{equation}
Then by \eqref{eq:thresassphharmonics},
\begin{align*}
    &\E_{G \sim \calG(n, p, d)}\left[\prod_{ij \in E(\mathrm{Cyc}_\ell)}G_{ij}\right] \\
    &= \E_{U_1, \dots, U_{\ell}}\left[\prod_{ij \in E(\mathrm{Cyc}_\ell)}\bm{1}\{\iprod{U_i}{U_j} \geq \tau(p, d)\}\right] \\
    &= \E_{U_1, \dots, U_{\ell}}\left[\prod_{ij \in E(\mathrm{Cyc}_\ell)}\left(\sum_{m=0}^\infty \frac{c_m}{\sqrt{N_m}}\sum_{t \in [N_m]}\phi_{m, t}(U_i)\phi_{m, t}(U_j)\right)\right] \\
    &= \sum_{m_1, \dots, m_{\ell} \geq 0} \frac{c_{m_1}\dots c_{m_{\ell}}}{\sqrt{N_{m_1} \dots N_{m_{\ell}}}} \sum_{t_1 \in [N_{m_1}], \dots, t_{\ell} \in [N_{m_{\ell}}]} \E_{U_1, \dots, U_{\ell}}[\phi_{m_1, t_1}(U_1)\phi_{m_1, t_1}(U_2) \dots \phi_{m_{\ell}, t_{\ell}}(U_{\ell})\phi_{m_{\ell}, t_{\ell}}(U_1)]\,,
\end{align*}
where each summand satisfies (by the orthonormality of spherical harmonics)
\begin{align*}
    &\E_{U_1, \dots, U_{\ell}}[\phi_{m_1, t_1}(U_1)\phi_{m_1, t_1}(U_2) \dots \phi_{m_{\ell}, t_{\ell}}(U_{\ell})\phi_{m_{\ell}, t_{\ell}}(U_1)] \\
    &= \E_{U_1}[\phi_{m_{\ell}, t_{\ell}}(U_1)\phi_{m_1, t_1}(U_1)] \dots \E_{U_{\ell}}[\phi_{m_{\ell -1}, t_{\ell -1}}(U_{\ell})\phi_{m_{\ell}, t_{\ell}}(U_{\ell})] \\
    &= \bm{1}\{m_{\ell} = m_1, t_{\ell} = t_1\} \cdots \bm{1}\{m_{\ell -1} = m_{\ell}, t_{\ell -1} = t_{\ell}\} \\
    &= \bm{1}\{m_1 = \dots = m_{\ell}, t_1 = \dots = t_{\ell}\}\,.
\end{align*}
Thus, we obtain
\[\E_{G \sim \calG(n, p, d)}\left[\prod_{ij \in E(\mathrm{Cyc}_\ell)}G_{ij}\right] = \sum_{m=0}^\infty \frac{c_m^{\ell}}{N_m^{\ell /2}}N_m = \sum_{m=0}^\infty \frac{c_m^{\ell}}{N_m^{\ell /2-1}}\,.
\]
Since $c_0 = \iprod{\bm{1}\{\cdot \geq \tau(p, d)\}}{q_0}_{\calL^2(\mu)} = \Prob(\iprod{U_1}{U_2} \geq \tau(p, d)) = p$ and $N_0 = 1$, from \eqref{eq:signedcyclecountsimple} we have
\begin{equation}\label{eq:signedcyclecountinfinitesum}
    \E_{G \sim \calG(n, p, d)}\left[\prod_{ij \in E(\mathrm{Cyc}_\ell)}(G_{ij}-p)\right] = \sum_{m=1}^\infty \frac{c_m^{\ell}}{N_m^{\ell /2-1}}\,.
\end{equation}
Thus, it suffices to only bound the size of $c_m$ and $N_m$ appearing from Gegenbauer polynomials. This is done by decomposing the infinite sum depending on the value of $m$. In particular, we claim that for some constant $C > 0$,
\begin{align}
    \frac{1}{C^{\ell}} \frac{p^{\ell} \log^{\ell /2}(1/p)}{d^{\ell /2-1}} \leq \frac{c_1^{\ell}}{N_1^{\ell /2-1}} &\leq C^{\ell} \frac{p^{\ell} \log^{\ell /2}(1/p)}{d^{\ell /2-1}}\,,\label{eq:signedcyclecountleadingterm}\\
    \left|\sum_{m=2}^{\lfloor d^{1/4}\rfloor} \frac{c_m^{\ell}}{N_m^{\ell /2-1}}\right| &= o\left(\frac{p^{\ell}\log^{\ell /2}(1/p)}{C^{\ell}d^{\ell /2-1}}\right)\,, \label{eq:signedcyclecountsmallm} \\
    \left|\sum_{m=\lfloor d^{1/4} \rfloor + 1} ^ \infty \frac{c_m^{\ell}}{N_m^{\ell /2-1}}\right| &= o\left(\frac{p^{\ell}}{C^{\ell}d^{\ell /2-1}}\right)\,, \label{eq:signedcyclecountlargem}
\end{align}
where the asymptotics from now on are with respect to $d \to \infty$, uniformly over $3 \leq \ell \leq n$; clearly, Proposition~\ref{prop:signedcyclecount} follows from those. For \eqref{eq:signedcyclecountleadingterm} through \eqref{eq:signedcyclecountlargem}, we will use $C_0, C_1, \dots$ to denote absolute constants within each case.

\paragraph*{Leading term ($m = 1$).} For $m = 1$, we have $N_1 = d$ and
\[c_1 = \int_{\tau(p, d)}^1 \sqrt{d}x\frac{\Gamma(d/2)}{\Gamma((d-1)/2)\sqrt{\pi}}(1-x^2)^{(d-3)/2}dx =  \frac{\sqrt{d} \Gamma(d/2)}{\Gamma((d-1)/2)(d-1)\sqrt{\pi}}(1-\tau(p, d)^2)^{(d-1)/2}\,.
\]
We show that $c_1$ (equivalently, $(1-\tau(p, d)^2)^{(d-1)/2}$) is in the order of $p\sqrt{\log(1/p)}$. This is done by dividing into two cases, where $p$ is small (close to $0$) or large (bounded away from $0$). For small $p$, by \cite[Lemma 2]{bubeck2016rgg} there exists $C_0 > 0$ such that for all $p \in (0, 0.49]$,
\[\frac{1}{C_0}\sqrt{\frac{\log(1/p)}{d}} \leq \tau(p, d) \leq C_0\sqrt{\frac{\log(1/p)}{d}}\,.
\]
In particular, there exists $\eps_0 > 0$ such that for all $p \in (0, \eps_0]$, $\tau(p, d) \geq (1/C_0)\sqrt{\log(1/p)/d} \geq \sqrt{2/d}$. Then by \cite[Lemma 2.1(b)]{brieden2001cap},
\[\frac{2}{C_0}p\sqrt{\log(1/p)}\leq 2p\sqrt{d}\tau(p, d)\leq (1-\tau(p, d)^2)^{(d-1)/2} \leq 6p\sqrt{d}\tau(p, d) \leq 6C_0p\sqrt{\log(1/p)}\,,
\]
implying that $c_1$ is in the order of $p\sqrt{\log(1/p)}$ when $p \in (0, \eps_0]$. For $p \in (\eps_0, 0.5]$, as $p\sqrt{\log(1/p)}$ is of constant order, it suffices to show that $(1-\tau(p, d)^2)^{(d-1)/2}$ is of constant order for the same range of $p$ (uniformly over $d$). \cite[Lemma 3.6]{liu2022rgg} states 
\begin{equation}\label{eq:thresub}
    \tau(p, d) \leq \sqrt{\frac{3\log(1/p)}{d}} \quad \text{for all} \quad 0 < p \leq 0.5\,.
\end{equation}
Thus for all sufficiently large $d$,
\[p^2 \leq (1-\tau(p, d)^2)^{(d-1)/2} \leq 1\,.
\]
This shows that $(1-\tau(p, d)^2)^{(d-1)/2}$ is of constant order over $p \in (\eps_0, 0.5]$. Thus, $c_1$ is in the order of $p\sqrt{\log(1/p)}$ which proves \eqref{eq:signedcyclecountleadingterm}.

\paragraph*{Small $m$ ($2 \leq m \leq d^{1/4}$).} As a brief overview, for this regime of $m$ we will control the size of $c_m$ and $N_m$ in \eqref{eq:signedcyclecountinfinitesum} inductively using their recursive definitions. For this, we first consider an upper bound $\widetilde{c}_m \geq 0$ of $|c_m|$, defined as
\[
\widetilde{c}_m :=  \int_{\tau(d, p)}^1 |q_m(x)| \mu(dx)\,.
\]
Our first claim is that there exists a constant $C_0 > 0$ such that
\begin{equation}\label{eq:signedcyclecountsmallm1}
    \widetilde{c}_m \leq C_0p\sqrt{\log(1/p)}(0.9d)^{(m-1)/8}\,,
\end{equation}
for all $m \geq 1$. This can be proved by induction; for $m = 1$, this follows from $\widetilde{c}_1 = c_1$ and the corresponding analysis for $c_1$ (in the proof for \eqref{eq:signedcyclecountleadingterm}), and for $m = 2$ this follows from
\begin{align*}
    \widetilde{c}_2 &\leq \int_{\tau(p, d)}^1(dx^2 + 1)\frac{\Gamma(d/2)}{\Gamma((d-1)/2)\sqrt{\pi}}(1-x^2)^{(d-3)/2}dx \\
    &= \frac{d}{d-1}\tau(p, d) \times \frac{\Gamma(d/2)}{\Gamma((d-1)/2)\sqrt{\pi}}(1-\tau(p, d)^2)^{(d-1)/2} \\
    &\quad+ \frac{d}{d-1} \times \int_{\tau(p, d)}^1 \frac{\Gamma(d/2)}{\Gamma((d-1)/2)\sqrt{\pi}}(1-x^2)^{(d-1)/2}dx  +  p \\
    &\leq \sqrt{d}\tau(p, d)c_1 + 3p \leq C_0p\sqrt{\log(1/p)}\left(\sqrt{d}\tau(p, d) + \frac{3}{C_0 \sqrt{\log(1/p)}}\right)\,,
\end{align*}
where the first equality is from integration by parts, second inequality is from $d/(d-1) \leq 2$, $(1-x^2 )^{(d-1)/2} \leq (1-x^2)^{(d-3)/2}$ and the definition of $c_1$, and the final inequality is from \eqref{eq:signedcyclecountsmallm1} for $m = 1$. From \eqref{eq:thresub} and $d \geq (5\log(1/p))^4$, the factor $\sqrt{d}\tau(p, d) + 3/(C_0 \sqrt{\log(1/p)})$ is smaller than $(0.9d)^{1/8}$ for all sufficiently large $d$. 

Now we use the recursive definition of the Gegenbauer polynomials to construct a recursive inequality for $\widetilde{c}_m$. It can be shown that the factors in \eqref{eq:gegenbauerrecursive} satisfy
\[\sqrt{\frac{(2m+d)(2m+d-2)}{(m+1)(m+d-2)}} \leq \sqrt{d} \quad \text{and} \quad \sqrt{\frac{m(m+d-3)(m+d/2)}{(m+1)(m+d-2)(m+d/2 - 2)}} \leq 1 \,,
\]
for all $d \geq 4$, $m \geq 1$. This implies

\[|q_{m+1}(x)| \leq \sqrt{d}x|q_m(x)| + |q_{m-1}(x)|\,,
\]
for all $x \in [\tau(p, d), 1]$. Thus,
\begin{align*}
    \widetilde{c}_{m+1} &\leq \sqrt{d}\int_{\tau(p, d)}^1 x|q_m(x)| \mu(dx) +\int_{\tau(p, d)}^1 |q_{m-1}(x)| \mu(dx) \\
    &= \int_{\tau(p, d)}^{0.9/d^{3/8}} \sqrt{d}x|q_m(x)| \mu(dx) + \int_{0.9/d^{3/8}}^1 \sqrt{d}x|q_m(x)| \mu(dx) + \tilde{c}_{m-1} \\
    &\leq 0.9d^{1/8} \int_{\tau(p, d)}^1 |q_m(x)|\mu(dx) + \left(\int_{0.9/d^{3/8}}^1 dx^2 \mu(dx)\right)^{1/2} \left(\int_{0.9/d^{3/8}}^1 (q_m(x))^2 \mu(dx)\right)^{1/2} + \widetilde{c}_{m-1}\\
    &\leq 0.9d^{1/8} \widetilde{c}_m +  \left(\int_{0.9/d^{3/8}}^1 d \mu(dx)\right)^{1/2}  + \widetilde{c}_{m-1} \leq 0.9d^{1/8} \widetilde{c}_m + 2\sqrt{d}\exp\left(-\frac{(0.9)^2}{4}d^{1/4} \right)+ \tilde{c}_{m-1}\,,
\end{align*}
where the second inequality is by Cauchy-Schwarz, third inequality is from $\norm{q_m}_{\calL^2(\mu)} = 1$, and the last inequality is from concentration of spherical cap \cite[Theorem 14.1.1]{matousek2013lectures}.

Now we apply the induction hypothesis; suppose that \eqref{eq:signedcyclecountsmallm1} holds for $m$ and $m-1$, where $m \geq 2$. Then 
\begin{align*}
    \widetilde{c}_{m+1} &\leq 0.9d^{1/8}\times C_0p\sqrt{\log(1/p)}(0.9d)^{(m-1)/8} + 2\sqrt{d}\exp\left(-\frac{(0.9)^2}{4}d^{1/4} \right) + C_0 p\sqrt{\log(1/p)}(0.9d)^{(m-2)/8} \\
    &\leq C_0p\sqrt{\log(1/p)}(0.9d)^{(m-2)/8}(0.9d^{1/4} + 2) \leq C_0p\sqrt{\log(1/p)}(0.9d)^{m/8}\,,
\end{align*}
where the second inequality is from $2\sqrt{d}\exp(-(0.9)^2d^{1/4}/4) \leq \exp(-d^{1/4}/5) \leq p$ which is smaller than $C_0p\sqrt{\log(1/p)}(0.9d)^{(m-2)/8}$, and the final inequality follows from $(0.9d)^{1/4} \geq 0.9d^{1/4} + 2$ for all sufficiently large $d$. This proves the first claim \eqref{eq:signedcyclecountsmallm1}.

Our second claim is that 
\begin{equation}\label{eq:signedcyclecountsmallm2}
    N_m \geq \frac{d^2}{2} \times (0.9d^3)^{(m-2)/4}\,,
\end{equation}
for all $2 \leq m \leq d^{1/4}$. This can also be shown by induction, as $N_2 = \frac{d+2}{2}\binom{d-1}{1} \geq \frac{d^2}{2}$ and
\[\frac{N_m}{N_{m-1}} = \frac{(d+2m-2)(d+m-3)}{m(d+2m-4)} \geq \frac{d+2m-2}{m} \times (0.9)^{1/4} \geq (0.9d^3)^{1/4}\,.
\]
Combining \eqref{eq:signedcyclecountsmallm1} and \eqref{eq:signedcyclecountsmallm2}, we have
\begin{align*}
    \left|\sum_{2 \leq m \leq d^{1/4}} \frac{c_m^{\ell}}{N_m^{\ell /2 - 1}}\right| &\leq \sum_{2 \leq m \leq d^{1/4}} \frac{(\widetilde{c}_m)^{\ell}}{N_m^{\ell /2 - 1}} \\
    &\leq \frac{2^{\ell/2-1}(C_0p)^{\ell}\log^{\ell /2}(1/p)}{d^{\ell -2}}(0.9d)^{\ell/8} \sum_{m \geq 0} \frac{(0.9d)^{\ell m/8}}{(0.9d^3)^{(\ell/2 - 1)(m/4)}} \\
    &= \frac{2^{\ell/2-1}(C_0p)^{\ell}\log^{\ell /2}(1/p)}{d^{\ell -2}}(0.9d)^{\ell/8} \sum_{m \geq 0} \left(\frac{1}{d^{1/2}}\right)^{(\ell/2-1)m} (0.9d)^{m/4} \\
    &\leq \frac{2^{\ell/2-1}(C_0p)^{\ell}\log^{\ell /2}(1/p)}{d^{\ell -2}}(0.9d)^{\ell/8} \sum_{m \geq 0} \left(\frac{1}{d^{1/2}}\right)^{m/2} (0.9d)^{m/4} \\
    &\leq \frac{2^{\ell/2-1}(C_0p)^{\ell}\log^{\ell /2}(1/p)}{d^{\ell -2}}(0.9d)^{\ell/8} \sum_{m \geq 0} (0.9)^{m/4} \\
    &\leq \frac{100(2C_0p)^{\ell}\log^{\ell /2}(1/p)}{d^{7\ell/8 - 2}}\,,
\end{align*}
where for the third and the last inequality we use $\ell \geq 3$. Let $C > 0$ be the constant in \eqref{eq:signedcyclecountleadingterm}. Then 
\[\frac{100(2C_0p)^{\ell}\log^{\ell /2}(1/p)}{d^{7\ell/8 - 2}} = \frac{p^{\ell} \log^{\ell /2}(1/p)}{C^{\ell} d^{\ell /2-1}} \times \frac{100(2C_0 C)^{\ell}}{d^{3\ell/8 - 1}}\,,
\]
where the factor $100(2C_0 C)^{\ell} / d^{3\ell/8 - 1}$ is uniformly of order $o(1)$ for all $\ell \geq 3$ as $d \to \infty$. This proves \eqref{eq:signedcyclecountsmallm}.

\paragraph*{Large $m$ ($m > d^{1/4}$).} %
For this regime of $m$, instead of recursively bounding the size of $c_m$ we directly invoke approximation results for Gegenbauer polynomials. This results in a rather complicated expression in Gamma functions. As $N_m$ can also be written in terms of Gamma functions, we invoke Stirling's approximation for bounding these and carefully bound the resulting summands in \eqref{eq:signedcyclecountinfinitesum} to show that the remaining infinite sum is small.

To begin, we have

\begin{align*}
    |c_m| &\leq \int_{\tau(p, d)}^1 |q_m(x)| \frac{\Gamma(d/2)}{\Gamma((d-1)/2)\sqrt{\pi}}(1-x^2)^{(d-3)/2}dx \\
    &= \frac{\Gamma(d/2)}{\Gamma((d-1)/2)\sqrt{\pi}} \int_{\tau(p, d)}^1 \sqrt{N_m} \frac{\Gamma(m+1)\Gamma((d-1)/2)}{\Gamma((d-1)/2 + m)}|P_m^{(d-3)/2, (d-3)/2}(x)|(1-x^2)^{(d-3)/2}dx \\
    &= \frac{\sqrt{N_m}\Gamma(d/2)\Gamma(m+1)}{\Gamma((d-1)/2 + m)\sqrt{\pi}} \int_{\tau(p, d)}^1 |P_m^{(d-3)/2, (d-3)/2}(x)|(1-x^2)^{(d-3)/2}dx \\
    &\leq \frac{\sqrt{N_m}\Gamma(d/2)\Gamma(m+1)}{\Gamma((d-1)/2 + m)\sqrt{\pi}} \int_{\tau(p, d)}^1 \sqrt{3}\left(\frac{d-3}{2}\right)^{1/6}\left(1+\frac{d-3}{2m}\right)^{1/12}(1-x^2)^{d/4 - 1}dx \\
    &\leq \frac{\sqrt{N_m}\Gamma(d/2)\Gamma(m+1)}{\Gamma((d-1)/2 + m)} d^{1/4}\int_{\tau(p, d)}^1 (1-x^2)^{d/4 - 1}dx \\
    &\leq \frac{\sqrt{N_m}\Gamma(d/2)\Gamma(m+1)}{\Gamma((d-1)/2 + m)} d^{1/4} \times \frac{C_0}{\sqrt{d}} \leq  \frac{C_0\sqrt{N_m}\Gamma(d/2)\Gamma(m+1)}{\Gamma((d-1)/2 + m)d^{1/4}} \times p\exp\left(\frac{1}{5}d^{1/4}\right) \,,
\end{align*}
for some constant $C_0 > 0$. 
The first equality is from \cite[Equation B.2.1]{dai2013approximation},\footnote{The polynomial $P_m^{(d-3)/2, (d-3)/2}$ is called Jacobi polynomial, defined as 
\[P_m^{(d-3)/2, (d-3)/2}(x) := \frac{(-1)^m}{2^m m!}(1-x^2)^{-(d-3)/2}\frac{d^m}{dx^m}(1-x^2)^{m+(d-3)/2}\,.
\]
See also \cite[Claim 4.3]{li2024spectralclusteringgaussianmixture}.} the second inequality is from \cite[Theorem 2]{krasikov2007jacobi}, and the third inequality is from $\sqrt{3/\pi}((d-3)/2)^{1/6}(1+(d-3)/(2m))^{1/12} \leq d^{1/4}$; the next two inequalities are respectively from $\int_{-1}^1 (1-x^2)^{d/4 - 1} dx \lesssim 1/\sqrt{d}$ and $d \geq (5\log(1/p))^4$. Thus,
\begin{equation}\label{eq:signedcyclecountdandmterm}
\begin{aligned}
    \frac{d^{\ell /2-1}}{(C_0p)^{\ell}} \times \frac{|c_m|^{\ell}}{N_m^{\ell /2-1}}  = \left(\frac{|c_m|}{C_0p\sqrt{N_m}}\right)^{\ell} d^{\ell /2-1} N_m\leq \frac{ \Gamma(d/2)^{\ell} \Gamma(m+1)^{\ell}}{\Gamma((d-1)/2 + m)^{\ell }}\exp\left(\frac{\ell}{5} d^{1/4} + \frac{\ell}{4}\log d\right) \times N_m\,.
\end{aligned}
\end{equation}
Note that the main summand of interest $|c_m|^{\ell}/N_m^{\ell /2-1}$ is already normalized here; for \eqref{eq:signedcyclecountlargem}, it suffices to show that \eqref{eq:signedcyclecountdandmterm} is $o(1/(C_0C)^{\ell})$ uniformly over $d$.

Since we need to bound the sum for all $m > d^{1/4}$, within the right hand side we collect the factors that depend on $m$, namely,
\begin{equation}\label{eq:signedcyclecountmterm1}
\begin{aligned}
    &\frac{\Gamma(m+1)^{\ell}}{\Gamma((d-1)/2+m)^{\ell}} \times N_m \\
    &= \frac{\Gamma(m+1)^{\ell}}{\Gamma((d-1)/2+m)^{\ell}} \times \frac{d+2m-2}{m} \times \frac{\Gamma(d+m-2)}{\Gamma(m)\Gamma(d-1)} \\
    &\leq C_1^{\ell} \frac{(m+1)^{(m+1/2)\ell}}{((d-1)/2 + m)^{(d/2 + m - 1)\ell}} \exp((d-3)\ell/2) \times \frac{d+2m-2}{m} \times \frac{(d+m-2)^{d+m-5/2}}{m^{m-1/2}(d-1)^{d-3/2}}\exp(1) \\
    &\leq C_2^{\ell} \frac{\exp((d-3)\ell/2)}{(d-1)^{d-3/2}} \times \frac{(m+1)^{(m+1/2)\ell}(d+m-2)^{d+m-5/2}}{((d-1)/2 + m)^{(d/2+m-1)\ell-1}m^{m+1/2}} \\
    &\leq C_2^{\ell} \frac{\exp((d-3)\ell/2)2^d}{(d-1)^{d-3/2}} \times \frac{(m+1)^{(m+1/2)\ell}(d+m-2)^{m-5/2}}{((d-1)/2 + m)^{(d/2+m-1)\ell-1 - d}m^{m+1/2}}\,,
\end{aligned}
\end{equation}
for some constants $C_1, C_2 > 0$ where the first inequality is from Stirling's approximation, second inequality is from $d+2m-2 \leq 2((d-1)/2 + m)$, and the third inequality is from $d+m-2\leq 2((d-1)/2 + m)$.
By combining
\begin{align*}
    \frac{(m+1)^{m-5/2}(m+d-2)^{m-5/2}}{(m + (d-1)/2)^{2m-5}} &\leq 1\,, \\
    \frac{(m+1)^{2m+1}}{m^{m+1/2}(m+(d-1)/2)^{m+1/2}} & \leq 1\,, \\
    \frac{(m+1)^{(m+1/2)\ell - (3m - 3/2)}}{(m + (d-1)/2)^{(m+1/2)\ell - (3m - 3/2)}} & \leq 1\,,
\end{align*}
(note that for the last inequality we use $(m+1/2)\ell - (3m - 3/2) \geq 0$, which holds whenever $\ell \geq 3$) the last term of \eqref{eq:signedcyclecountmterm1} is upper bounded by
\begin{align*}
    C_2^{\ell} \frac{\exp((d-3)\ell/2)2^d}{(d-1)^{d-3/2}} \times \frac{1}{(m + (d-1)/2)^{(\ell/2 - 1)d - 3\ell/2 + 2}}\,.
\end{align*}
Going back to \eqref{eq:signedcyclecountdandmterm} and summing over $m > d^{1/4}$, we obtain
\begin{align*}
    &\frac{d^{\ell /2-1}}{(C_0p)^{\ell}} \sum_{m > d^{1/4}} \frac{|c_m|^{\ell}}{N_m^{\ell /2-1}} \\
    &\leq \Gamma(d/2)^{\ell} \exp\left(\frac{\ell}{5}d^{1/4} + \frac{\ell}{4}\log d\right) \times \sum_{m > d^{1/4}}\frac{\Gamma(m+1)^{\ell}}{\Gamma((d-1)/2 + m)^{\ell}}N_m \\
    &\leq \Gamma(d/2)^{\ell} \exp\left(\frac{\ell}{5}d^{1/4} + \frac{\ell}{4}\log d\right)\times C_2^{\ell} \frac{\exp((d-3)\ell/2)2^d}{(d-1)^{d-3/2}}  \times \sum_{m > d^{1/4}}\frac{1}{(m + (d-1)/2)^{(\ell/2 - 1)d - 3\ell/2 + 2}} \\
    &\leq \Gamma(d/2)^{\ell} \exp\left(\frac{\ell}{5}d^{1/4} + \frac{\ell}{4}\log d\right)\times C_2^{\ell} \frac{\exp((d-3)\ell/2)2^d}{(d-1)^{d-3/2}}  \times \frac{1}{((d + 2d^{1/4} - 3)/2)^{(\ell/2-1)d - 3\ell/2 + 1}}\,,
\end{align*}
where the last inequality is obtained by comparing the sum with integral (here we use that $(\ell/2-1)d > 3\ell/2$, which holds for all sufficiently large $d$ regardless of $\ell \geq 3$).

Now that the last line only depends on $d$ and $\ell$, from $\Gamma(d/2) \lesssim (d/2)^{(d-1)/2}\exp(-d/2)$ the last line is further upper bounded by
\begin{align*}
    &C_3^{\ell} \left(\frac{d}{2}\right)^{(d-1)\ell/2}\exp(-d\ell/2)\exp\left(\frac{\ell}{5}d^{1/4}  + \frac{\ell}{4}\log d\right)\frac{\exp((d-3)\ell/2)2^d}{(d-1)^{d-3/2}}\frac{1}{((d + 2d^{1/4} - 3)/2)^{(\ell/2-1)d - 3\ell/2 + 1}} \\
    &\leq C_3^{\ell} \left(\frac{d}{d+2d^{1/4}-3}\right)^{(\ell/2-1)d} \left(\frac{d}{2}\right)^{d - \ell/2}d^{3\ell/2-1} \exp\left(\frac{\ell}{5}d^{1/4}  + \frac{\ell}{4}\log d\right) \exp(-3\ell/2)d^{3/2}\left(\frac{2}{d-1}\right)^d \\
    &\leq C_3^{\ell} \exp\left(-\frac{8}{5}\left(\frac{\ell}{2}-1\right)d^{1/4}\right)\left(\frac{d}{d-1}\right)^{d} (2d)^{\ell  + 1/2}\exp\left(\frac{\ell}{5}d^{1/4}  + \frac{\ell}{4}\log d\right) \\
    &\leq \exp\left(-\left(\frac{3}{5}\ell - \frac{8}{5}\right)d^{1/4} + 2l\log d\right)\,,
\end{align*}
for some constant $C_3 > 0$; the last inequality holds as long as $d$ is sufficiently large. In particular, for the constant $C > 0$ in \eqref{eq:signedcyclecountleadingterm},
\[\exp\left(-\left(\frac{3}{5}\ell - \frac{8}{5}\right)d^{1/4} + 2l\log d\right)(C_0C)^{\ell}
\]
is of order $o(1)$ uniformly over all $\ell \geq 3$. This implies
\[\frac{d^{\ell /2-1}}{(C_0p)^{\ell}} \sum_{m > d^{1/4}} \frac{|c_m|^{\ell}}{N_m^{\ell /2-1}} \times (C_0 C)^{\ell} = \frac{d^{\ell /2-1}C^{\ell}}{p^{\ell}} \sum_{m > d^{1/4}} \frac{|c_m|^{\ell}}{N_m^{\ell /2-1}} = o(1)\,,
\]
uniformly over $\ell \geq 3$. This proves \eqref{eq:signedcyclecountlargem}.

\end{document}